\numberwithin{equation}{section}
\definecolor{citegreen}{rgb}{0,0.6,0}
\definecolor{refred}{rgb}{0.8,0,0}
\newcommand{\R}{\mathbb{R}}
\newcommand{\N}{\mathbb{N}}
\newcommand{\Sph}{\mathbb{S}}
\def\HHH{{\rm H}}
\def\RRR{{\mathrm R}}
\def\a{\alpha}
\def\b{\beta}
\newcommand{\pa}{\partial}
\newcommand{\Om}{\Omega}
\newcommand{\ffi}{\varphi}
\newcommand{\ep}{\varepsilon}
\newcommand{\rmd}{{\rm d}}
\newcommand{\umax}{u_{{\rm max}}}
\newcommand{\mmax}{m_{{\rm max}}}
\newcommand{\go}{g_0}
\newcommand{\cgo}{g^{(0)}}
\newcommand{\Ric}{{\rm Ric}}
\newcommand{\D}{{\rm D}}
\newcommand{\DD}{{\rm D}^2}
\newcommand{\De}{\Delta}
\newcommand{\cho}{{\rm h}^{(0)}}
\newcommand{\Ho}{{\rm H}}
\newcommand{\Cr}{{\rm G}}
\newcommand{\g}{g}
\newcommand{\Ricg}{{\rm Ric}_g}
\newcommand{\Rg}{{\rm R}_g}
\newcommand{\na}{\nabla}
\newcommand{\nana}{\nabla^2}
\newcommand{\Deg}{\Delta_g}
\newcommand{\hg}{{\rm h}_g}
\newcommand{\chg}{{\rm h}^{(g)}}
\newcommand{\Hg}{{\rm H}_g}
\newcommand{\hhh}{{\rm h}}
\mathchardef\emptyset="001F
\setlist[description]{%
  topsep=1pt,               
  itemsep=1pt,               
  font={\normalfont\itshape\underline}, 
}
\definecolor{vgreen}{rgb}{0.1,0.5,0.2}
\definecolor{viola}{RGB}{85,26,139}
\newtheorem{theorem}{Theorem}[section]
\newtheorem{remark}{Remark}
\newtheorem{corollary}[theorem]{Corollary}
\newtheorem{definition}{Definition}
\newtheorem{proposition}[theorem]{Proposition}
\newtheorem{notation}{Notation}
\newtheorem{normalization}{Normalization}
\newtheorem{lemma}[theorem]{Lemma}
\newtheorem*{conjecture}{Conjecture}
\begin{document}

\title[On the mass of static metrics with positive cosmological constant -- II
]{On the mass of static metrics \\ with positive cosmological constant -- II}

\author[S.~Borghini]{Stefano Borghini}
\address{S.~Borghini, Uppsala Universitet, L{\"a}gerhyddsv{\"a}gen 1, 752 37 Uppsala, Sweden and Universit\`a degli Studi di Trento,
via Sommarive 14, 38123 Povo (TN), Italy}
\email{stefano.borghini@math.uu.se}

\author[L.~Mazzieri]{Lorenzo Mazzieri}
\address{L.~Mazzieri, Universit\`a degli Studi di Trento,
via Sommarive 14, 38123 Povo (TN), Italy}
\email{lorenzo.mazzieri@unitn.it}


\begin{abstract} 
This is the second of two works, in which we discuss the definition of an appropriate notion of mass for static metrics, in the case where the cosmological constant is positive and the model solutions are compact. In the first part, we have established a positive mass statement,
characterising the de Sitter solution as the only static vacuum metric with zero mass. 
In this second part, we prove optimal area bounds for horizons of black hole type and of cosmological type, corresponding to Riemannian Penrose inequalities and to cosmological area bounds \`a la Boucher-Gibbons-Horowitz, respectively. Building on the related rigidity statements, we also deduce a uniqueness result for the Schwarzschild--de Sitter spacetime. 
\end{abstract}

\maketitle

\noindent\textsc{MSC (2010): 
35B06,
\!53C21,
\!83C57,
}

\smallskip
\noindent\keywords{\underline{Keywords}:  Static metrics, Schwarzschild de Sitter solution, Riemannian Penrose Inequality, Black Hole Uniqueness Theorem.} 

\date{\today}

\maketitle


\section{Introduction and statement of the main results}

In this paper we continue the study started in~\cite{Bor_Maz_2-I} about the notion of {\em virtual mass} of a static metric with positive cosmological constant. To make the exposition as much self-contained as possible, we briefly recall the basic notions and definitions.

\subsection{Setting of the problem and preliminaries.} 
\label{sub:prelim}

In this paper we consider {\em static vacuum metrics} in presence of a positive cosmological constant. These are given by triples $(M,\go, u)$ where $(M,\go)$ is an $n$-dimensional compact Riemannian manifold, $n \geq 3$, with nonempty smooth boundary $\pa M$, and $u \in {\mathscr C}^\infty  (M)$ is a smooth nonnegative function obeying to the following system
\begin{equation}
\label{eq:SES}
\begin{dcases}
u\,\Ric=\DD u+ \frac{2\Lambda}{n-1}\,u\,\go, & \mbox{in } M  ,\\
\ \;\, \De u=-\frac{2\Lambda}{n-1}\, u, & \mbox{in } M ,
\end{dcases}
\end{equation}
where $\Ric$, $\D$, and $\De$ represent the Ricci tensor, the Levi-Civita connection, and the Laplace-Beltrami operator of the metric $g_0$, respectively, and $\Lambda >0$ is a positive real number called {\em cosmological constant}.
We will always assume that the boundary $\pa M$ coincides with the zero level set of $u$, so that, in particular, $u$ is strictly positive in the interior of $M$. For more detailed discussions on the legitimacy of these assumptions, we refer the reader to~\cite{Ambrozio,Hij_Mon_Rau}. 
In the rest of the paper the metric $g_0$ and the function $u$ will be referred to as {\em static metric} and {\em static (or gravitational) potential}, respectively, whereas the triple $(M,g_0, u)$ will be called a {\em static solution}. For a more complete justification of this terminology as well as for some comments about the physical nature of the problem, we refer the reader to the introduction of~\cite{Bor_Maz_2-I} and the references therein. Here, we only recall that, having at hand a solution $(M, g_0, u)$ to~\eqref{eq:SES}, it is possible to recover a static solution $(X, \gamma)$ to the 
{\em vacuum Einstein field equations}
\smallskip
\begin{equation}
\label{eq:EFE}
\Ric_\gamma \,- \frac{\RRR_\gamma}{2} \, \gamma \,+ \Lambda \, \gamma\, = \, 0 \, , \quad \hbox{ in \,\, $\R \times M $} \, ,
\end{equation}
just by setting $X = \R \times M$ and letting $\gamma$ be the Lorentzian metric defined on $X$ by
\begin{equation*}
\gamma \, = \, - \,  u^2  dt \otimes dt \, + \, g_0 \, .
\end{equation*}

To complete the setup of our problem, we now list some of the basic properties of static solutions to system~\eqref{eq:SES}, whose proof can be found in~\cite[Lemma~3]{Ambrozio} as well as in the indicated references.

\begin{itemize}
\item Concerning the regularity of the function $u$, we know from~\cite{Chrusciel_1,ZumHagen} that $u$ is analytic. In particular, by the results in~\cite{Sou_Sou_1}, we have that its critical level sets are discrete. 
\smallskip
\item Since the manifold $M$ is compact, $\pa M = \{ u=0\}$ and $u>0$ in $M \setminus \pa M$, the static potential $u$ achieves its maximum in the interior of $M$. To fix the notation, we set
\begin{equation*}
 \umax \,\, = \,\, \max_M u  \qquad \hbox{and} \qquad {\rm MAX}(u)=\{p\in M \, : \, u(p)=\umax\} \, .
\end{equation*}
Since $u$ is analytic, one has that, according to~\cite{Lojasiewicz_2} (see also~\cite[Theorem~6.3.3]{Kra_Par}), the locus ${\rm MAX}(u)$ is a (possibly disconnected) stratified analytic subvariety whose strata have dimensions between $0$ and $n-1$. More precisely, it holds
$$
{\rm MAX}(u)\,=\,\Sigma^0\sqcup \Sigma^1\sqcup\dots\sqcup \Sigma^{n-1}\,,
$$
where $\Sigma^i$ is a finite union of $i$-dimensional analytic submanifolds, for every $i=0,\dots,n-1$. This means that, given a point $p\in\Sigma^i$, there exists a neighborhood $p\in\Omega\subset M$ and an analytic diffeomorphism $f:\Omega\to\R^n$ such that 
$$
f(\Omega\cap\Sigma^i)\,=\,L\cap f(\Omega)\,,
$$
for some $i$-dimensional linear space $L\subset\R^n$. In particular, the set $\Sigma^{n-1}$ is a smooth analytic hypersurface and it will play an important role in what follows. We will refer to the hypersurface $\Sigma^{n-1}$ as the {\em top stratum} of ${\rm MAX}(u)$.

\smallskip
\item 
Taking the trace of the first equation in~\eqref{eq:SES} and substituting the result into the second one, 
it is immediate to deduce that the scalar curvature of the metric $g_0$ is constant, and more precisely it holds
\begin{equation}
\label{eq:CSC}
\RRR=2\Lambda \, .
\end{equation}
In particular, we observe that choosing a normalization for the cosmological constant corresponds to fixing a scale for the metric $g_0$. Throughout the paper we will choose the following normalization
\begin{equation}
\label{eq:norma}
\Lambda \, = \, \frac{n(n-1)}{2} \, .
\end{equation}
So that in particular the manifold $(M, \go)$ will have constant scalar curvature $\RRR \equiv n(n-1)$.

\smallskip
\item 
The boundary $\pa M = \{u=0\}$, which is assumed to be a smooth submanifold of $M$, is also a regular level set of $u$. 
In particular it follows from the equations that it is a (possibly disconnected) totally geodesic hypersurface in $(M,\go)$. The connected components of $\pa M$ will be referred to as {\em horizons}. In Definition~\ref{def:horiz} below, we will distinguish between {\em horizons of black hole type}, {\em horizons of cosmological type} and {\em horizon of cylindrical type}. In order to simplify the exposition of some of the results in the paper, it is convenient to suppose that the manifold $M$ is orientable. This of course is not restrictive. In fact, if the manifold is not orientable, we can consider its orientable double covering, and transfer
the results obtained on this latter to the original manifold by means of the projection. 
We recall that an orientation of $M$ induces an orientation on the boundary $\pa M$, therefore, in particular, if $M$ is orientable so are the horizons.
\smallskip
\item Finally, one has that the quantity $|\D u|$ is locally constant and positive on $\pa M$.  
Notice that the value of $|\D u|$ at a horizon depends on the choice of the normalization of $u$. A more invariant quantity is the so called {\em surface gravity} of an horizon $S$, which can be defined as the constant 
\begin{equation}
\label{eq:surf_grav_normalization}
\kappa(S)\,\,=\,\,\frac{|\D u|_{|_S}}{\umax} \,,
\end{equation} 
where we recall that $\umax$ is the maximum of $u$ in $M$. For a more precise explaination of the physical motivations behind this definition, we refer the reader to~\cite{Bor_Maz_2-I}.
\end{itemize}

\noindent Recasting all the normalizations that we have introduced so far, we are led to study the following system
\begin{equation}
\label{eq:prob_SD}
\begin{dcases}
u\,\Ric=\DD u+n\,u\,g_0, & \mbox{in } M\\
\ \;\,\De u=-n\, u, & \mbox{in } M\\
\ \ \ \ \; u>0, & \mbox{in }  M\setminus\pa M \\
\ \ \ \ \; u=0, & \mbox{on } \pa M 
\end{dcases}
 \qquad  \hbox{with} \  M \ \hbox{compact orientable} \ \hbox{and} \ \RRR\equiv n(n-1)\, .
\end{equation}
This system is of course equivalent to~\eqref{eq:SES}, with some of the assumptions made more explicit. 
In this work, we are interested in the classification of static triples up to {\em isometry}, or at least up to a finite {\em covering}. Even though these notions are quite natural, we recall their precise definitions in the setting of static triples.
\begin{definition}
We say that two triples $(M,\go,u)$ and $(M',\go',u')$ are {\em isometric} if there exists a Riemannian isometry $F:(M,\go)\to (M',\go')$ such that, up to a normalization of $u$, it holds $u=u'\circ F$. 
We say that $(M,\go,u)$ is a {\em covering} of $(M',\go',u')$ if there exists a Riemannian covering $F:(M,\go)\to (M',\go')$ such that, up to a normalization of $u$, it holds $u=u'\circ F$. 
\end{definition} 
We conclude this subsection introducing some more terminology, whose meaning will be clarified in the next subsection by the detailed description of the rotationally symmetric solutions to~\eqref{eq:prob_SD}.
\begin{definition}
\label{def:horiz}
Let $(M, \go, u)$ be a solution to problem~\eqref{eq:prob_SD}. A connected component $S$ of $\pa M$ is called an {\em horizon}. An horizon is said to be:
\begin{itemize}
\item of {\em cosmological type} if: \qquad \,\ \ 
$\kappa(S)
\,<\, \sqrt{n}$, 
\item of {\em black hole type} if: \qquad\quad\;\;\  
$\kappa(S) 
\,>\, \sqrt{n}$, 
\item of {\em cylindrical type} if: \qquad \quad\,\;\
$\kappa(S)
\,=\, \sqrt{n}$
\end{itemize}
where $\kappa(S)$ is the surface gravity of $S$ defined in~\eqref{eq:surf_grav_normalization}.
A connected component $N$ of $M \setminus {\rm MAX}(u)$ is called {\em region} and we will denote by $\pa N$ the collection of the horizons of $M$ that lie in $N$, namely
$$
\pa N\,=\,\pa M\cap N\,.
$$
A region $N$ is said to be:
\begin{itemize}
\item an {\em outer region} if all of its horizons are of cosmological type, i.e., if 
$$ \max_{S \in\pi_0(\pa N)} \kappa(S) \,<\, \sqrt{n}\,,$$
\item an {\em inner region} if it has at least one horizon of black hole type, i.e., if
$$ \max_{S \in\pi_0(\pa N)} \kappa(S)  \,>\, \sqrt{n}\,,$$
\item a {\em cylindrical region} if there are no horizons of black hole type and there is at least one horizon of cylindrical type, i.e., if
$$ \max_{S \in\pi_0(\pa N)} \kappa(S)  \,=\, \sqrt{n}\,.$$
\end{itemize}

\end{definition}

\subsection{Rotationally symmetric solutions.}
\label{sub:rotsol} 
In this subsection, we briefly recall the rotationally symmetric solutions to~\eqref{eq:prob_SD}. These have three different qualitative behaviour, depending on the value of the mass parameter $m$, which is allowed to vary in the real interval $[0, \mmax]$, where 
\begin{equation}
\label{eq:mmax_SD}
\mmax=\sqrt{\frac{(n-2)^{n-2}}{n^n}} \, .
\end{equation}
We observe that if the number $\mmax$ is defined as above, then for every $0<m<\mmax$ the equation 
$f_m(r) = 0$, where $f_m(r)=1 - r^2 - 2m\,r^{2-n}$, has exactly two positive solutions $0 < r_-(m) < r_+(m)< 1$.
Moreover, in the interval $[r_-(m),r_+(m)]$ the function $f_m(r)$ assumes its maximum value at $r_0(m)=[(n-2)m]^{1/n}$.
For $m=0$, one has that $r_0(0)=r_-(0)=0$ and $r_+(0)=1$, whereas for $m=\mmax$, one has $r_0(\mmax)=r_-(\mmax) = r_+(\mmax) = [(n-2)/n]^{1/2}$.

\begin{figure}
 \centering
 \subfigure[de Sitter\label{fig:dS}]
   {\includegraphics[width=4.5cm]{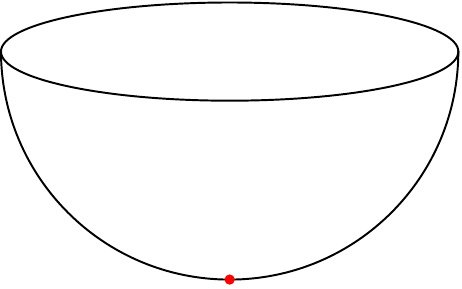}}
 \hspace{10mm}
 \subfigure[Schwarzschild--de Sitter\label{fig:SdS}]
   {\includegraphics[width=3.8cm]{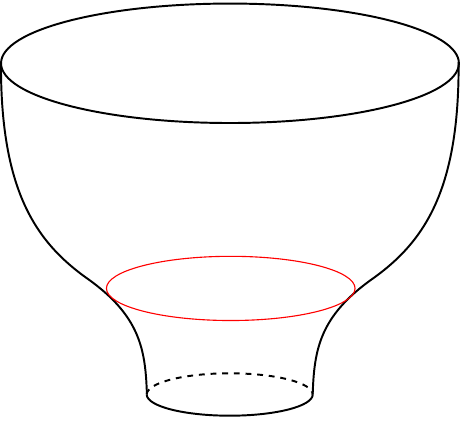}}
   \hspace{10mm}
 \subfigure[Nariai\label{fig:Nariai}]{\includegraphics[height=4cm,width=3cm]{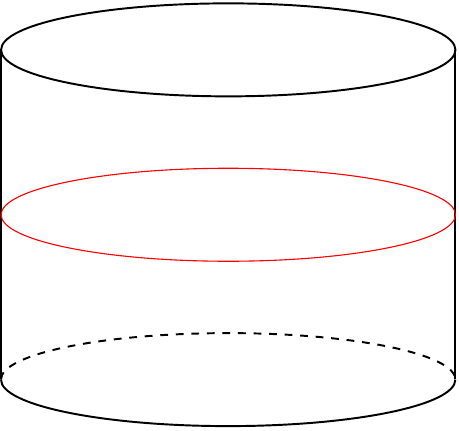}}
 \caption{\small Rotationally symmetric solutions to problem~\eqref{eq:prob_SD}. The red dot and red lines represent the set ${\rm MAX}(u)$ for the three models.
 \label{fig:rot_sym_sol}}
 \end{figure}

\begin{itemize}
\item \underline{de Sitter solution~\cite{DeSitter} ($m=0$), \figurename~\ref{fig:dS}.}
\begin{align}
\label{eq:D}
\nonumber M \, = \, \overline{B(0,1)}\subset\R^n \, ,  \qquad g_0 \, = \, \frac{d|x|\otimes d|x|}{1-|x|^2}+|x|^2 g_{\Sph^{n-1}} \, , \\
u \, = \, \sqrt{1-|x|^2} \, .\phantom{\qquad\qquad\qquad\qquad\qquad}
\end{align}
 It is not hard to check that both  the metric $g_0$ and the function $u$, which a priori are well defined only in the interior of $M\setminus \{ 0\}$, extend smoothly up to the boundary and through the origin. This model solution can be seen as the limit of the following Schwarzschild--de Sitter solutions~\eqref{eq:SD}, when the parameter $m \to 0^+$. The de Sitter solution is such that the maximum of the potential is $\umax=1$, and it is achieved at the origin. Moreover, this solution has only one connected horizon with surface gravity 
\begin{equation*}
|\D u| \,\, \equiv \,\, 1  \qquad \hbox{on} \quad \pa M \, .
\end{equation*} 
Hence, according to Definition~\ref{def:horiz} below,  this horizon is of cosmological type.  

\smallskip

\item \underline{Schwarzschild--de Sitter solutions~\cite{Kottler} ($0<m< \mmax$), \figurename~\ref{fig:SdS}.}
\begin{align}
\label{eq:SD}
\nonumber 
\phantom{\qquad}M \, = \, \overline{B(0,r_+(m))} \setminus B(0, r_-(m))  \subset\R^n \, ,  \qquad g_0 \, = \, \frac{d|x|\otimes d|x|}{1-|x|^2- 2m |x|^{2-n}}+|x|^2 g_{\Sph^{n-1}} \, , \\
u 
\, = \, \sqrt{1-|x|^2- 2m |x|^{2-n}}\, .
\phantom{\qquad\qquad\qquad\qquad\qquad\qquad}
\end{align}
Here $r_-(m)$ and $r_+(m)$ are the two positive solutions to $1-r^2-\!2mr^{2-n}=0$. We notice that, for $r_-(m),r_+(m)$ to be real and positive, one needs~\eqref{eq:mmax_SD}. It is not hard to check that both  the metric $g_0$ and the function $u$, which a priori are well defined only in the interior of $M$, extend smoothly up to the boundary. This latter has two connected components with different character 
\begin{equation*}
\pa M_+  = \,\, \{ |x| = r_+(m)\} \qquad \hbox{and} \qquad \pa M_-  = \,\, \{ |x| = r_-(m)\} \, .
\end{equation*}
In fact, it is easy to check (see formul\ae~\eqref{eq:k+} and~\eqref{eq:k-}) that  the normalized surface gravities satisfy
\begin{equation*}
\kappa(\pa M_+)\,\,=\,\,\frac{|\D u |_{|_{\pa M_+}}}{\umax} \,\, < \,\, \sqrt{n}    \qquad \hbox{and} \qquad \kappa(\pa M_-)\,\,=\,\,\frac{|\D u |_{|_{\pa M_-}}}{\umax} \,\, > \,\, \sqrt{n}  \, .
\end{equation*}
Hence, according to Definition~\ref{def:horiz} below, one has that $\pa M_+$ is of cosmological type, whereas $\pa M_-$ is of black hole type. Furthermore, it holds
\begin{equation}
\label{eq:umax_SD}
\umax\,=\,\sqrt{1-\left(\frac{m}{\mmax}\right)^{\!\!{2}/{n}}},\qquad {\rm MAX}(u)\,=\,\left\{|x|=r_0(m)\right\}\,,
\end{equation}
where we recall that $r_0(m)=[(n-2)m]^{1/n}$. Notice that $M \setminus {\rm MAX} (u)$ has exactly two connected components: $M_+$ with boundary $\pa M_+$ and $M_-$ with boundary $\pa M_-$. According to Definition~\ref{def:horiz}, we have that $M_+$ is an outer region, whereas $M_-$ is an inner region.

\begin{figure}
	\centering
	\includegraphics[scale=0.5]{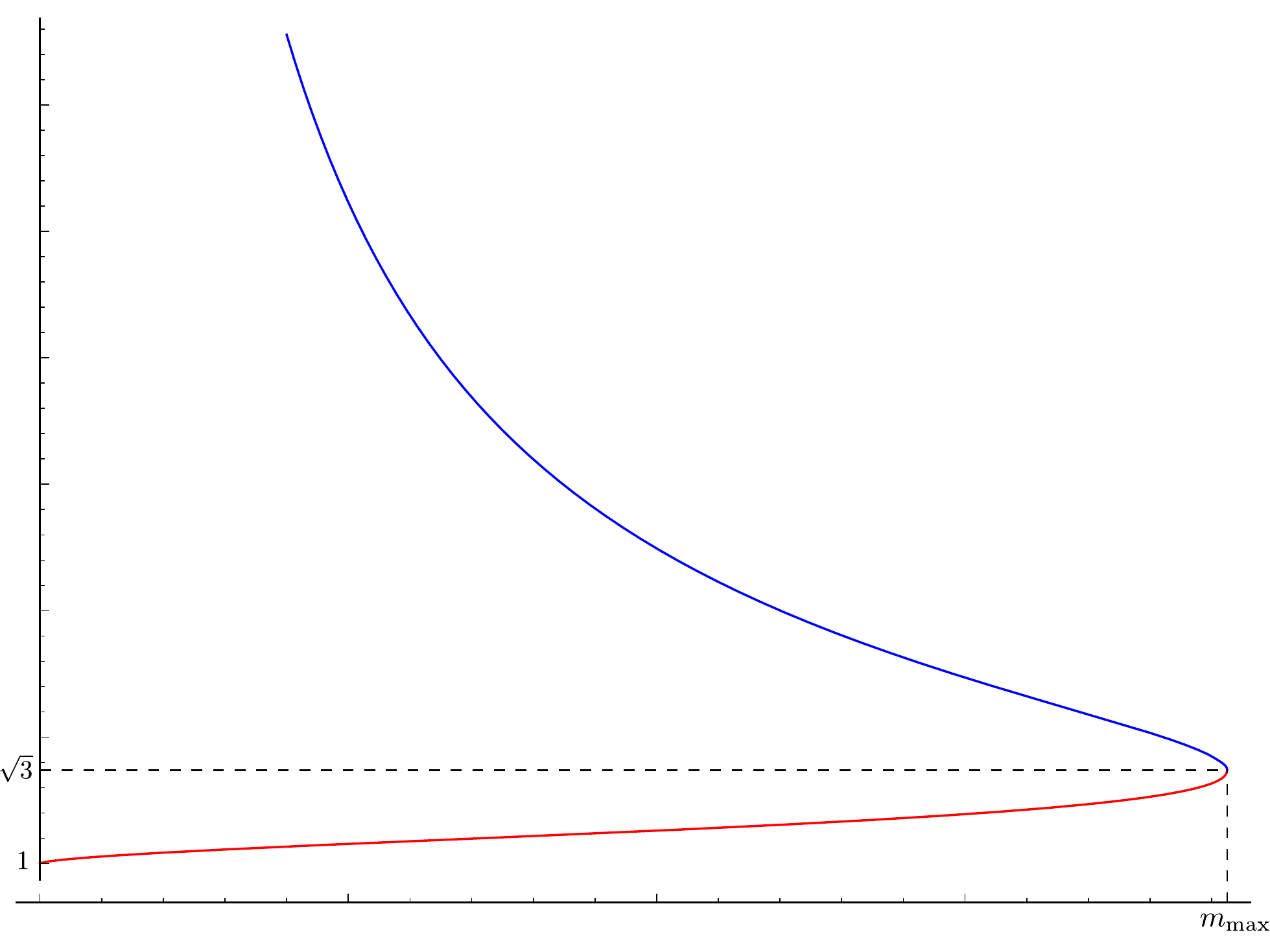}
	\caption{\small
Plot of the surface gravities $|\D u|/\umax$ of the two boundaries of the Schwarzschild--de Sitter solution~\eqref{eq:SD} as a function of the mass $m$ for $n=3$. The red line represents the surface gravity of the boundary $\pa M_+ = \{r = r_+(m)\}$, whereas the blue line represents the surface gravity of the boundary $\pa M_- = \{r = r_-(m)\}$. Notice that for $m=0$ we recover the constant value $|\D u| \equiv 1$ of the surface gravity on the (connected) cosmological horizon of the de Sitter solution~\eqref{eq:D}.
The other special situation is when $m=\mmax$. In this case the plot assigns to $\mmax= 1/(3 \sqrt{3})$ the unique value $\sqrt{3}$ achieved by the surface gravity on both the connected components of the boundary of the Nariai solution~\eqref{eq:cylsol_D}.
	} 
	\label{fig:surfacegravity_D}
\end{figure}

\smallskip

\item \underline{Nariai solution~\cite{Nariai} ($m=\mmax$), \figurename~\ref{fig:Nariai}.}
\begin{align}
\label{eq:cylsol_D}
\nonumber 
\phantom{\qquad\qquad}M \, = \, [0,\pi]\times\Sph^{n-1}\,, \qquad  g_0 \, = \, \frac{1}{n}\,\big[d r\otimes d r+(n-2)\,g_{\Sph^{n-1}}\big]\,, \\
u \, = \, \sin (r) \, .\phantom{\qquad\qquad\qquad\qquad\qquad\qquad\quad}
\end{align}
This model solution can be seen as the limit of the previous Schwarzschild--de Sitter solutions, when the parameter $m \to \mmax^-$, after an appropriate rescaling of the coordinates and potential $u$ (this was shown for $n=3$ in~\cite{Gin_Per} and then generalized to all dimensions $n\geq 3$ in~\cite{Car_Dia_Lem}, see also~\cite{Bousso,Bou_Haw}). In this case, we have $\umax=1$ and ${\rm MAX}(u)=\{\pi/2\}\times\Sph^{n-1}$. Moreover, the boundary of $M$ has two connected components with the same constant value of the surface gravity, namely
\begin{equation*}
|\D u| \,\, \equiv \,\, \sqrt{n} \qquad \hbox{on} \quad \pa M \, .
\end{equation*}
\end{itemize}

\noindent In Subsection~\ref{sub:surfmass}, we are going to use the above listed solutions as reference configurations in order to define the concept of {\em virtual mass} of a solution $(M, g_0, u)$ to~\eqref{eq:prob_SD}. To this aim, it is useful to introduce the functions $k_+$ and $k_-$, whose graphs are plotted, for $n=3$, in Figure~\ref{fig:surfacegravity_D}. They represent the normalized surface gravities of the model solutions as functions of the mass parameter $m$.
\begin{itemize}
\item The {\em outer surface gravity function}   
\begin{equation}
\label{eq:k+}
k_+  :  [\, 0, \mmax) \longrightarrow [\, 1, \sqrt{n} \, )
\end{equation}
is defined by
\begin{align*}
k_+(0) \,& = \, 1 \, , & \hbox{for $m=0$}\,, \phantom{\qquad\quad}  \\
k_+(m) \,&=\,
\sqrt{\frac{r_+^2(m)\left[1-\big(r_0(m)/r_+(m)\big)^n\right]^2}{1-\left( m / {\mmax}\right)^{2/n}}}\,, & \hspace{-1.5cm}\hbox{if $0<m<\mmax$} \, ,
\end{align*}
where $r_+(m)$ is the largest positive root of the polynomial $P_m(r) = r^{n-2}-r^n-2m$. Loosely speaking, $k_+(m)$ is nothing but the constant value of $|\D u|/\umax$ at $\{ |x| = r_+(m) \}$ for the Schwarzschild--de Sitter solution with mass parameter equal to $m$. We also observe that $k_+$ is continuous, strictly increasing and $k_+(m) \to \sqrt{n}$, as $m \to \mmax^-$.

\smallskip

\item The {\em inner surface gravity function}   
\begin{equation}
\label{eq:k-}
k_-  :  ( 0, \mmax \,] \longrightarrow [\, \sqrt{n}, +\infty \, )
\end{equation}
is defined by
\begin{align*}
k_-(\mmax) \,& = \, \sqrt{n} \, , & \hbox{for $m=\mmax$}\,, \phantom{\quad}  \\
k_-(m) \,&=\,
\sqrt{\frac{r_-^2(m)\left[1-\big(r_0(m)/r_-(m)\big)^n\right]^2}{1-\left( m / {\mmax}\right)^{2/n}}}\,, & \hspace{-1.5cm}\hbox{if $0<m<\mmax$} \, ,
\end{align*}
where $r_-(m)$ is the smallest positive root of the polynomial $P_m(r) = r^{n-2}-r^n-2m$. Loosely speaking, $k_-(m)$ is nothing but the constant value of $|\D u|/\umax$ at $\{ |x| = r_-(m) \}$ for the Schwarzschild--de Sitter solution with mass parameter equal to $m$. We also observe that $k_-$ is continuous, strictly decreasing and $k_-(m) \to + \infty$, as $m \to 0^+$.
\end{itemize}

\noindent 
This concludes the list of rotationally symmetric solutions. However, it is worth mentioning that in higher dimensions there is a simple generalization of the above model triples.
In fact, one can replace the spherical fibers in the Schwarzschild--de Sitter solution~\eqref{eq:SD} with any $(n-1)$-dimensional Einstein manifold $(E^{n-1},g_{E^{n-1}})$ with $\Ric_{E^{n-1}}=(n-2)g_{E^{n-1}}$. The resulting triple is still a solution to~\eqref{eq:prob_SD}, and it will be called {\em generalized Schwarzschild--de Sitter solution}
\begin{align}
\label{eq:gen_SD}
\nonumber 
\phantom{\qquad}M \, = \, [r_-(m),r_+(m)]\times E^{n-1} \, ,  \qquad \go \, = \, \frac{dr\otimes dr}{1-r^2- 2m r^{2-n}}+r^2 g_{E^{n-1}} \, , \\
u 
\, = \, \sqrt{1-r^2- 2m r^{2-n}}\, .
\phantom{\qquad\qquad\qquad\qquad\qquad\qquad}
\end{align}
Analogously, one can define the {\em generalized Nariai solution} as the triple
\begin{align}
\label{eq:gen_cylsol_D}
\nonumber 
\phantom{\qquad\qquad}M \, = \, [0,\pi]\times E^{n-1}\,, \qquad  g_0 \, = \, \frac{1}{n}\,\big[dr\otimes dr+(n-2)\,g_{E^{n-1}}\big]\,, \\
u \, = \, \sin (r) \, ,\phantom{\qquad\qquad\qquad\qquad\qquad\qquad\quad}
\end{align}
where, again, $(E^{n-1},g_{E^{n-1}})$ is an $(n-1)$-dimensional Einstein manifold with $\Ric_{E^{n-1}}=(n-2)g_{E^{n-1}}$.
Of course, the generalized solutions~\eqref{eq:gen_SD} and~\eqref{eq:gen_cylsol_D} are relevant only for $n\geq 5$, since for $n=3,4$ the only $(n-1)$-dimensional Einstein manifold with $\Ric_{E^{n-1}}=(n-2)g_{E^{n-1}}$ is the round sphere $(\Sph^{n-1},g_{\Sph^{n-1}})$.
We also mention that, exploiting a previous work of Bohm about the existence of 'non round' Einstein metrics on spheres~\cite{Bohm}, Gibbons, Hartnoll and Pope in~\cite{Gib_Har_Pop} were able to exhibit infinite families of solutions to problem~\eqref{eq:prob_SD}, in dimension $4\leq n\leq 8$. These solutions are such that their boundary is connected and diffeomorphic to a $(n-1)$-dimensional sphere. However, they do not have a warped product structure. This suggests that a complete classification of the solutions to problem~\eqref{eq:prob_SD} in dimension $n\geq 4$ is a very hard task. On the other hand, in dimension $n=3$, the only known  solutions are the de Sitter, Schwarzschild--de Sitter and Nariai triple. The question of whether these are the only ones is still open, although there are some partial results.
For instance, in~\cite{Kobayashi,Lafontaine} it is proven that these models are the only locally conformally flat  static metrics, in~\cite{Qin_Yua} this result has been extended to the Bach-flat case and in~\cite{daS_Bal} the case of cyclic parallel Ricci tensor has been discussed. Some pinching conditions implying the same classification are provided in~\cite{Ambrozio,Bal_Rib}.
Moreover, some further characterizations of the de Sitter metric have been proven in~\cite{Bou_Gib_Hor,Chrusciel_2,Hij_Mon_Rau}.

%

Since it will be of some importance in the forthcoming discussion, we conclude this section recalling the definition of Schwarzschild metric with mass parameter equal to $m>0$. This is the simplest (and also the early) example of a non flat static metric in the case where the cosmological constant in the Einstein Field Equations~\eqref{eq:EFE} is taken to be zero.
\begin{itemize}
\item \underline{Schwarzschild solutions~\cite{Schwarzschild} ($m>0$).}
\begin{align}
\label{eq:S}
\nonumber M \, = \, \R^n \setminus {B(0,r_s(m))}  \subset\R^n \, ,  \qquad g_0 \, = \, \frac{d|x|\otimes d|x|}{1- 2m |x|^{2-n}}+|x|^2 g_{\Sph^{n-1}} \, , \\
u \, = \, \sqrt{1- 2m |x|^{2-n}} \, .\phantom{\qquad\qquad\qquad\qquad\qquad}
\end{align}
Here, the so called Schwarzschild radius $r_s(m) = (2m)^{1/(n-2)}$ is the only positive solution to $1-2mr^{2-n}=0$. It is not hard to check that both  the metric $g_0$ and the function $u$, which a priori are well defined only in the interior of $M$, extend smoothly up to the boundary.
\end{itemize}


\subsection{The virtual mass.}
\label{sub:surfmass}

As already discussed in~\cite{Bor_Maz_2-I}, in the case of a positive cosmological constant there does not seem to be a general consensus about what the right notion of mass should be. For some possible approaches, as well as for more insights on the problems posed by the case $\Lambda>0$, we refer the reader to the following references~\cite{Abb_Des,Anninos,Ash_Bon_Kes,
Bal_DeB_Min,Chr_Jez_Kij,Kas_Tra,
Luo_Xie_Zha,Shiromizu,
Shi_Ida_Tor,Witten_book}.
In our previous work~\cite{Bor_Maz_2-I}, we have introduced a different point of view, leading to a new notion of mass, that we now recall.


\begin{definition}[Virtual Mass]
\label{def:virtual_mass} Let $(M, g_0, u)$ be a solution to~\eqref{eq:prob_SD} and let $N$ be a connected component of $M \setminus {\rm MAX} (u)$.
The virtual mass of $N$ is denoted by $\mu(N, g_0,u)$ and it is defined in the following way:
\begin{itemize}
\item[(i)] If $N$ is an outer region, then we set
\begin{equation}
\mu (N,g_0,u) \,\, = \,\, k_+^{-1} \left(  \max_{\pa N} \frac{|\D u |}{\umax}  \right) \, ,
\end{equation}
where $k_+$ is the outer surface gravity function defined in~\eqref{eq:k+}.
\smallskip
\item[(ii)] If $N$ is an inner region, then we set
\begin{equation}
\mu (N, g_0, u) \,\, = \,\, k_-^{-1} \left(  \max_{\pa N} \frac{|\D u |}{\umax}  \right)\, ,
\end{equation}
where $k_-$ is the inner surface gravity function defined in~\eqref{eq:k-}.
\end{itemize}
\end{definition}
\noindent In other words, the virtual mass of a connected component $N$ of $M\setminus {\rm MAX} (u)$ can be thought as the mass (parameter) that on a model solution would be responsible for (the maximum value of) the surface gravity measured at $\pa N$. In this sense the rotationally symmetric solutions described in Subsection~\ref{sub:rotsol} are playing here the role of reference configurations. As it is easy to check, if $(M, g_0, u)$ is either the de Sitter, or the Schwarzschild--de Sitter, or the Nariai  solution, then the virtual mass coincides with the explicit mass parameter $m$ that appears in Subsection~\ref{sub:rotsol}.

It is important to notice that it is not {\em a priori} guaranteed that the above definition is well posed. In fact, it could happen that the boundary of a connected component is empty or that the value of the normalized surface gravity does not lie in the range of either $k_+$ or $k_-$. The first possibility can be easily excluded arguing as in the No Island Lemma (see~\cite[Lemma 5.1]{Bor_Maz_2-I}), whereas to exclude the second possibility we need to invoke~\cite[Theorem~2.2]{Bor_Maz_2-I}. This result tells us that, on any region $N$ of a solution $(M,\go,u)$, it holds
\begin{equation*}
\max_{S\in\pi_0(\pa N)}\kappa(S)\,\,=\,\,\max_{\pa N}\frac{|\D u|}{\umax} \,\, \geq \,\, 1  \, ,
\end{equation*} 
and the equality is fulfilled only if $(M,\go,u)$ is isometric to the de Sitter solution~\eqref{eq:D}.
As an immediate consequence we obtain the following Positive Mass Statement for static metrics with positive cosmological constant.

\begin{theorem}[Positive Mass Statement for Static Metrics with Positive Cosmological Constant]
\label{thm:PMS}
Let $(M,\go,u)$ be a solution to problem~\eqref{eq:prob_SD}. Then, every connected component of $M\setminus{\rm MAX}(u)$ has well--defined and thus nonnegative virtual mass. Moreover, as soon as the virtual mass of some connected component vanishes, the entire solution $(M, \go,u)$ is isometric to the de Sitter solution~\eqref{eq:D}.
\end{theorem}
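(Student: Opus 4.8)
The plan is to read the statement off from the results recalled above: the real content is the universal lower bound on the surface gravity together with its rigidity, namely \cite[Theorem~2.2]{Bor_Maz_2-I}, and what remains is a matter of matching ranges. So I would fix a connected component $N$ of $M\setminus{\rm MAX}(u)$, i.e.\ a region in the sense of Definition~\ref{def:horiz}, and first check that $\pa N\neq\emptyset$. This guarantees that the maximum $\max_{\pa N}|\D u|/\umax$ appearing in Definition~\ref{def:virtual_mass} is taken over a nonempty (finite) collection of horizons, and is therefore a well-defined positive real number. This step is exactly the No Island Lemma \cite[Lemma~5.1]{Bor_Maz_2-I}: no connected component of $M\setminus{\rm MAX}(u)$ can have empty boundary.

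Next I would split into the three cases of Definition~\ref{def:horiz}. If $N$ is an outer region, then $\max_{\pa N}|\D u|/\umax<\sqrt n$ by definition, while $\max_{\pa N}|\D u|/\umax\geq 1$ by \cite[Theorem~2.2]{Bor_Maz_2-I}; hence this number lies in $[\,1,\sqrt n)$, which is precisely the image of the continuous strictly increasing function $k_+$ of \eqref{eq:k+}. Therefore $k_+^{-1}$ is defined at that value and $\mu(N,\go,u)=k_+^{-1}\big(\max_{\pa N}|\D u|/\umax\big)$ is a well-defined element of $[\,0,\mmax)$, in particular nonnegative. If $N$ is an inner region, then $\max_{\pa N}|\D u|/\umax>\sqrt n$, which belongs to $[\,\sqrt n,+\infty)$, the image of the continuous strictly decreasing function $k_-$ of \eqref{eq:k-}; hence $\mu(N,\go,u)=k_-^{-1}\big(\max_{\pa N}|\D u|/\umax\big)\in(\,0,\mmax\,]$ is well defined and strictly positive. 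In the remaining cylindrical case $\max_{\pa N}|\D u|/\umax=\sqrt n$, and, consistently with $k_+(m)\to\sqrt n$ as $m\to\mmax^-$ and with $k_-(\mmax)=\sqrt n$, one sets $\mu(N,\go,u)=\mmax>0$. In every case the virtual mass is well defined and nonnegative, which is the first assertion.

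For the rigidity part, suppose $\mu(N,\go,u)=0$ for some region $N$. By the previous paragraph an inner region has strictly positive virtual mass and a cylindrical region has virtual mass $\mmax>0$, so $N$ must be an outer region, and $0=\mu(N,\go,u)=k_+^{-1}\big(\max_{\pa N}|\D u|/\umax\big)$. Since $k_+$ is strictly increasing with $k_+(0)=1$, this forces $\max_{\pa N}|\D u|/\umax=1$, that is, equality in \cite[Theorem~2.2]{Bor_Maz_2-I}. The rigidity statement contained in that theorem then yields that $(M,\go,u)$ is isometric to the de Sitter solution \eqref{eq:D}, as claimed.

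I do not expect a serious obstacle here: the analytic work—the sharp lower bound on the surface gravity and the identification of its extremal case—has already been carried out in the first part of this project, so the present argument is essentially bookkeeping. The only points that require care are the precise matching of the numerical ranges of $k_+$ and $k_-$ with the trichotomy outer/inner/cylindrical, so that the appropriate inverse function is everywhere defined, and recording that a cylindrical region must be assigned the extremal value $\mmax$; the (a priori delicate) exclusion of components with empty boundary is likewise imported from \cite{Bor_Maz_2-I}.
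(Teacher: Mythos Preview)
Your proposal is correct and matches the paper's approach exactly: the paper does not give a standalone proof of this theorem but states it as ``an immediate consequence'' of the No Island Lemma \cite[Lemma~5.1]{Bor_Maz_2-I} (to ensure $\pa N\neq\emptyset$) and of \cite[Theorem~2.2]{Bor_Maz_2-I} (the lower bound $\max_{\pa N}|\D u|/\umax\geq 1$ together with its rigidity). Your write-up simply spells out the bookkeeping the paper leaves implicit, including the cylindrical case, which the paper handles only later and implicitly by assigning $\mu=\mmax$.
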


\noindent
We refer the reader to~\cite{Bor_Maz_2-I} for a more detailed discussion about the above statement as well as for a comparison with the classical Positive Mass Theorem proved by Schoen and Yau~\cite{Sch_Yau,Sch_Yau_2} (and with a different proof by Witten~\cite{Witten}) for the ADM-mass of asymptotically flat manifolds with nonnegative scalar curvature.

\subsection{Area bounds.} 
\label{sub:area_bounds_intro}
An important feature of the above positive mass statement is that it gives a complete characterisation of the zero mass solutions. Another very interesting and nowadays classical  characterisation of the de Sitter solution is given by the Boucher-Gibbons-Horowitz area bound~\cite{Bou_Gib_Hor}, which in our framework can be phrased as follows
\begin{theorem}[Boucher-Gibbons-Horowitz Area Bound]
\label{thm:BGH}
Let $(M^3,\go,u)$ be a $3$-dimensional solution to problem~\eqref{eq:prob_SD} with connected boundary $\pa M$. Then, the following inequality holds
\begin{equation}
\label{eq:BGH}
|\pa M| \, \leq \, 4 \pi \,.
\end{equation}
Moreover, the equality is fulfilled if and only if $(M^3, g_0, u)$ is isometric to the de Sitter solution~\eqref{eq:D}.
\end{theorem}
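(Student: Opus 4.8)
The plan is to combine the Gauss--Bonnet theorem with the static equations, exploiting the level sets of $u$ in the region adjacent to $\pa M$. Since $\pa M$ is connected, $|\D u|$ is a positive constant $c$ on it, and $\pa M$ is a closed, orientable, totally geodesic surface inside a manifold with $\RRR\equiv 6$; the Gauss equation then gives that its intrinsic Gauss curvature equals $K_{\pa M}=\tfrac12\RRR-\Ric(\nu,\nu)=3-\Ric(\nu,\nu)$, where $\nu=\D u/|\D u|$, so that Gauss--Bonnet reads $3\,|\pa M|-\int_{\pa M}\Ric(\nu,\nu)\,dA=2\pi\chi(\pa M)\le 4\pi$. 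More generally, on a regular level set $\Sigma_s=\{u=s\}$, decomposing $\De u=-3u$ along $\Sigma_s$ and inserting the first equation in~\eqref{eq:prob_SD} yields the identities $|\D u|\,H_{\Sigma_s}=-\,u\,\Ric(\nu,\nu)$ and $K_{\Sigma_s}=3-\Ric(\nu,\nu)+\det A_{\Sigma_s}$, where $H_{\Sigma_s}$ and $A_{\Sigma_s}$ are the mean curvature and the second fundamental form of $\Sigma_s$. These are the only local ingredients.

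Since the Gauss--Bonnet relation above merely re-expresses $|\pa M|$ in terms of $\int_{\pa M}\Ric(\nu,\nu)$, the heart of the matter is a global argument controlling this last quantity. The idea is to integrate a suitable divergence identity over the region $N\subset M$ containing $\pa M$, or equivalently to track a geometric functional of the level sets $\Sigma_s$ modelled on the de Sitter profile $u=\sqrt{1-|x|^2}$ (for which $|\Sigma_s|=4\pi(1-s^2)$ and $|\D u|^2=1-s^2$ on $\Sigma_s$), and to exploit the above identities together with the Gauss--Bonnet theorem on $\Sigma_s$ --- whence $\int_{\Sigma_s}K_{\Sigma_s}\le 4\pi$ when $\Sigma_s$ is connected --- and the elementary inequality $\det A_{\Sigma_s}\le\tfrac14 H_{\Sigma_s}^2$. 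Feeding in the boundary value $|\D u|\equiv c$ on $\pa M$ and the collapse of the level sets near ${\rm MAX}(u)$, one is led to $|\pa M|\le 4\pi$. The step I expect to be the main obstacle is precisely the passage to the ${\rm MAX}(u)$ end of the foliation: the level sets $\Sigma_s$ may be disconnected or change topology across the critical values of $u$ --- which are discrete by analyticity --- and the extraction of the sharp constant requires a careful local analysis near the (possibly stratified) set ${\rm MAX}(u)$. Here the structural results of~\cite{Bor_Maz_2-I}, and in particular the No Island Lemma, are used to keep the relevant level sets connected and to control the crossings.

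For the equality case, once $|\pa M|=4\pi$ every inequality in the chain is forced to be an equality: $\pa M$ is a sphere, each $\Sigma_s$ is umbilic with $\chi(\Sigma_s)=2$, and $|\D u|$, $\Ric(\nu,\nu)$ and the induced metrics depend on $u$ alone, so that $(M,\go,u)$ is a warped product over an interval; by the description of the rotationally symmetric solutions in Subsection~\ref{sub:rotsol}, this is the de Sitter solution~\eqref{eq:D}. Alternatively, and more economically, one checks that equality makes the virtual mass of $N$ vanish, so that the rigidity is immediate from the Positive Mass Statement, Theorem~\ref{thm:PMS}. Finally, we note that the inequality $|\pa M|\le 4\pi$ is also the instance $n=3$, $\pa M$ connected and of cosmological type, of the general cosmological area bound proved in the body of the paper, from which the statement can be deduced directly.
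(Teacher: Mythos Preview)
The paper does not give its own proof of Theorem~\ref{thm:BGH}: it is quoted as the classical Boucher--Gibbons--Horowitz result and used as motivation. What the paper proves is the sharper Theorem~\ref{thm:area_bound_SD}, and that --- together with the Positive Mass Statement --- does yield Theorem~\ref{thm:BGH} as a corollary, essentially as you say in your last sentence. One correction: you should not restrict to horizons of cosmological type. If $\pa M$ is connected it lies in a single region $N$ with virtual mass $m\ge 0$; if $m=0$ the solution is de~Sitter by Theorem~\ref{thm:PMS}; if $m>0$ then Theorem~\ref{thm:area_bound_SD} gives $|\pa M|\le 4\pi r_+^2(m)$, $4\pi r_-^2(m)$, or $4\pi/3$ according to whether $N$ is outer, inner, or cylindrical, and all three numbers are strictly less than $4\pi$. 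Equality thus forces $m=0$ and hence de~Sitter.

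Your direct argument, on the other hand, is not a proof but a plan. The local identities on $\Sigma_s$ are correct, and you rightly note that the Gauss--Bonnet relation on $\pa M$ alone only trades $|\pa M|$ for $\int_{\pa M}\Ric(\nu,\nu)$. But the sentence ``one is led to $|\pa M|\le 4\pi$'' hides exactly the step that needs proving: you never name the monotone quantity, nor the divergence to be integrated, and invoking the No Island Lemma does not by itself keep the intermediate level sets connected or control topology changes. The difficulty near ${\rm MAX}(u)$ that you anticipate is real for a level-set approach (and is what the paper's cylindrical ansatz is designed to handle), but the classical proof avoids it entirely: integrate $u|\Ric|^2\ge \tfrac13 u\RRR^2=12u$ over $M$, rewrite $u\,\Ric_{ij}=\D_i\D_j u+3u\,\cgo_{ij}$, integrate $\int_M \Ric_{ij}\D^i\D^j u$ by parts using the contracted Bianchi identity (the bulk term vanishes since $\RRR$ is constant), and combine with $\int_M u=\tfrac13\int_{\pa M}|\D u|$ and the Gauss equation on the totally geodesic $\pa M$. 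This yields $\int_{\pa M}|\D u|\le \int_{\pa M}|\D u|\,\tfrac12\RRR^{\pa M}$, hence for connected $\pa M$ the bound $|\pa M|\le 2\pi\chi(\pa M)$, forcing $\chi=2$ and $|\pa M|\le 4\pi$; equality gives $\Ric=2\go$, so constant curvature, hence de~Sitter. No foliation and no analysis at ${\rm MAX}(u)$ are needed.
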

Having at hand Theorem~\ref{thm:PMS} and Theorem~\ref{thm:BGH}, it is natural to ask if in the case where the virtual mass is strictly positive and the boundary of $M$ is allowed to have several connected components, it is possible to provide a refined version of both statements, whose rigidity case characterises now the Schwarzschild--de Sitter solutions described in~\eqref{eq:SD} instead of the de Sitter solution. In accomplishing this program, we are inspired by the well known relation between the Positive Mass Theorem and the Riemannian Penrose Inequality as they are stated in the classical setting, where $M^3$ is an asymptotically flat Riemannian manifold with nonnegative scalar curvature. To be more concrete, we report a simplified version of these statements in the case where the $3$-manifold has one end and at most one compact horizon.

\begin{theorem}
Let $(M^3, \go)$ be a $3$-dimensional complete asymptotically flat Riemannian manifold with nonnegative scalar curvature and ADM-mass $m_{ADM}(M^3,\go)$ equal to $m\in \R$. Then, the following statements hold.
\smallskip
\begin{itemize}
\item[(i)] \underline{Positive Mass Theorem (Schoen-Yau~\cite{Sch_Yau,Sch_Yau_2}, Witten~\cite{Witten}).} The number $m$ is always nonnegative
\begin{equation*}
0 \, \leq \, m \, .
\end{equation*}
Moreover, the equality is fulfilled if and only if $(M^3, g_0, u)$ is isometric to the flat Euclidean space with $u \equiv 1$.
\smallskip
\item[(ii)] \underline{Riemannian Penrose Inequality (Huisken-Ilmanen~\cite{Hui_Ilm}, Bray~\cite{Bray}).} Assume that the boundary of $M$ is non empty and given by a connected, smooth and compact outermost minimal surface. Then, 
the following inequality holds
\begin{equation}
\label{eq:RPI}
\sqrt{\frac{|\pa M|}{16 \pi}} \, \leq \, m \,.
\end{equation}
Moreover, the equality is fulfilled if and only if $(M^3, g_0, u)$ is isometric to the Schwarzschild solution~\eqref{eq:D} with mass parameter equal to $m$.
\end{itemize}
\end{theorem}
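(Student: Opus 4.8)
These are by now classical results and I would not reprove them from scratch; instead I would follow the established arguments, whose skeleton and main difficulties I sketch below.

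\textbf{Part (i).} For the inequality I would use Witten's spinorial argument. After fixing a spin structure on $M^3$, one chooses a spinor $\psi_0$ which is parallel for the Euclidean metric on the end and solves the Dirac equation $\mathcal D\psi = 0$ with $\psi-\psi_0\to 0$ at infinity; this is solvable in suitable weighted Sobolev spaces precisely because $(M^3,\go)$ is asymptotically flat. The Lichnerowicz--Weitzenb\"ock identity $\mathcal D^2 = \na^*\na + \tfrac14\RRR$, integrated over a large coordinate ball and combined with the decay of $\psi-\psi_0$ and with the definition of the ADM mass, yields
\[
c\, m \;=\; \int_{M^3}\Big(|\na\psi|^2 + \tfrac14\RRR\,|\psi|^2\Big)\, d\mu
\]
for a universal positive constant $c$ (a minimal boundary, if present, would only contribute a nonnegative term, turning this into an inequality $c\,m\ge\int(\dots)$). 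Since $\RRR\ge 0$, the right-hand side is nonnegative, hence $m\ge 0$; and if $m=0$, then $\na\psi\equiv 0$ for every such $\psi$, so $M^3$ carries a parallel spinor, which forces $\Rm\equiv 0$, and the only complete flat $3$-manifold with a single asymptotically flat end is $\R^3$. Alternatively one could run the Schoen--Yau minimal surface argument: assuming $m<0$, reduce (by a conformal deformation and a density argument) to a metric with harmonic asymptotics and $m<0$, produce a complete stable minimal surface asymptotic to a plane, and derive a contradiction from the stability inequality together with the Gauss equation and $\RRR\ge 0$ (using a logarithmic cut-off to cope with non-compactness).

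\textbf{Part (ii).} Here I would invoke the Huisken--Ilmanen weak inverse mean curvature flow $\{\Sigma_t\}_{t\ge 0}$ issuing from $\Sigma_0=\pa M$. The Geroch monotonicity formula shows that the Hawking mass
\[
m_H(\Sigma_t) \;=\; \sqrt{\frac{|\Sigma_t|}{16\pi}}\left(1-\frac{1}{16\pi}\int_{\Sigma_t}\Ho^2\, d\sigma\right)
\]
is nondecreasing along the weak flow; the ingredients are $\RRR\ge 0$, the Gauss--Bonnet theorem applied on the connected surface $\Sigma_t$ (this is where the connectedness of $\pa M$, propagated by the flow, is essential), and the fact that $m_H$ does not decrease across the jump times, when $\Sigma_t$ is replaced by its strictly outward minimizing hull and the jump surface is itself minimal. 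Since $\Sigma_0$ is minimal, $m_H(\Sigma_0)=\sqrt{|\pa M|/16\pi}$; and since the evolving surfaces exhaust $M^3$ and become asymptotically round, $m_H(\Sigma_t)\to m_{ADM}=m$. Monotonicity then yields~\eqref{eq:RPI}. In the rigidity case $m_H$ is constant along the flow, which forces every $\Sigma_t$ to be totally umbilic with constant mean and Gauss curvature; the induced foliation reconstructs the rotationally symmetric warped product~\eqref{eq:S}, so $(M^3,\go)$ is the Schwarzschild metric of mass $m$. Bray's conformal flow of metrics provides an alternative proof, which in addition handles a possibly disconnected horizon.

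\textbf{Main obstacle.} In both parts the conceptual scheme is short; the real work is analytic. For (i) via spinors the delicate points are the solvability of the Dirac equation with the prescribed fall-off and the rigorous identification of the surface term at infinity with $m_{ADM}$; for (i) via minimal surfaces it is the existence and regularity of the non-compact area-minimizing hypersurface and the justification of the stability estimate. For (ii) the crux is the construction of the weak inverse mean curvature flow --- existence, regularity, and especially the behaviour at the jump times --- so that Geroch monotonicity survives past the first singular time of the classical flow; this is precisely where the connectedness of $\pa M$ is used, and its removal (Bray) genuinely requires a different method.
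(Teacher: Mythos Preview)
Your sketch is correct and faithfully summarizes the classical arguments. Note, however, that the paper does not give its own proof of this theorem at all: it is stated purely as background, with the proofs attributed to Schoen--Yau, Witten, Huisken--Ilmanen, and Bray via the cited references, and is used only to motivate the analogous area bounds in the positive cosmological constant setting. So there is no ``paper's proof'' to compare against; your outline of the spinorial and inverse-mean-curvature-flow approaches is exactly what the authors are pointing to.
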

For the precise definitions of {\em asymptotically flat manifold} and {\em ADM-mass}, we refer the reader to the above cited references.
We also observe that in the original statement of the Positive Mass Theorem, the $3$-manifold $(M^3, g)$ is {\em a priori} allowed to have a finite number of ends and that the rigidity statement holds in a stronger way, meaning that 
as soon as the mass of one end is vanishing, then the whole manifold is isometric to the Euclidean space. Concerning the Riemannian Penrose Inequality, it is worth pointing out that in the original statement by Huisken and Ilmanen~\cite[Main Theorem]{Hui_Ilm}, the boundary of $M$ is {\em a priori} allowed to have a finite number of connected component, namely $\pa M = S_0 \, \sqcup\, S_1\, \sqcup\, \ldots \, \sqcup\,S_K $, and the authors are able to prove the following inequality 
\begin{equation*}
\sqrt{\frac{\max_{0\leq j \leq K}|S_j|}{16 \pi}} \, \leq \, m \, ,
\end{equation*}
where $m = m_{ADM}(M^3,g)$. With a different proof, Bray is able to recover in~\cite[Theorem~1]{Bray} a stronger version of the above inequality, namely
\begin{equation*}
\sqrt{\frac{|S_0| + \ldots+ |S_K|}{16 \pi}} \, \leq \, m \, .
\end{equation*}
Of course, when $\pa M$ is connected, the two inequalities are the same and they reduce to~\eqref{eq:RPI}. To introduce our first main result, we focus on this simple version of the Riemannian Penrose Inequality and we observe that, using the definition of the Schwarzschild radius given below formula~\eqref{eq:S}, it can be rephrased as follows
\begin{equation*}
|\pa M| \, \leq \, 16 \pi m^2 \, = \, 4\pi (2m)^2 \, = \, 4 \pi r_0^2(m) \, ,
\end{equation*}
where $m = m_{ADM}(M^3,g)$. Having these considerations in mind, we can now state one of the main results of the present paper.

%
%
%
%

\begin{theorem}[Refined Area Bounds]
\label{thm:area_bound_SD}
Let $(M^3,\go,u)$ be a $3$-dimensional solution to problem~\eqref{eq:prob_SD} and let $N$ be a connected component of $M^3\setminus{\rm MAX}(u)$ with virtual mass
\begin{equation*}
m \, = \, \mu(N, \go , u)\,\in\, \left(0, 1/(3\sqrt{3})\right]\,.
\end{equation*}
Let $S\subseteq\pa N$ be the horizon with the largest surface gravity in $N$, namely
\begin{equation*}
\kappa(S) \, = \, 
\begin{dcases}
k_+(m) & \hbox{ if $N$ is outer}\,,
\\
k_-(m) & \hbox{ if $N$ is inner}\,,
\\
\sqrt{n} & \hbox{ if $N$ is cylindrical}\,.
\end{dcases}
\end{equation*}
Then, $S$ is diffeomorphic to the sphere $\Sph^2$. Moreover, the following inequalities hold:
\begin{itemize}
\smallskip
\item[(i)] \underline{Cosmological Area Bound.} If $N$ is an outer region, then
\begin{equation}
	\label{eq:area_bound_+_SD}
	|S|\,\leq\,4\pi r_+^{2}(m)\,.
	\end{equation}
Moreover, if the equality is fulfilled and $S=\pa N$, then the triple $(M^3,\go ,u)$ is isometric to the Schwarzschild--de Sitter  solution~\eqref{eq:SD} with mass $m$.
\smallskip
\item[(ii)] \underline{Riemannian Penrose Inequality.} If $N$ is an inner region, then
\begin{equation}
	\label{eq:area_bound_-_SD}
	|S|\,\leq\,4\pi r_-^{2}(m)\,.
	\end{equation}
Moreover, if the equality is fulfilled and $S=\pa N$, then the triple $(M^3,\go ,u)$ is isometric to the Schwarz\-schild--de Sitter solution~\eqref{eq:SD} with mass $m$.
\smallskip
\item[(iii)] \underline{Cylindrical Area Bound.} If $N$ is a cylindrical region, then
	\begin{equation}
	\label{eq:area_bound_N_SD}
	|S|\,\leq\,\frac{4\pi}{3}\,,
	\end{equation}
Moreover, if the equality is fulfilled and $S=\pa N$, then the triple $(M^3,\go ,u)$ is covered by the Nariai solution~\eqref{eq:cylsol_D}.
\end{itemize}

\end{theorem}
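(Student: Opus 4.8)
\medskip
\noindent\emph{Sketch of the argument.} The idea is to run a monotonicity--rigidity analysis along the level sets of $u$ inside $N$, modelled on the Schwarzschild--de Sitter solution~\eqref{eq:SD} (or on the Nariai solution~\eqref{eq:cylsol_D} in the cylindrical case), and to recover $|S|$ by inspecting the two ``ends'' of $N$: the horizon end $\{u=0\}$ and the maximal end ${\rm MAX}(u)$. Since~\eqref{eq:prob_SD} is invariant under $u\mapsto\lambda u$, I first normalize $u$ so that $\umax=\sqrt{1-(m/\mmax)^{2/n}}$, the value attained by the potential of the model~\eqref{eq:SD} with mass $m$; with this choice the hypothesis $\kappa(S)=k_\pm(m)$ (resp.\ $\kappa(S)=\sqrt n$) says exactly that $|\D u|$ on $S$ equals the constant value of $|\D u|$ on the corresponding horizon $\{|x|=r_\pm(m)\}$ of that model. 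When $N$ is outer or inner I then introduce on $N$ the \emph{pseudo-radial function} $\psi$, defined so that $u^2=1-\psi^2-2m\,\psi^{2-n}$ along the appropriate monotone branch of $f_m$; thus $\psi$ takes values in $[r_-(m),r_+(m)]$, equals $r_\pm(m)$ on $S$ and $r_0(m)$ on ${\rm MAX}(u)$, and reduces to the radial coordinate on the model. (In the cylindrical case $m=\mmax$ this interval degenerates, and one compares instead with the Nariai solution, letting $u$ itself play the role of radial parameter.)

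\emph{Monotonicity and the horizon end.} Building on~\cite{Bor_Maz_2-I} --- and ultimately on the Bochner identity for $|\D u|^2$, the constancy of the scalar curvature~\eqref{eq:CSC}, and the Gauss equation for the level sets --- one shows that a weighted-area functional of the slices $\{u=t\}\cap N$, which on the model agrees with the area $4\pi\psi^2$ of the coordinate sphere $\{|x|=\psi\}$, is monotone as a function of $\psi$, is constant precisely on the solutions~\eqref{eq:SD}, and has the property that equality along it forces every slice to be totally umbilic with $|\D u|$ constant on it. On the horizon end the slices converge to $S$, which is totally geodesic; combining the Gauss equation with~\eqref{eq:prob_SD} one checks, in dimension $3$, that the Gaussian curvature of $S$ satisfies $\mathrm K_S=3-\Ric(\nu,\nu)$ with $\nu=\D u/|\D u|$, so that Gauss--Bonnet yields $\int_S\mathrm K_S=2\pi\chi(S)$ and, combined with the positivity built into the monotone functional, forces $\chi(S)>0$, i.e.\ $S\cong\Sph^2$, while producing the term that carries $|S|$.

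\emph{The maximal end and the area bound.} On the maximal end the slices collapse onto ${\rm MAX}(u)$; here one invokes the stratified-analytic description of ${\rm MAX}(u)$ recalled in Subsection~\ref{sub:prelim} --- in particular the fact that the slices concentrate on the smooth top stratum $\Sigma^{n-1}$ --- to evaluate the limit of the weighted area and show it is controlled by the corresponding model value, attached to $r_0(m)$. Passing through the monotonicity of the previous step between the two endpoint values then gives $|S|\le 4\pi r_\pm^2(m)$, which in the cylindrical case $m=\mmax$ reads $|S|\le 4\pi/3$ because $r_\pm(\mmax)=\sqrt{(n-2)/n}$.

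\emph{Rigidity and the main obstacle.} If equality holds and in addition $S=\pa N$, so that $N$ contains no further horizon, then every inequality used along the way is an equality; by the monotonicity step this forces all level sets of $u$ in $N$ to be totally umbilic with $|\D u|$ constant on them, whence $\go$ is a warped product over an interval with fibre $S$, and the warping function is pinned down by~\eqref{eq:prob_SD} and the prescribed values of $\umax$ and $|\D u|_{|_S}$ (i.e.\ of $m$) to be that of~\eqref{eq:SD}. Since $u$, hence $\go$, is analytic, this structure propagates across ${\rm MAX}(u)$ to all of $M^3$, yielding the isometry with~\eqref{eq:SD}; in the cylindrical case the warped product degenerates to a Riemannian product, which need not be globally realised, so that one only concludes that $(M^3,\go,u)$ is covered by the Nariai solution. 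I expect the delicate step to be the analysis at the maximal end: the slices degenerate there in a genuinely stratified way, and one must show that the weighted area functional still admits the expected limit and is bounded by the model value, handling the possible lower-dimensional strata of ${\rm MAX}(u)$; upgrading ``umbilic slices with $|\D u|$ constant'' to the global isometry --- and correctly matching the mass parameter, as well as treating the degenerate cylindrical case where only a covering is obtained --- is the other technical core.
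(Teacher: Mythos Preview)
Your proposal has the right cast of characters --- the pseudo-radial function, the Bochner identity, the Gauss equation on the horizon, the warped-product rigidity --- but the logical flow for the \emph{upper} bound does not close. You propose to run a monotone weighted-area functional from the horizon end to the maximal end and then to bound the limit at ${\rm MAX}(u)$ by ``the corresponding model value attached to $r_0(m)$''. But there is no a priori control on the area (or weighted area) of the slices near ${\rm MAX}(u)$: in the paper, the bound $|\Sigma_N|\le 4\pi r_0^2(m)$ is a \emph{consequence} of the horizon bound (Corollary~\ref{cor:Sigma_areabound}), not an input to it. What the monotonicity of the level-set functional actually delivers is the \emph{lower} bound $|\pa N|\ge [r_\pm(m)/r_0(m)]^{n-1}|\Sigma_N|$ (Theorem~\ref{thm:lower_bound}); it does not by itself produce $|S|\le 4\pi r_\pm^2(m)$.

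The paper's mechanism for the upper bound is different and more local. After the conformal change $g=\go/\Psi^2$ one introduces the pseudo-affine function $\ffi$ and shows, via the Bochner formula, that $w=\beta(1-|\na\ffi|_g^2)$ satisfies an elliptic inequality; a Minimum Principle argument --- which in turn requires a reverse {\L}ojasiewicz estimate (Proposition~\ref{pro:rev_loj_SD}) to control $w$ near ${\rm MAX}(u)$ --- then yields the global gradient bound $|\na\ffi|_g\le 1$ on $N$ (Proposition~\ref{pro:min_pr_SD}). By the very definition of the virtual mass, this bound is \emph{saturated} on the horizon $S$ with maximal surface gravity. A second-order expansion of $|\D u|^2$ along a geodesic orthogonal to $S$ (formula~\eqref{eq:Ambrozio_nearboundary}, compared with the same expansion on the model) then forces the \emph{pointwise} lower bound $\RRR^S\ge (n-1)(n-2)\,r_\pm^{-2}(m)$ (Theorem~\ref{thm:intid_rewr_SD}). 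Integrating and applying Gauss--Bonnet gives both $S\cong\Sph^2$ and $|S|\le 4\pi r_\pm^2(m)$. The rigidity when $S=\pa N$ is then obtained not from the pointwise bound but from a separate divergence-theorem identity for the vector field $Y=\na|\na\ffi|_g^2+\Deg\ffi\,\na\ffi$ (Propositions~\ref{pro:int_id_g_SD},~\ref{pro:int_id_g_2_SD} and Corollaries~\ref{cor:Gauss-Codazzi_+_SD},~\ref{cor:Gauss-Codazzi_-_SD}), which shows that equality forces $|\na\ffi|_g\equiv 1$ and $\nana\ffi\equiv 0$, hence the warped-product structure. So the step you flag as ``delicate'' --- the behaviour at the maximal end --- is indeed where the work lies, but it enters through the Minimum Principle for the gradient estimate, not through an area comparison at ${\rm MAX}(u)$.
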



\begin{remark}
Notice that the rigidity statements are only in force when $\pa N$ is connected. Concerning the rigidity statement in point (iii) of the above theorem, we observe that there is only one orientable triple which is not isometric to the $3$-dimensional Nariai solution but that is covered by it, which is the quotient of the Nariai triple by the involution
$$
\iota:\,[0,\pi]\times\Sph^2\to [0,\pi]\times\Sph^2\,,\qquad \iota(t,x)\,=\,(\pi-t,-x)\,,
$$
where we have denoted by $-x$ the antipodal point of $x$ on $\Sph^2$. The existence of this solution was pointed out in~\cite[Section~7]{Ambrozio}.
\end{remark}

About the previous statement some comments are in order. First, the fact that $S$ is necessarily diffeomorphic to a sphere is not a new result. In fact, a stronger result is already known from~\cite[Theorem~B]{Ambrozio}, where it is shown that every connected component of the boundary of a static solution to problem~\eqref{eq:prob_SD} is diffeomorphic to a sphere. Our approach allows to prove the same topological result, but only in the case where the horizons of $(M^3, g_0, u)$ are somehow separated from each other by the locus ${\rm MAX}(u)$. Concerning the area bounds, we observe that, conceptually speaking, the inequality~\eqref{eq:area_bound_+_SD} should be compared with the Boucher-Gibbons-Horowitz Area Bound~\eqref{eq:BGH}, since it involves the cosmological horizons of the solution, whereas the inequality~\eqref{eq:area_bound_-_SD} should be compared with~\eqref{eq:RPI} since it is a statement about horizons of black hole type.

An analogous statement holds in higher dimension, giving the natural analog of the inequality 
\begin{equation}
\label{eq:Chru}
|\pa M| \, \leq \, \int_{\pa M}
	\frac{\RRR^{\pa M}}{(n-1)(n-2)}\,\rmd\sigma \,,
\end{equation}
which has been obtained by Chru\`sciel in~\cite[Section~6]{Chrusciel_2} in the case of connected boundary, extending the Boucher-Gibbons-Horowitz method to every dimension $n\geq3$. Of course, in the above inequality $\RRR^{\pa M}$ stands for the scalar curvature of the boundary. Moreover, the equality is fulfilled if and only if $(M,g_0,u)$ coincides with the de Sitter solution.

\begin{theorem}
	\label{thm:scalarcurvature_bound_SD}
Let $(M,\go,u)$ be a solution to problem~\eqref{eq:prob_SD} of dimension $n\geq 3$, and let $N$ be  a connected component of $M\setminus{\rm MAX}(u)$ with connected smooth compact boundary $\pa N$. We then let $m \in (0, \mmax]$ be the virtual mass of $N$, namely
\begin{equation*}
m \, = \, \mu(N, g_0, u)\,.
\end{equation*}
Let $S\subseteq\pa N$ be the horizon with the largest surface gravity in $N$, namely
\begin{equation*}
\kappa(S) \, = \, 
\begin{dcases}
k_+(m) & \hbox{ if $N$ is outer}\,,
\\
k_-(m) & \hbox{ if $N$ is inner}\,,
\\
\sqrt{n} & \hbox{ if $N$ is cylindrical}\,.
\end{dcases}
\end{equation*}
Then, the following inequalities hold:\begin{itemize}
\smallskip
\item[(i)] If $N$ is an outer region, then
\begin{equation}
    \label{eq:scalarcurvature_bound_+_SD}
	|S|\,\leq\, \left(\int_{S}
	\frac{\RRR^{S}}{(n-1)(n-2)}\,\rmd\sigma \right)  \, r_+^{2}(m)\,,
	\end{equation}
Moreover, if the equality is fulfilled and $S=\pa N$, then $(M,g_0,u)$ is isometric to the Schwarzschild--de Sitter solution~\eqref{eq:SD} with mass $m$, for $n=3,4$. Whereas for $n\geq 5$ it is isometric to some generalized Schwarzschild--de Sitter solution~\eqref{eq:gen_SD} with Einstein fiber.
\smallskip
\item[(ii)] If $N$ is an inner region, then
\begin{equation}
	\label{eq:scalarcurvature_bound_-_SD}
	|S|\,\leq\, \left(\int_{S}
	\frac{\RRR^{S}}{(n-1)(n-2)}\,\rmd\sigma \right)  \, r_-^{2}(m)\,.
	\end{equation}
Moreover, if the equality is fulfilled and $S=\pa N$, then $(M,g_0,u)$ is isometric to the Schwarzschild--de Sitter solution~\eqref{eq:SD} with mass $m$, for $n=3,4$. Whereas for $n\geq5$ it is isometric to some generalized Schwarzschild--de Sitter solution~\eqref{eq:gen_SD} with Einstein fiber.

\smallskip
\item[(iii)] If $N$ is a cylindrical region, then
	\begin{equation}
	\label{eq:scalarcurvature_bound_N_SD}
	|S|\,\leq\, \int_{S}\frac{\RRR^{S}}{n(n-1)}\,\rmd\sigma\,.
	\end{equation}
Moreover, if the equality is fulfilled and $S=\pa N$, then $(M,\go,u)$ is covered by the Nariai solution~\eqref{eq:cylsol_D}, for $n=3,4$. Whereas for $n\geq 5$ it is covered by some generalized Nariai solution~\eqref{eq:gen_cylsol_D} with Einstein fiber.
\end{itemize}
\end{theorem}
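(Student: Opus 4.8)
\emph{Proof strategy.} The plan is to run the Boucher-Gibbons-Horowitz/Chru\'sciel scheme \cite{Chrusciel_2} for the level sets of the static potential, but carried out with a mass-dependent normalization, and this is what upgrades the Chru\'sciel estimate to the sharp constants $r_\pm^2(m)$ (and to the Nariai value) and, via the corresponding rigidity, singles out the model solutions. On the region $N$ the potential $u$ ranges over $(0,\umax)$, with the horizon $S$ sitting at $t\to 0^+$ and the top stratum $\Sigma^{n-1}$ of ${\rm MAX}(u)$ at $t\to\umax^-$; by analyticity of $u$, only finitely many levels $t\in(0,\umax)$ are critical. The basic computational input is the Gauss equation combined with~\eqref{eq:prob_SD}: writing $\nu=\D u/|\D u|$ and $\Ho,\ho$ for the mean curvature and the second fundamental form of $\Sigma_t=\{u=t\}\cap N$, and using $\De u=-nu$ together with $\RRR\equiv n(n-1)$ to split the Hessian of $u$ into its normal and tangential parts, one gets
\[
\RRR^{\Sigma_t}\;=\;n(n-1)\,+\,\frac{2\,\Ho\,|\D u|}{u}\,+\,\Ho^2-|\ho|^2\,,\qquad\Ric(\nu,\nu)\;=\;-\,\frac{\Ho\,|\D u|}{u}\,.
\]

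Inserting these identities into the first and second variation formulas for $|\Sigma_t|$, $\int_{\Sigma_t}|\D u|\,\rmd\sigma$ and $\int_{\Sigma_t}\RRR^{\Sigma_t}\,\rmd\sigma$, and using the Gauss-Codazzi equations together with the Bochner identity for $u$, one assembles a functional $\mathcal W(t)$ — schematically an explicitly $t$-weighted combination of $|\Sigma_t|$, $\int_{\Sigma_t}\RRR^{\Sigma_t}/[(n-1)(n-2)]\,\rmd\sigma$ and $\int_{\Sigma_t}|\D u|\,\rmd\sigma$ — which is monotone on $(0,\umax)$ and extends continuously through the interior critical levels of $u$. The weights are chosen so that $\mathcal W$ is constant on the generalized Schwarzschild--de Sitter solutions~\eqref{eq:gen_SD} and on the generalized Nariai solutions~\eqref{eq:gen_cylsol_D} with the appropriate mass parameter, and the monotonicity comes from the sharp algebraic inequality $|\Ric-(n-1)\go|^2\ge\tfrac{n}{n-1}\,[(\Ric-(n-1)\go)(\nu,\nu)]^2$ for the trace-free Ricci tensor $\Ric-(n-1)\go$ (recall $\RRR\equiv n(n-1)$), whose equality case forces the rotationally symmetric algebraic form; it is this equality case that will drive the rigidity.

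The estimate is closed by examining the two ends of $\mathcal W$. As $t\to 0^+$, $\Sigma_t\to S$, which is totally geodesic with $|\D u|\equiv\kappa(S)\,\umax$ constant, so $\mathcal W(0^+)$ is an explicit affine combination of $|S|$ and $\int_{S}\RRR^{S}\,\rmd\sigma$ with coefficients depending only on $\kappa(S)$. As $t\to\umax^-$, one evaluates $\mathcal W$ on $\Sigma^{n-1}$ using that there $|\D u|=0$, $\Ric(\nu,\nu)=0$ and $\Ric=n\,\go$ on $T\Sigma^{n-1}$, and one checks that the lower strata $\Sigma^i$, $i\le n-2$, give no contribution in the limit. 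Combining the two end values with the monotonicity of $\mathcal W$, and then substituting $\kappa(S)=k_\pm(m)$ (resp.\ $\kappa(S)=\sqrt n$ in the cylindrical case) from Definition~\ref{def:virtual_mass} together with the algebraic identities $2m=r_\pm^{n-2}(1-r_\pm^{2})$, $r_0^{n}=(n-2)m$ and $\umax^{2}=1-(m/\mmax)^{2/n}$ linking the model quantities, one reads off the inequalities~\eqref{eq:scalarcurvature_bound_+_SD}, \eqref{eq:scalarcurvature_bound_-_SD} and~\eqref{eq:scalarcurvature_bound_N_SD}. In dimension $n=3$, Gauss-Bonnet turns $\int_S\RRR^S/2\,\rmd\sigma$ into $4\pi$ once $S$ is known to be a $2$-sphere, recovering the cleaner bounds~\eqref{eq:area_bound_+_SD}--\eqref{eq:area_bound_N_SD} of Theorem~\ref{thm:area_bound_SD}; for $n\ge 4$ no such topological reduction is available, which is exactly why only the integral version survives and why the fibre is merely forced to be Einstein.

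For the rigidity statements, suppose equality holds in one of the bounds with $S=\pa N$; then $\mathcal W$ is constant on $(0,\umax)$, which forces equality in the above algebraic inequality at every regular level, so $\Ric-(n-1)\go$ has the rotationally symmetric form there and each $\Sigma_t$ is totally umbilic with $|\D u|$ constant on it. Reinserting this into~\eqref{eq:prob_SD} shows that $N$ is rotationally symmetric over an $(n-1)$-dimensional Einstein fibre $(E^{n-1},g_{E^{n-1}})$ with $\Ric_{E^{n-1}}=(n-2)\,g_{E^{n-1}}$, of warped form $dr\otimes dr+\phi(r)^2 g_{E^{n-1}}$ with $u=u(r)$, and the ODE for $\phi$ (equivalently for the profile of $u$) identifies $N$ with a generalized Schwarzschild--de Sitter solution~\eqref{eq:gen_SD} (or with a generalized Nariai solution~\eqref{eq:gen_cylsol_D} when $N$ is cylindrical); the connectedness of $\pa N$ then propagates this structure across ${\rm MAX}(u)$ to the whole of $M$ (up to the $\mathbb{Z}_2$-quotient in the Nariai case), and since for $n=3,4$ the only admissible fibre is the round sphere one recovers the genuine Schwarzschild--de Sitter solution~\eqref{eq:SD} (resp.\ the Nariai solution~\eqref{eq:cylsol_D}). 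I expect the main obstacle to be the construction and the monotonicity of $\mathcal W$: one has to pin down the precise mass-dependent weights that make $\mathcal W$ constant on the models — equivalently, the normalization turning the Chru\'sciel bound into the sharp one with constant $r_\pm^2(m)$ — and one has to control $\mathcal W$ across the interior critical values of $u$ and in the degenerate limit towards the possibly singular locus ${\rm MAX}(u)$, in particular near its lower strata; the endpoint computations, the algebra relating $\kappa(S)$, $m$ and $r_\pm(m)$, and the warped-product splitting in the equality case are by comparison routine.
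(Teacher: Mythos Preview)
Your proposal outlines a genuinely different route from the paper's. You propose a Shen--Boucher--Gibbons--Horowitz/Chru\'sciel style level-set monotonicity functional $\mathcal W(t)$, with mass-dependent weights tuned to be constant on the model, and monotonicity coming from the algebraic inequality $|\Ric-(n-1)\go|^2\ge \tfrac{n}{n-1}[(\Ric-(n-1)\go)(\nu,\nu)]^2$. The paper instead runs a \emph{cylindrical ansatz}: on $N$ it introduces a pseudo-radial function $\Psi$ mimicking $|x|$ in the model, performs the conformal change $g=\go/\Psi^2$, and defines a pseudo-affine function $\ffi$ (formula~\eqref{eq:ffi_SD}) so that $|\na\ffi|_g\equiv 1$ exactly on Schwarzschild--de Sitter. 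The core analytic step is a Minimum Principle for an elliptic inequality satisfied by $w=\beta(1-|\na\ffi|_g^2)$ (obtained from Bochner), which yields the \emph{pointwise} gradient estimate $|\na\ffi|_g\le 1$ on all of $N$ (Proposition~\ref{pro:min_pr_SD}). Translating back, this says $|\D u|^2\le W(u)$ everywhere, where $W$ is the model profile, with equality on $S$; comparing the second-order Taylor expansion of $|\D u|^2\circ\gamma$ along a geodesic normal to $S$ (formula~\eqref{eq:Ambrozio_nearboundary}) with the same expansion on the model then gives the \emph{pointwise} bound $\RRR^S\ge (n-1)(n-2)r_\pm^{-2}(m)$ (Theorem~\ref{thm:intid_rewr_SD}), of which the integral inequalities~\eqref{eq:scalarcurvature_bound_+_SD}--\eqref{eq:scalarcurvature_bound_N_SD} are immediate consequences. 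The rigidity is handled separately via integral identities for the divergence of $\gamma(\na|\na\ffi|_g^2+\Deg\ffi\,\na\ffi)$ (Propositions~\ref{pro:int_id_g_SD} and~\ref{pro:int_id_g_2_SD}), whose equality case forces $|\na\ffi|_g\equiv 1$ and $\nana\ffi\equiv 0$, triggering the splitting (Proposition~\ref{pro:rigidity}).

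Two remarks on your proposal. First, as you yourself flag, the functional $\mathcal W$ is never written down; getting mass-dependent weights that are simultaneously constant on the model and produce a sign-definite derivative on a general $N$ is exactly the nontrivial content, and the trace-free Ricci inequality you quote is what gives Chru\'sciel's inequality~\eqref{eq:Chru} with constant $1$, not obviously the sharp $r_\pm^2(m)$. In the paper the sharp constant is essentially encoded in the choice of conformal factor $\Psi$ (the pseudo-radial function built from the virtual mass), and the analogue of your $\mathcal W$ is the function $\Phi(s)=\int_{\{\ffi=s\}}|\na\ffi|_g\,\rmd\sigma_g$, whose monotonicity (Proposition~\ref{pro:mon_Phi1_SD}) is used for the area \emph{lower} bounds rather than for Theorem~\ref{thm:scalarcurvature_bound_SD}. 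Second, the paper's route buys more than the integral statement: it gives the pointwise lower bound on $\RRR^S$ and the global gradient estimate $|\na\ffi|_g\le 1$, both of which are reused elsewhere (for instance in Theorem~\ref{thm:lower_bound} and in the analysis of $\Sigma$ for the uniqueness theorem). Your route, if carried out, would presumably give only the integrated inequality directly.
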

The proof of the above statement will be given in Section~\ref{sec:area_bounds}, except for the rigidity statements, whose proof will be discussed in Section~\ref{sec:consequences_SD}, and for the cylindrical case, that will be discussed in Section~\ref{sec:nariai_SD}. It is clear that Theorem~\ref{thm:area_bound_SD} follows directly from Theorem~\ref{thm:scalarcurvature_bound_SD}, applying the Gauss-Bonnet formula. We also mention that the rigidity statement for Theorem~\ref{thm:scalarcurvature_bound_SD} will be deduced by some more general statements (see Corollaries~\ref{cor:intid_rewr_+_SD},~\ref{cor:intid_rewr_-_SD} and~\ref{cor:intid_rewr_N_SD})
which correspond to some balancing formulas, in the case where the boundary of $N$ is allowed to have several connected components.
The inequalities proven in Theorem~\ref{thm:scalarcurvature_bound_SD} share some analogies with the ones developed in~\cite{Fan_Yua,Yuan}, see in particular~\cite[Theorem~B]{Yuan}.

Our approach will also allow us to prove some area lower bounds on the horizons. These lower bounds do not require the connectedness of the boundary of our region $N$ and depend on the area of the hypersurface $\Sigma_N\subseteq{\rm MAX}(u)$ that separates $N$ from the rest of the manifold.

\begin{theorem}[Area Lower Bound]
\label{thm:lower_bound}
Let $(M,\go,u)$ be a solution to problem~\eqref{eq:prob_SD} of dimension $n\geq 3$, and let $N$ be  a connected component of $M\setminus{\rm MAX}(u)$ with connected smooth compact boundary $\pa N$. We let $m \in (0, \mmax]$ be the virtual mass of $N$, namely
\begin{equation*}
m \, = \, \mu(N, g_0, u)\,.
\end{equation*}
Let $\Sigma_N=\overline{N}\cap\overline{M\setminus \overline{N}}$ be the possibly stratified hypersurface separating $N$ from the rest of the manifold $M$.
Then, the following inequalities hold:\begin{itemize}
\smallskip
\item[(i)] If $N$ is an outer region, then
\begin{equation}
\label{eq:lower_bound_+_SD}
|\pa N|\,\geq\, \left[\frac{r_+(m)}{r_0(m)}\right]^{n-1}\,|\Sigma_N|\,,
\end{equation}
and the equality is fulfilled if and only if $(M,g_0,u)$ is isometric to the Schwarzschild--de Sitter solution~\eqref{eq:SD} with mass $m$, for $n=3,4$. Whereas for $n\geq 5$ it is isometric to some generalized Schwarzschild--de Sitter solution~\eqref{eq:gen_SD} with Einstein fiber.
\smallskip
\item[(ii)] If $N$ is an inner region, then
\begin{equation}
\label{eq:lower_bound_-_SD}
|\pa N|\,\geq\, \left[\frac{r_-(m)}{r_0(m)}\right]^{n-1}\,|\Sigma_N|\,,
\end{equation}
and the equality is fulfilled if and only if $(M,g_0,u)$ is isometric to the Schwarzschild--de Sitter solution~\eqref{eq:SD} with mass $m$, for $n=3,4$. Whereas for $n\geq5$ it is isometric to some generalized Schwarzschild--de Sitter solution~\eqref{eq:gen_SD} with Einstein fiber.

\smallskip
\item[(iii)] If $N$ is a cylindrical region, then
\begin{equation}
\label{eq:lower_bound_N_SD}
|\pa N|\,\geq\, |\Sigma_N|\,,
\end{equation}
and the equality is fulfilled if and only if $(M,\go,u)$ is covered by the Nariai solution~\eqref{eq:cylsol_D}, for $n=3,4$. Whereas for $n\geq 5$ it is covered by some generalized Nariai solution~\eqref{eq:gen_cylsol_D} with Einstein fiber.
\end{itemize}
\end{theorem}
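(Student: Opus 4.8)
The plan is to bound $|\Sigma_N|$ in terms of $|\pa N|$ by running a single monotone quantity along the level sets of the potential $u$, building on the balancing identities that enter the proof of Theorem~\ref{thm:scalarcurvature_bound_SD} in Section~\ref{sec:area_bounds}. \emph{Step~1: normalization and a pseudo-radial function.} Using the scaling freedom in problem~\eqref{eq:prob_SD}, I would first normalize $u$ so that $\umax$ equals the value $\sqrt{1-(m/\mmax)^{2/n}}$ attained by the maximum of the potential on the model solution with mass $m$; by Definition~\ref{def:virtual_mass} and~\eqref{eq:surf_grav_normalization}, with this normalization $|\D u|$ coincides on $\pa N$ with the model value at the corresponding horizon. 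I would then introduce on $N$ the pseudo-radial function $\Psi = \Psi(u)$, namely the inverse of the monotone branch of $u^2 = 1 - \Psi^2 - 2m\,\Psi^{2-n}$ with $\Psi\in[\,r_0(m),r_+(m)\,]$ when $N$ is outer, $\Psi\in[\,r_-(m),r_0(m)\,]$ when $N$ is inner, and $\Psi$ constant equal to $\sqrt{(n-2)/n}$ when $N$ is cylindrical (so that $m=\mmax$). As $u$ sweeps exactly $[0,\umax]$, this yields a continuous function on $\overline N$, smooth in $N$, equal to $r_\pm(m)$ on $\pa N$ and to $r_0(m)$ on $\Sigma_N$. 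A short computation gives $\Psi'(u) = -\,u\,\Psi^{n-1}/(\Psi^n - (n-2)m)$, and on the model solution one has $|\D\Psi|\equiv u$ while the level set $\{\Psi = r\}$ has area $r^{\,n-1}|E^{n-1}|$.

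\emph{Step~2: the monotone functional.} For a regular value $t\in(0,\umax)$ I would set $\Sigma_t = \{u=t\}\cap N$ and consider the weighted area
\[
\mathcal G(t) \,=\, \Psi(t)^{\,1-n}\, |\Sigma_t|\,,
\]
which reduces to the constant $|E^{n-1}|$ on any model solution. The core of the proof is that $\mathcal G$ is non-increasing on $[0,\umax)$ for \emph{every} static solution. To see this I would differentiate $\mathcal G$ using the first variation of area (the level sets flow with normal speed $|\D u|^{-1}$), rewrite the mean curvature of $\Sigma_t$ by means of the static system as $H = -\,u\,\Ric(\nu,\nu)/|\D u|$ with $\nu = \D u/|\D u|$, eliminate $\Ric(\nu,\nu)$ through the Gauss equation and the constancy $\RRR\equiv n(n-1)$, and combine with the identity for $\Psi'(u)$ from Step~1; this should give
\[
\mathcal G'(t) \,=\, u\,\Psi^{1-n}\!\int_{\Sigma_t}\!\Big[\tfrac{(n-1)\,\Psi^{\,n-2}}{\Psi^n - (n-2)m} - \tfrac{\Ric(\nu,\nu)}{|\D u|^2}\Big]\,\rmd\sigma\,,
\]
where the bracket is the model value of $\Ric(\nu,\nu)/|\D u|^2$ minus the actual one. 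Integrating a Bochner-type identity for $u$ over $\{u>t\}\cap N$ and applying the divergence theorem, in the same way as in the balancing formulas behind Theorem~\ref{thm:scalarcurvature_bound_SD} (Corollaries~\ref{cor:intid_rewr_+_SD}--\ref{cor:intid_rewr_N_SD}), should show that $\int_{\Sigma_t}\Ric(\nu,\nu)/|\D u|^2\,\rmd\sigma$ is at least $|\Sigma_t|$ times the model value, so $\mathcal G'\le 0$; equality at some $t$ forces the sign-definite bulk term to vanish, i.e.\ every level set to be totally umbilic with the model induced geometry.

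\emph{Step~3: endpoints and rigidity.} Since $\pa N$ is connected, $|\D u|$ is a single constant on it and $\Sigma_t\to\pa N$ smoothly as $t\to 0^+$, so $\mathcal G(0^+) = r_\pm(m)^{1-n}\,|\pa N|$. At the other end, $\Sigma_N$ agrees up to a set of vanishing $(n-1)$-measure with $\Sigma^{n-1}\cap\overline N$, and the static equations at ${\rm MAX}(u)$ yield the expansion $\umax - u \sim \tfrac12\,\umax\,\big(n-\Ric(\nu,\nu)\big)\,\dist(\cdot,\Sigma_N)^2$, whence $|\Sigma_t|\to|\Sigma_N|$ and $\mathcal G(t)\to r_0(m)^{1-n}\,|\Sigma_N|$ as $t\to\umax^-$. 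The monotonicity $\mathcal G(0^+)\ge\mathcal G(\umax^-)$ then reads $|\pa N|\ge [r_\pm(m)/r_0(m)]^{n-1}\,|\Sigma_N|$, which is~\eqref{eq:lower_bound_+_SD} in the outer case and~\eqref{eq:lower_bound_-_SD} in the inner case; in the cylindrical case $\Psi$ is constant, $\mathcal G$ becomes $|\Sigma_t|$ itself, and the same argument yields~\eqref{eq:lower_bound_N_SD}. If equality holds, $\mathcal G$ is constant, so by the equality case of Step~2 every $\Sigma_t$ is totally umbilic with the model values of $|\D u|$ and of the induced metric; reconstructing $\go$ from this data shows it to be a warped product over an $(n-1)$-dimensional Einstein fiber with $\Ric_{E^{n-1}} = (n-2)\,g_{E^{n-1}}$, and comparing the warping ODE with the normalization of Step~1 identifies $(M,\go,u)$ with the Schwarzschild--de Sitter solution~\eqref{eq:SD} for $n=3,4$ (the fiber being forced to be the round sphere) and with a generalized Schwarzschild--de Sitter solution~\eqref{eq:gen_SD} for $n\ge 5$; the cylindrical rigidity follows along the same lines together with the dedicated analysis of Section~\ref{sec:nariai_SD}, ``covered by'' rather than ``isometric'' entering because of the $\mathbb Z_2$-quotient of the Nariai triple recalled in the Remark after Theorem~\ref{thm:area_bound_SD}.

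\emph{Main obstacle.} The delicate step is the monotonicity in Step~2: choosing the correct weight $\Psi^{1-n}$ and, above all, proving that $\int_{\Sigma_t}\Ric(\nu,\nu)/|\D u|^2\,\rmd\sigma$ never drops below the model value, together with the sharp equality characterization. This is precisely where the constant scalar curvature constraint and a Newton/Kato-type inequality for the second fundamental form of the level sets must be exploited in the form provided by the balancing identities of Section~\ref{sec:area_bounds}. A secondary, more technical, difficulty is the limit $t\to\umax^-$: the level sets collapse onto the possibly stratified set ${\rm MAX}(u)$, and one must verify, using the analyticity of $u$ and the stratification recalled in Section~\ref{sub:prelim}, that $|\Sigma_t|$ indeed tends to $|\Sigma_N|$ with no area lost along the lower-dimensional strata.
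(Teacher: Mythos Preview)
Your overall strategy—running a monotone quantity along the level sets after a conformal change built on the pseudo-radial function $\Psi$—matches the paper's, and Steps~1 and~3 are essentially the paper's own setup and endpoint analysis (Subsections~\ref{sub:pseudo_radial}--\ref{sub:preparatory_estimates_SD}, Proposition~\ref{pro:C2}, formula~\eqref{eq:grad_u_nearSigma}). The gap is in Step~2: you chose the wrong functional, and the monotonicity proof you sketch does not go through.

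Your $\mathcal G(t)=\Psi^{1-n}|\Sigma_t|$ is nothing but the area $|\{\ffi=s\}|_g$ of the level set in the conformal metric $g=\go/\Psi^2$. Its first variation is governed by the conformal mean curvature $\Hg$, which by~\eqref{eq:formula_curvature_SD} splits as $-\Deg\ffi/|\na\ffi|_g$ \emph{plus} a Hessian term $\nana\ffi(\nu_g,\nu_g)/|\na\ffi|_g$; the latter has no definite sign. Your key claim $\int_{\Sigma_t}\Ric(\nu,\nu)/|\D u|^2\,\rmd\sigma\ge c(t)\,|\Sigma_t|$ is precisely the statement that this Hessian contribution is controlled, but the balancing identities you invoke (Propositions~\ref{pro:int_id_g_SD}--\ref{pro:int_id_g_2_SD}, Corollaries~\ref{cor:intid_rewr_+_SD}--\ref{cor:intid_rewr_N_SD}) produce inequalities that mix $\Ricg(\nu_g,\nu_g)$ with $(1-|\na\ffi|_g^2)$ and a weight $|\na\ffi|_g$; they do \emph{not} isolate the Ricci term on a bare level set. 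So Step~2, as written, is not justified.

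The paper sidesteps the problem by inserting one extra factor: it works with
\[
\Phi(s)\,=\,\int_{\{\ffi=s\}}|\na\ffi|_g\,\rmd\sigma_g
\]
(your $\mathcal G$ multiplied pointwise by $|\na\ffi|_g\le 1$). Then $\Phi(s_2)-\Phi(s_1)=\int_{\{s_1\le\ffi\le s_2\}}\Deg\ffi\,\rmd\mu_g$ by the divergence theorem, and the sign of $\Deg\ffi=n\psi\dot\psi\,(1-|\na\ffi|_g^2)$ follows from the \emph{pointwise} gradient bound $|\na\ffi|_g\le 1$ (Proposition~\ref{pro:min_pr_SD}), proved via a Minimum Principle on an elliptic inequality derived from Bochner. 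No integral Ricci comparison on intermediate level sets is ever needed. Since $|\na\ffi|_g=1$ on the connected $\pa N$ (Lemma~\ref{le:bound_psi_SD}) and $|\na\ffi|_g\to 1$ on the top stratum of $\Sigma_N$, one has $\Phi=\mathcal G$ at both endpoints, so the paper's monotonicity of $\Phi$ already delivers your desired inequality $\mathcal G(0^+)\ge \mathcal G(\umax^-)$; see Corollary~\ref{le:in_mon_Phi1_SD} and the lines immediately following it. The fix to your argument is simply to replace $\mathcal G$ by $\Phi$—equivalently, to weight the area element by $|\D u|$ divided by its model value.
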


In the notations of Theorem~\ref{thm:lower_bound}, if we also assume that $\pa N$ is connected we can combine the lower and upper bounds proved in Theorems~\ref{thm:area_bound_SD} and~\ref{thm:lower_bound} to obtain an area lower bound for the hypersurface $\Sigma_N$. The general statement of this result is given in Theorem~\ref{thm:area_bound_dischyp_gen}. Here we report the special $3$-dimensional case, in which the bound turns out to be particularly nice.

\begin{corollary}
\label{cor:lower_bound}
Let $(M,\go,u)$ be a $3$-dimensional solution to problem~\eqref{eq:prob_SD}, and let $N$ be  a connected component of $M\setminus{\rm MAX}(u)$ with connected smooth compact boundary $\pa N$. We let $m \in (0, \mmax]$ be the virtual mass of $N$, namely
\begin{equation*}
m \, = \, \mu(N, g_0, u)\,.
\end{equation*}
Let $\Sigma_N=\overline{N}\cap\overline{M\setminus \overline{N}}$ be the possibly stratified hypersurface separating $N$ from the rest of the manifold $M$. Then it holds
$$
|\Sigma_N|\,\leq\,4\,\pi\, r_0^2(m)\,,
$$
and the equality is fulfilled if and only if $(M,\go,u)$ is either isometric to the Schwarzschild--de Sitter solution~\eqref{eq:SD} with mass $0<m<\mmax$ or $(M,\go,u)$ is covered by the Nariai solution~\eqref{eq:cylsol_D}.
\end{corollary}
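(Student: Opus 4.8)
The plan is to derive the bound by combining, in dimension three, the upper area bound of Theorem~\ref{thm:area_bound_SD} with the lower area bound of Theorem~\ref{thm:lower_bound}. First I would observe that, since $\pa N$ is connected and nonempty, the horizon $S$ of largest surface gravity inside $N$ necessarily coincides with the whole boundary, so that $S=\pa N$; in particular the ``$S=\pa N$'' hypotheses appearing in the rigidity parts of Theorem~\ref{thm:area_bound_SD} are automatically in force.

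Next I would split into the three cases for $N$. If $N$ is an \emph{outer} region, point~(i) of Theorem~\ref{thm:area_bound_SD} gives $|\pa N|=|S|\le 4\pi\,r_+^2(m)$, while point~(i) of Theorem~\ref{thm:lower_bound} with $n=3$ gives $|\pa N|\ge [r_+(m)/r_0(m)]^{2}\,|\Sigma_N|$. Concatenating these two inequalities and cancelling the common factor $r_+^2(m)$ produces $|\Sigma_N|\le 4\pi\,r_0^2(m)$. The \emph{inner} case is identical, using points~(ii) of the two theorems and cancelling $r_-^2(m)$. In the \emph{cylindrical} case one has $m=\mmax$, hence $r_0^2(\mmax)=(n-2)/n=1/3$ and $4\pi\,r_0^2(\mmax)=4\pi/3$; then point~(iii) of Theorem~\ref{thm:area_bound_SD} gives $|\pa N|\le 4\pi/3$ and point~(iii) of Theorem~\ref{thm:lower_bound} gives $|\pa N|\ge |\Sigma_N|$, so again $|\Sigma_N|\le 4\pi/3=4\pi\,r_0^2(m)$.

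For the rigidity I would argue that equality in $|\Sigma_N|\le 4\pi\,r_0^2(m)$ forces equality both in the lower bound of Theorem~\ref{thm:lower_bound} and in the upper bound of Theorem~\ref{thm:area_bound_SD}; the rigidity statement of Theorem~\ref{thm:lower_bound} then gives that $(M,\go,u)$ is isometric to the Schwarzschild--de Sitter solution~\eqref{eq:SD} when $N$ is outer or inner — and then necessarily $0<m<\mmax$, since an outer or inner region has virtual mass strictly smaller than $\mmax$ — and that it is covered by the Nariai solution~\eqref{eq:cylsol_D} when $N$ is cylindrical, i.e.\ $m=\mmax$. The converse is a direct check on the explicit models, computing $|\Sigma_N|=|\{|x|=r_0(m)\}|=4\pi\,r_0^2(m)$ on Schwarzschild--de Sitter and $|\Sigma_N|=|\{\pi/2\}\times\Sph^2|=4\pi/3$ on Nariai. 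Since the whole argument only chains two already-established inequalities, there is no real obstacle; the only points needing a little care are the bookkeeping of the exponent $n-1=2$, the evaluation $r_0^2(\mmax)=1/3$, and the remark that an outer or inner region cannot have virtual mass $\mmax$, which is what selects the correct alternative in the equality case.
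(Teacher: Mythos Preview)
Your proposal is correct and follows essentially the same route as the paper: the corollary is obtained precisely by chaining the upper area bound of Theorem~\ref{thm:area_bound_SD} with the lower area bound of Theorem~\ref{thm:lower_bound} (equivalently, in the paper's internal numbering, Theorem~\ref{thm:Gauss-Bonnet_SD} with Corollary~\ref{le:in_mon_Phi1_SD}), splitting into the outer, inner and cylindrical cases, and then reading off rigidity from either of the two chained rigidity statements. Your bookkeeping of the exponent $n-1=2$, the evaluation $r_0^2(\mmax)=1/3$, and the observation that $S=\pa N$ when the boundary is connected are exactly the small checks the paper relies on.
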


We conclude this subsection with a comparison of our Theorem~\ref{thm:area_bound_SD} with the following recent result due to Ambrozio.

\begin{theorem}[{\cite[Theorem~C]{Ambrozio}}]
\label{thm:area-bound_Ambrozio}
Let $(M,\go,u)$ be a $3$-dimensional solution to problem~\eqref{eq:prob_SD}, let $S_0,\dots, S_p$ be the connected components of $\pa M$ and let $\kappa_0,\dots,\kappa_p$ be their surface gravities. If $(M,\go,u)$ is not isometric to the de Sitter solution~\eqref{eq:S}, then 
\begin{equation}
\label{eq:area-bound_Ambrozio}
\frac{\sum_{i=0}^p \kappa_i|S_i|}{\sum_{i=0}^p\kappa_i}\,\leq\,\frac{4\pi}{3}\,.
\end{equation}
Moreover, if the equality holds, then $(M,\go,u)$ is isometric to the Nariai solution~\eqref{eq:cylsol_D}.
\end{theorem}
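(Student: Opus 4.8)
\medskip
The plan is to adapt the Boucher--Gibbons--Horowitz method~\cite{Bou_Gib_Hor} (see also~\cite{Chrusciel_2,Ambrozio}) to a solution with several horizons. First, using the scale invariance of~\eqref{eq:prob_SD} in $u$, normalise $\umax=1$; since the metric scale is already fixed by $\RRR\equiv 6$, the surface gravities then reduce to $\kappa_i=|\D u|_{|_{S_i}}$ and the assertion becomes $\sum_{i=0}^p\kappa_i\,|S_i|\le\tfrac{4\pi}{3}\sum_{i=0}^p\kappa_i$. Two structural inputs are needed. The first is that each horizon $S_i$ is diffeomorphic to $\Sph^2$ by~\cite[Theorem~B]{Ambrozio}, so that $\int_{S_i}\RRR^{S_i}\,\rmd\sigma=8\pi$ by Gauss--Bonnet. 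The second is that, since $(M,\go,u)$ is \emph{not} the de Sitter solution~\eqref{eq:D}, the top stratum $\Sigma^{n-1}$ of ${\rm MAX}(u)$ is nonempty — if it were empty, ${\rm MAX}(u)$ would be lower dimensional and Theorem~\ref{thm:PMS} would force the de Sitter solution — and it is precisely the flux through this separating hypersurface that will improve the naive constant $4\pi$ to $\tfrac{4\pi}{3}$.

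The core computation revolves around the functions $|\D u|^2$ and $\phi=|\D u|^2+u^2$. From~\eqref{eq:prob_SD} one records that $\DD u+u\,\go=u\,(\Ric-2\,\go)$ (so the trace-free Hessian is $u$ times the trace-free Ricci), and a Bochner-type computation using $\De u=-3u$ and $\RRR\equiv 6$ yields
\begin{gather*}
{\rm div}\!\left(\frac{\D\phi}{u}\right)\,=\,2\,u\,\big|\Ric-2\,\go\big|^2\,\ge\,0\,,\\
{\rm div}\!\left(\frac{\D|\D u|^2}{u}\right)\,=\,2\,u\,\big|\Ric-2\,\go\big|^2+6\,u
\end{gather*}
in the interior of $M$. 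A maximum principle argument applied to $\phi$ via the first identity gives $\phi\le\max_i\kappa_i^2$, and evaluating at ${\rm MAX}(u)$ (where $\phi=\umax^2=1$) recovers $\max_i\kappa_i\ge 1$, i.e.\ the mechanism behind Theorem~\ref{thm:PMS}. Integrating the second identity over $\{u\ge\ep\}$, computing the boundary flux on $\{u=\ep\}$ by means of $\DD u(\nu,\nu)=u\,(\Ric(\nu,\nu)-3)$ and the Gauss equation $\Ric(\nu,\nu)_{|_{S_i}}=3-\tfrac12\RRR^{S_i}$ of the totally geodesic $S_i$, and then letting $\ep\to 0^+$ (also using $\int_M u=\tfrac13\sum_i\kappa_i|S_i|$, which follows from $\De u=-3u$), one obtains the balancing identity
\begin{equation*}
\sum_{i=0}^p\kappa_i\,|S_i|\;=\;4\pi\sum_{i=0}^p\kappa_i\;-\;\int_M u\,\big|\Ric-2\,\go\big|^2\,\rmd\mu\,.
\end{equation*}
This already gives $\sum\kappa_i|S_i|\le 4\pi\sum\kappa_i$, the analogue for disconnected boundary of the Boucher--Gibbons--Horowitz bound~\eqref{eq:BGH}. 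The sharp constant is then equivalent to the a priori estimate $\int_M u\,|\Ric-2\,\go|^2\,\rmd\mu\ge\tfrac{8\pi}{3}\sum\kappa_i$, which is extracted by running the divergence argument region by region: on a region $N$ the field $u^{-1}\D\phi$ has a controlled behaviour as $u\uparrow 1$, and its flux through the top stratum $\Sigma_N\subseteq\Sigma^{n-1}$ — nonempty precisely because $(M,\go,u)$ is not de Sitter — supplies the missing multiple of $\sum\kappa_i$, by area comparisons between $|\pa N|$ and $|\Sigma_N|$ in the spirit of Theorem~\ref{thm:lower_bound} and Corollary~\ref{cor:lower_bound}.

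Finally, in the equality case every inequality above is saturated: the defect integral $\int_M u\,|\Ric-2\,\go|^2$ concentrates along the nonempty top stratum, the regular level sets of $u$ become totally umbilic with constant Gauss curvature, and $(M,\go)$ is therefore locally a warped product $\rmd r\otimes\rmd r+\varphi(r)^2\,g_{\Sph^2}$ with $u=u(r)$; inserting this ansatz into~\eqref{eq:prob_SD} together with $u=0$, $|\D u|=\sqrt{3}$ on $\pa M$ and $\D u=0$ on ${\rm MAX}(u)$ pins down the Nariai profile $u=\sin r$, $\go=\tfrac13\,(\rmd r\otimes\rmd r+g_{\Sph^2})$, so that $(M,\go,u)$ is the Nariai solution~\eqref{eq:cylsol_D} (up to the orientation-reversing quotient described in the Remark following Theorem~\ref{thm:area_bound_SD}). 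The main obstacle is the middle step — establishing the sharp lower bound for the defect integral — which requires a delicate limiting analysis of the fluxes near the possibly singular stratified hypersurface ${\rm MAX}(u)$ and a proof that the lower-dimensional strata $\Sigma^0,\dots,\Sigma^{n-2}$ contribute nothing; this is exactly where the analytic stratification of ${\rm MAX}(u)$ recalled in Subsection~\ref{sub:prelim} becomes essential.
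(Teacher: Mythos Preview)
This theorem is not proved in the present paper: it is quoted verbatim from Ambrozio's work \cite[Theorem~C]{Ambrozio} purely for comparison with the authors' own Theorem~\ref{thm:area_bound_SD}. There is therefore no ``paper's own proof'' to compare your proposal against.

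That said, your sketch has one correct and one problematic part. The balancing identity
\[
\sum_{i=0}^p\kappa_i\,|S_i|\;=\;4\pi\sum_{i=0}^p\kappa_i\;-\;\int_M u\,\big|\Ric-2\,\go\big|^2\,\rmd\mu
\]
is indeed valid (it follows from the Robinson--Shen-type computation you indicate, combined with Gauss--Bonnet and \cite[Theorem~B]{Ambrozio}), and it immediately gives the weak bound with constant $4\pi$. The gap is in the sharpening step. You reduce the inequality with constant $\tfrac{4\pi}{3}$ to the a~priori estimate $\int_M u\,|\Ric-2\go|^2\ge\tfrac{8\pi}{3}\sum_i\kappa_i$, but your justification is not a proof: the ``flux through the top stratum $\Sigma_N$'' argument is only asserted, and the area comparisons you invoke (Theorem~\ref{thm:lower_bound}, Corollary~\ref{cor:lower_bound}) give \emph{upper} bounds for $|\Sigma_N|$ in terms of $|\pa N|$, which go in the wrong direction for producing a lower bound on the defect integral. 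Moreover, your preliminary claim that ``if the top stratum $\Sigma^{n-1}$ were empty then Theorem~\ref{thm:PMS} would force de~Sitter'' is not supported by anything in the paper: Theorem~\ref{thm:PMS} characterises de~Sitter via vanishing virtual mass, not via the dimension of ${\rm MAX}(u)$, and the paper nowhere proves that non--de~Sitter solutions must have a nonempty $(n-1)$-dimensional stratum in ${\rm MAX}(u)$.

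In short, the first half of your outline is sound, but the passage from $4\pi$ to $\tfrac{4\pi}{3}$ --- which is the entire content of Ambrozio's improvement over Boucher--Gibbons--Horowitz --- remains unproved in your proposal. Ambrozio's actual argument uses a different integral identity that yields the sharp constant directly, rather than trying to bound the defect term a~posteriori; you should consult \cite{Ambrozio} for the details.
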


\noindent
Of course Ambrozio's result is slightly different from ours under certain aspects, as Theorem~\ref{thm:area-bound_Ambrozio} does not require any assumption on ${\rm MAX}(u)$ and has a global nature, whereas our Theorem~\ref{thm:area_bound_SD} uses the locus ${\rm MAX}(u)$ to decompose the manifold into several connected components and provides on each of these components a (local) weighted inequality in the spirit of the above~\eqref{eq:area-bound_Ambrozio}. Let us compare the two statements in a couple of special cases. First of all, if our solution $(M,\go,u)$ has a single horizon and it is not isometric to the de Sitter solution, then Theorem~\ref{thm:area-bound_Ambrozio} gives
$$
|\pa M|\,\,\leq\,\,\frac{4\pi}{3}\,,
$$
which is a neat improvement of the classical Boucher-Gibbons-Horowitz inequality~\eqref{eq:BGH}. In this respect, our Theorem~\ref{thm:area_bound_SD} gives the same inequality if the horizon is of cylindrical type, a stronger inequality when the horizon is of black hole type and a worse result if the horizon is of cosmological type. 

\begin{figure}
\centering
\includegraphics[scale=0.5]{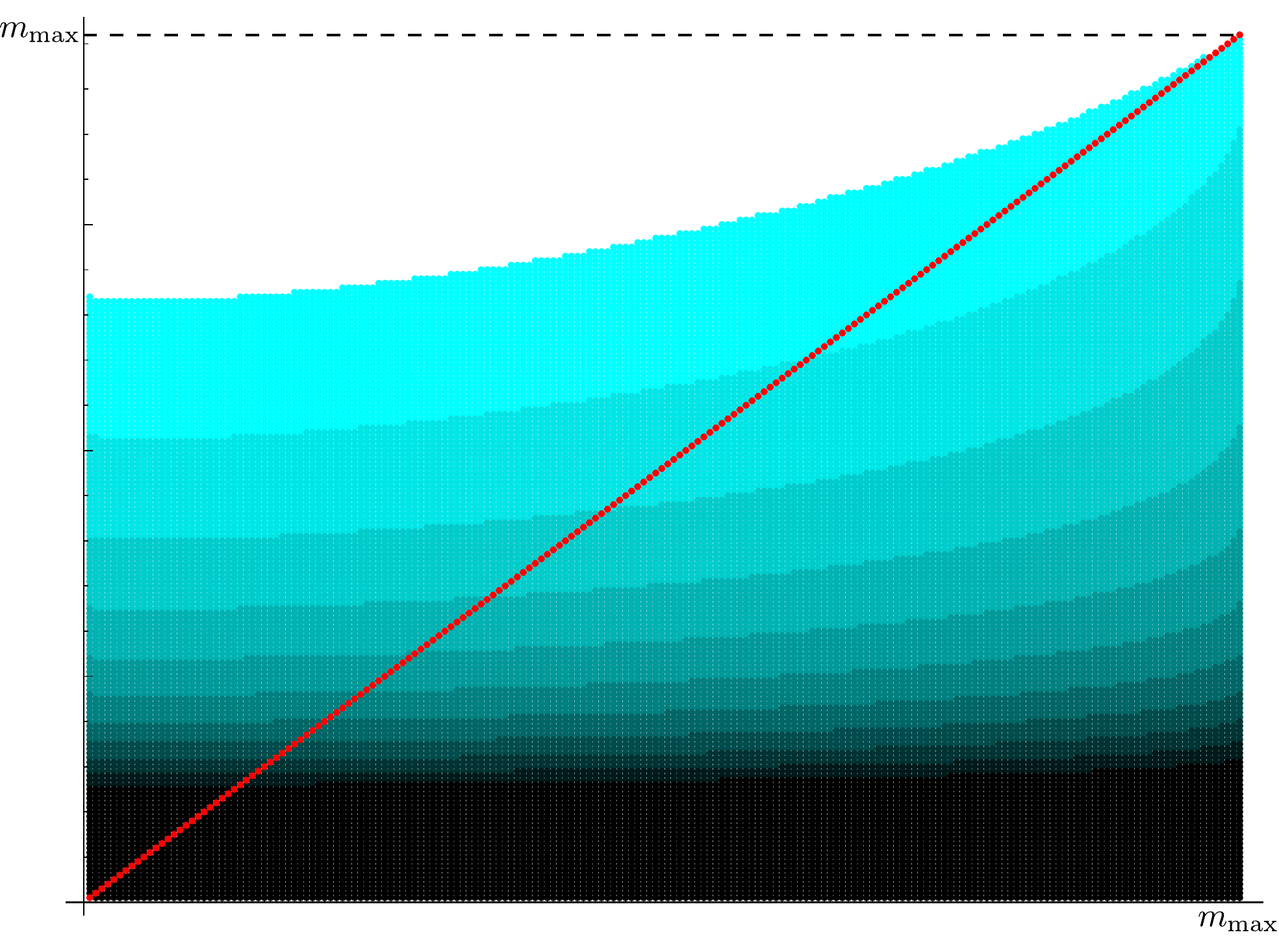}
\caption{\small
In this plot we have numerically analyzed the relation between formul\ae~\eqref{eq:special_Ambrozio} and~\eqref{eq:special_Us}, in function of the values of $m_+$ (on the $x$-axis) and of $m_-$ (on the $y$-axis). The red line represents the points where $m_+=m_-$, so that the Schwarzschild--de Sitter solutions lie on this line. The coloured region is the one where~\eqref{eq:special_Us} is stronger than~\eqref{eq:special_Ambrozio}. The darker the colour, the better our formula is. To give also a quantitative idea, the black region at the bottom is where the difference between the right hand side of~\eqref{eq:special_Ambrozio} and the right hand side of~\eqref{eq:special_Us} is greater than $3$.
} 
\label{fig:ambro}
\end{figure}

Let us now compare the two statements in the case upon which our result is modelled, that is, suppose that our solution $(M,\go,u)$ is such that
$$
M\setminus{\rm MAX}(u)\,=\,M_+\sqcup M_-\,,
$$
where $M_+$ is an outer region with connected boundary $\pa M_+$ and $M_-$ is an inner region with connected boundary $\pa M_-$. If we denote by
$$
m_+\,=\,\mu(M_+,\go,u)\,,\qquad m_-\,=\,\mu(M_-,\go,u)\,,
$$
the virtual masses of $M_+$ and $M_-$, then inequality~\eqref{eq:area-bound_Ambrozio} in Theorem~\ref{thm:area-bound_Ambrozio} writes as
\begin{equation}
\label{eq:special_Ambrozio}
k_+(m_+)\,|\pa M_+|\,+\, k_-(m_-)\,|\pa M_-|\,\,\leq\,\, \frac{4\pi}{3}\left[k_+(m_+)\,+\,k_-(m_-)\right]\,. \phantom{\qquad\qquad \,\,\,\,\,}
\end{equation}
On the other hand, inequalities~\eqref{eq:area_bound_+_SD} and~\eqref{eq:area_bound_-_SD} in Theorem~\ref{thm:area_bound_SD} give
\begin{equation}
\label{eq:special_Us}
\phantom{\qquad\qquad\,\,}k_+(m_+)\,|\pa M_+|\,+\, k_-(m_-)\,|\pa M_-|\,\,\leq\,\, 4\pi\left[k_+(m_+)r_+^2(m_+)\,+\,k_-(m_-)r_-^2(m_-)\right]\,.
\end{equation}
The two inequalities~\eqref{eq:special_Ambrozio},~\eqref{eq:special_Us} are compared in \figurename~\ref{fig:ambro}, where we have highlighted the values of $m_+,m_-$ for which our formula~\eqref{eq:special_Us} improves~\eqref{eq:special_Ambrozio}.
This comparison suggests that our result is particularly effective when the set ${\rm MAX}(u)$ separates the manifold into an outer region and an inner one, and motivates in turn our definition of a $2$-sided solution to problem~\eqref{eq:prob_SD} (see Definition~\ref{def:2-sided} below), providing us with the natural setting for the uniqueness statement described in the next subsection.

\subsection{Uniqueness results.}
\label{sub:BHU_intro}
In this subsection, we discuss a characterization of both the Schwarz\-schild--de Sitter and the Nariai solution, which is in some ways reminiscent of the well known Black Hole Uniqueness Theorem proved in different ways by Israel~\cite{Israel}, zum Hagen, Robinson and Seifert~\cite{ZHa_Rob_Sei}, Robinson~\cite{Robinson}, Bunting and Masood-ul Alam~\cite{Bun_Mas} and recently by the second author in collaboration with Agostiniani~\cite{Ago_Maz_2}.
This classical result states that when the cosmological constant is zero, the only asymptotically flat static solutions with nonempty boundary are the Schwarzschild triples described in~\eqref{eq:S}. In order to clarify what should be expected to hold in the case of positive cosmological constant, let us briefly comment the {\em asymptotic flatness} assumption.
Without discussing the physical meaning of this assumption nor reporting its precise definition -- which on the other hand can be easily found in the literature -- we underline 
the fact that it amounts to both a topological and a geometric requirement. More precisely, each end of the manifold is {\em a priori} forced to be diffeomorphic to $[ \,0,\! + \infty) \times \Sph^{n-1}$ and the metric has to converge to the flat one at a suitable rate, so that, up to a convenient rescaling, the boundary at infinity of the end is isometric to a round sphere. Another important feature of the {\em asymptotic flatness} assumption is that the 
static potential approaches its maximum value at infinity. 

From this last property, it seems natural to guess that the boundary at infinity of an asymptotically flat static solution with $\Lambda = 0$ should correspond in our framework to the set ${\rm MAX}(u)$. The same analogy is also proposed in~\cite[Appendix]{Bou_Haw}, where it is used to justify the physical meaning of the normalization~\eqref{eq:surf_grav_normalization} for the surface gravity. 
Before presenting the precise statement of this uniqueness result, it is important to underline another feature of the set ${\rm MAX}(u)$, that is peculiar of our setting. In fact, in sharp contrast with the $\Lambda = 0$ case, we observe that ${\rm MAX}(u)$ may in principle disconnect our manifold. On the other hand, this situation is not only possible but even natural, since it is realized in the model examples given by the Schwarzschild--de Sitter solutions~\eqref{eq:SD} and the Nariai solutions~\eqref{eq:cylsol_D}. Here, the set ${\rm MAX}(u)$ separates the manifold into two regions, one of which is either outer or cylindrical, while the other is either inner or cylindrical. Having this in mind, it is natural to introduce the notion of a $2$-sided solution to problem~\eqref{eq:prob_SD}.

\begin{definition}[2-Sided Solution]
\label{def:2-sided}
A triple $(M,\go,u)$ is said to be a {\em 2-sided solution} to problem~\eqref{eq:prob_SD} if
$$
M\setminus{\rm MAX}(u)\,\,=\,\, M_+\sqcup M_-\,,
$$
where $M_+$ is either an outer or a cylindrical region, that is
$$
\max_{S\in\pi_0(\pa M_+)}\kappa(S)\,=\,\max_{\pa M_+}\frac{|\D u|}{\umax}\,\leq\,\sqrt{n}\,,
$$ 
and $M_-$ is either an inner or a cylindrical region, that is
$$
\max_{S\in\pi_0(\pa M_-)}\kappa(S)\,=\,\max_{\pa M_-}\frac{|\D u|}{\umax}\,\geq\,\sqrt{n}\,.
$$
\end{definition}
%

The generic shape of a $2$-sided solution is shown in \figurename~\ref{fig:2sided_SD}.
 We recall that,
by a classical theorem of {\L}ojasiewicz~\cite{Lojasiewicz_2}, the set ${\rm MAX}(u)$ is given {\em a priori} by a possibly disconnected stratified analytic subvariety of dimensions ranging from $0$ to $(n-1)$.
In particular, it follows that a $2$-sided solution contains a stratified (possibly disconnected) hypersurface $\Sigma\subseteq{\rm MAX}(u)$ which separates $M_+$ and $M_-$, that is, $\overline{M}_+\cap\overline{M}_-=\Sigma$. 
This hypersurface will play an important role in our analysis, as it represents the junction between the regions $M_+$ and $M_-$. We are now ready to state the main result of this subsection.

\begin{figure}
	\centering
	\includegraphics[scale=0.8]{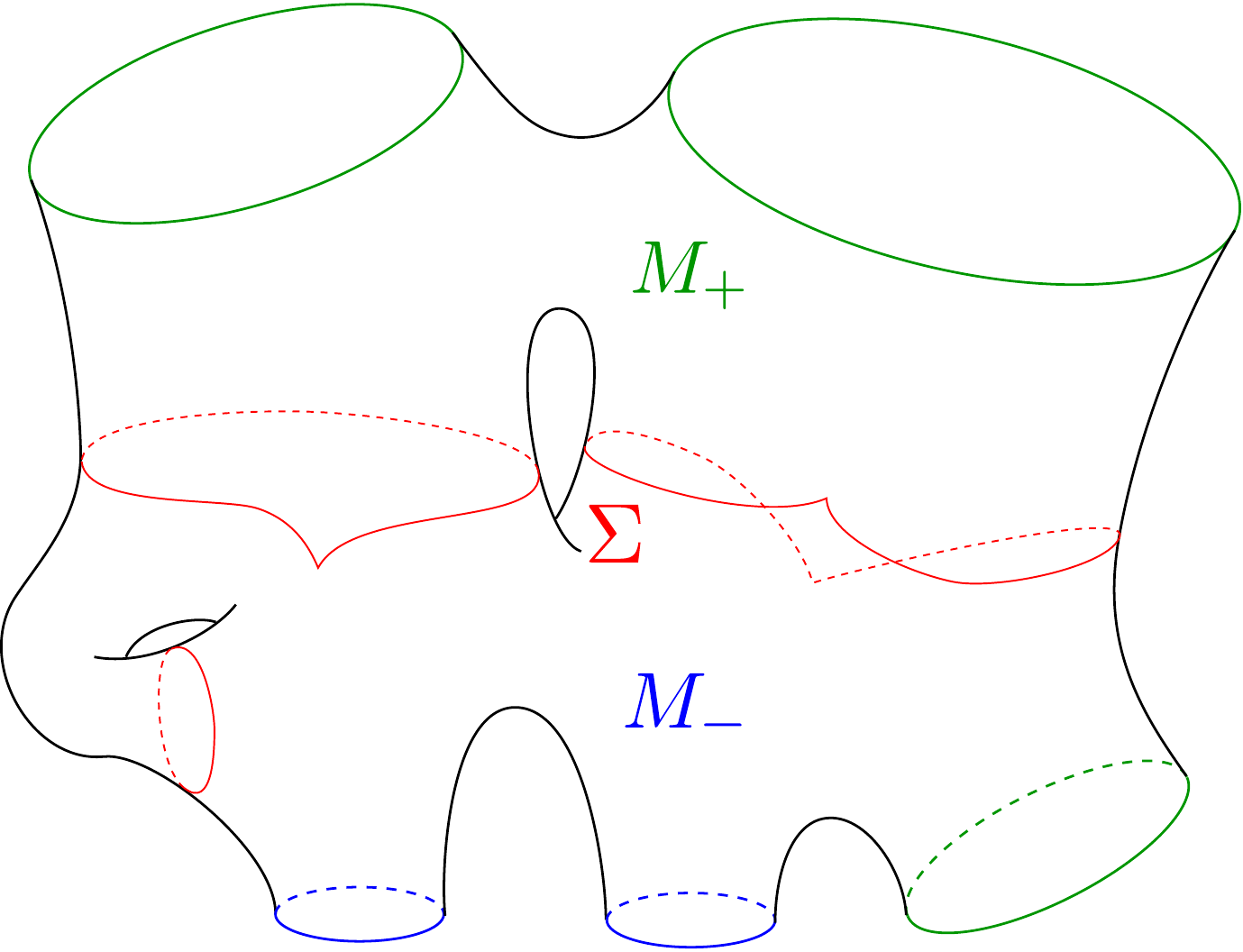}
	\caption{\small The drawing represents the possible structure of a generic $2$-sided solution to problem~\eqref{eq:prob_SD}. The red line represents the set ${\rm MAX}(u)$, with the separating stratified hypersurface $\Sigma$ put in evidence. The blue colour of a boundary component indicates a black hole horizon, whereas the green colour indicates a cosmological horizon. Cylindrical horizons are not considered in this figure since they are non generic.}
	\label{fig:2sided_SD}
\end{figure}

A careful analysis along $\Sigma$, combined with the area upper and lower bounds for the horizons stated in Subsection~\ref{sub:area_bounds_intro}, will lead to the proof of the following $3$-dimensional Black Hole Uniqueness Theorem:

\begin{theorem}
\label{thm:BHU3D}
Let $(M,\go,u)$ be a $3$-dimensional $2$-sided solution to problem~\eqref{eq:prob_SD}, and let $\Sigma\subseteq {\rm MAX}(u)$ be the stratified hypersurface separating $M_+$ and $M_-$. Let also
$$
m_+\,=\,\mu(M_+,\go,u)\,,\qquad \hbox{and} \qquad m_-\,=\,\mu(M_-,\go,u)
$$
be the virtual masses of $M_+$ and $M_-$, respectively. 
Suppose that the following conditions hold
\begin{itemize}
\smallskip
\item \underline{mass compatibility}\qquad\qquad\qquad\quad $\,m_+=m= m_-$ for some $0<m\leq \mmax$,
\smallskip
\item \underline{connected cosmological horizon}\quad\ \ $\,\pa M_+$ is connected.
\smallskip
\end{itemize}
Then the triple $(M,\go,u)$ is isometric to either the Schwarzschild--de Sitter solution~\eqref{eq:SD} with mass $0<m<\mmax$ or to the Nariai solution~\eqref{eq:cylsol_D} with mass $m=\mmax$.
\end{theorem}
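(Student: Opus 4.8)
The plan is to combine the two-sided structure of the solution with the matched area bounds from Theorems~\ref{thm:area_bound_SD} and~\ref{thm:lower_bound}, forcing equality everywhere and thereby triggering the rigidity statements. First, I would set $m_+ = m = m_- \in (0,\mmax]$ and let $S_+ \subseteq \pa M_+$ and $S_- \subseteq \pa M_-$ be the horizons of largest surface gravity in $M_+$ and $M_-$ respectively. Since $\pa M_+$ is assumed connected we have $S_+ = \pa M_+$, and the surface gravity of $S_+$ equals $k_+(m)$ (or $\sqrt{3}$ in the cylindrical case $m = \mmax$), while that of $S_-$ equals $k_-(m)$ (or $\sqrt{3}$). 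By Theorem~\ref{thm:area_bound_SD}(i)--(iii) applied to $M_+$ we obtain the upper bound $|\pa M_+| = |S_+| \leq 4\pi r_+^2(m)$, and by the lower bound Theorem~\ref{thm:lower_bound}(i) we obtain $|\pa M_+| \geq [r_+(m)/r_0(m)]^{n-1}\,|\Sigma_{M_+}| = [r_+(m)/r_0(m)]^{2}\,|\Sigma|$ in dimension $n = 3$, where $\Sigma = \overline{M}_+\cap\overline{M}_- \subseteq {\rm MAX}(u)$ is the separating hypersurface. Doing the same on the inner side gives $|\pa M_-| = |S_-| \leq 4\pi r_-^2(m)$ and $|\pa M_-| \geq [r_-(m)/r_0(m)]^{2}\,|\Sigma|$.

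The heart of the argument is then a pinching: chaining these inequalities on each side yields
\[
\left[\frac{r_+(m)}{r_0(m)}\right]^{2} |\Sigma| \,\leq\, |\pa M_+| \,\leq\, 4\pi r_+^2(m)\,, \qquad
\left[\frac{r_-(m)}{r_0(m)}\right]^{2} |\Sigma| \,\leq\, |\pa M_-| \,\leq\, 4\pi r_-^2(m)\,,
\]
which by itself only gives $|\Sigma| \leq 4\pi r_0^2(m)$, exactly Corollary~\ref{cor:lower_bound}. To close the gap and force equality I would look at $\Sigma$ from the two sides simultaneously: $\Sigma$ is literally the same stratified hypersurface bounding $M_+$ and $M_-$, so $|\Sigma_{M_+}| = |\Sigma_{M_-}| = |\Sigma|$. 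The plan is to show that the upper bound must be saturated. If instead $|S_+| < 4\pi r_+^2(m)$ strictly, I want to derive a contradiction with the inner-side bounds (or vice versa) by producing an independent relation between $|\Sigma|$, $|\pa M_+|$ and $|\pa M_-|$ coming from the analysis along $\Sigma$ — concretely, the balancing/integral identities alluded to in Corollaries~\ref{cor:intid_rewr_+_SD}, \ref{cor:intid_rewr_-_SD}, \ref{cor:intid_rewr_N_SD} evaluated on the hypersurface $\Sigma$, which should pin down $|\Sigma| = 4\pi r_0^2(m)$ and force simultaneous equality in both the upper and lower area bounds on each region. Once all four inequalities are equalities, the rigidity clauses of Theorem~\ref{thm:area_bound_SD} (for $S_\pm = \pa M_\pm$) and Theorem~\ref{thm:lower_bound} give that $(M,\go,u)$ restricted to each side is the corresponding piece of the Schwarzschild--de Sitter solution with parameter $m$ when $0 < m < \mmax$, and the corresponding piece of the Nariai solution when $m = \mmax$; since the two pieces are glued isometrically along $\Sigma$ — which in the Schwarzschild--de Sitter (resp.\ Nariai) model is precisely $\{|x| = r_0(m)\}$ (resp.\ $\{\pi/2\}\times\Sph^2$) — the whole triple is isometric to that model.

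The main obstacle I anticipate is exactly the closing step: ruling out the "unbalanced" scenario in which one side has strict inequality in its area bound while the separating hypersurface still has the same area on both sides. The area bounds of Theorems~\ref{thm:area_bound_SD}--\ref{thm:lower_bound} are proved region-by-region and are not obviously tight enough, on their own, to propagate equality from one side to the other; what is needed is the finer information carried by the equality case of the monotonicity/balancing formulas along $\Sigma$ — in particular control of the normal derivatives of $u$ and of the second fundamental form of the level sets of $u$ as they approach $\Sigma$ from each side, together with the fact that $u$ attains its interior maximum value $\umax$ along all of $\Sigma$. Handling the stratified (possibly singular, lower-dimensional) part of ${\rm MAX}(u)$ rigorously — showing it does not contribute and that $\Sigma$ is genuinely an $(n-1)$-dimensional smooth hypersurface in the equality case — is the technical crux, and is presumably where the detailed work of Sections~\ref{sec:area_bounds}--\ref{sec:nariai_SD} is invoked. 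Finally, the dichotomy $0 < m < \mmax$ versus $m = \mmax$ falls out automatically: $m = \mmax$ exactly forces both regions to be cylindrical (surface gravity $\sqrt{3}$), landing in the Nariai rigidity case, whereas $m < \mmax$ gives a genuine outer/inner pair and the Schwarzschild--de Sitter rigidity case.
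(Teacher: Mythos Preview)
Your overall strategy---pinching $|\Sigma|$ between the upper bound (Theorem~\ref{thm:area_bound_SD}) and the lower bound (Theorem~\ref{thm:lower_bound}) applied to both sides---is natural, but it does not close, and you have correctly identified the obstruction yourself. Here are the concrete problems.

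First, the inner region $M_-$ is not assumed to have connected boundary, so you cannot write $|\pa M_-| = |S_-|$; the upper bound from Theorem~\ref{thm:area_bound_SD}(ii) applies only to the single horizon $S_-$ of largest surface gravity, while the lower bound from Theorem~\ref{thm:lower_bound}(ii) bounds all of $|\pa M_-|$ from below. These two inequalities do not chain together into a pinching on $|\Sigma|$ unless $\pa M_-$ is connected, which is \emph{a posteriori}, not \emph{a priori}. More importantly, even on the outer side where $\pa M_+$ is connected, the two inequalities only give $|\Sigma|\leq 4\pi r_0^2(m)$, with no mechanism to force equality. The balancing identities in Corollaries~\ref{cor:intid_rewr_+_SD}--\ref{cor:intid_rewr_N_SD} are statements about the horizons $\pa N$, not about $\Sigma$, so they cannot supply the missing lower bound $|\Sigma|\geq 4\pi r_0^2(m)$.

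The paper's argument is genuinely different in structure: it never uses the inner-side area bounds at all. The decisive step (Theorems~\ref{thm:estimates_SD} and~\ref{thm:estimates_N_SD}) is a local analysis along $\Sigma$ exploiting the $2$-sided hypothesis and $m_+=m_-$ to show that $\Sigma$ is in fact a smooth hypersurface with \emph{explicitly computed} scalar curvature $\RRR^\Sigma = 2m^{-2/3}$ (or $\RRR^\Sigma=6$ in the Nariai case). This comes from integrating the Bochner formula on a thin collar of $\Sigma$, using the $\mathscr{C}^3$ regularity of the pseudo-affine function across $\Sigma$ (Proposition~\ref{pro:C2}) and the fact that $|\na\ffi|_g\to 1$ from both sides, to force $\nana\ffi=0$ on $\Sigma$; then Gauss--Codazzi gives $\RRR^\Sigma$. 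Once $\RRR^\Sigma>0$ is known, Gauss--Bonnet on $\Sigma$ gives $\chi(\Sigma)>0$, hence $\chi(\Sigma)\geq 2$ since $\Sigma$ is separating and orientable. On the other hand, the outer-side chain
\[
\int_\Sigma \RRR^\Sigma\,\rmd\sigma \,\leq\, 2\,r_+^{-2}(m)\,|\pa M_+| \,\leq\, \int_{\pa M_+}\RRR^{\pa M_+}\,\rmd\sigma
\]
(Proposition~\ref{pro:BHG_n>3_SD} and Corollary~\ref{cor:Gauss-Codazzi_+_SD}) together with Gauss--Bonnet on $\pa M_+\cong\Sph^2$ gives $\chi(\Sigma)\leq 2$. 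So the closing mechanism is topological, via Euler characteristics, and the missing ingredient in your proposal is the explicit determination of $\RRR^\Sigma$ from the local analysis along $\Sigma$.
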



The hypothesis of connected cosmological horizon 
is motivated by the beautiful result in~\cite[Theorem~B]{Ambrozio}, where it is proven that any static solution $(M,\go,u)$ admits at most one unstable horizon. From a physical perspective, one may expect that the unstable horizons should be the ones of cosmological type, whereas the horizons of black hole type should be stable. This is what happens for the model solutions, as one can easily check. 
This observation leads us to formulate the following conjecture, which, if proven to be true, would allow to remove the assumption of connected cosmological horizon from Theorem~\ref{thm:BHU3D}.
\begin{conjecture}
An horizon of cosmological type is necessarily unstable. In particular, every static solution to problem~\eqref{eq:prob_SD} has at most one horizon of cosmological type.
\end{conjecture}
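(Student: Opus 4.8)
The plan is to prove the first, stronger assertion — that every horizon of cosmological type is unstable — and to derive the second from it. Indeed, once instability of cosmological horizons is established, the bound on their number is immediate: by \cite[Theorem~B]{Ambrozio} a static solution to~\eqref{eq:prob_SD} carries \emph{at most one} unstable horizon, hence at most one horizon of cosmological type. Thus the entire content is concentrated in the implication $\kappa(S) < \sqrt{n} \Rightarrow S$ unstable. Here stability is understood in the usual minimal-hypersurface sense: since each horizon $S\subseteq\pa M$ is totally geodesic, its Jacobi operator reduces to $L = \Delta_{S} + \Ric(\nu,\nu)$, where $\nu$ is the unit normal, and $S$ is unstable precisely when there is a test function $\phi$ with $\int_{S}\big(|\D\phi|^2 - \Ric(\nu,\nu)\,\phi^2\big) < 0$.

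First I would rewrite $\Ric(\nu,\nu)$ in intrinsic terms. Because $S$ is totally geodesic and the ambient scalar curvature is the constant $\RRR\equiv n(n-1)$, the Gauss equation gives $\Ric(\nu,\nu) = \tfrac12\big(n(n-1) - \RRR^{S}\big)$, where $\RRR^{S}$ is the scalar curvature of $S$. Testing the second variation with the constant function $\phi\equiv 1$ then shows that $S$ is unstable as soon as
\begin{equation*}
\int_{S}\Ric(\nu,\nu)\,\rmd\sigma \, = \, \frac{1}{2}\left( n(n-1)\,|S| - \int_{S}\RRR^{S}\,\rmd\sigma\right) \, > \, 0 \, ,
\end{equation*}
that is, as soon as $|S| > \tfrac{1}{n(n-1)}\int_{S}\RRR^{S}\,\rmd\sigma$. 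In dimension $n=3$, where $S\cong\Sph^2$ by \cite[Theorem~B]{Ambrozio}, the Gauss--Bonnet formula turns this into the transparent condition $|S| > 4\pi/3$. Observe that the borderline coincides exactly with the Nariai/cylindrical value appearing in~\eqref{eq:scalarcurvature_bound_N_SD}, and that a direct check on the model solutions shows the cosmological horizons to satisfy the \emph{strict} inequality (their areas sweep out $(4\pi/3,\,4\pi]$ as $m$ runs over $[0,\mmax)$), so the reduction is consistent with the expected classification.

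The heart of the matter, and the step I expect to be the main obstacle, is therefore to show that the surface-gravity condition $\kappa(S) < \sqrt{n}$ forces the area inequality $|S| > \tfrac{1}{n(n-1)}\int_{S}\RRR^{S}\,\rmd\sigma$ for a single horizon, with no symmetry assumption. The natural attempt is to run a Boucher--Gibbons--Horowitz/Chru\`sciel-type monotonicity for a suitable weighted quantity along the level sets $\{u=t\}$, but integrated \emph{from the horizon $S$ inward toward} ${\rm MAX}(u)$, rather than in the direction used to obtain the upper bounds of Theorem~\ref{thm:scalarcurvature_bound_SD}. The boundary contribution at $S$ carries the factor $\kappa(S)$, and the sign of $\kappa(S)-\sqrt{n}$ should dictate the direction of the resulting inequality, yielding a \emph{lower} area bound in the cosmological case and recovering the Nariai solution in the equality case.

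The difficulty is that the upper bounds available in this paper are produced by integrating outward from ${\rm MAX}(u)$, where the relevant monotone quantity has a favourable sign; the lower bound needed here requires controlling the geometry of the intermediate level sets with the \emph{opposite} sign, and there is no a priori monotonicity guaranteeing it once the fibres are not Einstein. An alternative route, which would bypass the area entirely, is to destabilise $S$ directly with a non-constant test function manufactured from the normal structure of $u$ near the horizon — for instance comparing the first eigenfunction of $L$ against a function built out of $|\D u|$ — so that the surface gravity enters the Rayleigh quotient explicitly. Either way, extracting the correct sign from the bare hypothesis $\kappa(S) < \sqrt{n}$ is precisely the gap that, at present, keeps the statement at the level of a conjecture.
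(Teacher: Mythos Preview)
The statement you are addressing is a \emph{conjecture} in the paper, not a theorem: the authors do not prove it and explicitly present it as open. There is therefore no proof in the paper against which to compare your proposal.

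Your write-up is not a proof but a strategy sketch, and you yourself acknowledge this in the final paragraph. The reduction you outline is correct and is exactly the mechanism the authors have in mind: instability of a totally geodesic horizon $S$ with the constant test function amounts to $\int_S \Ric(\nu,\nu)\,\rmd\sigma>0$, which via Gauss--Codazzi becomes the area lower bound $|S|>\tfrac{1}{n(n-1)}\int_S \RRR^S\,\rmd\sigma$; and once instability is known, Ambrozio's Theorem~B yields uniqueness. This is precisely the motivation the paper states before formulating the conjecture.

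The genuine gap you identify is also the correct one: the paper's machinery (the cylindrical ansatz, the gradient bound $|\nabla\varphi|_g\le 1$, the divergence identities) produces area \emph{upper} bounds for horizons of maximal surface gravity in a region, not lower bounds for an arbitrary cosmological horizon sitting among possibly several components of $\partial N$. Reversing the direction of the monotonicity, or manufacturing a non-constant destabilising test function from $|\D u|$, are both plausible avenues, but neither is carried out here or in the paper. Your final sentence is accurate: this is exactly why the statement remains a conjecture.
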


\subsection{Summary.}

In the remainder of the paper we will prove the results stated in this introduction.
We will first focus on outer and inner regions, since the analysis of these two cases is similar. 
Our study is based on the so called {\em cylindrical ansatz}, introduced in~\cite{Ago_Maz_1,Ago_Maz_3,Ago_Maz_2}, which consists is finding an appropriate conformal change of the original metric $\go$ in terms of the static potential $u$. 

After some preliminaries (Section~\ref{sec:pre}) in~\ref{sec:cyl_ans} we will describe this method, we will set up the formalism and we will provide some preliminary lemmata and computations that will be used throughout the paper.
Building on this, we will prove in Section~\ref{sec:int_id} a couple of integral identities in the conformal setting.

In Section~\ref{sec:area_bounds} we will proceed to the proof of the inequalities in Theorems~\ref{thm:area_bound_SD},~\ref{thm:scalarcurvature_bound_SD} and~\ref{thm:lower_bound}, for both the cases of outer and inner regions.
In Section~\ref{sec:consequences_SD} we will translate the integral identities proven in Section~\ref{sec:int_id} in terms of the original metric $\go$. As a consequence, we will prove the rigidity statements for Theorems~\ref{thm:area_bound_SD},~\ref{thm:scalarcurvature_bound_SD}, together with some weighted area inequalities for the horizons.

In Section~\ref{sec:ass_SD} we will show that our analysis can be improved under the assumption that the solution is $2$-sided, and this will lead us to the proof of Theorem~\ref{thm:BHU3D} stated in Subsection~\ref{sub:BHU_intro}, in the case where $m_+<\mmax$. 

Finally, in Section~\ref{sec:nariai_SD} we will focus on the cylindrical regions. The analysis of the cylindrical case is slightly different, as our model solution will be the Nariai triple instead of the Schwarzschild--de Sitter triple, however the ideas behind our analysis are completely analogous. In this section we will establish the results stated in Subsection~\ref{sub:area_bounds_intro} for cylindrical regions and we will complete the proof of Theorem~\ref{thm:BHU3D} by studying the case $m_+=\mmax$.

\section{Analytic preliminaries}
\label{sec:pre}

This section is devoted to the setup of the {\em cylindrical ansatz}, which will be the starting point of the proofs of our main results. We will work on a single region $N$ of our manifold $M$, and we will always suppose that $N$ is not cylindrical, that is
$$
\max_{S\in\pi_0(\pa N)}\kappa(S)
\,\neq\,\sqrt{n}\,.
$$
The case of equality requires a different analysis, and will be studied separately in Section~\ref{sec:nariai_SD}.

The cylindrical ansatz is inspired by the analogous technique used in~\cite{Ago_Maz_1,Ago_Maz_3,
Ago_Maz_2}, and consists in an appropriate conformal change of the original triple. The idea comes from the observation that the Schwarzschild--de Sitter metric can be made cylindrical via a division by $|x|^2$. In fact, the metric
$$
\frac{1}{|x|^2}\left(\frac{d|x|\otimes d|x|}{1-|x|^2- 2m |x|^{2-n}}+|x|^2 g_{\Sph^{n-1}}\right)\,=\,
\frac{d|x|\otimes d|x|}{|x|^2(1-|x|^2- 2m |x|^{2-n})}+ g_{\Sph^{n-1}}\,,
$$
after a rescaling of the coordinate $|x|$, is just the standard metric of the cylinder $\R\times\Sph^{n-1}$. We would like to perform a similar change of coordinates on a general solution $(M,\go,u)$.

To this end, in Subsection~\ref{sub:pseudo_radial} we are going to define on a region $N$ of a general triple $(M,\go,u)$ a {\em pseudo-radial function} $\Psi:N\to \R$. The function $\Psi$ will be constructed starting from the static potential $u$, and in the case where $u$ is as in the Schwarzschild--de Sitter solution~\eqref{eq:SD}, it will simply coincide with $|x|$. 

Subsection~\ref{sub:preparatory_estimates_SD} is devoted to the proof of the relevant properties of the pseudo-radial function. Most of the results in this subsection are quite technical, and the reader is advised to simply ignore this part of the work and to come back only when needed. 
However, there is one result that deserves to be mentioned. 
In Proposition~\ref{pro:rev_loj_SD} we will prove that static potentials satisfy a reverse {\L}ojasiewicz inequality. The proof does not depend so deeply on the equations in~\eqref{eq:prob_SD}, and can thus be adapted to a much larger family of functions. This will be the object of further studies. For the purposes of this work, the reverse {\L}ojasiewicz inequality will be crucial in the Minimum Principle argument that leads to Proposition~\ref{pro:min_pr_SD}. It is interesting to notice that Proposition~\ref{pro:min_pr_SD}, in turn, will allow to improve the reverse {\L}ojasiewicz inequality, as explained in Remark~\ref{rem:bound_Du}. However, since the proof of Proposition~\ref{pro:min_pr_SD} exploits the equations in~\eqref{eq:prob_SD}, we do not know if the improved {\L}ojasiewicz inequality still holds outside the realm of static potentials.

\subsection{The pseudo-radial function.}
\label{sub:pseudo_radial}

Let $(M,\go,u)$ be a solution to problem~\eqref{eq:prob_SD}, and let $N$ be a connected component of $M\setminus{\rm MAX}(u)$. 
As already discussed above, in this subsection we focus on inner and outer region. In other words, the quantity 
$$
\max_{S\in\pi_0(\pa N)}\kappa(S)\,=\,\max_{\pa N}\frac{|\D u|}{\umax}
$$ 
will always be supposed to be different from $\sqrt{n}$. In particular, the virtual mass 
$$
m\,=\,\mu(N,\go,u)\,,
$$
is strictly less than $\mmax$. The special case $m=\mmax$ will be discussed later, in Section~\ref{sec:nariai_SD}.

The aim of this subsection is that of defining a pseudo-radial function, that is, a function that mimic the behavior of the radial coordinate $|x|$ in the Schwarzschild--de Sitter solution.
First of all, we recall that our problem is invariant under a normalization of $u$, hence we first rescale $u$ in such a way that its maximum is the same as the maximum of the Schwarzschild--de Sitter solution with mass $m$.

\begin{notation}
	\label{not:SD}
We will make use of the notations $\mmax,\umax$ introduced in~\eqref{eq:mmax_SD},~\eqref{eq:umax_SD}. We recall their definitions here
	$$
	\mmax=\sqrt{\frac{(n-2)^{n-2}}{n^n}}\,,\qquad
	\umax(m)\,=\,\sqrt{1-\left(\frac{m}{\mmax}\right)^{2/n}}\,.
	$$
We emphasize that $\umax=\umax(m)$ is a function of the virtual mass $m$ of $N$. We will explicitate that dependence only when it will be significative.
\end{notation}

\begin{normalization}
We normalize $u$ in such a way that its maximum is $\umax(m)$, where $m$ is the virtual mass of $N$ and $\umax(m)$ is defined as in Notation~\ref{not:SD}.
\end{normalization}

\noindent As usual, we let $r_+(m)>r_-(m)\geq 0$ be the two positive roots of the polynomial $P_m(x)=x^{n-2}-x^n-2m$, we set $r_0(m)=[(n-2)m]^{1/n}$ and we define the function
\begin{align*}
F_m:[0,\umax(m)]\times [r_-(m),r_+(m)]\,\,&\longrightarrow\,\, \R
\\
(u,\psi)\,\,\longmapsto\,\, F_m(u,\psi)\,=\,u^2-1&+\psi^2+2m\psi^{2-n}
\end{align*}
It is a simple computation to show that $\pa F_m/\pa\psi=0$ if and only if $\psi=0$ or $\psi=r_0(m)$. Therefore, as a consequence of the Implicit Function Theorem we have the following.  

\begin{proposition}
Let $u$ be a positive function and let $\umax$ be its maximum value. Then there exist functions
$$
\psi_-:[0,\umax]\,\,\longrightarrow\,\,\left[r_-(m),r_0(m)\right]\,\,,\qquad
\psi_+:[0,\umax]\,\,\longrightarrow\,\,\left[r_0(m),r_+(m)\right]\,\,,
$$
such that $F_m(u,\psi_-(u))=F_m(u,\psi_+(u))=0$ for all $u\in[0,\umax(m)]$.
\end{proposition}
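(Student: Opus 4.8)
The plan is to apply the Implicit Function Theorem to the function $F_m$ around the two branches $\psi = r_-(m)$ and $\psi = r_0(m)$ at $u = 0$ (respectively the branches through $\psi = r_0(m)$ and $\psi = r_+(m)$), and then extend the resulting local solutions to the whole interval $[0, \umax]$ by a connectedness/continuation argument. First I would record the elementary fact already noted in the text: for fixed $u \in [0,\umax(m)]$, the equation $F_m(u,\psi) = 0$ reads $u^2 - 1 + \psi^2 + 2m\psi^{2-n} = 0$, i.e. $\psi^2 + 2m\psi^{2-n} = 1 - u^2$, and since $\psi \mapsto \psi^2 + 2m\psi^{2-n}$ is strictly decreasing on $(0, r_0(m)]$ and strictly increasing on $[r_0(m), \infty)$ (its derivative vanishes exactly at $\psi = r_0(m) = [(n-2)m]^{1/n}$, as $\pa F_m/\pa\psi = 2\psi - 2m(n-2)\psi^{1-n}$), the level $1 - u^2 \in [1 - \umax(m)^2, 1]$ is attained exactly once on $[r_-(m), r_0(m)]$ and exactly once on $[r_0(m), r_+(m)]$, provided the range of the monotone function covers that level. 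This monotonicity reduces the whole statement to checking the endpoint values.

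Next I would verify the range condition at the endpoints $u = 0$ and $u = \umax(m)$. At $u = 0$ the equation $\psi^2 + 2m\psi^{2-n} = 1$ is precisely $P_m(\psi)/\psi^{n-2} = \ldots$, equivalently $1 - \psi^2 - 2m\psi^{2-n} = 0$, whose roots on $(0,\infty)$ are exactly $r_-(m) < r_+(m)$ by definition; so $\psi_-(0) = r_-(m)$ and $\psi_+(0) = r_+(m)$. At $u = \umax(m)$, using $\umax(m)^2 = 1 - (m/\mmax)^{2/n}$ we get $\psi^2 + 2m\psi^{2-n} = (m/\mmax)^{2/n}$; I would check that the minimum value of the left-hand side, attained at $\psi = r_0(m)$, equals exactly $(m/\mmax)^{2/n}$ — this is the identity $r_0(m)^2 + 2m\, r_0(m)^{2-n} = (m/\mmax)^{2/n}$, a direct computation from $r_0(m) = [(n-2)m]^{1/n}$ and $\mmax = \sqrt{(n-2)^{n-2}/n^n}$. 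Hence at $u = \umax(m)$ both branches collapse to $\psi_-(\umax) = \psi_+(\umax) = r_0(m)$, and for $u \in [0, \umax(m))$ the value $1 - u^2$ lies strictly between $(m/\mmax)^{2/n}$ and $1$, so each of the two monotone restrictions hits it exactly once in the respective closed subinterval. This already gives existence and uniqueness of the functions $\psi_\pm$ pointwise, with the stated ranges $[r_-(m), r_0(m)]$ and $[r_0(m), r_+(m)]$.

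Finally I would argue regularity: on the open set where $\psi \neq 0$ and $\psi \neq r_0(m)$ we have $\pa F_m/\pa\psi \neq 0$, so the Implicit Function Theorem gives that $\psi_\pm$ are smooth (indeed analytic, since $F_m$ is) on $[0, \umax(m))$, or on the relevant half-open interval, with $\psi_\pm'(u)$ computable by implicit differentiation; continuity up to $u = \umax(m)$ follows from the squeeze $\psi_-(\umax) = r_0(m) = \psi_+(\umax)$ together with monotonicity of $u \mapsto \psi_\pm(u)$. (Monotonicity: as $u$ increases from $0$ to $\umax$, $1 - u^2$ decreases, so $\psi_-$ increases toward $r_0(m)$ and $\psi_+$ decreases toward $r_0(m)$.) The main obstacle — really the only nontrivial point — is the endpoint computation at $u = \umax(m)$, i.e. verifying the algebraic identity $r_0(m)^2 + 2m\, r_0(m)^{2-n} = (m/\mmax)^{2/n}$ so that the two branches meet exactly at $\psi = r_0(m)$; everything else is the Implicit Function Theorem plus elementary monotonicity, as the statement already anticipates by invoking it.
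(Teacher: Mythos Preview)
Your argument is correct and follows essentially the same approach as the paper, which simply notes that $\partial F_m/\partial\psi = 0$ iff $\psi = 0$ or $\psi = r_0(m)$ and then invokes the Implicit Function Theorem without further detail. Your version is considerably more explicit---the monotonicity analysis and the endpoint verification at $u = \umax(m)$ are exactly the content that makes the terse IFT citation work, and the algebraic identity you flag is indeed the one computation needed.
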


\noindent
Let us make a list of the main properties of $\psi_+$ and $\psi_-$, that can be derived easily from their definition. 
\begin{itemize}
\item First of all, we can compute $\psi_+$, $\psi_-$ and their derivatives using the following formul\ae
\begin{equation}
\label{eq:def_Psi_SD}
u^2\,=\,1-\psi_{\pm}^2-2m\psi_{\pm}^{2-n}\,.
\end{equation}
\begin{equation}
\label{eq:dpsideu_SD}
\dot{\psi}_{\pm}\,=\,-\frac{u}{\psi_{\pm}\big[1-\big(r_0(m)/\psi_{\pm}\big)^{n}\big]}\,,\qquad \ddot{\psi}_{\pm}\,=\,n\frac{\dot{\psi}_{\pm}^3}{u}+(n-1)\frac{\dot{\psi}_{\pm}^2}{{\psi}_{\pm}}+\frac{\dot{\psi}_{\pm}}{u}\,.
\end{equation}

\item The function $\psi_-$ takes values in $[r_-(m),r_0(m)]$, hence $\psi_-^n\leq r_0^n(m)=(n-2)m$ and from~\eqref{eq:dpsideu_SD} we deduce
$$
\dot\psi_-\,\geq\,0\,,\qquad \ddot\psi_-\,\geq\, 0\,,\qquad \lim_{u\to\umax^-}\dot\psi_-\,=\,+\infty\,.
$$

\item The function $\psi_+$ takes values in $[r_0(m),r_+(m)]$, hence $\psi_+^n\geq r_0^n(m)=(n-2)m$ and from the first formula in~\eqref{eq:dpsideu_SD} we deduce  that $\dot\psi_+$ is nonpositive and diverges as $u$ approaches $\umax$. Moreover, the second formula in~\eqref{eq:dpsideu_SD} can be rewritten as
$$
\ddot\psi_+\,=\,\frac{\dot\psi_+}{u}\bigg\{1+\left[1+(n-1)(n-2)m\psi_+^{-n}\right]\dot\psi_+^2\bigg\}\,,
$$ 
from which it follows $\ddot\psi_+\leq 0$.
Summing up, we have
$$
\dot\psi_+\,\leq\,0\,,\qquad \ddot\psi_+\leq 0\,,\qquad \lim_{u\to\umax^-}\dot\psi_+\,=\,-\infty\,.
$$
\end{itemize}

\noindent
Let us now come back to our case of interest, that is, let us consider a region $N\subseteq M\setminus{\rm MAX}(u)$. We want to use the functions $\psi_\pm$ in order to define a {\em pseudo-radial function} on $N$. To this end, we distinguish between the case where $N$ is an outer or an inner region, according to Definition~\ref{def:horiz}.

\begin{itemize}
\item If $N$ is an outer region, then our reference model will be the outer region of the Schwarzschild--de Sitter solution~\eqref{eq:SD}. Accordingly, we define the pseudo-radial function $\Psi_+$ as
\begin{equation}
\label{eq:pr_function_+}
\begin{split}
\Psi_+:\,N &\,\longrightarrow\, \left[r_0(m),r_+(m)\right]
\\
p&\,\longmapsto\, \Psi_+(p):=\psi_+(u(p))\,. 
\end{split}
\end{equation}
Notice that, if $N$ is the outer region of the Schwarzschild--de Sitter solution~\eqref{eq:SD} with mass $m$, for every $p\in N$ the value of $\Psi_+(p)$ is equal to the value of the radial coordinate $|x|$ at $p$.
\smallskip
\item If $N$ is an inner region, then our reference model will be the inner region of the Schwarzschild--de Sitter solution~\eqref{eq:SD}. Accordingly, we define the pseudo-radial function $\Psi_-$ as
\begin{equation}
\label{eq:pr_function_-}
\begin{split}
\Psi_-:\,N &\,\longrightarrow\, \left[r_-(m),r_0(m)\right]
\\
p&\,\longmapsto\, \Psi_-(p):=\psi_-(u(p))\,. 
\end{split}
\end{equation}
Notice that, if $N$ is the inner region of the Schwarzschild--de Sitter solution~\eqref{eq:SD} with mass $m$, for every $p\in N$ the value of $\Psi_-(p)$ is equal to the value of the radial coordinate $|x|$ at $p$.
\end{itemize}
\noindent
In the case of $2$-sided solutions we will need a global version of the definition above.
\begin{itemize}
\item If $(M,\go,u)$ is a $2$-sided solution in the sense of Definition~\ref{def:2-sided}, then we define the {\em global pseudo-radial function} as
\begin{equation}
\label{eq:pr_function_global}
\begin{split}
\Psi:\,M &\,\longrightarrow\, \left[r_-(m),r_+(m)\right]
\\
p&\,\longmapsto\, \Psi(p):=
\begin{dcases}
\psi_+(u(p))   & \hbox{ if } p\in M_+\,,
\\
\psi_-(u(p))   & \hbox{ if } p\in M_-\,,
\\
r_0(m) & \hbox{ if } p\in {\rm MAX}(u)\,.
\end{dcases}
\end{split}
\end{equation}
If $(M,\go,u)$ is isometric to the Schwarzschild--de Sitter solution~\eqref{eq:SD} with mass $m$, then $\Psi$ coincides with the radial coordinate $|x|$. The function $\Psi$ is continuous by construction, but {\em a priori} we have no more information about its regularity near the set ${\rm MAX}(u)$. However, in Subsection~\ref{sub:preparatory_estimates_SD} we will prove that $\Psi$ is always Lipschitz. Moreover, we will also show that $\Psi$ is $\mathscr{C}^2$ at the points of the top stratum of the hypersurface $\Sigma\subseteq{\rm MAX}(u)$ that separates $M_+$ and $M_-$.
\end{itemize} 

By definition, we have the following relation between the derivatives of the pseudo-radial function $\Psi$ and the potential $u$.
\begin{equation}
\label{eq:rel_u_gradient_Psi}
\D \Psi_{\pm}\,=\,(\dot{\psi}_\pm\circ u)\,\D u\,,\qquad\DD \Psi_\pm\,=\,(\dot{\psi}_\pm\circ u)\,\DD u\,+\,(\ddot{\psi}_\pm\circ u)\,du\otimes du\,.
\end{equation}

\begin{notation}
In the following sections, we will perform several formal computations. In order to simplify the notations, we will avoid to indicate the subscript $\pm$, and we will simply denote by $\Psi=\psi\circ u$ the pseudo-radial function on a region $N$ of $M\setminus{\rm MAX}(u)$, where we understand that $\Psi$ is defined by~\eqref{eq:pr_function_+} if we are in an outer region and by~\eqref{eq:pr_function_-} if we are in an inner region. When there is no risk of confusion, we will also avoid to explicitate the composition with $u$, namely, we will write $\psi$ instead of $\psi\circ u$. For instance, the formul\ae\ in~\eqref{eq:rel_u_gradient_Psi} will be simply written as
$$
\D \Psi\,=\,\dot{\psi}\,\D u\,,\qquad\DD \Psi\,=\,\dot{\psi}\,\DD u\,+\,\ddot{\psi}\,du\otimes du\,,
$$
\end{notation}

\begin{figure}
\centering
\includegraphics[scale=0.5]{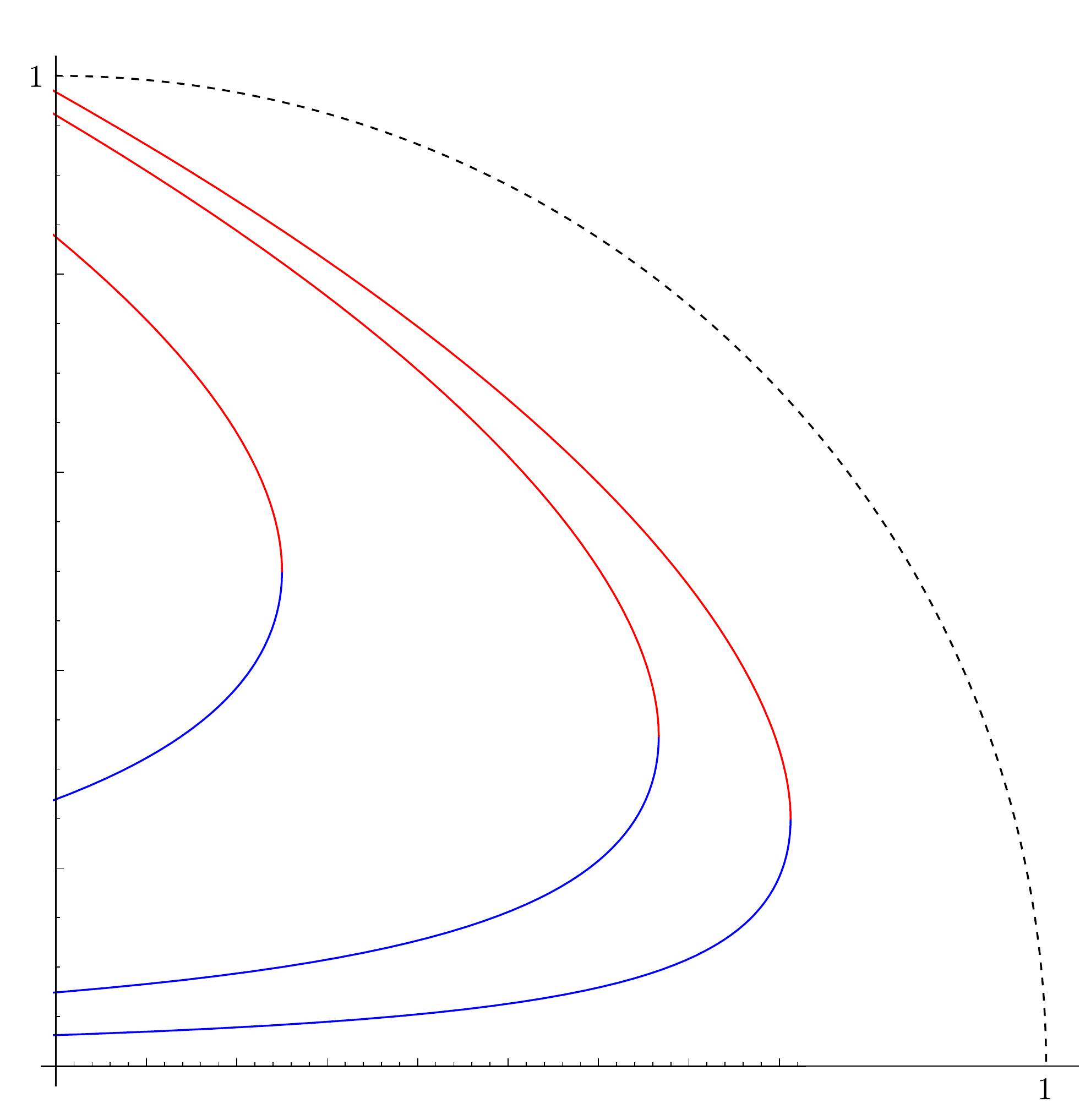}
\caption{\small Relation between $u^2$ (on the $x$-axis) and the pseudo-radial functions (on the $y$-axis) for different values of the virtual mass $m$. The blue lines represent the relation with $\psi_-$ whereas the red lines represent the relation with $\psi_+$. We have also included in the plot a dashed line showing the relation between the radial coordinate and the static potential in the de Sitter solution~\eqref{eq:D}, which represents the limit case when $m\to 0$.}
\label{fig:psi_SD}
\end{figure}

\subsection{Preparatory estimates}
\label{sub:preparatory_estimates_SD}

Here we collect some lemmata that will be useful in the following. The first one shows an important connection between the value of the pseudo-radial function at the boundary and the surface gravity.

\begin{lemma}
\label{le:bound_psi_SD}
Let $(M,\go,u)$ be a solution to problem~\eqref{eq:prob_SD} and let $N\subseteq M\setminus{\rm MAX}(u)$ be a region with virtual mass $m=\mu(N,\go,u)<\mmax$.
If $N$ is outer, then
$$
\max_{\pa N}\left|\frac{\D u}{r_+(m)\big[1-\big(r_0(m)/r_+(m)\big)^n\big]}\right|\,=\,1\,.
$$
If $N$ is inner, then
$$
\max_{\pa N}\left|\frac{\D u}{r_-(m)\big[1-\big(r_0(m)/r_-(m)\big)^n\big]}\right|\,=\,1\,.
$$
\end{lemma}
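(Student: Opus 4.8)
The key observation is that the pseudo-radial function $\Psi = \psi\circ u$ was designed precisely so that, on a model Schwarzschild--de Sitter solution, it coincides with the radial coordinate $|x|$; in particular the quantity appearing inside the absolute value is nothing but $|\D\Psi|$. Indeed, from the chain rule $\D\Psi = \dot\psi\,\D u$ together with the explicit formula~\eqref{eq:dpsideu_SD}, namely $\dot\psi_\pm = -u/\big(\psi_\pm[1-(r_0(m)/\psi_\pm)^n]\big)$, we get on the boundary $\pa N$, where $u = 0$ and $\psi_\pm = r_\pm(m)$ by~\eqref{eq:def_Psi_SD}, that
$$
\left|\frac{\D u}{r_\pm(m)\big[1-\big(r_0(m)/r_\pm(m)\big)^n\big]}\right|\;=\;\left|\frac{\D u}{u}\,\dot\psi_\pm\cdot u\right|_{\pa N}
$$
is a slightly misleading way to write it, since $\dot\psi$ degenerates; the cleaner route is to avoid $\dot\psi$ altogether at the boundary and instead relate the left-hand side directly to the surface gravity.

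\textbf{Main computation.} First I would rewrite the left-hand side in terms of the surface gravity $\kappa(S)$ of each horizon $S\subseteq\pa N$. Recall $\kappa(S) = |\D u|_{|S}/\umax$ by~\eqref{eq:surf_grav_normalization}, and that $|\D u|$ is locally constant on $\pa M$. Thus for each connected component $S$ of $\pa N$,
$$
\left|\frac{\D u}{r_\pm(m)\big[1-\big(r_0(m)/r_\pm(m)\big)^n\big]}\right|_{|S}
\;=\;\frac{\umax(m)\,\kappa(S)}{\big|r_\pm(m)\big[1-\big(r_0(m)/r_\pm(m)\big)^n\big]\big|}\,.
$$
Now I would invoke the defining formulas for the surface gravity functions $k_\pm$ in~\eqref{eq:k+},~\eqref{eq:k-}, which read
$$
k_\pm(m)\;=\;\sqrt{\frac{r_\pm^2(m)\big[1-\big(r_0(m)/r_\pm(m)\big)^n\big]^2}{1-(m/\mmax)^{2/n}}}
\;=\;\frac{\big|r_\pm(m)\big[1-\big(r_0(m)/r_\pm(m)\big)^n\big]\big|}{\umax(m)}\,,
$$
using $\umax(m) = \sqrt{1-(m/\mmax)^{2/n}}$ from Notation~\ref{not:SD}. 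Substituting, the displayed quantity on $S$ equals simply $\kappa(S)/k_\pm(m)$.

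\textbf{Conclusion via the definition of virtual mass.} Taking the maximum over the connected components of $\pa N$, the left-hand side of each asserted identity becomes $\big(\max_{S\in\pi_0(\pa N)}\kappa(S)\big)/k_\pm(m)$. It remains to show this equals $1$, i.e.\ that $\max_{S}\kappa(S) = k_\pm(m)$ where $m = \mu(N,\go,u)$. But this is immediate from Definition~\ref{def:virtual_mass}: in the outer case $m = k_+^{-1}(\max_{\pa N}|\D u|/\umax) = k_+^{-1}(\max_S\kappa(S))$, hence $k_+(m) = \max_S\kappa(S)$; the inner case is identical with $k_-$. This uses crucially that $m<\mmax$ so that $k_\pm$ is a genuine homeomorphism onto its range and $k_\pm^{-1}$ makes sense, as guaranteed by the hypothesis $\max_S\kappa(S)\neq\sqrt n$. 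The only genuine point requiring care—the ``main obstacle'', such as it is—is bookkeeping the sign of $r_\pm[1-(r_0/r_\pm)^n]$: since $r_-<r_0<r_+$, one has $1-(r_0/r_+)^n>0$ while $1-(r_0/r_-)^n<0$, so the absolute values in~\eqref{eq:k+},~\eqref{eq:k-} are doing real work and must be tracked, but this is purely algebraic. I would present the two cases in parallel, noting that all identities follow from~\eqref{eq:def_Psi_SD},~\eqref{eq:dpsideu_SD}, the formulas~\eqref{eq:k+},~\eqref{eq:k-}, and Definition~\ref{def:virtual_mass}.
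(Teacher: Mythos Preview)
Your proof is correct and follows essentially the same approach as the paper: both arguments reduce the identity to $\max_{\pa N}\kappa(S) = k_\pm(m)$, which is immediate from Definition~\ref{def:virtual_mass}, and then recognize the denominator $|r_\pm(m)[1-(r_0(m)/r_\pm(m))^n]|$ as $\umax(m)\,k_\pm(m)$ using the explicit formulas~\eqref{eq:k+},~\eqref{eq:k-}. Your opening paragraph about $\dot\psi$ is an unnecessary digression (as you yourself note), and the paper's proof is correspondingly shorter, but the substance is identical.
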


\begin{proof}
The proof is an easy computation. We recall from the definition of the virtual mass $m$ of $N$, that $\max_{\pa N}|\D u|/\umax= k_\pm(m)$, where $k_\pm$ are the surface gravity functions defined by~\eqref{eq:k+} and~\eqref{eq:k-}, and the sign $\pm$ depends on whether $N$ is an outer or inner region. Therefore, we have
\begin{align*}
	\max_{\pa N}\left|\frac{\D u}{r_\pm(m)\big[1-\big(r_0(m)/r_\pm(m)\big)^n\big]}\right|\,&=\,\frac{\umax}{r_\pm(m)\big|1-\big(r_0(m)/r_\pm(m)\big)^n\big|}\,\max_{\pa N}\frac{|\D u|}{\umax}\,
	\\
	&=\,\frac{\umax}{r_\pm(m)\big|1-\big(r_0(m)/r_\pm(m)\big)^n\big|}\,k_\pm(m)
	\\
	&=\,1\,,
\end{align*}
where the last equality follows from the definition of $k_+$ and $k_-$.
\end{proof}

\begin{remark}
\label{rem:bound_psi_SD}
Following the proof of Lemma~\ref{le:bound_psi_SD}, it is easy to see that, if $N$ is an outer region, then, for every $\mu(N,\go,u)\leq m\leq\mmax$ it holds
$$
\max_{\pa N}\left|\frac{\D u}{r_+(m)\big[1-\big(r_0(m)/r_+(m)\big)^n\big]}\right|\,\leq 1\,.
$$
Similarly, if $N$ is an inner region, one can see that for every $0\leq m\leq \mu(N,\go,u)$ it holds
$$
\max_{\pa N}\left|\frac{\D u}{r_-(m)\big[1-\big(r_0(m)/r_-(m)\big)^n\big]}\right|\,\leq 1\,.
$$
This remark will be useful in Section~\ref{sec:ass_SD}, where we will work with parameters $m$ that do not necessarily coincide with the virtual mass.
\end{remark}

We now pass to discuss an estimate for the gradient of the potential $u$ near the maximum points. This estimate will be an important ingredient in the proof of Lemma~\ref{le:estimate_upsi_SD} below, which is the result that we will actually need in the following. However, Proposition~\ref{pro:rev_loj_SD} is also interesting on its own. In fact, it can be interpreted as a reverse {\L}ojasiewicz inequality for the function $u$ (for the original {\L}ojasiewicz inequality, see~\cite[Th{\'e}or{\`e}m~4]{Lojasiewicz_1}). Proposition~\ref{pro:rev_loj_SD} is stated for solutions to problem~\eqref{eq:prob_SD}, but we emphasize that a similar property can be proven for a much larger class of functions, as it will be discussed in a forthcoming work.

\begin{proposition}
\label{pro:rev_loj_SD}
Let $(M,\go,u)$ be a solution to problem~\eqref{eq:prob_SD} and let $\umax$ be the maximum of $u$. Then, for every $0<\beta<1$, there exists a constant $K_{\beta}$ and an open neighborhood $\Omega_{\beta}\supset{\rm MAX}(u)$ such that  
$$
|\D u|^2(x)\,\leq\,K_{\beta}\,[\umax-u(x)]^\beta\,,
$$
for all $x\in \Omega_{\beta}$.
\end{proposition}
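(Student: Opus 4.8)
The plan is to establish the reverse {\L}ojasiewicz estimate $|\D u|^2 \leq K_\beta (\umax - u)^\beta$ near ${\rm MAX}(u)$ by a blow-up / contradiction argument combined with the structure theory of the analytic set ${\rm MAX}(u)$. First I would observe that away from ${\rm MAX}(u)$ there is nothing to prove, so the issue is purely local near a point $p \in {\rm MAX}(u)$; by compactness of ${\rm MAX}(u)$ it suffices to produce, for each such $p$, a neighborhood and a constant on which the inequality holds, and then take a finite subcover and the maximum of the constants. Fix $\beta \in (0,1)$ and suppose, for contradiction, that no such bound exists: then there is a sequence $x_k \to {\rm MAX}(u)$ (after passing to a subsequence, $x_k \to p \in {\rm MAX}(u)$) with $|\D u|^2(x_k) > k\,(\umax - u(x_k))^\beta$. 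In particular $x_k \notin {\rm MAX}(u)$, so $u(x_k) < \umax$, and since $\D u$ vanishes on ${\rm MAX}(u)$ (it is a maximum locus) we also get $|\D u|(x_k) \to 0$; hence $(\umax - u(x_k))^\beta = o(|\D u|^2(x_k)) \to 0$ as well.

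The heart of the matter is to turn this into a quantitative contradiction using the equations in~\eqref{eq:prob_SD}. The natural tool is the elliptic/Bochner structure they impose. From $\De u = -n u$ together with $u\,\Ric = \DD u + n u\, g_0$, one can derive a differential identity for $|\D u|^2$ along the flow of $\D u$: contracting the first equation with $\D u \otimes \D u$ gives $\D^2 u(\D u,\D u) = u\,\Ric(\D u,\D u) - n u |\D u|^2$, i.e. $\tfrac12 \langle \D|\D u|^2, \D u\rangle = u\bigl(\Ric(\D u,\D u) - n|\D u|^2\bigr)$. More useful is the classical observation (used already by Boucher--Gibbons--Horowitz and in~\cite{Ambrozio}) that the quantity $|\D u|^2 + u^2$ is closely related to a constant along integral curves of $\D u$ in the rotationally symmetric case and, in general, satisfies a tractable comparison. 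The strategy I would pursue is: set $W = |\D u|^2$ and study the ratio $W/(\umax - u)^\beta$ by following the gradient flow line through $x_k$ toward ${\rm MAX}(u)$. Parametrize by $u$ itself near $p$ (legitimate where $\D u \neq 0$); along such a curve, $\tfrac{dW}{du} = \tfrac{\langle \D W, \D u\rangle}{|\D u|^2} = \tfrac{2u(\Ric(\D u,\D u) - n|\D u|^2)}{|\D u|^2}$, which, since $\Ric$ is bounded (the metric is smooth on compact $M$) and $u$ is bounded, yields $\bigl|\tfrac{dW}{du}\bigr| \leq C_1 + C_2 \tfrac{|\Ric(\D u, \D u)|}{|\D u|^2} \leq C_3$ as long as we stay in a region where $|\D u|$ is bounded below — but that is exactly what fails near ${\rm MAX}(u)$, so one needs the bound $|\Ric(\D u,\D u)| \leq \|\Ric\| |\D u|^2$, giving $\bigl|\tfrac{dW}{du}\bigr| \leq C\,u \leq C\umax =: \Lambda_0$ \emph{uniformly}. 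Integrating from the maximum value (where $W = 0$ since $\D u = 0$ on ${\rm MAX}(u)$) down to $u(x)$ along the flow line gives $W(x) = |\D u|^2(x) \leq \Lambda_0\,(\umax - \ell(x))$ where $\ell(x)$ is the value of $u$ at the ${\rm MAX}(u)$-endpoint of the flow line — which is $\umax$. So naively one even gets the linear bound $|\D u|^2 \leq \Lambda_0(\umax - u)$; the subtlety, and the reason only the weaker exponent $\beta < 1$ is claimed, is that the gradient flow line emanating from $x$ need not reach ${\rm MAX}(u)$ in finite $u$-length, or may do so only after traveling an arbitrarily long Riemannian distance, so the "initial condition $W = 0$" is not literally available and must be replaced by a limit argument controlling the length of the flow line — this is where the stratified analytic structure of ${\rm MAX}(u)$ from {\L}ojasiewicz's theorem and the local normal-form coordinates $f: \Omega \to \R^n$ enter.

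Concretely, I would use the local analytic triviality: near $p \in \Sigma^i$ there are coordinates in which ${\rm MAX}(u)$ is an $i$-plane, and in these coordinates one can estimate $\umax - u$ from below by a power of the distance to that plane (this is the direction of the {\L}ojasiewicz gradient inequality that \emph{does} hold) and $|\D u|$ from above by controlling the Taylor expansion of the analytic function $u$; the loss from the optimal exponent to an arbitrary $\beta < 1$ absorbs the a priori unknown {\L}ojasiewicz exponent of $u$ at $p$, which may be close to (but is strictly less than, by analyticity) the critical value. The main obstacle, as I see it, is precisely making the gradient-flow integration rigorous without assuming the flow line reaches ${\rm MAX}(u)$: one must argue that either the flow line accumulates on ${\rm MAX}(u)$ with $u \to \umax$ (so the "boundary term" $W \to 0$ by continuity of $\D u$ and the fact that $\D u \equiv 0$ there), or else it stays a definite distance away, in which case $|\D u|$ is bounded below there and $x$ was not in the problematic neighborhood after all. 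Combining the uniform bound $|\tfrac{dW}{du}| \leq \Lambda_0$ on the derivative with this dichotomy gives $|\D u|^2(x) \leq \Lambda_0(\umax - u(x))$ on a neighborhood, which is even stronger than the claimed $K_\beta(\umax - u)^\beta$ since $\umax - u$ is small; alternatively, if the flow-line-length issue genuinely forces a loss, the $\beta$-version is what survives, and choosing the neighborhood $\Omega_\beta$ small enough that $(\umax - u) \leq 1$ there lets one trade the exponent freely. Either way the contradiction with $|\D u|^2(x_k) > k(\umax - u(x_k))^\beta$ is reached for $k$ large.
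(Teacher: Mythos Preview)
Your gradient-flow idea is correct and in fact proves more than the proposition claims; the hedging and detours are unnecessary. The key computation is right: from $u\,\Ric = \DD u + nu\,\go$ one gets $\DD u(\D u,\D u) = u\bigl(\Ric(\D u,\D u) - n|\D u|^2\bigr)$, so along any forward integral curve of $\D u$, reparametrised by $s = u$, the function $W(s) = |\D u|^2$ satisfies $|dW/ds| \le 2\umax(\|\Ric\|_\infty + n) =: \Lambda_0$. Your concern about the endpoint is misplaced: along the forward flow $u$ increases to some limit $c \le \umax$, the $\omega$-limit set consists of critical points (standard for gradient flows on compact manifolds), hence $W(s) \to 0$ as $s \to c^-$. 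Integrating yields directly $|\D u|^2(x) \le \Lambda_0\,(c - u(x)) \le \Lambda_0\,(\umax - u(x))$, the \emph{linear} bound --- stronger than the stated $\beta<1$ version, and valid throughout $M\setminus\pa M$. It is irrelevant whether the flow accumulates on ${\rm MAX}(u)$ or on some lower critical set, so the contradiction setup, the {\L}ojasiewicz structure theory, and the dichotomy about flow-line length can all be dropped.

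This route is genuinely different from the paper's. There one applies the Weak Maximum Principle to $w = |\D u|^2 - K(\umax - u)^\beta$: computing $\De w$ via Bochner's formula and choosing $K$ large, one obtains an elliptic inequality $\De w - c(x)\,w \ge 0$ on a collar of ${\rm MAX}(u)$, whence $w\le 0$ there. That argument only yields the sub-linear exponent at this stage (the linear improvement is recovered later, in Remark~\ref{rem:bound_Du}, through the separate Minimum-Principle argument for the pseudo-affine function). Your ODE argument is more direct and reaches the sharp exponent at once; what it gives up is generality, since it leans on the pointwise identity $\DD u(\D u,\D u) = u\,\Ric(\D u,\D u) - nu|\D u|^2$, whereas the paper explicitly flags that its Maximum-Principle proof is designed to extend to a broader class of functions than static potentials.
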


\begin{proof}
We consider the function
$$
w\,=\,|\D u|^2-K(\umax-u)^\beta\,,
$$
where $K>0$ is a constant that will be chosen conveniently later. We compute
$$
\D w\,=\,\D|\D u|^2\,+\,\beta K (\umax-u)^{-(1-\beta)}\D u\,,
$$
and diverging the above formula
\begin{align*}
\De w\,&=\,\De|\D u|^2+\beta K (\umax-u)^{-(1-\beta)}\De u+\beta(1-\beta) K (\umax-u)^{-(2-\beta)}|\D u|^2
\\
&=\,2|\DD u|^2+2\Ric(\D u,\D u)+2\langle \D\De u\,|\,\D u \rangle+\beta K \frac{\De u}{(\umax-u)^{1-\beta}}+\beta(1-\beta) K \frac{|\D u|^2}{(\umax-u)^{2-\beta}}\,,
\end{align*}
where in the second equality we have used Bochner formula. Since $|\D u|$ goes to zero as we approach ${\rm MAX}(u)$, so does the quantity $h=2\Ric(\D u,\D u)+2\langle \D\De u|\D u \rangle$.
Moreover, we have $|\DD u|\geq (\De u)^2/n=n u^2>0$ in a neighborhood of ${\rm MAX}(u)$. From the compactness of the level sets of $u$, it follows that we can choose $\eta>0$ small enough such that 
$$
|h|\,\leq\, 2\,|\DD u|^2\quad \hbox{ on }\,
\{\umax-\eta\leq u\leq \umax\}\,.
$$
Therefore, from the identity above we find
\begin{align*}
\De w\,
&\geq\,\beta K \frac{\De u}{(\umax-u)^{1-\beta}}+\beta(1-\beta) K \frac{|\D u|^2}{(\umax-u)^{2-\beta}}
\\
&=\,\beta K \frac{\De u}{(\umax-u)^{1-\beta}}+\beta(1-\beta) K \frac{w}{(\umax-u)^{2-\beta}}+\beta(1-\beta) K^2 \frac{1}{(\umax-u)^{2-2\beta}}\,,
\end{align*}
where in the second equality we have used $|\D u|^2=w+K(\umax-u)^{\beta}$. It follows that, on $\{\umax-\eta\leq u\leq \umax\}$, it holds
\begin{equation}
\label{eq:ellipticine}
\De w-\beta(1-\beta)K\frac{1}{(\umax-u)^{2-\beta}}w\,\geq\,\beta X\left[\De u+(1-\beta)X\right]\,,
\end{equation}
where 
$$
X=\frac{K}{(\umax-u)^{1-\beta}}\,.
$$
On $\{\umax-\eta\leq u\leq \umax\}$ we have  
$$
X\,=\,\frac{K}{(\umax-u)^{1-\beta}}\,\geq\,\frac{K}{\eta^{1-\beta}}\,,
$$
Moreover, $\De u$ is continuous and thus bounded in a neighborhood of ${\rm MAX}(u)$. This means that, for any $K$ big enough, we have $(1-\beta)X+\De u\geq 0$ on the whole $\{\umax-\eta\leq u\leq \umax\}$.
For such values of $K$, the right hand side of~\eqref{eq:ellipticine} is nonnegative, that is,
$$
\De w-\beta(1-\beta)K\frac{1}{(\umax-u)^{2-\beta}}w\,\geq\,0\,, \ \,\hbox{ on }\,\{\umax-\eta\leq u\leq \umax\}\,.
$$
Therefore, we can apply the Weak Maximum Principle~\cite[Corollary~3.2]{Gil_Tru} to $w$ in any open set where $w$ is $\mathscr{C}^2$ --that is, on any open set of $\{\umax-\eta\leq u\leq \umax\}$ that does not intersect ${\rm MAX}(u)$.
Up to increasing the value of $K$, if needed, we can suppose
$$
K\,\geq\,\max_{\{u=\umax-\eta\}}\frac{|\D u|^2}{(\umax-u)^\beta}\,=\,\frac{\max_{\{u=\umax-\eta\}}|\D u|^2}{\eta^\beta}\,,
$$
so that $w\leq 0$ on $\{u=\umax-\eta\}$. Now we apply the Weak Maximum Principle to the function $w$ on the open set $\Omega_\epsilon=\{\umax-\eta\leq u\leq \umax-\epsilon\}$, obtaining
$$
w\,\leq\, \max_{\pa \Omega_\ep} (w)\,=\,\max\left\{\max_{\{u=\umax-\ep\}} \!(w)\,,\,\max_{\{u=\umax-\eta\}} \!(w)\right\}\,\leq\,\max\left\{\max_{\{u=\umax-\ep\}} \!(w)\,,\,0\right\}\,.
$$
Taking the limit as $\ep\to 0$, from the continuity of $u$ and the compactness of the level sets, we have  $\lim_{\ep\to 0}\max_{\{u=\umax-\ep\}} \!(w)=0$, hence we obtain $w\leq 0$ on $\{\umax-\eta\leq u\leq \umax\}$. Recalling the definition of $w$, we have proved that the inequality
$$
|\D u|^2\,\leq\, K(\umax-u)^\beta
$$
holds in $\Omega=\{\umax-\eta\leq u< \umax\}$, which is a collar neighborhood of ${\rm MAX}(u)$. 
\end{proof}

\noindent
The above result can actually be improved in the following way. Take $\alpha<\beta<1$. In the neighborhood $\Omega_{\beta}$ given by Proposition~\ref{pro:rev_loj_SD}, we have
$$
\frac{|\D u|^2}{(\umax-u)^\alpha}\,=\,
\frac{|\D u|^2}{(\umax-u)^\beta}\cdot (\umax-u)^{\beta-\alpha}\leq K_{\beta} \,(\umax-u)^{\beta-\alpha}\,,
$$
for some constant $K_{\beta}$.
Since $\beta>\alpha$, the right hand side goes to zero as we approach ${\rm MAX}(u)$ and we obtain the following corollary.

\begin{corollary}
\label{cor:Dpsi_SD}
Let $(M,\go,u)$ be a solution to problem~\eqref{eq:prob_SD} and let $\umax$ be the maximum of $u$. Then, for every $p\in{\rm MAX}(u)$, it holds 
$$
\lim_{x\not\in{\rm MAX}(u),\,x\to p}\frac{|\D u|^2}{(\umax-u)^\a}(x)\,=\,0\,,
$$
for all $0<\a<1$.
\end{corollary}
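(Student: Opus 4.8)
The plan is to obtain this as an immediate consequence of Proposition~\ref{pro:rev_loj_SD} via a simple interpolation in the exponent. Fix $\alpha\in(0,1)$ and a point $p\in{\rm MAX}(u)$. Since $\alpha<1$, there is room to pick an auxiliary exponent $\beta$ with $\alpha<\beta<1$; applying Proposition~\ref{pro:rev_loj_SD} with this choice of $\beta$ produces a constant $K_{\beta}>0$ and an open neighborhood $\Omega_{\beta}\supset{\rm MAX}(u)$ on which
$$
|\D u|^2\,\leq\,K_{\beta}\,(\umax-u)^{\beta}\,.
$$

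Next I would restrict attention to points $x\in\Omega_{\beta}\setminus{\rm MAX}(u)$, so that $\umax-u(x)>0$ and the quotient in the statement is well defined, and factor
$$
\frac{|\D u|^2}{(\umax-u)^{\alpha}}(x)\,=\,\frac{|\D u|^2}{(\umax-u)^{\beta}}(x)\cdot(\umax-u)^{\beta-\alpha}(x)\,\leq\,K_{\beta}\,\big(\umax-u(x)\big)^{\beta-\alpha}\,.
$$
Because $\beta-\alpha>0$ and $u$ is continuous with $u(p)=\umax$, the right-hand side tends to $0$ as $x\to p$, which is exactly the asserted limit.

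There is essentially no obstacle here: the only point worth recording is that the hypothesis $\alpha<1$ is precisely what allows an admissible intermediate exponent $\beta\in(\alpha,1)$ to exist, so that Proposition~\ref{pro:rev_loj_SD} — which only supplies the bound for exponents strictly below $1$ — can be invoked; and the limit is understood along points avoiding ${\rm MAX}(u)$, exactly so that the expression being estimated makes sense.
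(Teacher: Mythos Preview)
Your proof is correct and is essentially identical to the paper's own argument: choose $\beta$ with $\alpha<\beta<1$, apply Proposition~\ref{pro:rev_loj_SD} to bound $|\D u|^2/(\umax-u)^{\alpha}$ by $K_\beta(\umax-u)^{\beta-\alpha}$, and let $x\to p$.
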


\noindent 
Of course, we have specified $x\not\in{\rm MAX}(u)$ in the limit above because otherwise the function in the argument is not defined.
Corollary~\ref{cor:Dpsi_SD}, in turn, allows us to prove the following useful estimate near ${\rm MAX}(u)$.

\begin{lemma}
\label{le:estimate_upsi_SD}
Let $(M,\go,u)$ be a solution to problem~\eqref{eq:prob_SD} and let $\Psi=\psi\circ u$ be defined by~\eqref{eq:pr_function_+} or~\eqref{eq:pr_function_-} with respect to a parameter $m\in(0,\mmax)$. Then, for every $p\in{\rm MAX}(u)$, it holds 
$$
\lim_{x\to p}{\dot\psi^{2\a}}|\D u|^2(x)\,=\,0\,,
$$
for every $0<\a<1$.
\end{lemma}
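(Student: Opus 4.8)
The plan is to deduce the statement from Corollary~\ref{cor:Dpsi_SD} by showing that $\dot\psi$ cannot blow up faster than $(\umax-u)^{-1/2}$ as one approaches ${\rm MAX}(u)$. Concretely, I would first prove the auxiliary claim that the quantity $\dot\psi^2\,(\umax-u)$ stays bounded on a collar neighborhood of ${\rm MAX}(u)$, and then conclude by writing
$$
\dot\psi^{2\a}\,|\D u|^2 \,=\, \big[\dot\psi^2(\umax-u)\big]^{\a}\,\cdot\,\frac{|\D u|^2}{(\umax-u)^{\a}}\,,
$$
where, as $x\to p\in{\rm MAX}(u)$, the first factor remains bounded and the second tends to $0$ for every $0<\a<1$ by Corollary~\ref{cor:Dpsi_SD}.

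To establish the auxiliary claim I would introduce the one-variable function $g(\psi)=\psi^2+2m\psi^{2-n}$, so that~\eqref{eq:def_Psi_SD} reads $u^2=1-g(\psi)$; evaluating at a maximum point of $u$, where $\psi=r_0(m)$, gives $\umax^2=1-g(r_0(m))$, hence $\umax^2-u^2=g(\psi)-g(r_0(m))$. A direct computation shows $g'(\psi)=2\psi\big(1-(r_0(m)/\psi)^n\big)$, so $g'(r_0(m))=0$, while using $r_0(m)^n=(n-2)m$ one gets $g''(r_0(m))=2n>0$; moreover the first identity in~\eqref{eq:dpsideu_SD} can be rewritten as $\dot\psi=-2u/g'(\psi)$. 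Therefore
$$
\dot\psi^2\,(\umax-u)\,=\,\frac{4u^2(\umax-u)}{g'(\psi)^2}\,=\,\frac{4u^2}{\umax+u}\cdot\frac{g(\psi)-g(r_0(m))}{g'(\psi)^2}\,.
$$
On the region $N$ one has $0\le u\le\umax$ and, since $m\in(0,\mmax)$, also $0<r_-(m)\le\psi\le r_+(m)$, so the first fraction on the right is bounded; for the second fraction, since $g$ is smooth near $r_0(m)$ with a nondegenerate critical point there, writing $g'(s)=(s-r_0(m))\,\tilde g(s)$ with $\tilde g$ continuous and $\tilde g(r_0(m))=2n$ gives, on a small interval around $r_0(m)$, estimates of the form $c\,(\psi-r_0(m))^2\le g(\psi)-g(r_0(m))$ and $g'(\psi)^2\ge C\,(\psi-r_0(m))^2$ with $c,C>0$, so that $\big(g(\psi)-g(r_0(m))\big)/g'(\psi)^2$ is bounded there. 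Since $\psi=\psi_\pm(u)\to r_0(m)$ as $u\to\umax$, and since by compactness of the level sets of $u$ the potential $u$ is uniformly close to $\umax$ on a suitable neighborhood of ${\rm MAX}(u)$, this yields the desired bound for $\dot\psi^2(\umax-u)$ near ${\rm MAX}(u)$.

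With the auxiliary claim in hand, taking $\a$-th powers shows $\dot\psi^{2\a}(\umax-u)^{\a}$ is bounded near ${\rm MAX}(u)$, and multiplying by $|\D u|^2(\umax-u)^{-\a}$, which tends to $0$ at every $p\in{\rm MAX}(u)$ by Corollary~\ref{cor:Dpsi_SD}, gives $\dot\psi^{2\a}|\D u|^2\to 0$ as $x\to p$. I do not expect a serious obstacle; the only mildly delicate points are treating the outer and inner cases uniformly (handled by working with $|g'(\psi)|$ and $|\psi-r_0(m)|$, so that the one-sided sign of $\psi_\pm-r_0(m)$ is irrelevant) and making sure the neighborhoods involved are genuine neighborhoods of all of ${\rm MAX}(u)$, including its lower-dimensional strata — which is exactly where the compactness of the level sets of $u$ is used.
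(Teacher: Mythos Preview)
Your proposal is correct and follows essentially the same strategy as the paper: both show that $(\umax-u)\,\dot\psi^2$ stays bounded near ${\rm MAX}(u)$, then factor $\dot\psi^{2\a}|\D u|^2=[(\umax-u)\dot\psi^2]^\a\cdot |\D u|^2/(\umax-u)^\a$ and invoke Corollary~\ref{cor:Dpsi_SD}. The only difference is in how the boundedness is established: the paper carries out an explicit algebraic computation in the variable $z=r_0(m)/\psi$ and obtains the precise limiting formula~\eqref{eq:lim_DPsi_SD}, which is reused later in the proof of Proposition~\ref{pro:C2}; your Taylor-expansion argument via the nondegenerate critical point of $g$ is cleaner for the lemma itself but does not yield that extra information.

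One small slip to fix: to bound the ratio $\big(g(\psi)-g(r_0(m))\big)/g'(\psi)^2$ from above you need an \emph{upper} bound $g(\psi)-g(r_0(m))\le C'(\psi-r_0(m))^2$ on the numerator, not the lower bound you wrote. Both inequalities hold by Taylor's theorem since $g''(r_0(m))=2n>0$, so the argument goes through unchanged once the direction is corrected.
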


\begin{proof}
First, we compute
\begin{align*}
\frac{\umax-u}{\big[1-\big(r_0(m)/\psi\big)^n\big]^2}\,&=\,\frac{1}{\umax+u}\,\frac{\umax^2-u^2}{\big[1-\big(r_0(m)/\psi\big)^n\big]^2}
\\ 
&=\,
\frac{1}{\umax+u}\,\frac{1-(m/\mmax)^{2/n}-1+\psi^2+2m\psi^{2-n}}{\big[1-\big(r_0(m)/\psi\big)^n\big]^2}
\\
&\!\!\!\!\!\!\!=\,
\frac{1}{(\umax+u)\big[1-\big(r_0(m)/\psi\big)^n\big]}\,\left[\psi^2-\frac{n\,m^{\frac{2}{n}}}{(n-2)^{\frac{n-2}{n}}}\,\frac{1-\big(r_0(m)/\psi\big)^{n-2}}{1-\big(r_0(m)/\psi\big)^n}\right].
\end{align*}
We want to show that the quantity above has a finite nonzero limit as we approach ${\rm MAX}(u)$. If we denote $z=r_0(m)/\psi$, the equation above can be rewritten as
\begin{align*}
\frac{\umax-u}{\big[1-\big(r_0(m)/\psi\big)^n\big]^2}\,&=
\frac{\umax-u}{(1-z^n)^2}
\\
&=\,
\frac{r_0^2(m)}{(\umax+u)(1-z^n)}\,\left[z^{-2}-\frac{n}{n-2}\,\frac{1-z^{n-2}}{1-z^n}\right]
\\
&=\,
\frac{r_0^2(m)}{(\umax+u)(1-z^n)}\,\left[z^{-2}-\frac{n}{n-2}\,\frac{1+z+\cdots+z^{n-3}}{1+z^+\cdots+z^{n-1}}\right]
\\
&=\,
\frac{r_0^2(m)\,z^{-2}}{(\umax+u)(1+z+\cdots+z^{n-1})^2}\,\cdot\,\frac{1+z-\frac{2}{n-2}z^{2}\,\left(1+z+\cdots+z^{n-3}\right)}{1-z}\,.
\end{align*}
It is clear that the first factor above has a finite nonzero limit as we approach ${\rm MAX}(u)$, that is, when $z$ goes to $1$. Concerning the second factor, one easily computes
$$
\frac{1+z-\frac{2}{n-2}z^{2}\,\left(1+z+\cdots+z^{n-3}\right)}{1-z}\,=\,n-\frac{2}{n-2}(1-z)\sum_{k=1}^{n-2}(n-k-1)(1+z+\dots+z^{k-1})\,,
$$
Substituting, we easily obtain
\begin{equation}
\label{eq:lim_DPsi_SD}
\frac{\umax-u}{\big[1-\big(r_0(m)/\psi\big)^n\big]^2}\,=\,\frac{r_0^2(m)}{2n\umax}\left[1\,+\,f(z)\right]\,,
\end{equation}
where $f(z)$ is a function that is analytic near $z=1$ and such that $f(1)=0$. 
In particular, recalling formula~\eqref{eq:dpsideu_SD}, we have proved that
$(\umax-u)\dot\psi^2$ has a finite limit as we approach the set ${\rm MAX}(u)$. Therefore, for any $p\in{\rm MAX}(u)$ and $0<\a<1$, we compute
$$
\lim_{x\to p}{\dot\psi^{2\a}}|\D u|^2(x)\,=\,\lim_{x\to p}\left[({\umax-u}){\dot\psi^{2}}\right]^{\a}\!\frac{|\D u|^2}{(\umax-u)^\a}(x)\,,
$$
and, since ${(\umax-u)}{\dot\psi^{2}}$ has a finite limit on ${\rm MAX}(u)$, from Corollary~\ref{cor:Dpsi_SD} we conclude. 
\end{proof}

\noindent
Lemma~\ref{le:estimate_upsi_SD} will be crucial later in the proof of Proposition~\ref{pro:min_pr_SD}, where a Minimum Principle argument will be used to prove a stronger result, namely, that the quantity $\dot\psi|\D u|$ is bounded near ${\rm MAX}(u)$, see Remark~\ref{rem:bound_Du}. In particular, since $(\umax-u)\dot\psi^2$ is also bounded near ${\rm MAX}(u)$, as shown in the proof of Lemma~\ref{le:estimate_upsi_SD}, it follows that the quantity
$$
4\,\Big|\D\big(\sqrt{\umax-u}\big)\Big|^2\,=\,\frac{|\D u|^2}{\umax-u}\,.
$$
is bounded near ${\rm MAX}(u)$.
In other words, the function $\sqrt{\umax-u}$ is always Lipschitz continuous on $M$.

It is worth remarking that, in the neighborhood of the points of the top stratum of ${\rm MAX}(u)$, we can actually prove a much more precise result about the behavior of the static potential $u$. We recall that with top stratum of ${\rm MAX}(u)$ we mean the open subset $\Sigma\subset{\rm MAX}(u)$ which is a $(n-1)$-dimensional analytic submanifold. In other words, the points $p\in\Sigma$ are the ones such that there exists a neighborhood $\Omega$ of $p$ and an analytic function $f:\Omega\to\R$ such that ${\rm MAX}(u)\cap\Omega=f^{-1}(0)$ and $|df|\neq 0$ in $\Omega$.

\begin{proposition}
\label{pro:expansion_u}
Let $(M,\go,u)$ be a solution to problem~\eqref{eq:prob_SD} and let $p\in{\rm MAX}(u)$ be a point in the top stratum of ${\rm MAX}(u)$. Let $\Omega$ be a small neighborhood of $p$ such that $\Sigma=\Omega\cap{\rm MAX}(u)$ is contained in the top stratum and $\Omega\setminus\Sigma$ has two connected components $\Omega_+,\Omega_-$. We define the signed distance to $\Sigma$ as
$$
r(x)\,=\,
\begin{cases}
+ \, d(x,\Sigma)\,,   & \text{ if } x\in \overline{\Omega}_+\,,
\\
- \, d(x,\Sigma)\,,  & \text{ if } x\in \overline{\Omega}_-\,.
\end{cases}
$$
Then the following expansion holds:
\begin{equation}
\label{eq:expansion_u_final}
u\,=\,\umax\,\left[1-\frac{n}{2}\,r^2\,+\,\frac{n}{6}\,\HHH\,r^3 \,-\,\frac{1}{24}\left(2\,n\,|\mathring{\hhh}|^2\,+\,\frac{n(n+1)}{n-1}\,\HHH^2\,-\,n^2\right)r^4\,+\mathcal{O}(r^5)\right]\,,
\end{equation}
where $\HHH$ is the mean curvature of $\Sigma$ with respect to the normal pointing towards $\Omega_+$.
\end{proposition}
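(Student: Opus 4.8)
The plan is to pass to Fermi (geodesic normal) coordinates around the hypersurface $\Sigma$ and to expand the equations~\eqref{eq:prob_SD} in powers of the signed distance $r$. Write $\go=dr\otimes dr+h_r$, where $h_r$ is the induced metric on the slice $\{r=\mathrm{const}\}$, so that $\De u=\partial_r^2 u+\HHH_r\,\partial_r u+\De_{h_r}u$, where $\HHH_r=\partial_r\log\sqrt{\det h_r}$ is the mean curvature of the slice with respect to $\nu=\D r$ and $\De_{h_r}$ is the Laplace--Beltrami operator of $h_r$. The first step is to determine the $r$-jet of $u$ along $\Sigma$. Since $\Sigma$ consists of interior maximum points of $u$, one has $u\equiv\umax$ and $\D u\equiv 0$ on $\Sigma$, hence $\partial_r u|_\Sigma=0$. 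Restricting the Hessian equation $\DD u=u\,\Ric-n\,u\,\go$ to vectors tangent to $\Sigma$ (using $\D u|_\Sigma=0$ and that $u$ is constant on $\Sigma$) gives $\DD u(X,Y)=0$ and $\DD u(X,\nu)=0$ for $X,Y\in T\Sigma$, hence $\Ric|_{T\Sigma}=n\,\go|_{T\Sigma}$ and $\Ric(\nu,\cdot)|_{T\Sigma}=0$; taking the trace and recalling $\RRR\equiv n(n-1)$ then forces $\Ric(\nu,\nu)|_\Sigma=0$, so that $\partial_r^2 u|_\Sigma=\DD u(\nu,\nu)|_\Sigma=\umax\big(\Ric(\nu,\nu)-n\big)\big|_\Sigma=-n\,\umax$. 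In particular, the first three coefficients of the $r$-expansion of $u$, namely $u$, $\partial_r u$ and $\partial_r^2 u$ restricted to $r=0$, are all \emph{constant} on $\Sigma$, equal to $\umax$, $0$ and $-n\,\umax$ respectively.

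Next I would compute $\partial_r^3 u|_\Sigma$ and $\partial_r^4 u|_\Sigma$ by differentiating the identity $\partial_r^2 u=-\HHH_r\,\partial_r u-\De_{h_r}u-n\,u$ (which is just $\De u=-nu$ in these coordinates). Two facts are needed. First, since $\De_{h_r}$ is a second order operator with no zeroth order term and the first three coefficients of the $r$-expansion of $u$ are constant on $\Sigma$, one has $\partial_r^k(\De_{h_r}u)\big|_\Sigma=0$ for $k=0,1,2$. Second, the radial Riccati equation $\partial_r\HHH_r=-|\hhh_r|^2-\Ric(\nu,\nu)$, evaluated on $\Sigma$ and using $\Ric(\nu,\nu)|_\Sigma=0$ together with the decomposition $|\hhh|^2=|\mathring{\hhh}|^2+\HHH^2/(n-1)$, gives $\partial_r\HHH_r|_\Sigma=-|\mathring{\hhh}|^2-\HHH^2/(n-1)$. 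Feeding these into the first and second $r$-derivatives of the identity above and evaluating at $r=0$ (where $\partial_r u$ vanishes) yields
\begin{equation*}
\partial_r^3 u\big|_\Sigma\,=\,n\,\umax\,\HHH\,,\qquad
\partial_r^4 u\big|_\Sigma\,=\,\umax\left(n^2-2n\,|\mathring{\hhh}|^2-\frac{n(n+1)}{n-1}\,\HHH^2\right)\,,
\end{equation*}
and Taylor's formula $u=\sum_{k=0}^{4}\frac{r^k}{k!}\,\partial_r^k u\big|_\Sigma+\mathcal O(r^5)$ gives precisely~\eqref{eq:expansion_u_final}. The remainder is $\mathcal O(r^5)$ uniformly on compact subsets of the top stratum, since $u$, $\Sigma$ and all the geometric quantities appearing above are analytic.

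The step-two computation is essentially forced once the $r$-jet of $u$ on $\Sigma$ is in hand, so the crux of the argument is really the preliminary step: the identity $\Ric(\nu,\nu)|_\Sigma=0$ is what makes the whole computation reduce to a short linear recursion, and the fact that $\partial_r u|_\Sigma$ and $\partial_r^2 u|_\Sigma$ are genuinely constant on $\Sigma$ is what kills the slice-Laplacian terms to the order we need. This in turn uses in an essential way that $p$ belongs to the \emph{top} stratum, so that $\Sigma$ is a smooth $(n-1)$-dimensional hypersurface and the Fermi decomposition $\go=dr\otimes dr+h_r$ is available on a full two-sided neighborhood $\Omega$. The only remaining difficulty is purely computational: keeping track of the sign conventions in the Fermi decomposition and in the radial Riccati equation, and carefully verifying the vanishing $\partial_r^k(\De_{h_r}u)|_\Sigma=0$ for $k\le 2$, which rests entirely on the constancy established in the first step.
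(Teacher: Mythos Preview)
Your proof is correct and follows essentially the same route as the paper: decompose $\De u=-nu$ in Fermi coordinates about $\Sigma$, use that $u|_\Sigma$, $\partial_r u|_\Sigma$, $\partial_r^2 u|_\Sigma$ are constant along $\Sigma$ to kill the slice-Laplacian terms through order two, and feed the radial Riccati equation $\partial_r\HHH_r=-|\hhh|^2-\Ric(\nu,\nu)$ (together with $\Ric(\nu,\nu)|_\Sigma=0$) into the successive $r$-derivatives. The only noteworthy difference is organizational: the paper first shows analytically that $\umax-u=r^2(\text{analytic})$ and then deduces $\Ric(\nu,\nu)|_\Sigma=0$ from $\DD u(\nu,\nu)=-n\umax$, whereas you obtain $\Ric(\nu,\nu)|_\Sigma=0$ upfront from the tangential trace of the Hessian equation---a slightly cleaner path to the same recursion.
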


\begin{proof}
Let $(x^1,\dots,x^n)$ be a chart centered at $p$, with respect to which the metric $\go$ and the function $u$ are analytic.
From the fact that $p$ belongs to the top stratum of ${\rm MAX}(u)$, it follows that we can choose an open neighborhood $\Omega$ of $p$ in $M$, where the signed distance $r(x)$
is a well defined analytic function (see for instance~\cite{Kra_Par_smoothdist}, where this result is discussed in full details in the Euclidean space, however the proofs extend with small modifications to the Riemannian setting). More precisely, we have
$$
r=\phi(x^1,\dots,x^n)\,,
$$ 
where $\phi$ is an analytic function. Since $r$ is a signed distance function, we have $|\D r|=1$, which implies in particular that one of the partial derivatives of $\phi$ has to be different from zero. Without loss of generality, let us suppose $\pa\phi/\pa x^1\neq 0$ in a small neighborhood $\Om$ of $p$. As a consequence, we have that the function
$$
H:\,\R^{n+1}\,\to\, \R\,,\qquad H(r,x^1,\dots ,x^n)\,=\,r-\phi(x^1,\dots,x^n)\,.
$$
satisfies $\pa H/\pa x^1=-\pa\phi/\pa x^1\neq 0$ in $\Om$. We can then apply the Real Analytic Implicit Function Theorem (see~\cite[Theorem~2.3.5]{Kra_Par}), from which it follows that there exists an analytic function $h:\R^n\to\R$ such that
$$
H(r,h(r,x^2,\dots,x^n),x^2,\dots, x^n)=0\,.
$$
In other words, the change of coordinates from $(r,x^2,\dots,x^n)$ to $(x^1,\dots,x^n)$, which is obtained setting $x^1=h(r,x^2,\dots,x^n)$, is analytic, and in particular $u$ is an analytic function also with respect to the chart $(r,x^2,\dots,x^n)$. In the following computation, it is convenient to denote this new analytic chart as $y=(y^1,\dots, y^n)$, where $y^1=r$ and $y^i=x^i$ for $i=2,\dots, n$.
In particular, in this new chart, the smooth hypersurface $\Sigma\cap\Omega$ coincides with the points with $y^1=0$. 
Since $u$ is analytic, we can take its Taylor expansion in $p$
\begin{equation}
\label{eq:analytic_expansion_1}
\umax-u(y)\,=\,\sum_{k=2}^\infty\sum_{|I|=k} A_{I}\,y^I\,,
\end{equation}
where $I = (I_1, \dots, I_n)$ is a multi-index and $|I| = I_1 + \dots + I_n$. 
Since $\umax-u\equiv 0$ on $\Sigma\cap\Omega=\{y^1=0\}$, the summand on the right hand side of~\eqref{eq:analytic_expansion_1} must be identically zero when we set $y^1=0$. From this it follows that $A_{I}=0$ whenever $I_1=0$.

We now prove that $A_I=0$ also when $I_1=1$. In fact, suppose that this is not true, and let $k$ be the smallest integer such that there exists a multi-index $I$ with $|I|=k$, $I_1=1$ and $A_I\neq 0$. Consider points of the form
$$
y^1\,=\, \ep^{k} \sigma^1\,,\qquad y^i\,=\, \ep\,\sigma^i\,,
$$
where $\ep\in\R$, $\sigma^1 = 1$ and $\sigma^i\in\R\setminus \{0\}$ for all $i=2,\dots, n$. Recalling that $A_I=0$ whenever $I_1=0$ and whenever $|I|<k$ and $I_1=1$, at these points it holds
\begin{equation*}
\umax-u\,=\,\ep^{2k-1}\!\!\!\!\sum_{|I|=k,\ I_1=1} \!\!\!\!A_{I}\,\sigma^I\,+\,\mathcal{O}(\ep^{2k})\,.
\end{equation*}
We recall that we are supposing that there are some nonzero coefficients in the sum on the right hand side, hence we can choose the values of $\sigma^2,\dots,\sigma^n$ in such a way that $\sum_{|I|=k,\ I_1=1} A_{I}\,\sigma^I < 0$.
Therefore, for small values of $\ep>0$, we would have $\umax-u<0$, against the hypothesis that $\umax$ is the maximum value of $u$.

From these considerations, it follows that we can write
\begin{equation}
\label{eq:analytic_expansion_2}
\umax-u(y)\,=\,(y^1)^2\,\cdot\,(A_{(2,0,\dots,0)}\,+\,y^1\,f)\,,
\end{equation}
where $f$ is an analytic function.
Notice that, at the point $p$, we have $\pa_\a u=0$ for all $\a=1,\dots,n$, and from the second equation in~\eqref{eq:prob_SD} we find 
$$
-n\umax=\De u=\go^{\a\b}[\pa^2_{\a\b}u-\Gamma_{\a\b}^\gamma\pa_\gamma u]=-2\,\go^{11}\,A_{(2,0,\dots,0)}=-2\,A_{(2,0,\dots,0)} \, . $$ 
It follows that $A_{(2,0,\dots,0)}=n\umax/2> 0$.

Now that we have found a good expansion of $u$ around the point $p$, it is convenient to come back to the old notation $(r,x^2,\dots,x^n)$. Namely, we set again $r=y^1$ and $x^i=y^i$ for all $i=2,\dots,n$.
Rewriting~\eqref{eq:analytic_expansion_2} recalling also that $a=n\umax/2$, we obtain the following expansion
\begin{equation}
\label{eq:expansion_u_rewr}
u(r,x)\,=\,\umax\,-\,\frac{n}{2}\,\umax\,r^2\,+r^3\,f\,,
\end{equation}
We now want to gather more information on the analytic function $f$. To this end, set $\Sigma_\rho=\{r=\rho\}$ and observe that all $\Sigma_\rho$ with $\rho$ small enough are smooth, since $(r,x)=(r,x^2,\dots,x^n)$ is an analytic chart and $|\D r|=1\neq 0$. In particular, of course, we have $\Sigma_0=\Sigma\cap\Omega$. On each $\Sigma_\rho$, the laplacian of $u$ satisfies the following well known formula 
\begin{equation}
\label{eq:lapl_exp_Sigmar}
\De u\,=\,\DD u({\rm n}_\rho,{\rm n}_\rho)\,+\,\HHH^{\Sigma_\rho}\,\langle\D u\,|\,{\rm n}_\rho\rangle\,+\,\De^{\Sigma_\rho}u\,,
\end{equation}
where ${\rm n}_\rho=\pa/\pa r$ is the $\go$-unit normal to $\Sigma_\rho$, $\HHH^{\Sigma_\rho}$ is the mean curvature of $\Sigma_\rho$ with respect to ${\rm n}_\rho$ and $\De^{\Sigma_\rho}u$ is the laplacian of the restriction of $u$ to $\Sigma_\rho$ with respect to the metric induced by $\go$ on $\Sigma_\rho$. Evaluating ~\eqref{eq:lapl_exp_Sigmar} at $\rho=0$, since $u\equiv\umax$ and $|\D u|=0$ on $\Sigma_0$, recalling also that $\De u=-nu$, we immediately get
$$
\DD u(\nu,\nu)\,=\,\De u\,=\,-n\umax\,,
$$
in agreement with expansion~\eqref{eq:expansion_u_rewr}.
We now differentiate formula~\eqref{eq:lapl_exp_Sigmar} two times with respect to $r$, obtaining the following
\begin{align*}
-n\frac{\pa u}{\pa r}\,&=\,\frac{\pa^3 u}{\pa r^3}\,+\,\HHH^{\Sigma_r}\frac{\pa^2 u}{\pa r^2}\,+\,\frac{\pa \HHH^{\Sigma_r}}{\pa r}\,\frac{\pa u}{\pa r}\,+\,\frac{\pa}{\pa r}\De^{\Sigma_r}u\,.
\\
-n\frac{\pa^2 u}{\pa r^2}\,&=\,\frac{\pa^4 u}{\pa r^4}\,+\,\HHH^{\Sigma_r}\frac{\pa^3 u}{\pa r^3}\,+\,2\,\frac{\pa \HHH^{\Sigma_r}}{\pa r}\,\frac{\pa^2 u}{\pa r^2}\,+\,\frac{\pa^2 \HHH^{\Sigma_r}}{\pa r^2}\,\frac{\pa u}{\pa r}\,+\,\frac{\pa^2}{\pa r^2}\De^{\Sigma_r}u\,.
\end{align*}
Let us focus first on the terms involving $\De^{\Sigma_r}u$. Calling $g_{(r)}$ the metric induced by $\go$ on $\Sigma_r$ and $\Gamma_{\!(r)}$ the Christoffel symbols of $g_{(r)}$, we have
$$
\De^{\Sigma_r}u\,\,=\,\,
g_{(r)}^{ij}\,\frac{\pa^2 u}{\pa x^i\pa x^j}\,+\,g_{(r)}^{ij}\,\,\Gamma^{\phantom{\!(r)}\,k}_{\!(r)\,ij}\,\,\frac{\pa u}{\pa x^k}\,,
$$
where the indices $i,j,k$ vary between $2$ and $n$.
On the other hand, notice from~\eqref{eq:expansion_u_rewr} that
$$
\frac{\pa^2 u}{\pa x^i\pa x^j}_{|_{r=0}}\,=\,\frac{\pa^2 u}{\pa r\pa x^i}_{|_{r=0}}\,=\,\frac{\pa^3 u}{\pa r^2 \pa x^i}_{|_{r=0}}\,=\,\frac{\pa^4 u}{\pa r^2\pa x^i\pa x^j}_{|_{r=0}}\,=\,0\,,
$$ 
for all $i,j=2,\dots,n$. From this, it easily follows 
$$
\frac{\pa}{\pa r}\De^{\Sigma_r}u_{|_{r=0}}\,=\,\frac{\pa^2}{\pa r^2}\De^{\Sigma_r}u_{|_{r=0}}\,=\,0\,.
$$
Since we also know that $\pa u/\pa r=0$ and $\pa^2 u/\pa r^2=-n\umax$ when $r=0$, from the expansions above we deduce
\begin{align*}
\frac{\pa^3 u}{\pa r^3}_{|_{r=0}}\,&=\,n\,\umax\,\HHH\,.
\\
\frac{\pa^4 u}{\pa r^4}_{|_{r=0}}\,&=\,2\,n\,\umax\,\frac{\pa \HHH^{\Sigma_r}}{\pa r}_{|_{r=0}}\,-\,n\,\umax\,\HHH^2\,+\,n^2\,\umax\,,
\end{align*} 
where we have denoted by $\HHH$ the mean curvature of $\Omega\cap\Sigma=\Sigma_0$ for simplicity.
Furthermore, from~\cite[Lemma~7.6]{Hui_Pol} and the first equation in~\eqref{eq:prob_SD} we get
$$
\frac{\pa \HHH^{\Sigma_r}}{\pa r}_{|_{r=0}}
\,=\,-\,|\hhh|^2\,-\,\Ric(\nu,\nu)
\,=\,-\,\left(|\mathring{\hhh}|^2\,+\,\frac{\HHH^2}{n-1}\right)\,-\,\left[\frac{\DD u(\nu,\nu)}{u}\,+\,n\,\langle\,\nu|\,\nu\rangle\right]
\,=\,-\,|\mathring{\hhh}|^2\,-\,\frac{\HHH^2}{n-1}\,,
$$
where we have used $\DD u(\nu,\nu)=-n\umax$, as proven above. Now that we have computed the third and fourth derivative of $u$, we can use this information to improve~\eqref{eq:expansion_u_rewr} and get the desired expansion of the static potential $u$.
\end{proof}

Proposition~\ref{pro:expansion_u} has some very useful consequences for our analysis. Let us start from the simplest one. From expansion~\eqref{eq:expansion_u_final}, we can compute the explicit formula for the gradient of $u$ as we approach a point $p$ in the top stratum of ${\rm MAX}(u)$ as
\begin{align}
\notag
\lim_{x\not\in{\rm MAX}(u),\,x\to p}\,\frac{|\D u|^2 (x)}{\umax-u(x)}\,&=\,
\lim_{r\to 0}\frac{n^2\,\umax^2\,r^2\,+\,\mathcal{O}(r^3)}{({n}/{2})\,\umax\,r^2\,+\,\mathcal{O}(r^3)}
\\
\label{eq:grad_u_nearSigma_N}
&=\,2\,n\,\umax\,.
\end{align}
In particular, recalling formula~\eqref{eq:lim_DPsi_SD}, at each point of the top stratum we deduce the following identity
\begin{equation}
\label{eq:grad_u_nearSigma}
\lim_{x\not\in{\rm MAX}(u),\,x\to p}\,\frac{|\D u|^2 (x)}{\psi^2(x) \left[1-\big(r_0(m)/\psi(x)\big)^n  \right]^2}\,=\,1\,.
\end{equation}
We will see that the left hand side of formula~\eqref{eq:grad_u_nearSigma} admits an interpretation as the norm of the gradient of a pseudo-affine function (see formula~\eqref{eq:na_ffi_SD}) that will be of extreme importance in the rest of the work.

A second important consequence of Proposition~\ref{pro:expansion_u} is the following regularity result for the pseudo-radial function $\Psi$.

\begin{proposition}
\label{pro:C2}
Let $(M,\go,u)$ be a solution to problem~\eqref{eq:prob_SD} and let $p\in{\rm MAX}(u)$ be a point in the top stratum of ${\rm MAX}(u)$. Let $\Omega$ be a small neighborhood of $p$ such that $\Sigma=\Omega\cap{\rm MAX}(u)$ is contained in the top stratum and $\Omega\setminus\Sigma$ has two connected components $\Omega_+,\Omega_-$. We define the function
$$
\Psi(x)=
\begin{dcases}
\Psi_+(x)\,, & \hbox{ if } x\in \overline{\Omega}_+\,,
\\
\Psi_-(x)\,, & \hbox{ if } x\in \overline{\Omega}_-\,,
\end{dcases}
$$ 
where $\Psi_+$ is the pseudo-radial function defined by~\eqref{eq:pr_function_+} with respect to a parameter $m\in[0,\mmax)$ and $\Psi_-$ is the pseudo-radial function defined by~\eqref{eq:pr_function_-} with respect to the same parameter $m$. Then the function $\Psi$ is $\mathscr{C}^3$ in $\Omega$.
\end{proposition}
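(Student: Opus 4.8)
The plan is to work in the analytic chart $(r,x^2,\dots,x^n)$ constructed in the proof of Proposition~\ref{pro:expansion_u}, in which $r$ is the signed distance to $\Sigma$ (so that $r>0$ on $\Omega_+$ and $r<0$ on $\Omega_-$), both $r$ and $u$ being analytic on $\Omega$; the goal is to show that there $\Psi_+|_{\Omega_+}$ and $\Psi_-|_{\Omega_-}$ are restrictions of a \emph{single} analytic function of $(r,x)$, which gives analyticity — in particular $\mathscr{C}^3$ regularity — of $\Psi$ on $\Omega$. The first step is to recast the branches intrinsically. Setting $G(\psi)=\psi^2+2m\psi^{2-n}$, formula~\eqref{eq:def_Psi_SD} reads $G(\psi_\pm(u))=1-u^2$; moreover $\psi_\pm(\umax)=r_0(m)$ and a direct computation gives $G'(r_0)=0$ and $G''(r_0)=2n>0$. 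Writing $v=\umax-u$ (so $v\geq 0$ on $\Omega$, with $v=0$ exactly on $\Sigma$), subtraction yields
\[
G(\psi_\pm(u))-G(r_0)\;=\;\umax^2-u^2\;=\;v\,(2\umax-v)\,.
\]

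Next I would linearise the square root at the non-degenerate critical point $r_0$ of $G$. Since $G(\psi)-G(r_0)=n\,(\psi-r_0)^2\,h(\psi)$ with $h$ analytic near $r_0$ and $h(r_0)=1$, the function $\sigma(\psi):=\mathrm{sgn}(\psi-r_0)\sqrt{G(\psi)-G(r_0)}=\sqrt{n}\,(\psi-r_0)\sqrt{h(\psi)}$ is analytic near $\psi=r_0$, with $\sigma(r_0)=0$ and $\sigma'(r_0)=\sqrt{n}\neq 0$; this is simply a one-variable Morse lemma. Let $P$ be its analytic local inverse, $P(0)=r_0$. Because $\psi_+\geq r_0\geq\psi_-$, with strict inequalities wherever $v>0$, we have $\sigma(\psi_\pm(u))=\pm\sqrt{v\,(2\umax-v)}$, and therefore, near $p$,
\[
\Psi_\pm\;=\;\psi_\pm(u)\;=\;P\!\left(\pm\sqrt{v}\,\sqrt{2\umax-v}\,\right)\,.
\]

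Now I would insert the expansion of $u$. By~\eqref{eq:expansion_u_rewr} (obtained in the proof of Proposition~\ref{pro:expansion_u}), $v=\umax-u=r^2\,W(r,x)$ with $W$ analytic near $p$ and $W(p)=\tfrac{n}{2}\umax>0$, while $2\umax-v$ is analytic and positive near $p$. Hence $\sqrt{v}\,\sqrt{2\umax-v}=|r|\,Q(r,x)$, where $Q:=\sqrt{\,W(r,x)\,(2\umax-v(r,x))\,}$ is analytic near $p$ with $Q(p)=\umax\sqrt{n}>0$. Consequently, on $\Omega_+$ (where $r>0$) one gets $\Psi=\Psi_+=P\big(rQ(r,x)\big)$; on $\Omega_-$ (where $r<0$, so $|r|=-r$) one gets $\Psi=\Psi_-=P\big(-|r|Q(r,x)\big)=P\big(rQ(r,x)\big)$; and on $\Sigma$, $\Psi=r_0(m)=P(0)$, in agreement. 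Thus $\Psi$ equals the composition of $P$ with the analytic map $(r,x)\mapsto rQ(r,x)$ on all of $\Omega$, and since this map vanishes at $p$ and takes values in the domain of $P$, the function $\Psi$ is analytic — in particular $\mathscr{C}^3$ — on $\Omega$.

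The crux is the middle step: as a function of $u$ alone, $\psi_\pm$ is only Hölder-$\tfrac12$ at $u=\umax$, but composing it with $v=\umax-u$, which vanishes to exactly second order along $\Sigma$, absorbs the half-power; and, crucially for the gluing, the sign $\pm$ in $\Psi_\pm=P(\pm|r|Q)$ is exactly the sign of $r$ on the corresponding side of $\Sigma$, so the two branches fuse into one analytic expression. This uses in an essential way both the correct branch identification ($\mathrm{sgn}\,\sigma=\mathrm{sgn}(\psi-r_0)$ together with $\psi_+\geq r_0\geq\psi_-$) and the non-degeneracy $G''(r_0)=2n\neq 0$.
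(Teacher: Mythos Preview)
Your proof is correct and, in fact, cleaner and stronger than the paper's. The paper proceeds by explicit finite-order expansion: starting from the identity~\eqref{eq:lim_DPsi_SD} it writes $\frac{\umax-u}{(1-z)^2}$ (with $z=r_0(m)/\psi$) as a power series in $(1-z)$, then iteratively substitutes this relation into itself to express $(1-z)$ as a function of $\sqrt{\umax-u}$ and hence of $r$; finally it computes the expansion of $\partial(\Psi^2)/\partial r$ to second order in $r$, checks that the coefficients match from the two sides, and concludes that $\Psi^2$ is $\mathscr{C}^3$. This is correct but computational, and it only yields the stated $\mathscr{C}^3$ regularity.

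Your argument replaces the iterated expansion by a single conceptual step: the Morse lemma at the non-degenerate minimum $r_0$ of $G(\psi)=\psi^2+2m\psi^{2-n}$ produces the analytic change of variable $\sigma$, and the sign bookkeeping $\sigma(\psi_\pm)=\pm\sqrt{v(2\umax-v)}$ matches exactly the sign of the signed distance $r$, so the two branches fuse into $\Psi=P(rQ)$ with $P$, $Q$ analytic. This not only avoids all the iterative substitutions but actually proves that $\Psi$ is \emph{real analytic} across $\Sigma$, which is stronger than the $\mathscr{C}^3$ conclusion in the statement. The paper's method could in principle be pushed to higher order, but your approach gets analyticity for free.
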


\begin{proof}
Let us start from formula~\eqref{eq:lim_DPsi_SD} obtained in the proof of Proposition~\ref{le:estimate_upsi_SD}, where we recall that we had set $z=r_0(m)/\psi$. It is clear that it is possible to refine~\eqref{eq:lim_DPsi_SD} by expanding around $z=1$ the function $f(z)$ appearing in it. Namely, we can write
\begin{equation}
\label{eq:lim_DPsi_enhanced}
\frac{\umax-u}{\big[1-\big(r_0(m)/\psi\big)^n\big]^2}\,=\,\frac{r_0^2(m)}{2n\umax}\left[1\,+\,A\,(1-z)\,+\,B\,(1-z)^2\,+\,\mathcal{O}\left((1-z)^3\right)\right]\,,
\end{equation}
for suitable $A,B\in\R$. The computation of the precise values of the coefficients $A,B$ is tedious and it will not be necessary in our proof, as for our argument it is sufficient that such coefficients exist. If we also expand $1-\big(r_0(m)/\psi\big)^n=1-z^n$ in terms of $1-z$, from~\eqref{eq:lim_DPsi_enhanced} we obtain
\begin{equation}
\label{eq:lim_DPsi_enhanced2}
\frac{\umax-u}{(1-z)^2}\,=\,\frac{r_0^2(m)}{2n\umax}\left[ 1\,+\,C\,(1-z)\,+\,D\,(1-z)^2\,+\,\mathcal{O}\left((1-z)^3\right)\right]\,,
\end{equation}
for suitable coefficients $C,D\in\R$ that, once again, we avoid to compute explicitly. We want to use~\eqref{eq:lim_DPsi_enhanced2} in order to prove that $1-z$ can be expanded in terms of $r$ close to $\Sigma$. We do this by applying~\eqref{eq:lim_DPsi_enhanced2} repeatedly as follows
\begin{align*}
1-z\,&=\,\frac{1-z}{\sqrt{\umax-u}}\sqrt{\umax-u}
\\
&=\,\sqrt{\frac{2n\umax}{r_0^2(m)}}\frac{\sqrt{\umax-u}}{\sqrt{1+C(1-z)+D(1-z)^2+\mathcal{O}\left((1-z)^3\right)}}
\\
&=\,\sqrt{\frac{2n\umax}{r_0^2(m)}}\frac{\sqrt{\umax-u}}{\sqrt{1+C\sqrt{\frac{2n\umax}{r_0^2(m)}}\frac{\sqrt{\umax-u}}{\sqrt{1+C(1-z)+\mathcal{O}\left((1-z)^2\right)}}+D\frac{2n\umax}{r_0^2(m)}\frac{\umax-u}{1+\mathcal{O}(1-z)}+\mathcal{O}\left((1-z)^3\right)}}
\\
&\!\!\!\!\!\!\!\!\!\!\!\!\!\!\!\!=\,\sqrt{\frac{2n\umax}{r_0^2(m)}}\frac{\sqrt{\umax-u}}{\sqrt{1+C\sqrt{\frac{2n\umax}{r_0^2(m)}}\frac{\sqrt{\umax-u}}{\sqrt{1+C\sqrt{\frac{2n\umax}{r_0^2(m)}}\frac{\sqrt{\umax-u}}{\sqrt{1+\mathcal{O}(1-z)}}+\mathcal{O}\left((1-z)^2\right)}}+D\frac{2n\umax}{r_0^2(m)}\frac{\umax-u}{1+\mathcal{O}(1-z)}+\mathcal{O}\left((1-z)^3\right)}}\,.
\end{align*} 
We observe, again from~\eqref{eq:lim_DPsi_enhanced2}, that $\mathcal{O}(1-z)=\mathcal{O}(\umax-u)^{1/2}=\mathcal{O}(r)$. Now we use Proposition~\ref{pro:expansion_u} to substitute $\umax-u$ with its expansion~\eqref{eq:expansion_u_final}. Using the known expansions for square roots and fractions, the cumbersome formula above reduces to
$$
1-z\,=\,E\,+\,F\,r\,+\,G\,r^2\,+\,\mathcal{O}(r^3)\,.
$$
for suitable coefficients $E,F,G$. Notice that these coefficients are not necessarily constant, as they can depend on $\HHH$ and $|\mathring{\hhh}|$ coming from the expansion of $u$. Finally, we can substitute back in~\eqref{eq:lim_DPsi_enhanced} to obtain
\begin{equation}
\label{eq:lim_DPsi_enhanced3}
\frac{\umax-u}{\big[1-\big(r_0(m)/\psi\big)^n\big]^2}\,=\,\frac{r_0^2(m)}{2n\umax}\,+\,P\,r\,+\,Q\,r^2\,+\,\mathcal{O}(r^3)\,,
\end{equation}
for suitable coefficients $P,Q$ (that again, may not be constant).
We are now ready to prove the regularity of $\Psi$. Recalling~\eqref{eq:dpsideu_SD}, and using again the expansion~\eqref{eq:expansion_u_final} of $u$, we compute
\begin{align*}
\frac{\pa(\Psi^2)}{\pa r}\,&=\,2\psi\,\dot\psi\,\frac{\pa u}{\pa r}
\\
&=\,\frac{2\,u}{1-(r_0(m)/\psi)^{n}}\,\,\frac{\pa u}{\pa r}
\\
&=\,\frac{2\,u}{\sqrt{\umax-u}}\,\,\frac{\sqrt{\umax-u}}{1-(r_0(m)/\psi)^{n}}\,\,\frac{\pa u}{\pa r}\,.
\end{align*}
We can now expand the three factors using~\eqref{eq:expansion_u_final} and~\eqref{eq:lim_DPsi_enhanced3}, obtaining
$$
\frac{\pa(\Psi^2)}{\pa r}\,=\,R\,+\,S\,r\,+T\,r^2\,+\,\mathcal{O}(r^3)\,,
$$
where the coefficients $R,S,T$ depend only on $\HHH$ and $|\mathring{\hhh}|$. It is then clear that $\pa(\Psi^2)/\pa r$, and also its first and second derivatives, are continuous along $r=0$. A completely analogous reasoning can be done for $\pa(\Psi^2)/\pa x^i$, for all $i=2,\dots,n$. It follows that $\Psi^2$, and thus $\Psi$, is $\mathscr{C}^3$.

It should be mentioned that it is possible to compute precisely the coefficients of the expansion of $\Psi$.
A sufficiently simple way of doing it is to recall that $\Psi=r_0(m)$ on ${\rm MAX}(u)$ and then write
$$
\Psi\,=\,r_0(m)\,+\,W\,r\,+\,X\,r^2\,+\,Y\,r^3\,+\mathcal{O}(r^4)\,,
$$
where $W,X,Y$ are functions of the coordinates $x^2,\dots,x^n$ only. Now one can compute the expansions of the left and right hand sides of the relation $u^2\,=\,1-\Psi^2-2m\Psi^{2-n}$ to obtain information on the functions $W,X,Y$. With some lenghty (but standard) computations, one obtains
$$
\Psi\,=\,r_0(m)\,+\,\umax\,r\,+\,\frac{\umax}{12}\left[|\mathring{\hhh}|^2
\,+\,(n+1)(n-1)\,\left(\frac{\HHH^2}{(n-1)^2}\,-\,\frac{\umax^2}{r_0^2(m)}\right)
\,-\,2\,n\right]\,r^3\,+\mathcal{O}(r^4)\,.
$$
Anyway, this expansion will not be useful in what follows.
\end{proof}

Finally, we conclude this subsection with another important consequence of Proposition~\ref{pro:expansion_u}, which is the following regularity result on the top stratum of ${\rm MAX}(u)$.

\begin{proposition}
\label{pro:SigmaC1}
Let $(M,\go,u)$ be a solution to problem~\eqref{eq:prob_SD} and let $\Sigma$ be the top stratum of ${\rm MAX}(u)$. Then $\overline{\Sigma}$ is a $\mathscr{C}^1$ hypersurface (possibly with boundary).
\end{proposition}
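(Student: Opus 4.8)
Since the statement is local and $\Sigma$ is by construction an analytic $(n-1)$-dimensional submanifold, there is nothing to prove at points of $\Sigma$; the only point of the argument is the behaviour of $\overline\Sigma$ at a point $q\in\overline\Sigma\setminus\Sigma$. The plan is to show that in a neighbourhood of such a $q$ the whole locus ${\rm MAX}(u)$ is contained in a single analytic (hence smooth) hypersurface $\mathcal{H}$, and then to use real-analyticity to conclude that $q$ is actually not a boundary point at all. This proves, slightly more than required, that $\overline\Sigma=\Sigma$ is a closed analytic — in particular $\mathscr{C}^1$ — hypersurface.

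The only information used from the system \eqref{eq:prob_SD} is that on ${\rm MAX}(u)$ one has $\D u=0$ (interior maximum) and $\mathrm{tr}\,\DD u=\De u=-nu=-n\umax\neq 0$; hence $\DD u\neq 0$ on ${\rm MAX}(u)$, in particular $\DD u(q)\neq 0$. I would pick analytic coordinates $(x^1,\dots,x^n)$ centred at $q$; since $\D u(q)=0$, the covariant Hessian $\DD u(q)$ agrees with the coordinate Hessian, which is a nonzero symmetric matrix, so after a linear change of coordinates $\partial^2 u/\partial(x^n)^2(q)\neq 0$. Thus the nonnegative analytic function $\umax-u$ has a zero of order exactly two in the variable $x^n$ at $q$, and the Weierstrass preparation theorem in the real-analytic category gives, near $q$,
\[
\umax-u\,=\,\bigl((x^n)^2+a(x')\,x^n+b(x')\bigr)\,e(x',x^n)\,,
\]
where $x'=(x^1,\dots,x^{n-1})$, the functions $a,b$ are analytic with $a(0)=b(0)=0$, and $e$ is analytic with $e(0)=\tfrac12\,\partial^2 (\umax-u)/\partial(x^n)^2(q)>0$, so $e>0$ near $q$.

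Since $\umax-u\geq 0$ and $e>0$, the quadratic polynomial in $x^n$ above is nonnegative near $q$; evaluating it at its vertex $x^n=-\tfrac12 a(x')$ (which is admissible for $x'$ small, as $a(0)=0$) yields that $c(x'):=4b(x')-a(x')^2$ is a nonnegative analytic function near $0\in\R^{n-1}$. Completing the square one finds, in a neighbourhood of $q$,
\[
{\rm MAX}(u)\,=\,\bigl\{x^n=-\tfrac12 a(x')\bigr\}\cap\bigl\{c(x')=0\bigr\}\,,
\]
that is, ${\rm MAX}(u)$ is the image under the analytic graph map $x'\mapsto(x',-\tfrac12 a(x'))$ of the zero set $\{c=0\}\subset\R^{n-1}$; in particular it is contained in the analytic hypersurface $\mathcal{H}=\{x^n=-\tfrac12 a(x')\}$. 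Moreover, under this identification a point of ${\rm MAX}(u)$ lies in the top stratum $\Sigma$ precisely when $\{c=0\}$ contains a full neighbourhood of the corresponding point of $\R^{n-1}$, i.e. when that point lies in $\mathrm{int}\{c=0\}$; hence $\Sigma$ corresponds to $\mathrm{int}\{c=0\}$ and $\overline\Sigma$ to its closure.

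The conclusion now follows from the identity principle. The function $c\geq 0$ is real-analytic and vanishes identically on the open set $\mathrm{int}\{c=0\}$, so at any point of $\overline{\mathrm{int}\{c=0\}}$ all partial derivatives of $c$ vanish (they are continuous and identically zero on $\mathrm{int}\{c=0\}$); by real-analyticity $c$ vanishes identically near that point, which therefore belongs to $\mathrm{int}\{c=0\}$. Thus $\mathrm{int}\{c=0\}$ is closed, so translating back $q\in\Sigma$, contradicting $q\in\overline\Sigma\setminus\Sigma$; hence $\overline\Sigma=\Sigma$. The delicate step is the local structural description of ${\rm MAX}(u)$ around $q$ — where analyticity and the sign condition $u\leq\umax$ are combined through Weierstrass preparation — while the rest of the argument is soft. (One can also obtain $\mathcal{H}$ without Weierstrass preparation: along $\Sigma$ the Hessian $\DD u$ has rank one, the nonzero eigenvalue $-n\umax$ being simple, so the corresponding unit eigen-covector $\nu$ extends analytically to a neighbourhood of $\overline\Sigma$ by perturbation theory, and one may take $\mathcal{H}=\{\langle\D u,\nu\rangle=0\}$.)
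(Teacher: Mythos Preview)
Your argument is correct, and it in fact proves strictly more than the paper claims: you show $\overline\Sigma=\Sigma$, so the top stratum is already a \emph{closed analytic} hypersurface without boundary, not merely a $\mathscr{C}^1$ hypersurface ``possibly with boundary''. The route is genuinely different from the paper's and worth comparing.

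The paper proceeds in two geometric steps. First, invoking the {\L}ojasiewicz structure theorem to write $\overline\Sigma\cap\Omega=\overline\Sigma_1\cup\dots\cup\overline\Sigma_k$ near a point $q\in\overline\Sigma\setminus\Sigma$, it uses the rank-one structure of $\DD u$ along $\Sigma$ (the Hessian matrix~\eqref{eq:hessian_matrix}) together with a compactness/subsequence argument to show that the unit normals to the sheets $\Sigma_i$ have a common limit at $q$, thereby ruling out transversal crossings. Second, a Taylor expansion along unit-speed geodesics normal to $\Sigma$ gives a uniform lower bound on the distance from $\Sigma$ to itself in the normal direction, ruling out cuspidal or tangential-touching folds. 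The outcome is $\mathscr{C}^1$ regularity of $\overline\Sigma$, with the caveat (explicitly flagged in the remark after the proposition) that nothing is said about a possible boundary.

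Your argument bypasses both steps at once. Weierstrass preparation together with the sign constraint $\umax-u\geq 0$ forces the monic quadratic to have nonpositive discriminant, so its zero locus --- and hence ${\rm MAX}(u)$ --- is locally contained in the single analytic graph $\mathcal{H}=\{x^n=-\tfrac12 a(x')\}$; this simultaneously excludes transversal and folding configurations. The identity principle then finishes: since $q\in\overline\Sigma$, the open set $\mathrm{int}\{c=0\}$ is nonempty in the connected Weierstrass neighbourhood, so $c\equiv 0$ there and $q$ already lies in $\Sigma$. Two small points you leave implicit but which check out: (i) analytic charts in which both $u$ and $g_0$ are analytic exist (the paper uses exactly such charts in the proof of Proposition~\ref{pro:expansion_u}); (ii) the chosen second derivative is automatically strictly positive, since $\DD u(q)\leq 0$ at the interior maximum and the coordinate Hessian agrees with the covariant one at the critical point.

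What each approach buys: the paper's argument is elementary in that it avoids Weierstrass preparation, but it is longer and leaves open the boundary question. Your argument is shorter, sharper, and yields analytic (not just $\mathscr{C}^1$) regularity with $\overline\Sigma=\Sigma$; in particular the caveat in the remark following Proposition~\ref{pro:SigmaC1} becomes moot, and the later use for $2$-sided solutions (where one needs the separating hypersurface to be boundaryless) is automatic.
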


\begin{proof}
We already know that the top stratum $\Sigma$ is an analytic hypersurface, meaning that each point $p\in\Sigma$ admits a neighborhood $\Om$ such that there exists an analytic function $f:\Om\to\R$ with $\Om\cap \Sigma=f^{-1}(0)$ and $|df|\neq 0$ on the whole $\Om$. 
It remains to prove that $\overline{\Sigma}$ is a $\mathscr{C}^1$ hypersurface also at the points that do not belong to $\Sigma$. Let then $p\in\overline{\Sigma}\setminus\Sigma$ and let $\Omega\ni p$ be a small open neighborhood. From the \L ojasiewicz Structure Theorem~\cite[Theorem~6.3.3]{Kra_Par}, it follows that we can choose $\Om$ small enough so that
\begin{equation}
\label{eq:Sigma1Sigma2..}
\overline{\Sigma}\cap \Om\,=\,\overline{\Sigma}_1\cup\dots\cup\overline{\Sigma}_k
\end{equation}
for some $k\in\N$, where the $\Sigma_i$'s are connected analytic hypersurfaces contained in the top stratum $\Sigma$ and $p\in\overline{\Sigma}_i$ for all $i=1,\dots,k$.
For every $i=1,\dots,k$ and for every $x\in\Sigma_i$, let us denote by ${\rm n}_i(x)$ the unit normal to $\Sigma_i$ at the point $x$.

We now show that the following limit
\begin{equation}
\label{eq:limit_reg_Sigma}
\lim_{x\in\Sigma_i,\,x\to p} {\rm n}_i(x)
\end{equation}
exists for every $i=1,\dots,k$. To this end, suppose that this is not the case, that is, suppose that, for some $i$, there exists a sequence of points $\{x_j\}_{j\in\N}$ on $\Sigma_i$ with $x_j\to p$ as $j\to\infty$ and such that the sequence of normal vectors ${\rm n}_i(x_j)$ does not converge.
Considering an orthonormal basis ${\rm n}_i(x_j),X_2(x_j),\dots,X_n(x_j)$ of $T_{x_j}M$, we easily see from formula~\eqref{eq:analytic_expansion_2} that the hessian $\DD u$ at the point $x_j$ is represented by the following matrix
\begin{equation}
\label{eq:hessian_matrix}
\begin{bmatrix} 
    -n\umax & 0      & \dots  & 0 \\
    0       & 0      & \dots  & 0 \\
    \vdots  & \vdots & \ddots & \vdots \\
    0       & 0      & \dots  & 0
\end{bmatrix}\,\,.
\end{equation}
Since the normal vectors ${\rm n}_i(x_j)$ belong to $\Sph^{n-1}$ (viewed as a subspace of $T_{x_j}M$), which is compact, we can find two subsequences $\{x_{j_k}^{(1)}\}, \{x_{j_k}^{(2)}\}$ of $\{x_j\}_{j\in\N}$ such that the corresponding normal vectors ${\rm n}_i(x^{(1)}_{j_k}),{\rm n}_i(x^{(2)}_{j_k})$ converge to $X^{(1)},X^{(2)}\in\Sph^{n-1}$ (viewed as a subspace of $T_p M$) with $X^{(1)}\neq \pm X^{(2)}$.
Up to pass to subsubsequences, we can also suppose that $X_2(x_{j_k}^{(\ell)}),\dots,X_n(x_{j_k}^{(\ell)})$ converge to some vectors $X_2^{(\ell)},\dots,X_n^{(\ell)}$ in $T_pM$ for $\ell=1,2$. Notice that the continuity of $\go$ grants us that $X^{(1)},X_2^{(1)},\dots,X_n^{(1)}$ and $X^{(2)},X_2^{(2)},\dots,X_n^{(2)}$ are both orthonormal bases of  $T_pM$.
Since $u$ is analytic, its hessian is continuous, hence passing to the limit along the subsequence $\{x_{j_k}^{(1)}\}$ we deduce that $\DD u$ at the point $p$ is represented by the matrix~\eqref{eq:hessian_matrix} with respect to the basis $X^{(1)},X^{(1)}_2,\dots,X^{(1)}_n$.
Analogously, if we take the limit of $\DD u$ along the second subsequence, we get that $\DD u$ at the point $p$ is represented by the matrix~\eqref{eq:hessian_matrix} with respect to $X^{(2)},X^{(2)}_2,\dots,X^{(2)}_n$. On the other hand, since $X^{(1)}\neq\pm X^{(2)}$, it is clear that the change of basis from $X^{(1)},X^{(1)}_2,\dots,X^{(1)}_n$ to $X^{(2)},X^{(2)}_2,\dots,X^{(2)}_n$ must modify the first line and the first row of the hessian matrix, hence we have a contradiction. Therefore necessarily the limit in~\eqref{eq:limit_reg_Sigma} must exist. 
Let us also observe that the limit in~\eqref{eq:limit_reg_Sigma} cannot depend on the index $i \in\{1,\dots,k \}$, otherwise we could repeat the same argument working with two sequences on two different hypersurfaces to obtain the same contradiction. Therefore, up to a suitable choice of the orientation, all of the $\overline{\Sigma}_i$'s share the same normal at the point $p \in \overline{\Sigma} \setminus \Sigma$. In particular, the tangent space $T_p\overline{\Sigma}$ is well defined.

%
 
\begin{figure}
 \centering
 \subfigure[Transversal point~\label{fig:sing1}]
   {\includegraphics[scale=1.4]{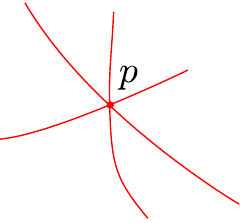}}
 \hspace{15mm}
 \subfigure[Cuspidal point\label{fig:sing2}]
   {\includegraphics[scale=1.4]{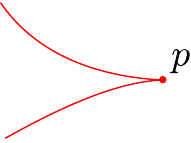}}
   \hspace{15mm}
 \subfigure[Touching point\label{fig:sing3}]{\includegraphics[scale=1.4]{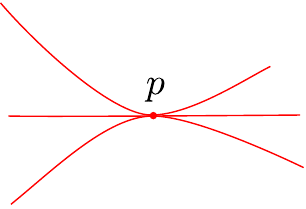}}
 \caption{\small Visual $1$-dimensional representation of the possible singularities of $\overline{\Sigma}$. The first part of the proof of Proposition~\ref{pro:SigmaC1} is concerned with showing that the normal to $\overline{\Sigma}$ is well defined everywhere, thus ruling out transversal singularities like the one pictured in~\ref{fig:sing1}. To exclude folding singularities (cuspidal points~\ref{fig:sing2} or multiple hypersurfaces touching tangentially~\ref{fig:sing3}) one needs a different argument, which is presented in the second part of the proof. 
 \label{fig:sing_Sigma}}
 \end{figure}

We are not finished yet, as it may happen that $p$ is a cuspidal point or that there are multiple hypersurfaces touching at $p$, see~\figurename~\ref{fig:sing_Sigma}. 
We now show that such folding singularities cannot happen by proving that $\Sigma$ is uniformly distant from itself along its normal direction.
Let us start by considering a point $x\in\Sigma$ and a unit speed geodesic $\gamma$ with $\gamma(0)=x$ and $\dot\gamma(0)$ orthogonal to $\Sigma$. Since $M$ is compact and complete, there exists a positive constant $K>0$ such that every such $\gamma(t)$ exists (that is, it does not reach the boundary $\pa M$) and is smooth for $|t|\leq K$. From the above observations on the hessian of $u$, it follows that the restriction of $u$ to $\gamma$ satisfies the following
$$
(u\circ\gamma)(t)\,=\,\umax\,-\,\frac{n}{2}\,\umax\, t^2\,+\,\sigma(t)\,,
$$
where $\sigma(t)$ is an error term such that, for all $t$,
$$
|\sigma(t)|\,\leq\,\frac{(u\circ\gamma)'''(\xi)}{3!}\,|t|^3\,,
$$
for some $\xi\in\R$ with $|\xi|<t$. Since $u$ and $\gamma$ are smooth and $M$ is compact, in particular we have that the derivatives of $u\circ\gamma$ are bounded, hence there exists a constant $C$ such that $|\sigma(t)|< C\,|t|^3$. But this implies that, for every $|t|\leq \min\{n\umax/(2C),K\}$ it holds
$$
(u\circ\gamma)(t)\,=\,\umax\,-\,\frac{n}{2}\,\umax\, t^2\,+\,\sigma(t)\,<\,\umax\,-\,\left(\frac{n\umax}{2C}\,-\,|t|\right)C\,t^2\,\leq\,\umax\,.
$$
This means that every point of $\gamma(t)$ with $|t|\leq \min\{n\umax/(2C),K\}$, $t\neq 0$, does not belong to ${\rm MAX}(u)$. This must hold for every unit speet geodesic starting from a point of $\Sigma$ and directed orthogonally to $\Sigma$. From this property it follows that there cannot be folding pathologies at our point $p$. In fact, if there exist $\Sigma_1,\Sigma_2$ in the decomposition~\eqref{eq:Sigma1Sigma2..} that form a fold, this would mean that orthogonal geodesics starting from points of $\Sigma_1$ arbitrarily close to $p$ would intersect $\Sigma_2$ (which is also contained in ${\rm MAX}(u)$) for arbitrarily small values of $t$. This is in contradiction with what we have just proven, hence such singularities cannot exist. This concludes our proof.
\end{proof}

\begin{remark}
The authors are deeply indebted to P.~T.~Chru\'sciel, who suggested the argument used in Proposition~\ref{pro:SigmaC1} to rule out folding singularities.
\end{remark}

\begin{remark}
We emphasize that in Proposition~\ref{pro:SigmaC1} we are not claiming any regularity of the boundary of $\overline{\Sigma}$, which in principle can be any stratified submanifold of dimension $n-2$. However, we will need Proposition~\ref{pro:SigmaC1} only in the study of the separating hypersurface of a $2$-sided solution, and it is clear that such hypersurface has empty boundary. In other words, Proposition~\ref{pro:SigmaC1} tells us that the separating hypersurface of a $2$-sided solution is $\mathscr{C}^1$. In Section~\ref{sec:ass_SD}, starting from this, we will refine the analysis to show that such hypersurface is actually $\mathscr{C}^\infty$.
\end{remark}

\section{The cylindrical ansatz}
\label{sec:cyl_ans}

In Subsection~\ref{sub:conformal reformulation_SD} we will finally use the pseudo-radial function $\Psi$ to set up our cylindrical ansatz. More precisely, on a region $N$ of our initial manifold, we will consider the new metric
$$
g\,\,=\,\,\frac{\go}{\Psi^2}\,,
$$
and we will also define a {\em pseudo-affine function} $\ffi$. 
The definitions are chosen in such a way that, if $(M,\go,u)$ is isometric to the Schwarzschild--de Sitter solution, then the metric $g$ is just the standard cylindrical metric and $\ffi$ is an affine function, that is, the norm of $\na\ffi$ with respect to the metric $g$ is constant on $M$ (here we have denoted by $\na$ the Levi-Civita connection of $g$). Conversely, the general idea in the future proofs will be to find opportune conditions that force $\ffi$ to be affine and $g$ to be cylindrical, thus proving the isometry with the Schwarzschild--de Sitter solution. The highlight of this subsection is Proposition~\ref{pro:conf_ref_SD}, where we will translate the equations in problem~\eqref{eq:prob_SD} in terms of $g$ and $\ffi$.

In Subsection~\ref{sub:geom_levelsets_SD} we will analyze the level sets of $\ffi$, and in particular we will write down the relations between the mean curvature and second fundamental form of the level sets with respect to $\go$ and $g$. 

In Subsection~\ref{sub:bochner_minpr_SD} we will apply the Bochner formula and the equations of the conformal reformulation of problem~\eqref{eq:prob_SD} written down in Proposition~\ref{pro:conf_ref_SD}, in order to deduce an elliptic inequality for the quantity
$$
w=\beta\left(1-|\na\ffi|_g^2\right)\,,
$$ 
where $\beta$ is a suitably chosen positive function. A Minimum Principle argument, together with an estimate on the behavior of $|\na\ffi|_g$ near ${\rm MAX}(u)$ (which is provided by the reverse {\L}ojasiewicz inequality proved in Subsection~\ref{sub:preparatory_estimates_SD}) will allow us to prove that $w$ is positive on our region $N$. This will give us an important bound from above on the norm of the gradient of $\ffi$, which will be of great importance in the next sections.

As a first consequence of this bound on $|\na\ffi|_g$, in Subsection~\ref{sub:monotonicity_SD} we will prove the monotonicity along the level sets of $u$ of the function $\Phi$ defined in~\eqref{eq:Phi_SD}. From this we will deduce an area lower bound for the boundary of our region $N$.

\subsection{Conformal reformulation of the problem}
\label{sub:conformal reformulation_SD}

Let $(M,\go,u)$ be a solution to problem~\eqref{eq:prob_SD}, and let $N$ be a connected component of $M\setminus{\rm MAX}(u)$.
As already observed, when $(M,\go,u)$ is the Schwarzschild--de Sitter solution, the pseudo-radial function $\Psi=\psi\circ u$, defined by~\eqref{eq:pr_function_+} or by~\eqref{eq:pr_function_-} depending on whether $N$ is outer or inner, coincides with the radial coordinate $|x|$, provided the parameter $m$ in the definition of $\Psi$ coincides with the virtual mass of $N$. As anticipated, we want to proceed via a cylindrical ansatz, that is, on $N$ we consider the following conformal change
\begin{equation}
\label{eq:g_SD}
g=\frac{g_0}{\Psi^2}\,,
\end{equation}
and we rephrase problem \eqref{eq:prob_SD} in terms of $g$. We fix local coordinates in $N$ and we compute the relation between the Christoffel symbols $\Gamma_{\alpha\beta}^{\gamma},G_{\alpha\beta}^{\gamma}$ of $g,g_0$
\begin{equation}
\label{eq:christoffels_SD}
\Gamma_{\alpha\beta}^{\gamma}=G_{\alpha\beta}^{\gamma}-\frac{1}{\psi}\left(\delta_{\alpha}^{\gamma}\partial_{\beta}\Psi+\delta_{\beta}^{\gamma}\partial_{\alpha}\Psi-(g_0)_{\alpha\beta}(g_0)^{\gamma\eta}\partial_{\eta}\Psi\right)\,.
\end{equation}
Denote by $\nabla,\Delta_g$ the Levi-Civita connection and the Laplace-Beltrami operator of $g$. For every $z\in\mathscr{C}^{\infty}$, we compute
\begin{equation}
\label{eq:hessian_SD}
\nabla_{\alpha\beta}^2 z=\D_{\alpha\beta}^2 z+\frac{1}{\psi}\Big(\partial_{\alpha}z\partial_{\beta}\Psi+\partial_{\alpha}\Psi\partial_{\beta}z-\langle\D z\,|\D\Psi\,\rangle\,\cgo_{\alpha\beta}\Big)
\end{equation}
\begin{equation}
\label{eq:laplacian_SD}
\Delta_g z=\psi^2\Delta z-(n-2)\psi\langle\D z\,|\D\Psi\,\rangle
\end{equation}
Substituting $z=\Psi$ in formul\ae~\eqref{eq:hessian_SD} and~\eqref{eq:laplacian_SD}, and using the equations in~\eqref{eq:prob_SD} we compute
\begin{align}
\notag
\nana \Psi&=\DD \Psi+\frac{1}{\psi}\Big(2d\Psi\otimes d\Psi-|\D \Psi|^2\,\go\Big)
\\
\notag
&=\dot\psi\DD u+\left(\frac{1}{u\dot{\psi}}+\frac{n+1}{\psi}+n\frac{\dot{\psi}}{u}\right)d\Psi\otimes d\Psi-\frac{|\D\Psi|^2}{\psi}\go
\\
\label{eq:hessian_psi_SD}
&=u\dot\psi \Ric +\left(\frac{1}{u\dot{\psi}}+\frac{n+1}{\psi}+n\frac{\dot{\psi}}{u}\right)d\Psi\otimes d\Psi-\frac{|\na\Psi|_g^2}{\psi}g-nu\psi^2\dot{\psi}\,g\,,
\\
\notag
\Delta_g \Psi&=\psi^2\Delta \Psi-(n-2)\psi|\D\Psi|^2
\\
\notag
&=\psi^2\dot\psi\De u+\left(\frac{\psi^2}{u\dot\psi}+\psi+n\frac{\psi^2\dot{\psi}}{u}\right)|\D\Psi|^2
\\
\label{eq:laplacian_psi_SD}
&=-nu\psi^2\dot\psi+\left(\frac{1}{u\dot\psi}+\frac{1}{\psi}+n\frac{\dot{\psi}}{u}\right)|\na\Psi|_g^2\,.
\end{align}
On the other hand, we know from~\cite[Theorem~1.159]{Besse} that the Ricci tensors of $\go$ and $g$ are related by the formula
\begin{align}
\notag
\!\!\Ric&=\Ricg-\frac{n-2}{\psi}\nana\Psi+\frac{2(n-2)}{\psi^2}d\Psi\otimes d\Psi-\left(\frac{1}{\psi}\Deg\Psi+\frac{n-3}{\psi^2}|\na\Psi|_g^2\right) g
\\
\label{eq:ricci_SD}
&=\Ricg-\frac{n-2}{\psi}\nana\Psi+\frac{2(n-2)}{\psi^2}d\Psi\otimes d\Psi+\left[nu\psi\dot{\psi}-\bigg(n-2+n\frac{\psi\dot{\psi}}{u}+\frac{\psi}{u\dot{\psi}}	\bigg)\frac{|\na\Psi|_g^2}{\psi^2}\right] g\,.
\end{align}
Substituting~\eqref{eq:ricci_SD} in~\eqref{eq:hessian_psi_SD} we obtain
\begin{multline}
\label{eq:riccig_SD}
\Ricg=\bigg[\frac{n-2}{\psi}+\frac{1}{u\dot{\psi}}\bigg]\nana\Psi-\bigg[\frac{2(n-2)}{\psi^2}+\frac{n}{u^2}+\frac{1}{u^2\dot{\psi}^2}+\frac{n+1}{u\psi\dot\psi}\bigg]d\Psi\otimes d\Psi
\\
+\left[n\psi\Big(\psi-u\dot{\psi}\Big)+\bigg(n-2+n\frac{\psi\dot{\psi}}{u}+\frac{2\psi}{u\dot{\psi}}	\bigg)\frac{|\na\Psi|_g^2}{\psi^2}\right] g\,.
\end{multline}
In order to simplify the above expressions, we notice that, {\em a posteriori}, in the rotationally symmetric case we expect the equality $|\D u|^2=(u/\dot\psi)^2$, or equivalently $|\na\Psi|_g^2=(u\psi)^2$, to hold pointwise on $N$. For this reason, it is convenient to introduce a function $\ffi\in\mathscr{C}^{\infty}(N)$ which satisfies $|\na\ffi|^2_g=|\na\Psi|_g^2/(u\psi)^2$, so that, {\em a posteriori}, we expect $|\na\ffi|_g=1$ pointwise on $N$, that is, we expect $\ffi$ to be an affine function.
Such a function $\ffi$ can be defined in several ways. In fact, if $\ffi$ is such a function, also $c\pm\ffi$, with $c\in\R$, satisfies the same equality $|\na\ffi|_g=1$. However, all these choices are actually equivalent for our analysis, hence we will fix $\ffi$ now, once and for all. We define the {\em pseudo-affine function} $\ffi$ as
\begin{equation}
\label{eq:ffi_SD}
\ffi(p)\,=\,\int_{\Psi(p)}^{r_+(m)} \frac{dt}{t \sqrt{1-t^2-2mt^{2-n}}}\,.
\end{equation}
Despite the integrand has a singularity for $t=r_\pm(m)$, the integral in~\eqref{eq:ffi_SD} is finite. In fact, setting $s=1-t^2-2mt^{2-n}$, fixed $\eta>[(n-2)m]^{1/n}$, we have
\begin{align*}
\int_{\eta}^{r_+(m)} \frac{dt}{t \sqrt{s}}
\,&=\,\int^{0}_{1-\eta^2-2m\eta^{2-n}} \frac{-ds}{2t^2[1-(n-2)mt^{-n}] \sqrt{s}}
\\
&\leq\,\frac{1}{2\eta^2[1-(n-2)m\eta^{-n}]}\int^{1-\eta^2-2m\eta^{2-n}}_{0} \frac{ds}{\sqrt{s}}
\\
&=\frac{\sqrt{1-\eta^2-2m\eta^{2-n}}}{\eta^2[1-(n-2)m\eta^{-n}]
}\,<\,\infty\,.
\end{align*}
The singularity  of the integrand when $t=r_-(m)$ can be handled in the same way. It follows that $\ffi$ is well defined and smooth on $N$. However, {\em a priori} we do not know if the gradient of $\ffi$ is bounded when we approach ${\rm MAX}(u)$, because both the numerator and the denominator of formula~\eqref{eq:na_ffi_SD} below go to zero. This point will be addressed in Proposition~\ref{pro:min_pr_SD}, where we will show that $|\na\ffi|_g$ is bounded above by $1$ on the whole $N$.
Notice that the definition of $\ffi$ is chosen in such a way that, when $N$ is outer and $p\in\pa N$, we have $\ffi=0$ on $\pa N$.
Instead, when $N$ is inner and $p\in\pa N$, that is, $\Psi(p)=r_-(m)$, the function $\ffi$ assumes its maximum value. 

For future convenience, we also write down some formul\ae\ for the gradient and the hessian of $\ffi$
\begin{align}
\label{eq:na_ffi_SD}
|\na\ffi|_g^2\,&=\,\frac{|\na\Psi|_g^2}{u^2\psi^2}\,=\,\frac{\dot\psi^2}{u^2}\,|\D u|^2\,=\,\frac{|\D u|^2}{\psi^2\big[1-\big(r_0(m)/\psi\big)^n\big]^2}\,,
\\
\label{eq:dffi_SD}
\na\ffi\,&=\,-\frac{\dot{\psi}}{u\psi}\D u\,=\,\frac{\D u}{\psi^2\big[1-\big(r_0(m)/\psi\big)^n\big]}
\,,
\\
\notag
\nana\ffi\,&=\,-\,\frac{\nana\Psi}{\psi u}\,+\,\frac{1}{\psi^2 u^2}\bigg(u+\frac{\psi}{\dot\psi}\bigg)d\Psi\otimes d\Psi
\\
\label{eq:nana_ffi_SD}
&=\,-\frac{\dot\psi}{u\psi}\DD u\,+\,\frac{\dot\psi^2}{u\psi^2}|\D u|^2\go\,-\,n\frac{\dot\psi^2}{u^2\psi^2}\left(u+\psi\dot\psi\right)du\otimes du
\end{align}
Combining equations~\eqref{eq:na_ffi_SD},~\eqref{eq:nana_ffi_SD} with~\eqref{eq:laplacian_psi_SD},~\eqref{eq:riccig_SD}, we arrive with some computations to a conformal reformulation of system~\eqref{eq:prob_SD}.

\begin{proposition}
\label{pro:conf_ref_SD}
Let $(M,\go,u)$ be a solution to problem~\eqref{eq:prob_SD}, and let $N$ be an outer or inner region with virtual mass
$$
m\,\,=\,\,\mu(N,\go,u)\,.
$$
Let also $\Psi=\psi\circ u$ be the pseudo-radial function defined by~\eqref{eq:pr_function_+} or~\eqref{eq:pr_function_-}, depending on whether $N$ is an outer or inner region, respectively. Then the metric $g=\go/\Psi^2$ and the pseudo-affine function $\ffi$ defined in~\eqref{eq:ffi_SD} satisfy the following system of differential equations
\begin{equation}
\label{eq:pb_conf_SD}
\!\!\!\!\!\!\!\!\!\!\!\!\begin{dcases}
\Ricg=-\Big[(n-2)u+\frac{\psi}{\dot{\psi}}\Big]\nana\ffi-(n-2)d\ffi\otimes d\ffi+\bigg[(n-2)|\na\ffi|_g^2-\Big(u-\frac{\psi}{\dot{\psi}}\Big)\Deg\ffi\bigg]g, & \mbox{in } N,
\\
\Deg \ffi=n\psi\dot{\psi}\left(1-|\na\ffi|_g^2\right), & \mbox{in } N,
\end{dcases}
\end{equation}
with boundary conditions
\begin{align}
\label{eq:init_cond}
&\begin{dcases}
\ffi=0  & \mbox{on } \partial N,
\\
\ffi=\ffi_0:= \int_{r_0(m)}^{r_+(m)} \frac{dt}{t \sqrt{1-t^2-2mt^{2-n}}} & \mbox{on } \overline{N}\cap {\rm MAX}(u),
\end{dcases}
\ \ \ \hbox{ if $N$ outer\,,}
\\
\smallskip
&\begin{dcases}
\ffi=\ffi_{\rm max}:= \int_{r_-(m)}^{r_+(m)} \frac{dt}{t \sqrt{1-t^2-2mt^{2-n}}}  & \mbox{on } \partial N,
\\
\ffi=\ffi_0:= \int_{r_0(m)}^{r_+(m)} \frac{dt}{t \sqrt{1-t^2-2mt^{2-n}}} & \mbox{on } \overline{N}\cap {\rm MAX}(u),
\end{dcases}
\ \ \ \hbox{ if $N$ inner\,.}
\end{align}
\end{proposition}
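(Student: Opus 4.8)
The plan is to verify the system~\eqref{eq:pb_conf_SD} and the boundary conditions~\eqref{eq:init_cond} by a direct computation that combines the formulae already assembled for $\ffi$ — namely~\eqref{eq:na_ffi_SD} for $|\na\ffi|_g^2$, \eqref{eq:dffi_SD} for $\na\ffi$ and~\eqref{eq:nana_ffi_SD} for $\nana\ffi$ — with the formula~\eqref{eq:laplacian_psi_SD} for $\Deg\Psi$ and the formula~\eqref{eq:riccig_SD} for $\Ricg$. The whole point is that $\ffi$ is a function of $u$ alone (through $\Psi=\psi\circ u$), so every object in sight is expressible through $\psi,\dot\psi,u,\D u,\DD u$; the explicit dependence on $\psi,\dot\psi,u$ is then removed at the end by invoking the two identities $1-u^2=\psi^2+2m\psi^{2-n}$ and $\dot\psi=-u/\big(\psi[1-(r_0(m)/\psi)^n]\big)$ from~\eqref{eq:def_Psi_SD} and~\eqref{eq:dpsideu_SD}, together with the relations $d\Psi\otimes d\Psi=u^2\psi^2\,d\ffi\otimes d\ffi$ and $|\na\Psi|_g^2=u^2\psi^2|\na\ffi|_g^2$ that follow at once from~\eqref{eq:dffi_SD}.

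First I would establish the scalar equation. Taking the $g$-trace of the first line of~\eqref{eq:nana_ffi_SD} gives $\Deg\ffi=-\frac{1}{\psi u}\Deg\Psi+\frac{1}{\psi^2u^2}\big(u+\frac{\psi}{\dot\psi}\big)|\na\Psi|_g^2$; substituting~\eqref{eq:laplacian_psi_SD} for $\Deg\Psi$ and rewriting $|\na\Psi|_g^2=u^2\psi^2|\na\ffi|_g^2$, the terms carrying $\dot\psi^{-1}$ and $\psi^{-1}$ cancel and a short simplification leaves exactly $\Deg\ffi=n\psi\dot\psi\big(1-|\na\ffi|_g^2\big)$, which is the second equation in~\eqref{eq:pb_conf_SD}.

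Next I would turn to the Ricci equation, which is the heart of the computation. Inverting the first line of~\eqref{eq:nana_ffi_SD} writes $\nana\Psi$ as $-\psi u\,\nana\ffi$ plus a multiple of $d\Psi\otimes d\Psi$; inserting this into~\eqref{eq:riccig_SD} and converting $d\Psi\otimes d\Psi$ and $|\na\Psi|_g^2$ as above, the coefficient of $\nana\ffi$ comes out immediately as $-\big[(n-2)u+\psi/\dot\psi\big]$. For the coefficient of $d\ffi\otimes d\ffi$ one collects the several contributions and checks — using in particular $2\psi u/\dot\psi=-2\psi^2[1-(r_0(m)/\psi)^n]$ and $1-u^2=\psi^2+2m\psi^{2-n}$ — that it collapses to the constant $-(n-2)$. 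Finally, in the coefficient of $g$ one uses the scalar equation just established to trade the term $n\psi^2-n\psi u\dot\psi$ for $-\big(u-\psi/\dot\psi\big)\Deg\ffi$ up to a term proportional to $|\na\ffi|_g^2$, after which the same two identities reduce the remaining $|\na\ffi|_g^2$-coefficient to $n-2$. This produces the first line of~\eqref{eq:pb_conf_SD}.

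The boundary conditions I would obtain simply by evaluating $\Psi=\psi\circ u$. Because $u$ is normalized so that $\umax=\umax(m)$, one has $F_m(\umax,r_0(m))=0$ and $F_m(0,r_\pm(m))=0$, whence $\psi_\pm(\umax)=r_0(m)$ and $\psi_+(0)=r_+(m)$, $\psi_-(0)=r_-(m)$. Hence $\Psi\equiv r_0(m)$ on $\overline N\cap{\rm MAX}(u)$, giving $\ffi\equiv\ffi_0$ there, while on $\pa N=\{u=0\}$ one gets $\Psi\equiv r_+(m)$ if $N$ is outer, so $\ffi\equiv 0$, and $\Psi\equiv r_-(m)$ if $N$ is inner, so $\ffi\equiv\ffi_{\rm max}$; the finiteness of the integrals, already checked below~\eqref{eq:ffi_SD}, guarantees that $\ffi$ is well defined and smooth on $N$. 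The main obstacle is purely computational: the raw substitution in the Ricci identity yields an expression with explicit, seemingly uncontrolled dependence on $u,\psi,\dot\psi$, and one must recognise both that its $g$-component hides a copy of $\Deg\ffi$ and that the two displayed identities are exactly what is needed to reduce each coefficient to the advertised form.
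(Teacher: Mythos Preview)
Your proposal is correct and follows precisely the approach the paper indicates: the paper itself does not give a detailed proof but simply states that one combines~\eqref{eq:na_ffi_SD},~\eqref{eq:nana_ffi_SD} with~\eqref{eq:laplacian_psi_SD} and~\eqref{eq:riccig_SD} and arrives ``with some computations'' at~\eqref{eq:pb_conf_SD}. Your outline fills in exactly those computations --- trace of~\eqref{eq:nana_ffi_SD} for the scalar equation, inversion of~\eqref{eq:nana_ffi_SD} and substitution into~\eqref{eq:riccig_SD} for the Ricci equation, evaluation of $\Psi$ at $\partial N$ and at $\overline N\cap{\rm MAX}(u)$ for the boundary conditions --- and your identification of the key algebraic identities (\eqref{eq:def_Psi_SD},~\eqref{eq:dpsideu_SD} and the relation $|\na\Psi|_g^2=u^2\psi^2|\na\ffi|_g^2$) needed to collapse the coefficients is exactly right.
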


\noindent 
Tracing the first equation of~\eqref{eq:pb_conf_SD}, one obtains
\begin{align}
\label{eq:tildeR_SD}
\frac{\RRR_g}{(n-1)(n-2)}&=1-\left(1+\frac{2nu\psi\dot{\psi}-n\psi^2}{n-2}\right)\left(1-|\nabla\varphi|_g^2\right)
\end{align}
where $\Rg$ is the scalar curvature of $g$.
In the cylindrical situation, which is the conformal counterpart of the Schwarzschild--de Sitter solution, $\Rg$ has to be constant. In this case, the above formula implies that also $|\na \ffi|_g$ has to be constant and equal to $1$, as already anticipated. 
For these reasons, also in the  situation, where we do not know a priori if $g$ is cylindrical, it is natural to think of $\na \ffi$ as to a candidate splitting direction and to investigate under which conditions this is actually the case. 
A first important observation is that the splitting is in force when $\ffi$ is an affine function, that is, when its hessian $\nana\ffi$ and the quantity $1-|\na\ffi|_g$ vanish everywhere in our region.

\begin{proposition}
\label{pro:rigidity}
Let $(M,\go,u)$ be a solution to problem~\eqref{eq:prob_SD}, and let $N$ be an outer or inner region with virtual mass
$$
m\,\,=\,\,\mu(N,\go,u)\,.
$$
Let $\Psi=\psi\circ u$ be the pseudo-radial function defined by~\eqref{eq:pr_function_+} or~\eqref{eq:pr_function_-}, depending on whether $N$ is an outer or inner region, respectively. Finally, let $g=\go/\Psi^2$ and let $\ffi$ be the pseudo-affine function defined by~\eqref{eq:ffi_SD}.
If $\nana\ffi\equiv 0$ and $|\na\ffi|_g\equiv 1$ on $N$, then $(M,\go,u)$ is isometric to a generalized Schwarzschild--de Sitter solution~\eqref{eq:gen_SD} with mass $m$.
\end{proposition}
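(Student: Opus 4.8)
The plan is to read the two hypotheses as a splitting statement and then exploit the explicit relations tying $g$, $\ffi$ and $\Psi$ back to $\go$ and $u$. Since $\nana\ffi\equiv 0$ and $|\na\ffi|_g\equiv 1$ on $N$, the vector field $\na\ffi$ is a unit $g$-parallel (hence Killing) vector field, and $\ffi$ has no critical point. First I would record that the level sets $\{\ffi=c\}$, for $c$ in the open interval $\ffi(N)$, are compact: they are closed subsets of the compact set $\overline{N}$, because on $\overline{N}\setminus N=\pa N\cup\Sigma_N$ the function $\ffi$ only takes the \emph{endpoint} values (namely $0$ or $\ffi_{\rm max}$ on $\pa N$, and $\ffi_0$ on $\Sigma_N=\overline{N}\cap{\rm MAX}(u)$), see \eqref{eq:init_cond}. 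Consequently the unit-speed gradient flow of $\ffi$ is defined on all of $N$ for the relevant range of the parameter, and since $\na\ffi$ is parallel it yields a global Riemannian product decomposition $(N,g)\cong\big(\ffi(N),dt^2\big)\times(\Sigma,g_\Sigma)$, where $t=\ffi$ and $(\Sigma,g_\Sigma)$ is any level set of $\ffi$ with its induced metric; in particular $\Sigma$ is connected and $d\ffi\otimes d\ffi$ is the vertical factor of $g$.

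Next I would feed the hypotheses into the first equation of \eqref{eq:pb_conf_SD}. Since $\nana\ffi\equiv 0$ forces $\Deg\ffi=\operatorname{tr}_g\nana\ffi=0$, and since $|\na\ffi|_g^2=1$, that equation collapses to
\[
\Ricg\,=\,(n-2)\big(g-d\ffi\otimes d\ffi\big)\,.
\]
In the product picture this says precisely that the $\ffi$-direction is Ricci-flat (automatic for a product) and that $(\Sigma,g_\Sigma)$ is Einstein with $\Ric_{g_\Sigma}=(n-2)\,g_\Sigma$; as $\dim\Sigma=n-1$, this is exactly the type of fibre appearing in \eqref{eq:gen_SD}.

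I would then reconstruct $\go$ and $u$. On $N$ one has $\go=\Psi^2 g$, and by \eqref{eq:ffi_SD} the function $\ffi$ is a strictly monotone (hence invertible) function of $\Psi$ alone, with $d\ffi/d\Psi=-1/\big(\Psi\sqrt{1-\Psi^2-2m\Psi^{2-n}}\big)$. Substituting $g=d\ffi\otimes d\ffi+g_\Sigma$ and changing variable from $\ffi$ to $r:=\Psi$ gives
\[
\go\,=\,\Psi^2\,d\ffi\otimes d\ffi\,+\,\Psi^2 g_\Sigma\,=\,\frac{dr\otimes dr}{1-r^2-2m r^{2-n}}\,+\,r^2 g_\Sigma\,,
\]
while \eqref{eq:def_Psi_SD} gives $u=\sqrt{1-r^2-2m r^{2-n}}$. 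Since $r=\Psi$ ranges over $[r_0(m),r_+(m)]$ when $N$ is outer and over $[r_-(m),r_0(m)]$ when $N$ is inner, this shows that $\overline{N}$ (horizon and adjacent stratum of ${\rm MAX}(u)$ included) is isometric to the corresponding ``half'' of a generalized Schwarzschild--de Sitter solution \eqref{eq:gen_SD} with Einstein fibre $(\Sigma,g_\Sigma)$ and mass $m$.

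Finally I would promote this to a statement about all of $M$ by analyticity. The metric $\go$ and the potential $u$ are analytic on the connected manifold $M$ and, on the open set $N$, coincide with the analytic metric and potential of a generalized Schwarzschild--de Sitter solution; hence the local isometry extends by analytic continuation. The delicate point is the crossing of ${\rm MAX}(u)$, which I would handle using Propositions \ref{pro:SigmaC1} and \ref{pro:expansion_u}: the former ensures that the relevant stratum of ${\rm MAX}(u)$ is a smooth closed hypersurface, so that $\Sigma_N$ is in fact a full component of ${\rm MAX}(u)$ locally separating $N$ from the rest of $M$; the latter shows that $u$ continues across it with exactly the expansion of the model. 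Together these force the complementary region to be the ``other half'' of the same generalized Schwarzschild--de Sitter solution, and patching yields the global isometry. I expect this last step --- propagating the model structure through ${\rm MAX}(u)$ to the whole manifold --- to be the main obstacle; Steps 1--3 are essentially the splitting induced by a parallel vector field combined with bookkeeping via \eqref{eq:ffi_SD} and \eqref{eq:def_Psi_SD}, the only additional care being the compactness argument that makes the splitting in Step 1 global rather than merely local.
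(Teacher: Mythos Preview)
Your Steps 1--3 match the paper's argument essentially verbatim: the paper invokes \cite[Theorem~4.1]{Ago_Maz_1} to obtain the product splitting $(N,g)\cong\big([0,\ffi_0)\times\pa N,\,d\ffi\otimes d\ffi+g^{\pa N}\big)$, reads off $\Ric_{g^{\pa N}}=(n-2)\,g^{\pa N}$ from the first equation of \eqref{eq:pb_conf_SD}, and undoes the conformal change to identify $(N,\go,u)$ with half of a generalized Schwarzschild--de Sitter triple. So the core of your proposal is correct and follows the same route.

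The gap is in Step~4. You implicitly assume that $\Sigma_N$ is two-sided in $M$, i.e.\ that there actually is a ``rest of $M$'' on the other side into which one can analytically continue. A~priori this need not be the case: since $M$ is only assumed orientable, the hypersurface $\Sigma_N$ could be one-sided (non-orientable), in which case $M=\overline{N}$ and the global model you seek would have to be a quotient of the generalized Schwarzschild--de Sitter solution by an involution of its ${\rm MAX}$-level. The paper handles this explicitly: in the model, the mean curvature vector $\vec{\HHH}$ of ${\rm MAX}^s(u)$ has constant nonzero norm and points everywhere outside $M_+^s$, so after quotienting by any orientation-reversing involution of ${\rm MAX}^s(u)$ the mean curvature would be ill-defined at \emph{every} point of $\Sigma_N$; this contradicts the fact that $\Sigma_N$, being a stratified hypersurface, is smooth $\mathscr{H}^{n-1}$-almost everywhere. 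Propositions~\ref{pro:SigmaC1} and~\ref{pro:expansion_u} do not, by themselves, rule out the one-sided case; you need an argument of this kind (or to observe directly that the nonzero mean curvature of the model obstructs any such folding) to finish.
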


\begin{proof}
Let us suppose that $N$ is an outer region, the inner case being completely equivalent. Proceeding as in the proof of \cite[Theorem~4.1]{Ago_Maz_1}, we obtain that $(\{0\leq\ffi< \ffi_0\},g)$ is isometric to the product
$$
\big([0,\ffi_0)\times \pa N\,,\,d\ffi\otimes d\ffi+g^{\pa N}\big)\,,
$$
where $g^{\pa N}$ is the metric induced by $g$ on $\pa N$. From the first equation in~\eqref{eq:pb_conf_SD} we deduce that $\Ric_{g^{\pa N}}=(n-2)g^{\pa N}$. Recalling the definition of $\ffi$ and the relation between $g$ and $\go$, we deduce that $\go$ is isometric to 
$$
\frac{d\Psi\otimes d\Psi}{u^2}+\psi^2g^{\pa N}\,.
$$
This proves that $(N,\go,u)$ is isometric to the outer region of a generalized Schwarzschild--de Sitter solution $(M^s,\go^s,u^s)$ defined by~\eqref{eq:gen_SD}, where $\Psi$ is the radial coordinate. It remains to prove that this isometry between the outer regions extends to an isometry between the whole $(M,\go,u)$ and the whole $(M^s,\go^s,u^s)$. 
To this end, we distinguish two cases, depending on whether the (possibly stratified) hypersurface $\Sigma_N=\overline{N}\cap\overline{M\setminus \overline{N}}\subseteq{\rm MAX}(u)$ is orientable or not.
\begin{itemize}
\item Let us start by considering the case in which $\Sigma_N$ is an orientable hypersurface. Since $M$ is orientable by hypothesis, the hypersurface $\Sigma_N$ is orientable if and only if it is two sided, meaning that any neighborhood of $\Sigma_N$ contains both points of $N$ and points outside $N$. Considering the corresponding chart in $(M^s,\go^s,u^s)$, by analytic continuation we can extend the isometry between $(N,\go,u)$ and the outer region of $(M^s,\go^s,u^s)$ to all the points in the chart. That way, the isometry pass through $\Sigma$, and we can continue to argue chart by chart until we finally cover all the manifold $M$, thus proving the global isometry of $(M,\go,u)$ and $(M^s,\go^s,u^s)$.
\smallskip
\item If $\Sigma_N$ is not orientable, this means that it is one sided, that is, every point of $\Sigma_N$ has a neighborhood that is entirely contained inside $\overline{N}$.  Therefore, it easily follows that $(M,\go,u)=(\overline{N},\go,u)$ is isometric to $(\overline{M}_+^s,\go^s,u^s)/\sim$, where $\sim$ is a relation on the points of 
$$
{\rm MAX}^s(u)\,=\,\{p\in M^s\,:\,u^s(p)=\umax\}\,\subset\,\pa\overline{M}_+^s\,.
$$
We first observe that this relation is induced by an involution 
$$
\iota_\sim:\,{\rm MAX}^s(u)\,\to\,{\rm MAX}^s(u)\,.
$$
In fact, the neighborhood of a point $x\in {\rm MAX}^s(u)$ inside $(\overline{M}_+^s,\go)$ is isometric to an half space $\R_+^{n}$ endowed with a metric such that the boundary $\pa \R^n_+$ is smooth. In order for the manifold $(\overline{M}_+^s,\go)/\sim$ to be smooth at $x$ it is necessary that there exists exactly one point $\iota_\sim(x)\in{\rm MAX}^s(u)$, $\iota_\sim(x)\neq x$, such that $x\sim \iota_\sim(x)$. Moreover, $\iota_\sim$ has to be continuous and to reverse the orientation. It is also clear that $\iota_\sim^2$ is the identity, so that $\iota_\sim$ is indeed an involution.

We now notice that the mean curvature vector $\vec{\HHH}$ of the hypersurface ${\rm MAX}^s(u)$ has constant nonzero norm and it always points outside $M_+^s$ on the whole ${\rm MAX}^s(u)$, hence the same holds on ${\rm MAX}^s(u)/\sim$. In particular, at the points $x$ and $\iota_\sim(x)$ the vector $\vec{\HHH}$ points outside $M_+^s$. Therefore, in a chart centered at $x=\iota_\sim(x)$ in the quotient manifold we would have that $\vec{\HHH}$ points in one direction according to the measure at $x$, and points in the opposite direction if measured at $\iota_\sim(x)$, which means that the mean curvature of $\Sigma_N$ at $x$ is not well defined. Since the same reasoning can be repeated at each point $x\in\Sigma_N$, we would have that the mean curvature in nowhere defined on $\Sigma_N$, against the fact that we know that $\Sigma_N={\rm MAX}(u)$ is a stratified hypersurface, so that in particular it is smooth $\mathscr{H}^{n-1}$-almost everywhere. We have reached a contradiction, hence $\Sigma_N$ is necessarily oriented and the first case applies.
\end{itemize}
This concludes the proof.
\end{proof}

\subsection{The geometry of the level sets.}
\label{sub:geom_levelsets_SD}
In the forthcoming analysis a crucial role is played by the study of the geometry of the level sets of $\ffi$, which coincide with the level sets of $u$ in $N$, by definition. 
Hence, we pass now to describe the second fundamental form and the mean curvature of the regular level sets of $\ffi$ (or equivalently of $u$) in both the original Riemannian context $(N, \go)$ and the conformally related one $(N, g)$.
To this aim, we fix a regular level set $\{ \ffi = s_0\}$ and we construct a suitable set of coordinates in a neighborhood of it. Note that $\{ \ffi = s_0\}$ must be compact, by the properness of $\ffi$. In particular, there exists a real number $\delta>0$  such that in the tubular neighborhood $\mathcal{U}_\delta = \{s_0 - \delta < \ffi < s_0 + \delta \}$ we have $|\na \ffi |_\g > 0$ so that $\mathcal{U}_\delta$ is foliated by regular level sets of $\ffi$. As a consequence, $\mathcal{U}_\delta$ is diffeomorphic to $(s_0 -\delta , s_0 + \delta) \times \{ \ffi = s_0 \}$ and the function $\ffi$ can be regarded as a coordinate in $\mathcal{U}_\delta$. Thus, one can choose a local system of coordinates $\{\ffi,\vartheta^1\!,\!....,\vartheta^{n-1}\}$, where $\{\vartheta^1\!,\!...., \vartheta^{n-1} \}$ are local coordinates on $\{ \ffi = s_0 \}$. In such a system, the metric $\g$ can be written as
\begin{equation*}
g \, = \,\frac{d\ffi \otimes d\ffi}{|\na \ffi|_\g^2} +\g_{ij}(\ffi,\vartheta^1 \!,\!...., \vartheta^{n-1})\,d\vartheta^i\!\otimes d\vartheta^j \,,
\end{equation*}
where the latin indices vary between $1$ and $n-1$. We now fix in $\mathcal{U}_\delta$ the $\g$-unit vector field $\nu_g=-\na\ffi/|\na\ffi|_g$. We also define $\nu$ as the $\go$-unit vector field that points in the same direction as $\nu_g$ at every point, that is
$$
\nu\,=\,
\begin{cases}
-\D u/|\D u|\,,  &  \mbox{ if $N$ outer},
\\
\D u/|\D u|\,,  &  \mbox{ if $N$ inner}.
\end{cases}
$$
We will denote by $\hhh$ and $\HHH$ the second fundamental form and mean curvature with respect to the metric $\go$ and the normal $\nu$. We will denote by $\hg$ and $\Hg$ the second fundamental form and mean curvature with respect to the metric $g$ and the normal $\nu_g$.
According to these choices,  the second fundamental forms of the level sets of $u$ or $\ffi$ are given by
\begin{equation}
\label{eq:formula_fundamentalform_SD}
\cho_{ij}=
\begin{dcases}
-\frac{\DD_{ij} u}{|\D u|}\,,  &  \mbox{ if $N$ outer},
\\
\frac{\DD_{ij} u}{|\D u|}\,,  &  \mbox{ if $N$ inner},
\end{dcases}
\qquad\qquad
\chg_{ij}=-\frac{\nana_{ij}\ffi}{|\na\ffi|_\g}\, ,  \qquad \mbox{for}\quad i,j = 1,\!...., n-1.
\end{equation}  
Taking the traces of the above expressions with respect to the induced metrics 
we obtain the following expressions for the mean curvatures in  the two ambients
\begin{equation}
\label{eq:formula_curvature_SD}
\Ho=
\begin{dcases}
-\frac{\De u}{|\D u|}+\frac{\DD u(\D u,\D u)}{|\D u|^3}\,,  &  \mbox{ if $N$ outer},
\\
\frac{\De u}{|\D u|}-\frac{\DD u(\D u,\D u)}{|\D u|^3}\,,  &  \mbox{ if $N$ inner},
\end{dcases}
\qquad\qquad
\Hg=-\frac{\Deg \ffi}{|\na \ffi|_g}+\frac{\nana\ffi(\na\ffi,\na\ffi)}{|\na\ffi|_{\g}^3}\,.
\end{equation}
Taking into account expressions~\eqref{eq:na_ffi_SD},~\eqref{eq:nana_ffi_SD}, one can show that 
the second fundamental forms are related by
\begin{equation}
\label{eq:formula_h_h_g_SD}
\chg_{ij}
\,=\,
\frac{1}{\psi}\,\cho_{ij}\,-\,\frac{|\dot{\psi}|}{\psi^2}\,|\D u|\,  \cgo_{ij}\,.
\end{equation}
The analogous formula for the mean curvatures reads
\begin{equation}
\label{eq:formula_H_H_g_SD}
{\Hg}
\,=\,
\psi\,\HHH \,-\, (n-1)\,|\dot{\psi}|\,|\D u|\,.
\end{equation}

Concerning the nonregular level sets of $\ffi$, we first observe that, since $u$ is analytic on $M$ (see~\cite{Chrusciel_1,ZumHagen}), then  $\ffi$ is analytic on the whole $N$. 
As anticipated in Subsection~\ref{sub:prelim}, it follows from the results in~\cite{Lojasiewicz_2} (see also~\cite[Theorem~6.3.3]{Kra_Par}) that there exists an hypersurface $S\subseteq{\rm Crit}(\ffi)$ such that $\mathscr{H}^{n-1}({\rm Crit}(\ffi)\setminus S)=0$.
In particular, the $(n-1)$-dimensional Hausdorff measure of the level sets of $\ffi$ is locally finite. Moreover, the unit normal to a level set is well-defined $\mathscr{H}^{n-1}$-almost everywhere, and so are the second fundamental form $\hg$ and the mean curvature $\Hg$. 
We now compute the relation between $\hg,\Hg$ and $\hhh,\HHH$ at a point $y_0\in S$.
 Let $\nu,\nu_g$ be the unit normal vector fields to $S$ at $y_0$ with respect to $\go,g$ respectively. Since $
|\nu_g|_g^2\,=\,1\,=\,|\nu|^2\,=\,\psi^2\,|\nu|_g^2$, we deduce that
$\nu_g\,=\,\psi\,\nu$.
Let $(\pa/\pa x^1,\dots,\pa/\pa x^{n-1})$ be a basis of $T_{y_0}S$, so that in particular $(\pa/\pa x^1,\dots,\pa/\pa x^{n-1},\nu_g)$ is a basis of $T_{y_0}M$. Recalling~\eqref{eq:christoffels_SD} and observing that the derivatives of $u$ and $\Psi$ in $y_0$ are all zero since $y_0\in{\rm Crit}(\ffi)={\rm Crit}(u)$, we have
$$
\chg_{ij}\,=\,\Big\langle \na_i\frac{\pa}{\pa x^j}\,\Big|\,\nu_g\Big\rangle_g\,=\,\Gamma_{ij}^n\,=\,\Cr_{ij}^n\,=\,\Big\langle \D_i\frac{\pa}{\pa x^j}\,\Big|\,\nu_g\Big\rangle_g\,=\,\frac{1}{\psi}\,\Big\langle \D_i\frac{\pa}{\pa x^j}\,\Big|\,\nu\Big\rangle\,=\,\frac{1}{\psi}\,\cho_{ij}\,.
$$
Taking the trace we obtain $\Hg=\psi\HHH$.
This proves that formul\ae~\eqref{eq:formula_h_h_g_SD} and~\eqref{eq:formula_H_H_g_SD} hold also at any point $y_0\in S$, so that in particular they hold $\mathscr{H}^{n-1}$-almost everywhere on any level set.

\subsection{Consequences of the Bochner formula}
\label{sub:bochner_minpr_SD}

Starting from the Bochner formula and using the equations in~\eqref{eq:pb_conf_SD}, we find
\begin{multline}
\Deg|\na\ffi|_g^2\,=\,2|\nana\ffi|_g^2\,+2\Ricg(\na\ffi,\na\ffi)+2\langle\na\Deg\ffi\,|\,\na\ffi\rangle
\\
\label{eq:bochner_SD}
=\,2|\nana\ffi|_g^2-\,\bigg[(n-2)u+\frac{\psi}{\dot{\psi}}+2n\psi\dot{\psi}\bigg]
\,\langle\na|\na\ffi|_g^2\,|\,\na\ffi\rangle_g\,
-\,2\bigg[(n+1)u+n\psi\dot{\psi}\bigg]\,|\na\ffi|_g^2\,\Deg\ffi\,.
\end{multline}
We will use~\eqref{eq:bochner_SD} to compute the laplacian of the function
$$
w=\beta\left(1-|\na\ffi|_g^2\right)\,,\quad\mbox{where }\beta=\psi^2\left|1-(n-2)m\psi^{-n}\right|=\psi^2\left|\frac{u}{\psi\dot{\psi}}\right|.
$$
The function $\beta$ is smooth in $N$. We will denote by $\beta'$ the derivative of $\beta$ with respect to~$\ffi$, more precisely, $\beta'\in\mathscr{C}^{\infty}(N)$ is the function that satisfies $\na\beta=\beta'\na\ffi$.
One computes
\begin{align}
\label{eq:betaprime_SD}
\frac{\beta'}{\beta}\,&=n\psi\dot\psi+(n-2)u\,,
\\
\label{eq:naw_SD}
\na w\,&=\,-\beta\na|\na\ffi|_g^2+\frac{\beta'}{\beta}w\na\ffi\,.
\end{align}
In order to compute the laplacian of $w$, we take the divergence of~\eqref{eq:naw_SD} 
\begin{equation*}
\Deg w\,=\,\,-\frac{\beta'}{\beta}\big\langle\beta\na|\na\ffi|_g^2\,\big|\,\na\ffi\big\rangle_g-\beta\Deg|\na\ffi|_g^2
+\Big(\frac{\beta'}{\beta}\Big)'w|\na\ffi|_g^2+\frac{\beta'}{\beta}\langle\na w\,|\,\na\ffi\rangle_g+\frac{\beta'}{\beta}w\Deg \ffi
\end{equation*}
and using formula~\eqref{eq:bochner_SD} we obtain
\begin{align}
\notag
\Deg w
&=-2\beta\left[|\nana\ffi|_g^2-\frac{(\Deg\ffi)^2}{n}\right]\!-\!\left[(n-2)u+\frac{\psi}{\dot{\psi}}+2n\psi\dot{\psi}-2\frac{\beta'}{\beta}\right]\!\langle\na w|\na\ffi\rangle_g
+n\psi\dot{\psi}\bigg(\frac{\beta'}{\beta}-2\psi\dot{\psi}\bigg)w
\\
\notag
&\ +\bigg[\Big(\frac{\beta'}{\beta}\Big)'-\Big(\frac{\beta'}{\beta}\Big)^2+\bigg((n-2)u+\frac{\psi}{\dot{\psi}}+n\psi\dot{\psi}\bigg)\frac{\beta'}{\beta}+2n(n+1)u\psi\dot{\psi}+2n(n+1)\psi^2\dot{\psi}^2\bigg]|\na\ffi|_g^2w
\\
\label{eq:Degw_SD}
&\leq\,\left[(n-2)u-\frac{\psi}{\dot{\psi}}\right]\langle\na w\,|\,\na\ffi\rangle_g+n(n-2)m\psi^{2-n}\dot{\psi}^2\big[(n-2)+(n+2)|\na\ffi|_g^2\big]w\,.
\end{align}
In particular, $w$ satisfies an elliptic inequality on our connected component $N$ and, as a consequence of the Minimum Principle, we obtain the following relevant bound on the gradient of the pseudo-affine function $\ffi$.

\begin{proposition}
\label{pro:min_pr_SD}
Let $(M,\go,u)$ be a solution to problem~\eqref{eq:prob_SD} and let $N\subseteq M\setminus{\rm MAX}(u)$ be  an outer or inner region with virtual mass $m=\mu(N,\go,u)$. Let also $g,\ffi$ be defined by~\eqref{eq:g_SD},~\eqref{eq:ffi_SD}. 
Then it holds
\begin{equation}
\label{eq:min_pr_SD}
|\na\ffi|_g\leq 1\,,
\end{equation}
on the whole $N$. Moreover, if $|\na\ffi|_g=1$ at a point in the interior of $N$, then $|\na\ffi|_g\equiv 1$ on the whole $N$ and $(M,\go,u)$ is isometric to a generalized Schwarzschild--de Sitter solution~\eqref{eq:gen_SD} with mass $m$.
\end{proposition}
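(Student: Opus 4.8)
The plan is to argue by a Minimum Principle applied to the function $w = \beta(1 - |\na\ffi|_g^2)$, exploiting the elliptic inequality~\eqref{eq:Degw_SD} that has already been derived. The inequality~\eqref{eq:Degw_SD} can be schematically written as
$$
\Deg w \,-\, \big\langle \na w \,|\, \na\ffi\big\rangle_g\,\left[(n-2)u - \frac{\psi}{\dot\psi}\right]\,\leq\, c(x)\,w\,,
$$
where $c(x) = n(n-2)m\,\psi^{2-n}\dot\psi^2\big[(n-2) + (n+2)|\na\ffi|_g^2\big] \geq 0$ whenever $0 < m$. Since this is a linear elliptic inequality with a \emph{nonnegative} zero-order coefficient, we cannot directly invoke the weak maximum principle on all of $N$; instead, we first establish that $w \geq 0$ near $\pa N$ and near ${\rm MAX}(u)$, and then propagate the sign inward.

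First I would check the boundary behaviour. On $\pa N$ one has $|\na\ffi|_g^2 = |\D u|^2 / (\psi^2[1-(r_0/\psi)^n]^2)$ by~\eqref{eq:na_ffi_SD}, and by Lemma~\ref{le:bound_psi_SD} the maximum of this quantity over $\pa N$ equals exactly $1$; hence $w \geq 0$ on $\pa N$, with $w$ vanishing precisely at the horizon $S$ of largest surface gravity. Near ${\rm MAX}(u)$ the situation is subtler: $\beta = \psi^2|u/(\psi\dot\psi)| \to 0$ while $|\na\ffi|_g$ is a priori only controlled by the reverse {\L}ojasiewicz estimates. Here is where Lemma~\ref{le:estimate_upsi_SD} enters: it gives $\dot\psi^{2\a}|\D u|^2 \to 0$ for every $0 < \a < 1$, and combining this with $(\umax - u)\dot\psi^2$ being bounded (shown in the proof of that lemma via~\eqref{eq:lim_DPsi_SD}), one gets that $|\na\ffi|_g$ stays bounded, and in fact one can show $\beta\,|\na\ffi|_g^2 = \psi^2|u/(\psi\dot\psi)|\cdot \dot\psi^2|\D u|^2/u^2 \to 0$, so that $w \to \beta \cdot 1 = 0$ (or a nonnegative limit) as $x \to {\rm MAX}(u)$. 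Thus $\liminf w \geq 0$ along any approach to $\overline{N} \setminus N$.

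Next I would run the maximum-principle argument. Suppose $w < 0$ somewhere in the interior of $N$; let the infimum of $w$ be attained (by properness/compactness of level sets, after a suitable exhaustion) at an interior point, or approached along a sequence staying in the interior. On the open set $\{w < 0\}$ the right-hand side $c(x)w$ is $\leq 0$, so $w$ is a supersolution of a linear elliptic operator \emph{with no zero-order term sign obstruction on that set} — more precisely, on $\{w<0\}$ we have $\Deg w - \langle \na w | \na\ffi\rangle_g[(n-2)u - \psi/\dot\psi] \leq c(x)w \leq 0$, so $w$ is superharmonic for this operator there, and by the weak maximum principle (as in~\cite[Corollary~3.2]{Gil_Tru}, applied on relatively compact open subsets avoiding ${\rm Crit}(\ffi)$) $w$ attains its minimum on $\pa\{w<0\}$. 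But on $\pa\{w<0\}$ we have $w = 0$ (interior part) or we are on $\pa N \cup {\rm MAX}(u)$ where $\liminf w \geq 0$; in all cases the minimum is $\geq 0$, contradicting $w < 0$. One must handle the critical set ${\rm Crit}(\ffi)$ separately — but there $|\na\ffi|_g = 0 < 1$ so $w = \beta > 0$, and since $\mathscr{H}^{n-1}({\rm Crit}(\ffi)\setminus S) = 0$ and $\ffi$ is analytic, the critical set does not obstruct the argument (it can be excised with arbitrarily small capacity, or one argues on the complement of a tubular neighborhood). This yields $w \geq 0$, i.e. $|\na\ffi|_g \leq 1$ on all of $N$.

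Finally, the rigidity. If $|\na\ffi|_g = 1$ at some interior point $p$, then $w(p) = 0$ is an interior minimum of $w$. Rewriting~\eqref{eq:Degw_SD} more carefully: the term $-2\beta[|\nana\ffi|_g^2 - (\Deg\ffi)^2/n]$ that was discarded in passing to the inequality is $\leq 0$ by Cauchy--Schwarz, and it vanishes only when $\nana\ffi$ is pure trace. By the strong maximum principle (Hopf) applied to the elliptic inequality for $w$ on a neighborhood of $p$ where $w \geq 0$ with interior zero, we conclude $w \equiv 0$ on $N$, hence $|\na\ffi|_g \equiv 1$; moreover tracking the equality cases forces $\Deg\ffi = n\psi\dot\psi(1-|\na\ffi|_g^2) \equiv 0$ and then $|\nana\ffi|_g^2 = (\Deg\ffi)^2/n = 0$, so $\nana\ffi \equiv 0$. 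One then invokes Proposition~\ref{pro:rigidity} to conclude that $(M,\go,u)$ is isometric to a generalized Schwarzschild--de Sitter solution~\eqref{eq:gen_SD} with mass $m$.

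\textbf{Main obstacle.} The delicate point is the behaviour near ${\rm MAX}(u)$: establishing $\liminf_{x\to{\rm MAX}(u)} w \geq 0$ rigorously requires the reverse {\L}ojasiewicz inequality (Proposition~\ref{pro:rev_loj_SD}) and its consequence Lemma~\ref{le:estimate_upsi_SD}, because $\beta \to 0$ there and one must show $|\na\ffi|_g$ does not blow up faster than $\beta^{-1/2}$. A second, more technical obstacle is that the maximum principle must be deployed on the complement of the critical set ${\rm Crit}(\ffi)$, so one needs the Łojasiewicz stratification (finiteness of $\mathscr{H}^{n-1}$ of level sets, $\mathscr{H}^{n-1}({\rm Crit}(\ffi)\setminus S)=0$) to argue that the critical locus is negligible and does not carry a spurious negative minimum; on the regular part where the maximum principle applies, the hard work is already done by~\eqref{eq:Degw_SD}.
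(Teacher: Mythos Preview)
Your proposal is correct and follows essentially the same approach as the paper: both apply the Weak Minimum Principle to $w=\beta(1-|\na\ffi|_g^2)$ via the elliptic inequality~\eqref{eq:Degw_SD}, after checking $w\geq 0$ on $\pa N$ (Lemma~\ref{le:bound_psi_SD}) and $w\to 0$ near ${\rm MAX}(u)$ (Lemma~\ref{le:estimate_upsi_SD}), and then invoke the Strong Minimum Principle plus Bochner plus Proposition~\ref{pro:rigidity} for rigidity. The only notable difference is that the paper formalizes the compactness step by working on $N_\ep=\{|w|\geq\ep\}$ for regular values $\ep$ (using analyticity of $w$ to get smooth boundary), which cleanly sidesteps the unbounded first-order coefficient $\psi/\dot\psi$ near $\pa N$ and $\dot\psi$ near ${\rm MAX}(u)$; your contradiction argument on $\{w<0\}$ achieves the same once you note $\{w\leq-\ep\}$ is compact in the interior. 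Your stated worry about ${\rm Crit}(\ffi)$ is a red herring---the inequality~\eqref{eq:Degw_SD} holds pointwise throughout $N$ with no singularity at critical points of $\ffi$ (indeed $w=\beta>0$ there), so no excision is needed; the genuine reason for restricting to compact subsets is the unboundedness of the coefficients, not the critical locus.
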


\begin{proof}
We recall from~\eqref{eq:na_ffi_SD} that it holds
$$
|\na\ffi|_g\,=\,\frac{|\D u|}{\psi\left|1-\big(r_0(m)/\psi\big)^n\right|}\,,
$$ 
therefore, from Lemma~\ref{le:bound_psi_SD} we deduce that $w\geq 0$ on $\pa N$. On the other hand, from the definition of $w$, we compute 
$$
w\,=\,\beta(1-|\na\ffi|_g^2)\,=\,\psi^2\left|\frac{u}{\psi\dot{\psi}}\right|-\bigg|\frac{\psi \dot \psi}{u}\bigg|\,|\D u|^2\,.
$$ 
Since $\beta$ goes to $0$ as we approach ${\rm MAX}(u)$, and also $\dot\psi|\D u|^2$ goes to $0$ by Lemma~\ref{le:estimate_upsi_SD}, we have $w\to 0$ as we approach ${\rm MAX}(u)$. 
We also recall that $\ffi, g$, thus also $w$, are analytic in the interior of $N$. As observed in Subsection~\ref{sub:prelim}, this implies that the critical level sets of $w$ are discrete. Therefore there exists $\eta>0$ such that any $0<\ep<\eta$ is a regular values for $w$. In particular, the set 
$$
N_\ep\,=\,\{|w|\geq\ep\}
$$
has a smooth boundary. Since we have already observed that $w\to 0$ as we approach $\pa N$ and ${\rm MAX}(u)$, we have that $N_\ep$ is a compact domain contained in the interior of $N$.
In particular, the coefficients of the elliptic inequality~\eqref{eq:Degw_SD} are bounded in $N_\ep$ and we can apply the Weak Minimum Principle (see for instance~\cite[Corollary~3.2]{Gil_Tru}) to deduce that 
\begin{equation}
\label{eq:min_pr_aux_SD}
\min_{N_\ep} w\,\,\geq\,\, \min_{\pa N_\ep} w\,\,\geq\,\, -\ep\,,
\end{equation}
where in the latter inequality we have used the fact that the boundary of $N_\ep$ is contained in $\{w=\ep\}\cup\{w=-\ep\}$.
Since inequality~\eqref{eq:min_pr_aux_SD} holds for all $0<\ep<\eta$, taking the limit as $\ep\to 0$ we obtain $w\geq 0$ on the whole $N$, and~\eqref{eq:min_pr_SD} follows.

Now we pass to the proof of the second part of the statement. Let $x$ be a point in the interior of $N$ such that $|\na\ffi|_g(x)=1$. In particular it holds $w(x)=0$ and we have proved above that $w\geq 0$ on the whole $N$. Applying the Strong Minimum Principle on an open set $\Omega$ containing $x$, we obtain $w\equiv 0$, or equivalently $|\na\ffi|_g\equiv 1$, on $\Omega$. From the arbitrariness of $\Omega$ we deduce $|\na\ffi|_g\equiv 1$ on $N$, and plugging this information inside the Bochner formula~\eqref{eq:bochner_SD}, we obtain $|\nana\ffi|_g\equiv 0$. We can now invoke Proposition~\ref{pro:rigidity} to conclude.
\end{proof}

\begin{remark}
Proposition~\ref{pro:min_pr_SD} should be compared with~\cite[Proposition~1]{Bei_Sim}, where an analogous result is obtained for a vast class of static perfect fluid solutions. The proof in~\cite{Bei_Sim} is based on the application of the Maximum Principle to an elliptic inequality which seems to be closely related to the one used in our proof.
\end{remark}

\begin{remark}
\label{rem:bound_Du}
Translating the thesis of Proposition~\ref{pro:min_pr_SD} back in terms of $u,\go$, we have that $\dot\psi|\D u|$ is bounded in $N$, 
and recalling formula~\eqref{eq:lim_DPsi_SD}, we deduce that the quantity $|\D u|^2/(\umax-u)$ is bounded on $N$. 
In particular, for any $p\in\overline{N}\cap{\rm MAX}(u)$ there exists a collar neighborhood $p\in \Omega_p\subset \overline{N}$ and a constant $K_p$ such that
\begin{equation}
\label{eq:bounded_quantity_SD}
|\D u|^2(x)\,\,\leq\,\,K_p\,[\umax-u(x)]
\end{equation}
for any $x\in \Omega_p$. The same proof can be repeated on each inner and outer region, and a similar result will also be shown for cylindrical regions, see Proposition~\ref{pro:min_pr_N_SD}. It follows from these considerations that inequality~\eqref{eq:bounded_quantity_SD} is always in force in a neighborhood of any $p\in{\rm MAX}(u)$. This is an improvement of Proposition~\ref{pro:rev_loj_SD} proved above.
\end{remark}


\subsection{Area lower bound.}
\label{sub:monotonicity_SD}

In this subsection, we will study the function
\begin{equation}
\label{eq:Phi_SD}
\Phi(s)=\int_{\{\ffi=s\}}|\na\ffi|_g\,\rmd\sigma_g\,.
\end{equation}
which is defined on $s\in[0,\ffi_0)$ or $s\in(\ffi_0,\ffi_{\rm max}]$ depending on whether $\max_{\pa N}|\D u|/\umax$ is less or greater than $\sqrt{n}$, respectively.
As an application of Proposition~\ref{pro:min_pr_SD}, one can prove the following monotonicity result for $\Phi$.
\begin{proposition}
\label{pro:mon_Phi1_SD}
Let $(M,\go,u)$ be a solution to problem~\eqref{eq:prob_SD}, let $N$ be  a connected component of $M\setminus{\rm MAX}(u)$ with virtual mass $m<\mmax$, and let $\Phi(s)$ be the function defined by~\eqref{eq:Phi_SD}, with respect to the metric $g$ and the pseudo-affine function $\ffi$ defined by~\eqref{eq:g_SD},~\eqref{eq:ffi_SD}.
\begin{itemize}
\smallskip
\item[(i)]
If $N$ is an outer region, then the function $\Phi(s)$, defined for $s\in[0,\ffi_0)$, is monotonically nonincreasing.
Moreover, if $\Phi(s_1)=\Phi(s_2)$ for two different values $0\leq s_1<s_2<\ffi_0$, then the triple $(M,\go,u)$ is isometric to a generalized Schwarzschild--de Sitter triple~\eqref{eq:gen_SD} with mass $m$.
\smallskip
\item[(ii)] If $N$ is an inner region, then the function $\Phi(s)$, defined for $s\in(\ffi_0,\ffi_{\rm max}]$, is monotonically nondecreasing. 
Moreover, if $\Phi(s_1)=\Phi(s_2)$ for two different values $\ffi_0< s_1<s_2\leq\ffi_{\rm max}$, then the solution $(M,\go,u)$ is isometric a generalized Schwarzschild--de Sitter triple~\eqref{eq:gen_SD} with mass $m$.
\end{itemize}
\end{proposition}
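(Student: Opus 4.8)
The plan is to differentiate $\Phi$ along the foliation of $N$ by the level sets of $\ffi$, to identify $\Phi'(s)$ with an integral whose sign is controlled by Proposition~\ref{pro:min_pr_SD}, and then to read off the rigidity from the equality case of that same proposition. Since $u$ is analytic, $\ffi$ is analytic on $N$, so its critical values form a discrete subset of $[0,\ffi_0)$ (resp. of $(\ffi_0,\ffi_{\rm max}]$); it therefore suffices to prove monotonicity on each subinterval of regular values, the statement then extending by continuity of $\Phi$ — which follows from the local finiteness of the $(n-1)$-dimensional Hausdorff measure of the level sets together with the bound $|\na\ffi|_g\le 1$ of Proposition~\ref{pro:min_pr_SD}. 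Around a regular value $s$, the tubular neighbourhood $\{|\ffi-s|<\delta\}$ is foliated by regular level sets, and the flow of $\na\ffi/|\na\ffi|_g^2$ moves $\{\ffi=s\}$ with $g$-normal speed $1/|\na\ffi|_g$. Writing the first variation of $|\na\ffi|_g\,\rmd\sigma_g$ along this flow and using the expression~\eqref{eq:formula_curvature_SD} for $\Hg$, one is led to
\begin{equation*}
\Phi'(s)\,=\,\int_{\{\ffi=s\}}\left[\frac{\nana\ffi(\na\ffi,\na\ffi)}{|\na\ffi|_g^3}\,-\,\Hg\right]\rmd\sigma_g\,=\,\int_{\{\ffi=s\}}\frac{\Deg\ffi}{|\na\ffi|_g}\,\rmd\sigma_g\,.
\end{equation*}
(Equivalently, this identity can be obtained by differentiating twice the coarea relation $\int_a^b\Phi(t)\,\rmd t=\int_{\{a<\ffi<b\}}|\na\ffi|_g^2\,\rmd\mu_g$, or by integrating $\operatorname{div}_g(\ffi\,\na\ffi)$ over $\{a<\ffi<b\}$.)

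Next, I would substitute the second equation of~\eqref{eq:pb_conf_SD}, namely $\Deg\ffi=n\,\psi\,\dot\psi\,(1-|\na\ffi|_g^2)$, to obtain
\begin{equation*}
\Phi'(s)\,=\,n\int_{\{\ffi=s\}}\psi\,\dot\psi\,\frac{1-|\na\ffi|_g^2}{|\na\ffi|_g}\,\rmd\sigma_g\,.
\end{equation*}
By Proposition~\ref{pro:min_pr_SD} the factor $1-|\na\ffi|_g^2$ is nonnegative, and $\psi>0$ on $N$. If $N$ is outer then $\psi=\psi_+$ with $\dot\psi_+\le 0$, so the integrand is nonpositive and $\Phi'\le 0$, i.e.\ $\Phi$ is nonincreasing on $[0,\ffi_0)$; if $N$ is inner then $\psi=\psi_-$ with $\dot\psi_-\ge 0$, so $\Phi'\ge 0$ and $\Phi$ is nondecreasing on $(\ffi_0,\ffi_{\rm max}]$.

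For the rigidity, suppose $\Phi(s_1)=\Phi(s_2)$ with $s_1<s_2$. By the monotonicity just proved, $\Phi$ is constant on $[s_1,s_2]$, hence $\Phi'$ vanishes at every regular value in $(s_1,s_2)$. Since the integrand in the last display has a fixed sign, it must vanish $\sigma_g$-almost everywhere on almost every level set, and the coarea formula then gives $\psi\dot\psi(1-|\na\ffi|_g^2)\equiv 0$ on the open set $\{s_1<\ffi<s_2\}$. On the interior of $N$ one has $\psi>0$ and, by~\eqref{eq:dpsideu_SD}, $\dot\psi\neq 0$ (since $u>0$ and $\psi\neq r_0(m)$ there), so necessarily $|\na\ffi|_g\equiv 1$ on $\{s_1<\ffi<s_2\}$, in particular at an interior point of $N$. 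The rigidity statement of Proposition~\ref{pro:min_pr_SD} then forces $(M,\go,u)$ to be isometric to a generalized Schwarzschild--de Sitter solution~\eqref{eq:gen_SD} with mass $m$.

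The main obstacle is really in the first step: one must justify the first‑variation formula across the discrete set of critical values of $\ffi$ (using the local finiteness of the level‑set measure and the $\mathscr{H}^{n-1}$‑a.e.\ validity of~\eqref{eq:formula_curvature_SD}), and guarantee that $\Phi$ is continuous up to the endpoints $s=0$, $s=\ffi_{\rm max}$ and especially $s=\ffi_0$ — for this last point one invokes the behaviour of $|\na\ffi|_g$ near ${\rm MAX}(u)$ recorded in~\eqref{eq:grad_u_nearSigma} together with the gradient bound in Remark~\ref{rem:bound_Du}. Once these regularity issues are settled, Steps~2 and~3 are purely a matter of plugging in the conformal equations of Proposition~\ref{pro:conf_ref_SD} and the gradient estimate of Proposition~\ref{pro:min_pr_SD}.
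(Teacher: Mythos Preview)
Your proposal is correct and follows essentially the same approach as the paper: both use the sign of $\Deg\ffi=n\psi\dot\psi(1-|\na\ffi|_g^2)$, controlled by Proposition~\ref{pro:min_pr_SD} and the sign of $\dot\psi$, to deduce monotonicity, and both extract rigidity from the equality $|\na\ffi|_g\equiv 1$. The only difference is in packaging: the paper integrates $\Deg\ffi$ over the slab $\{s_1\le\ffi\le s_2\}$ and applies the Divergence Theorem once to obtain $\Phi(s_2)-\Phi(s_1)=\int_{\{s_1\le\ffi\le s_2\}}\Deg\ffi\,\rmd\mu_g$ directly, which sidesteps the first-variation computation and the regularity concerns you flag at the end; for rigidity the paper uses analyticity of $\ffi$ to propagate $\Deg\ffi\equiv 0$ from the slab to all of $N$, whereas you invoke the rigidity clause of Proposition~\ref{pro:min_pr_SD} --- an equivalent shortcut.
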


\begin{proof}
Consider the case $N$ outer, that is, $\max_{\pa N}|\D u|/\umax<\sqrt{n}$. In particular, the determination of $\psi$ is~\eqref{eq:pr_function_+}, $\ffi\in[0,\ffi_0)$, $\dot{\psi}\leq 0$ and $\Deg \ffi\leq 0$ (this last inequality is a consequence of the second equation of system~\eqref{eq:pb_conf_SD} and of Proposition~\ref{pro:min_pr_SD}). Integrating $\Deg \ffi\leq 0$  in $\{s_1\leq\ffi\leq s_2\}$ for any $0\leq s_1<s_2<\ffi_0$, we get
\begin{equation}
\label{eq:int_lapl_SD}
\int\limits_{\{s_1\leq\ffi\leq s_2\}}\!\!\!\Deg\ffi\,\rmd\sigma_g\,\leq\, 0\,.
\end{equation}
Applying the Divergence Theorem to inequality~\eqref{eq:int_lapl_SD}, we easily obtain $\Phi(s_2)\leq\Phi(s_1)$, therefore $\Phi$ is nonincreasing. 
To prove the rigidity statement, we observe that, if the equality $\Phi(s_1)=\Phi(s_2)$ holds for some $0\leq s_1<s_2<\ffi_0$, then $\Deg\ffi\equiv 0$ on $\{s_1\leq\ffi\leq s_2\}$, hence by the analyticity of $\ffi$ we deduce $\Deg\ffi\equiv 0$ on $N$. Recalling the definition of $\Deg\ffi$, this in turn implies $|\na\ffi|_g\equiv 1$ on $N$. Substituting this information in the Bochner formula~\eqref{eq:bochner_SD} we obtain $|\nana\ffi|_g\equiv 0$, hence we can apply Proposition~\ref{pro:rigidity} to conclude.

If instead $N$ is an inner region, that is, $\max_{\pa N}|\D u|/\umax>\sqrt{n}$, then $\psi$ is as in~\eqref{eq:pr_function_-}, $\ffi\in(\ffi_0,\ffi_{\rm max}]$ and $\Deg \ffi\geq 0$. Proceeding in the same way as above, we obtain the opposite monotonicity for $\Phi$. The rigidity statement is proved in the same way as in the preceding case.
\end{proof}

If the limit of $\Phi(s)$ exists as $s\to\ffi_0$, then this limit is finite since $|\na\ffi|_g$ is bounded (this is a consequence of Proposition~\ref{pro:min_pr_SD}) and the level sets are compact.
Therefore, from the monotonicity of $\Phi$ we can deduce the following global monotonicity property.

\begin{corollary}
	\label{le:in_mon_Phi1_SD}
Let $(M,\go,u)$ be a solution to problem~\eqref{eq:prob_SD}, let $N$ be  a connected component of $M\setminus{\rm MAX}(u)$ with virtual mass $m<\mmax$, and let $g$ and $\ffi$ be defined by~\eqref{eq:g_SD},~\eqref{eq:ffi_SD}. Let $\Sigma_N=\overline{N}\cap\overline{M\setminus\overline{N}}$ be the possibly stratified hypersurface separating $N$ from the rest of the manifold $M$. Then
\begin{equation}
\label{eq:lower_bound_g}
|\Sigma_N|_g\,\leq\,|\pa N|_g \,.
\end{equation}
Moreover, if the equality holds, 
then $(M,\go,u)$ is isometric to a generalized Schwarzschild--de Sitter triple~\eqref{eq:gen_SD} with mass $m$.
\end{corollary}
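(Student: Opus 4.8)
The plan is to obtain \eqref{eq:lower_bound_g} by letting the level value run between the two ends of the interval on which $\Phi$ is defined and identifying the corresponding limits of $\Phi$ with $|\pa N|_g$ and $|\Sigma_N|_g$. Assume first that $N$ is an outer region, so that by Proposition~\ref{pro:mon_Phi1_SD} the function $\Phi$ is nonincreasing on $[0,\ffi_0)$; since $|\na\ffi|_g\le 1$ by Proposition~\ref{pro:min_pr_SD}, the level sets of $\ffi$ have locally finite $\mathscr{H}^{n-1}$-measure, and $\{\ffi=s\}$ stays in a fixed compact subset of $\overline N$ as $s\to\ffi_0^-$, the limit $L:=\lim_{s\to\ffi_0^-}\Phi(s)=\inf_{[0,\ffi_0)}\Phi$ exists and is finite. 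The inner case is entirely analogous, with $\Phi$ nondecreasing on $(\ffi_0,\ffi_{\rm max}]$ and the roles of the two endpoints exchanged, so I will only discuss the outer case. The end at $\pa N$ is immediate: evaluating \eqref{eq:Phi_SD} at $s=0$ and using $|\na\ffi|_g\le 1$ gives $\Phi(0)=\int_{\pa N}|\na\ffi|_g\,\rmd\sigma_g\le|\pa N|_g$, and since $\Phi(0)\ge L$ by monotonicity, the whole statement reduces to proving
$$
L\,\ge\,|\Sigma_N|_g\,.
$$

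For the end at $\Sigma_N$, observe that $\{\ffi=s\}\to\Sigma_N=\overline N\cap{\rm MAX}(u)$ as $s\to\ffi_0^-$. The key tool is Proposition~\ref{pro:expansion_u}: around each point of the top stratum of $\Sigma_N$ the signed distance $r$ to $\Sigma_N$ is an analytic coordinate, the level sets $\{u=\umax-\ep\}$ are analytic graphs over $\Sigma_N$ which collapse onto it as $\ep\to 0$, and by \eqref{eq:grad_u_nearSigma} one has $|\na\ffi|_g\to 1$ uniformly in a neighbourhood of $\Sigma_N$. Fixing a compact subset $K$ of the top stratum, the $g$-measure of $\{\ffi=s\}\cap U_K$ converges to $|K|_g$ on a suitable tubular neighbourhood $U_K$ of $K$, so together with $|\na\ffi|_g\to 1$ we get $\lim_{s\to\ffi_0^-}\int_{\{\ffi=s\}\cap U_K}|\na\ffi|_g\,\rmd\sigma_g=|K|_g$; letting $K$ exhaust the top stratum, whose complement in $\Sigma_N$ is $\mathscr{H}^{n-1}$-negligible, yields $L\ge|\Sigma_N|_g$. (Equivalently, one may apply the divergence theorem to $\na\ffi$ on the slab $\{s_1\le\ffi\le s_2\}$ as in the proof of Proposition~\ref{pro:mon_Phi1_SD} and let $s_1\to\ffi_0^-$, using that $\na\ffi$ extends continuously up to the top stratum with $|\na\ffi|_g=1$ there, the lower strata being of codimension $\ge 2$ and carrying no flux, and that $\Deg\ffi=n\psi\dot\psi(1-|\na\ffi|_g^2)\le 0$; this produces directly the stronger conclusion $\Phi(s)\ge|\Sigma_N|_g$ for every $s$.) This is the step I expect to be the main obstacle, since it requires controlling the geometry of the collapsing level sets near the possibly singular stratified set $\Sigma_N$ and checking that the lower-dimensional strata contribute nothing in the limit; Proposition~\ref{pro:expansion_u} and the limit \eqref{eq:grad_u_nearSigma} are exactly what makes this tractable on the top stratum.

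Combining the two ends gives $|\Sigma_N|_g\le L\le\Phi(0)\le|\pa N|_g$, which is \eqref{eq:lower_bound_g}. If equality holds, then all the intermediate inequalities are equalities; in particular $\Phi\equiv\Phi(0)$ on $[0,\ffi_0)$, so the rigidity part of Proposition~\ref{pro:mon_Phi1_SD}, applied to two distinct level values $s_1<s_2$, forces $(M,\go,u)$ to be isometric to a generalized Schwarzschild--de Sitter solution~\eqref{eq:gen_SD} with mass $m$. For the inner case one repeats the same argument verbatim, with $\Phi$ nondecreasing and the endpoint at $\pa N$ being $\ffi_{\rm max}$ and the one at $\Sigma_N$ being $\ffi_0$.
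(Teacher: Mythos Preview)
Your proposal is correct and follows essentially the same approach as the paper: monotonicity of $\Phi$ from Proposition~\ref{pro:mon_Phi1_SD}, the bound $\Phi(0)\le|\pa N|_g$ from $|\na\ffi|_g\le 1$ on $\pa N$, and the identification of $\lim_{s\to\ffi_0}\Phi(s)\ge|\Sigma_N|_g$ by working on compact pieces of the top stratum where $|\na\ffi|_g\to 1$ and then exhausting. The only cosmetic difference is that the paper phrases the last step via the flow of $\na\ffi$ (using the $\mathscr{C}^2$ regularity of Proposition~\ref{pro:C2}) to produce a diffeomorphism from $\Sigma_N\setminus S_\ep$ onto an open piece $V_s\subset\{\ffi=s\}$, whereas you describe the level sets as graphs over $\Sigma_N$ via Proposition~\ref{pro:expansion_u}; these are equivalent, and your rigidity argument is exactly the intended one.
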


\begin{proof}
If $N$ is outer, then we know that the function $\Phi(s)$, defined for $0\leq \ffi<\ffi_0$, is monotonically nonincreasing by Proposition~\ref{pro:mon_Phi1_SD}, hence $\lim_{s\to\ffi_0}\Phi(s)\leq\Phi(0)$. Moreover, from Lemma~\ref{le:bound_psi_SD}, we know that $|\na \ffi|_g\leq 1$ on $\pa N$, thus $\Phi(0)\leq\int_{\pa N}\rmd\sigma_g=|\pa N|_g$. This proves the following
\begin{equation*}
|\pa N|_g\,\geq\,\lim_{s\to\ffi_0}\int_{\{\ffi=s\}} |\na\ffi|_g\,\rmd\sigma_g \,.
\end{equation*}
It remains to show that the right hand side is greater than or equal to $|\Sigma_N|_g$. To this end, for a small value $\ep>0$, consider a set $S_\ep\subset\Sigma_N$ such that $|S_\ep|_g<\ep$ and $\Sigma_N\setminus S_\ep$ is contained in the top stratum of $\Sigma_N$.
From Proposition~\ref{pro:C2} and formula~\eqref{eq:na_ffi_SD} we know that $g,\ffi$ are $\mathscr{C}^2$ and
$$
|\na\ffi|_g^2\,=\,\frac{|\D u|^2}{\psi^2\left[1-\big(r_0(m)/\psi\big)^n\right]^2}\,\to\,1
$$
as we approach the top stratum of $\Sigma_N$. 
In particular, for every value $s$ close enough to $\ffi_0$, the flow of $\na\ffi$ gives a diffeomorphism between $\Sigma_N\setminus S_\ep$ and an open subset $V_s\subset\{\ffi=s\}$. 
Therefore, from the continuity of $g$ and $|\na\ffi|_g$, we get
\begin{equation*}
\lim_{s\to\ffi_0}\int_{\{\ffi=s\}} |\na\ffi|_g\,\rmd\sigma_g\,\geq\,\lim_{s\to\ffi_0}\int_{V_s} |\na\ffi|_g\,\rmd\sigma_g\,=\,|\Sigma_N\setminus S_\ep|_g \,.
\end{equation*}
Taking the limit as $\ep\to 0$ we obtain the wished inequality.
The case $N$ inner is proved in the exact same way.
\end{proof}

\noindent
Recalling the definition of $g$, if we rewrite formula~\eqref{eq:lower_bound_g} in terms of $\go$, we obtain
$$
|\pa N|\,\geq\,\left[\frac{r_{\pm}(m)}{r_0(m)}\right]^{n-1}|\Sigma_N|\,,
$$
where the sign $\pm$ depends on whether $N$ is outer or inner. This proves Theorem~\ref{thm:lower_bound}, stated in the introduction, except for the case where $N$ is a cylindrical region, which will be studied in Section~\ref{sec:nariai_SD}.

\section{Integral identities}
\label{sec:int_id}

In this section we use the pseudo-affine function $\ffi$ in order to construct a vector field with a nonnegative divergence. As an application of the Divergence Theorem, we will then deduce a couple of important integral identities. In particular, in Propositions~\ref{pro:int_id_g_SD} and~\ref{pro:int_id_g_2_SD} we show two functions that give a nonnegative value when integrated along any level set of $\ffi$. Moreover, the integral on a level set is zero only in the case of the Schwarzschild--de Sitter solution.

The analysis of the case where $N$ is an outer region and the case where $N$ is an inner region are slightly different. The outer case will be studied in Subsection~\ref{sub:integral_identities_+_SD} and the inner case will be studied in Subsection~\ref{sub:integral_identities_-_SD}.

\subsection{Integral identities in the outer regions.}
\label{sub:integral_identities_+_SD}

We start by considering the case where $N$ is an outer region, that is, in this subsection we will suppose
$$
\max_{S\in\pi_0(\pa N)}\kappa(S)\,=\,\max_{\pa N}\frac{|\D u|}{\umax}\,<\,\sqrt{n}\,,
$$ 
and the pseudo-radial function $\Psi=\psi\circ u$ is chosen as in~\eqref{eq:pr_function_+}.
Consider the vector field 
$$
Y\,=\,\na|\na\ffi|_g^2\,+\,\Deg\ffi\na\ffi\,.
$$
From the Bochner formula~\eqref{eq:bochner_SD} and the equations in~\eqref{eq:pb_conf_SD} we compute
\begin{multline*}
\!\!\!\!{\rm div}_g(Y)\,=\,\Deg|\na\ffi|_g^2+{\rm div}_g(\Deg\ffi\na\ffi)
\\
=\,-\left[(n-2)u+\frac{\psi}{\dot{\psi}}+3n\psi\dot{\psi}\right]\langle\na\ffi\,|Y\,\rangle_g+2|\nana\ffi|_g^2+(\Deg\ffi)^2-2n(n+2)u\psi\dot{\psi}|\na\ffi|_g^2(1-|\na\ffi|_g^2)\,.
\end{multline*} 
Since 
$$
u\psi\dot{\psi}=\psi^2\dot{\psi}^2\left(\frac{u}{\psi\dot{\psi}}\right)=-\psi^2\dot{\psi}^2[1-(n-2)m\psi^{-n}]
$$
is negative when the chosen determination of $\psi$ is~\eqref{eq:pr_function_+}, we have
\begin{multline}
\label{eq:divY_SD}
{\rm div}_g(Y)\,+\,\left[(n-2)u+\frac{\psi}{\dot{\psi}}+3n\psi\dot{\psi}\right]\langle\na\ffi\,|Y\,\rangle_g\,=
\\
=\,2|\nana\ffi|_g^2+(\Deg\ffi)^2-2n(n+2)u\psi\dot{\psi}|\na\ffi|_g^2(1-|\na\ffi|_g^2)
\geq \,0\,.
\end{multline}
Now consider the function 
\begin{equation}
\label{eq:gamma_+_SD}
\gamma=-\frac{u^2\psi^{2n-1}}{\dot{\psi}^{3}}\,=\,\frac{\psi^{2(n+1)}}{u}\,\big[1-(n-2)m\psi^{-n}\big]^{3}
\end{equation}
(notice that $\gamma\geq 0$ when $\Psi=\psi\circ u$ is as in~\eqref{eq:pr_function_+}).
We compute
\begin{align*}
\frac{\gamma'}{\gamma}\,&=\,\frac{\dot{\psi}^{3}}{u^2\psi^{2n-1}}\cdot\frac{d u}{d \ffi}\cdot\left[2\frac{u\psi^{2n-1}}{\dot{\psi}^{3}}+(2n-1)\frac{u^2\psi^{2n-2}\dot{\psi}}{\dot{\psi}^{3}}-3\frac{u^2\psi^{2n-1}\ddot{\psi}}{\dot{\psi}^{4}}\right]
\\
&=\,-\frac{\dot{\psi}^{2}}{u\psi^{2n-2}}\left[2\frac{u\psi^{2n-1}}{\dot{\psi}^{3}}+(2n-1)\frac{u^2\psi^{2n-2}\dot{\psi}}{\dot{\psi}^{3}}-3\frac{u^2\psi^{2n-1}\ddot{\psi}}{\dot{\psi}^{4}}\right]
\\
&=\,-2\frac{\psi}{\dot{\psi}}-(2n-1)u+3\frac{u\psi\ddot{\psi}}{\dot{\psi}^{2}}
\\
&=\,-2\frac{\psi}{\dot{\psi}}-(2n-1)u+3n\psi\dot{\psi}+3(n-1)u+3\frac{\psi}{\dot{\psi}}
\\
&=\,(n-2)u+\frac{\psi}{\dot{\psi}}+3n\psi\dot{\psi}\,.
\end{align*}
Therefore, formula~\eqref{eq:divY_SD} rewrites as
\begin{equation}
\label{eq:divY2_SD}
{\rm div}_g(\gamma Y)\,=\,\gamma\left[2|\nana\ffi|_g^2+(\Deg\ffi)^2-2n(n+2)u\psi\dot{\psi}|\na\ffi|_g^2\left(1-|\na\ffi|_g^2\right)\right]\,\geq\,0\,.
\end{equation}
Integrating~\eqref{eq:divY2_SD} in $N$, we obtain the following proposition.

\begin{proposition}
\label{pro:int_id_g_SD}
Let $(M,\go,u)$ be a solution to problem~\eqref{eq:prob_SD}, let $N\subseteq M\setminus{\rm MAX}(u)$ be an outer region with virtual mass $m=\mu(N,\go,u)$, and let $\Psi$, $g$ and $\ffi$ be defined by~\eqref{eq:pr_function_+},~\eqref{eq:g_SD} and~\eqref{eq:ffi_SD}.
For any $0\leq s<\ffi_0$ it holds
\begin{multline}
\label{eq:int_id_g_SD}
\int_{\pa N}|\na\ffi|_g\left[\Ricg(\nu_g,\nu_g)-\frac{3}{2}n r_+^2(m)(1-|\na\ffi|_g^2)\right]\rmd\sigma_g\,=
\\
=\,-\,\frac{1}{C}\int_{N}\gamma\,\bigg[\,|\nana\ffi|_g^2\,+\,\frac{1}{2}(\Deg\ffi)^2\,-
\,n(n+2)u\psi\dot{\psi}|\na\ffi|_g^2(1-|\na\ffi|_g^2)\bigg]\rmd\sigma_g
\,\leq\,0\,.
\end{multline}
where $C=C(m,n)=r_+^{2n+1}(m)[1-(n-2)mr_+^{-n}(m)]^2$ and $\gamma$ is the function defined by~\eqref{eq:gamma_+_SD}.
Moreover, if the equality
\begin{equation}
\label{eq:int_id_rig_g_SD}
\int_{\pa N}|\na\ffi|_g\left[\Ricg(\nu_g,\nu_g)-\frac{3}{2}n r_+^2(m)(1-|\na\ffi|_g^2)\right]\rmd\sigma_g\,=\,0\,,
\end{equation}
holds, then the solution $(M,\go,u)$ is isometric to a generalized Schwarzschild--de Sitter triple~\eqref{eq:gen_SD} with mass $m$.
\end{proposition}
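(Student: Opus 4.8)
The plan is to integrate over $N$ the pointwise inequality $\mathrm{div}_g(\gamma Y)\ge 0$ established in~\eqref{eq:divY2_SD} and to read off the boundary fluxes. Fix $0\le s<\ffi_0$. Since $\ffi$ is analytic, its critical values are discrete, so for almost every small $\ep>0$ the level set $\{\ffi=\ffi_0-\ep\}$ is regular and, together with $\{\ffi=s\}$, bounds a compact region $N_{s,\ep}=\{s\le\ffi\le\ffi_0-\ep\}$ inside $\overline N$. Applying the Divergence Theorem to $\gamma Y$ on $N_{s,\ep}$, with $\nu_g=-\na\ffi/|\na\ffi|_g$ the $g$-unit normal on each level set, gives
\[
\int_{N_{s,\ep}}\!\!\mathrm{div}_g(\gamma Y)\,\rmd\sigma_g\,=\,\int_{\{\ffi=s\}}\!\!\gamma\langle Y\,|\,\nu_g\rangle_g\,\rmd\sigma_g\,-\,\int_{\{\ffi=\ffi_0-\ep\}}\!\!\gamma\langle Y\,|\,\nu_g\rangle_g\,\rmd\sigma_g\,.
\]

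Next I would show that the last integral vanishes as $\ep\to 0$. On $\{\ffi=\ffi_0-\ep\}$ the potential $u$, hence $\Psi$, is constant with $\Psi\to r_0(m)$, so the factor $\gamma=\psi^{2(n+1)}[1-(n-2)m\psi^{-n}]^3/u$ tends to $0$ uniformly there; on the other hand $|\langle Y\,|\,\nu_g\rangle_g|\lesssim |\nana\ffi|_g\,|\na\ffi|_g$, where $|\na\ffi|_g\le 1$ by Proposition~\ref{pro:min_pr_SD} and $|\nana\ffi|_g$ is controlled near ${\rm MAX}(u)$ via~\eqref{eq:nana_ffi_SD} by the boundedness of $\dot\psi|\D u|$ (Remark~\ref{rem:bound_Du}), so that $\gamma|\nana\ffi|_g\to 0$ uniformly. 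Since ${\rm MAX}(u)$ is a stratified analytic variety, the $(n-1)$-dimensional Hausdorff measure of the level sets stays bounded as $\ep\to 0$, whence the flux across $\{\ffi=\ffi_0-\ep\}$ goes to $0$. Letting $\ep\to 0$, and using monotone convergence on the left since the integrand is nonnegative, yields $\int_{\{\ffi=s\}}\gamma\langle Y\,|\,\nu_g\rangle_g\,\rmd\sigma_g=\int_{\{s\le\ffi<\ffi_0\}}\mathrm{div}_g(\gamma Y)\,\rmd\sigma_g\ge 0$, with equality precisely when $\mathrm{div}_g(\gamma Y)\equiv 0$ on $\{s<\ffi<\ffi_0\}$.

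The core of the proof is to identify, on $\pa N$ (i.e. at $s=0$), the flux $\gamma\langle Y\,|\,\nu_g\rangle_g$ with the Ricci-form integrand appearing in~\eqref{eq:int_id_g_SD}. From $Y=\na|\na\ffi|_g^2+\Deg\ffi\,\na\ffi$ one has $\langle Y\,|\,\nu_g\rangle_g=-|\na\ffi|_g\big[2\nana\ffi(\nu_g,\nu_g)+\Deg\ffi\big]$. Using the first equation of~\eqref{eq:pb_conf_SD} on the pair $(\nu_g,\nu_g)$ to express $\nana\ffi(\nu_g,\nu_g)$ through $\Ricg(\nu_g,\nu_g)$, $|\na\ffi|_g$ and $\Deg\ffi$, then inserting the second equation $\Deg\ffi=n\psi\dot\psi(1-|\na\ffi|_g^2)$, the logarithmic derivative $\gamma'/\gamma=(n-2)u+\psi/\dot\psi+3n\psi\dot\psi$ computed above, and the algebraic identities~\eqref{eq:def_Psi_SD}--\eqref{eq:dpsideu_SD} among $u,\psi,\dot\psi$ — together with the fact that $\pa N$ carries $u=0$, $\psi=r_+(m)$ and is totally geodesic also for $g$ (by~\eqref{eq:formula_h_h_g_SD}, since $\dot\psi|\D u|\to 0$ there), which renders the singular limit quantities finite and suppresses the curvature-of-$\pa N$ contributions — a direct, if lengthy, computation reduces $\gamma\langle Y\,|\,\nu_g\rangle_g$ on $\pa N$ to exactly $-2C\,|\na\ffi|_g\big[\Ricg(\nu_g,\nu_g)-\tfrac32 n\,r_+^2(m)(1-|\na\ffi|_g^2)\big]$, with $C=C(m,n)=r_+^{2n+1}(m)[1-(n-2)m\,r_+^{-n}(m)]^2$ precisely the constant that balances the coefficients (here one uses $r_0^n(m)=(n-2)m$). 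Feeding this into the flux identity of the previous paragraph at $s=0$ — where $\{\ffi=0\}=\pa N$ and $\{0<\ffi<\ffi_0\}$ exhausts $N$ up to a null set — and observing that $\tfrac12\,\mathrm{div}_g(\gamma Y)=\gamma\big[|\nana\ffi|_g^2+\tfrac12(\Deg\ffi)^2-n(n+2)u\psi\dot\psi|\na\ffi|_g^2(1-|\na\ffi|_g^2)\big]$ by~\eqref{eq:divY2_SD}, one obtains exactly~\eqref{eq:int_id_g_SD}. Its right-hand side is $\le 0$ because $\gamma\ge 0$ on $N$, the hessian and Laplacian terms are nonnegative, $u\psi\dot\psi=-\psi^2\dot\psi^2[1-(n-2)m\psi^{-n}]\le 0$ for the determination~\eqref{eq:pr_function_+}, and $1-|\na\ffi|_g^2\ge 0$ by Proposition~\ref{pro:min_pr_SD}.

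Finally, for the rigidity statement, if~\eqref{eq:int_id_rig_g_SD} holds then the nonnegative integrand on the right-hand side of~\eqref{eq:int_id_g_SD} vanishes identically on $N$; since $\gamma>0$ in the interior of $N$, this forces $\nana\ffi\equiv 0$ and $\Deg\ffi\equiv 0$, and as $u\psi\dot\psi$ never vanishes in the interior, the second equation of~\eqref{eq:pb_conf_SD} then gives $|\na\ffi|_g\equiv 1$. Proposition~\ref{pro:rigidity} now applies and yields that $(M,\go,u)$ is isometric to a generalized Schwarzschild--de Sitter solution~\eqref{eq:gen_SD} with mass $m$. I expect the genuine difficulty to be the boundary computation of the third paragraph: though mechanical, it requires simultaneous bookkeeping of both equations in~\eqref{eq:pb_conf_SD}, of the identities for $\dot\psi$ and $\ddot\psi$, and of the exact value of $C$, together with a careful passage to the limit at $\pa N$ (where several factors blow up while others vanish, with finite products). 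A secondary, more routine point is the control of the flux near ${\rm MAX}(u)$, which rests on the estimates of Subsection~\ref{sub:preparatory_estimates_SD} and the regularity of $\Psi$ from Proposition~\ref{pro:C2}.
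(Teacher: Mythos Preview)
Your approach is essentially the paper's: integrate ${\rm div}_g(\gamma Y)\ge 0$ on a truncated region, show the flux near ${\rm MAX}(u)$ vanishes, identify the flux on $\pa N$ with the Ricci integrand, and invoke Proposition~\ref{pro:rigidity} for rigidity. Two technical points, however, are handled differently in the paper and are worth tightening in your write-up.

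First, you apply the Divergence Theorem directly with $s=0$, but $\gamma=-u^2\psi^{2n-1}/\dot\psi^{3}$ behaves like $1/u$ on $\pa N$ (since $\dot\psi\sim -u/(r_+(m)[1-(r_0/r_+)^n])$ there), so $\gamma Y$ is not smooth up to $\{\ffi=0\}$. The paper avoids this by first keeping $s>0$ regular and only then letting $s\to 0$: it factors $\gamma\big\langle Y\,\big|\,\na\ffi/|\na\ffi|_g\big\rangle_{g}=(\dot\psi\gamma)\cdot\tfrac{1}{\dot\psi}\big\langle Y\,\big|\,\na\ffi/|\na\ffi|_g\big\rangle_{g}$ and computes the two finite limits separately (this is exactly where the constant $C=r_+^{2n+1}(m)[1-(n-2)mr_+^{-n}(m)]^{2}$ appears, as $\lim_{s\to 0}\dot\psi\gamma=-C$).

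Second, for the vanishing of the flux near ${\rm MAX}(u)$ you assert that the $(n-1)$-dimensional Hausdorff measure of the level sets stays bounded as $\ep\to 0$; this is not proved (and not used) in the paper. The paper instead translates the integrand to $(u,\go)$, obtaining $\gamma\big\langle Y\,\big|\,\na\ffi/|\na\ffi|_g\big\rangle_{g}=\tfrac{\psi^{2n}}{\dot\psi}|\D u|\cdot[\hbox{bounded}]$, and then shows $\int_{\{u=t\}\cap N}\tfrac{1}{\dot\psi}|\D u|\,\rmd\sigma\to 0$ by a coarea argument (as in \cite[Theorem~4.4]{Bor_Maz_2-I}) combined with $(\dot\psi^2/u^2)|\D u|^2\le 1$ and $|\dot\psi|\to\infty$. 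This route bypasses any need to control the raw level-set areas.
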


\begin{proof}
Let us recall from Subsection~\ref{sub:prelim}
 that $u$ is an analytic function. In particular, also $\ffi$ is analytic in the interior of $N$, hence its critical level sets are discrete. It follows that we can choose $0<s<S<\ffi_0$, with $s$ arbitrarily close to zero and $S$ arbitrarily close to $\ffi_0$ such that both $s$ and $S$ are regular values for $\ffi$. 
Integrating ${\rm div}_g(\gamma Y)$ on $\{s\leq\ffi\leq S\}$ and using the Divergence Theorem we obtain
\begin{equation}
\label{eq:int_in_+_aux_SD}
\int\limits_{\{s\leq\ffi\leq S\}}\!\!\!{\rm div}_g(\gamma Y)\rmd\sigma_g
\,=\,
\int\limits_{\{\ffi= S\}}\!\!\!\gamma(S)\Big\langle Y\,\Big|\,\frac{\na\ffi}{|\na\ffi|_g}\Big\rangle_{\!g}\rmd\sigma_g
-\int\limits_{\{\ffi= s\}}\!\!\!\gamma(s)\Big\langle Y\,\Big|\,\frac{\na\ffi}{|\na\ffi|_g}\Big\rangle_{\!g}\rmd\sigma_g\,,
\end{equation}
First of all, we notice that it holds
\begin{equation}
\label{eq:limit_int_SD}
\lim_{S\to\ffi_0}\gamma(S)\int_{\{\ffi= S\}}\Big\langle Y\,\Big|\,\frac{\na\ffi}{|\na\ffi|_g}\Big\rangle_{\!g}\rmd\sigma_g=0\,.
\end{equation}
In fact, using formul\ae~\eqref{eq:na_ffi_SD},~\eqref{eq:nana_ffi_SD} and~\eqref{eq:pb_conf_SD} to translate the integrand in terms of $u,\go$, we find
\begin{align*}
\gamma\,\Big\langle Y\,\Big|\,\frac{\na\ffi}{|\na\ffi|_g}\Big\rangle_{\!g}\,&=\gamma\,\left(\frac{\langle\na|\na\ffi|_g^2\,|\,\na\ffi\rangle_g}{|\na\ffi|_g}\,+\,\Deg\ffi\,|\na\ffi|_g\right)
\\
&=\,\gamma\,|\na\ffi|_g\left[2\,\nana\ffi(\nu_g,\nu_g)+\Deg\ffi\right]
\\
&=\,\frac{\psi^{2n}}{\dot\psi}|\D u|\bigg[-2\,\DD u(\nu,\nu)-2\frac{\dot\psi}{u\psi}\bigg(n-1+n\frac{\psi\dot\psi}{u}\bigg)|\D u|^2 \,+\,n\,\bigg(1-\frac{\dot\psi^2}{u^2}\,|\D u|^2\bigg)\,\bigg]\,,
\end{align*}
where $\nu=\D u/|\D u|,\nu_g=\na\ffi/|\na\ffi|_g=\psi\,\nu$ are the unit normals to $\{\ffi=S\}$, which exist everywhere because $\{\ffi=S\}$ is a regular level set.
Since $|\na\ffi|_g^2=(\dot\psi^2/u^2)|\D u|^2\leq 1$ by Proposition~\ref{pro:min_pr_SD}, we deduce that the limit of the term in square bracket as $S\to\ffi_0$ (or equivalently $u\to \umax$) is bounded from above. Therefore, in order to prove~\eqref{eq:limit_int_SD}, it is enough to show that
$$
\lim_{t\to 1^-}\int_{\{u=t\}\cap N}\frac{1}{\dot\psi}\,|\D u|\,\rmd\sigma\,=\,0\,.
$$
But this can be done proceeding exactly as in the proof of~\cite[Theorem~4.4]{Bor_Maz_2-I}, via a simple argument using the coarea formula and the facts that $(\dot\psi^2/u^2)|\D u|^2\leq 1$ and $\dot\psi\to +\infty$ as $u\to \umax$.
Therefore, taking the limit as $S\to\ffi_0$ of~\eqref{eq:int_in_+_aux_SD}, we deduce
\begin{equation}
\label{eq:int_in_aux2_+_SD}
\int_{\{\ffi= s\}}\gamma(s)\Big\langle Y\,\Big|\,\frac{\na\ffi}{|\na\ffi|_g}\Big\rangle_{\!g}\rmd\sigma_g
\,=\,
-\int_{\{s\leq\ffi<\ffi_0\}}{\rm div}_g(\gamma Y)\rmd\sigma_g\,\leq\,0\,,
\end{equation}
where in the last inequality we have used~\eqref{eq:divY2_SD}.
Now we compute the integral on the left hand side. Using the equations in~\eqref{eq:pb_conf_SD}, we obtain
\begin{align*}
\frac{1}{\dot\psi}\Big\langle Y\,\Big|\,\frac{\na\ffi}{|\na\ffi|_g}\Big\rangle_{\!g}\,&=\,2\,\frac{\nana\ffi(\na\ffi,\na\ffi)}{|\na\ffi|_g}+\Deg\ffi|\na\ffi|_g
\\
&=\,|\na\ffi|_g\,\bigg[-\frac{2}{(n-2)u+({\psi}/{\dot\psi})}\,\Ricg(\nu_g,\nu_g)\,+\,\bigg(1-2\,\frac{u-({\psi}/{\dot\psi})}{(n-2)u+({\psi}/{\dot\psi})}\bigg)\Deg\ffi\bigg]\,, 
\end{align*}
and taking the limit as $s\to 0$, since $u\to 0$, $\psi\to r_+(m)$ and $\dot\psi\to 0$, we get
\begin{equation}
\label{eq:integrand_aux_+_SD}
\!\!\!\!\lim_{s\to 0}\left[\frac{1}{\dot\psi}\,\Big\langle Y\,\Big|\,\frac{\na\ffi}{|\na\ffi|_g}\Big\rangle_{\!g}\right]_{|_{\{\ffi=s\}}}\!\!\!\!\!\!\!\!
=\,\left\{|\na\ffi|_g
\left[-\frac{2}{r_+(m)}\,\Ricg(\nu_g,\nu_g)+3n r_+(m)\left(1-|\na\ffi|_g^2\right)\right]\right\}_{|_{\pa N}}\!\!\!.
\end{equation}
Moreover, recalling from~\eqref{eq:dpsideu_SD} the relation between $\psi$, $\dot\psi$ and $u$, we find
\begin{align}
\notag
\lim_{s\to 0}(\dot\psi\gamma)_{|_{\{\ffi=s\}}}\,&=\,-\lim_{s\to 0}\left(\frac{u^2}{\dot\psi^2}\psi^{2n-1}\right)_{|_{\{\ffi=s\}}}
\\
\notag
\,&=\,-\lim_{s\to 0}\left\{\psi^{2n+1}[1-(n-2)m\psi^{-n}]^2\right\}_{|_{\{\ffi=s\}}}
\\
\label{eq:limit_aux_+_SD}
\,&=\,-\,r_+^{2n+1}(m)[1-(n-2)mr_+^{-n}(m)]^2\,.
\end{align}
Taking the limit of~\eqref{eq:int_in_aux2_+_SD} as $s\to 0$ and using the information given by~\eqref{eq:integrand_aux_+_SD} and~\eqref{eq:limit_aux_+_SD}, we obtain te desired inequality~\eqref{eq:int_id_g_SD}.

To prove the rigidity statement, we start by observing that, if the equality~\eqref{eq:int_id_rig_g_SD} holds, then necessarily the right-hand side of~\eqref{eq:int_id_g_SD} is null. In particular, $|\na\ffi|_g\equiv 1$ on $N$.
Substituting this information in the Bochner formula~\eqref{eq:bochner_SD} we obtain $|\nana\ffi|_g\equiv 0$, hence we can apply Proposition~\ref{pro:rigidity} to conclude.
\end{proof}

\subsection{Integral identities in the inner regions.}
\label{sub:integral_identities_-_SD}
In this subsection, we deal with the case in which $N$ is an inner region, that is, 
$$
\max_{S\in\pi_0(\pa N)}\kappa(S)\,=\,\max_{\pa N}\frac{|\D u|}{\umax}\,>\,\sqrt{n}\,,
$$ 
and the pseudo-radial function $\Psi=\psi\circ u$ is defined by~\eqref{eq:pr_function_-}. This case is slightly more complicated than the outer one, and requires a generalization of the computations of the previous subsection. Let 
$$
Y_\a=\na|\na\ffi|_g^2+\a\Deg\ffi\na\ffi\,,
$$
where $\a\in\R$.
From the Bochner formula~\eqref{eq:bochner_SD} and the equations in~\eqref{eq:pb_conf_SD} we compute
\begin{multline*}
{\rm div}_g(Y_\a)+\left[(n-2)u+\frac{\psi}{\dot{\psi}}+3n\psi\dot{\psi}\right]\langle\na\ffi\,|Y_\a\,\rangle_g\,=
\\
=\,2|\nana\ffi|_g^2+\a(\Deg\ffi)^2+n\psi^2\dot\psi^2\left[n(\a-1)(\a+2)-2(n+\a+1)\frac{u}{\psi\dot{\psi}} \right]|\na\ffi|_g^2(1-|\na\ffi|_g^2)\,.
\end{multline*} 
In order for the term $2|\nana\ffi|_g^2+\a(\Deg\ffi)^2$ to be positive, we want $\a\geq -2/n$. Recalling 
$$
\frac{u}{\psi\dot{\psi}}\,=\,-[1-(n-2)m\psi^{-n}]\,,
$$
we have
\begin{multline}
\label{eq:divg_Ya_SD}
{\rm div}_g(Y_\a)\,+\,\left[(n-2)u+\frac{\psi}{\dot{\psi}}+(\a+2)n\psi\dot{\psi}\right]\langle\na\ffi\,|Y_\a\,\rangle_g\,=
\\
=\,2|\nana\ffi|_g^2+\a(\Deg\ffi)^2+n\psi^2\dot\psi^2\left[n(n\a+2)(\a+1)-2(n+\a+1)(n-2)\frac{m}{\psi^{n}} \right]|\na\ffi|_g^2(1-|\na\ffi|_g^2)\,.
\end{multline}
The term in square brackets is positive if and only if
\begin{equation}
\label{eq:alpha_aux_SD}
\frac{\psi^n}{m}\geq 2(n-2)\frac{n+\a+1}{(n\a+2)(\a+1)}\,.
\end{equation}
Since the term on the right hand side goes to zero as $\a\to\infty$, there exists an $\a$ big enough so that
\begin{equation}
\label{eq:alpha_SD}
\frac{r_-^n(m)}{m}= 2(n-2)\frac{n+\a+1}{(n\a+2)(\a+1)}\,.
\end{equation}
Notice that the value of $\a$ that satisfies~\eqref{eq:alpha_SD} is greater than or equal to $1$ (in fact, if we set $\a=1$ in~\eqref{eq:alpha_aux_SD} we have $\psi^n\geq (n-2)m$, which is never satisfied on $N$). If we choose $\alpha$ as in~\eqref{eq:alpha_SD}, we have that the square bracket above is positive for any $\psi\in[r_-(m),\big((n-2)m\big)^{1/n}]$. In particular, for that $\a$ we have
\begin{equation}
\label{eq:divY_2_SD}
{\rm div}_g(Y_\a)\,+\,\left[(n-2)u+\frac{\psi}{\dot{\psi}}+(\a+2)n\psi\dot{\psi}\right]\langle\na\ffi\,|Y_\a\,\rangle_g\,\geq\, 0\,.
\end{equation}
on the whole $N$.
Now we choose 
\begin{equation}
\label{eq:gamma_-_SD}
\gamma\,=\,\frac{u^{\a+1}\psi^{n\a+n-\a}}{\dot{\psi}^{\a+2}}\,=\,\frac{\psi^{n\a+n+2}}{u}\,\big[1-(n-2)m\psi^{-n}\big]^{\a+2}\geq 0\,,
\end{equation} 
(notice that $\gamma\geq 0$ when $\Psi=\psi\circ u$ is as in~\eqref{eq:pr_function_-}). We compute
\begin{align*}
\frac{\gamma'}{\gamma}\,&=\,\frac{\dot{\psi}^{\a+2}}{u^{\a+1}\psi^{n\a+n-\a}}\cdot\frac{d u}{d \ffi}\cdot
\bigg[(\a+1)\frac{u^\a\psi^{n\a+n-\a}}{\dot{\psi}^{\a+2}}+(n\a+n-\a)\frac{u^{\a+1}\psi^{n\a+n-\a-1}\dot{\psi}}{\dot{\psi}^{\a+2}}
\\
&\qquad\qquad\qquad\qquad\qquad\qquad\qquad\qquad\qquad\qquad\qquad\qquad\qquad\qquad\qquad\ \ -(\a+2)\frac{u^2\psi^{n\a+n-\a}\ddot{\psi}}{\dot{\psi}^{\a+3}}\bigg]
\\
&=\,-\frac{\dot{\psi}^{\a+1}}{u^\a\psi^{n\a+n-\a-1}}
\bigg[(\a+1)\frac{u^\a\psi^{n\a+n-\a}}{\dot{\psi}^{\a+2}}+(n\a+n-\a)\frac{u^{\a+1}\psi^{n\a+n-\a-1}\dot{\psi}}{\dot{\psi}^{\a+2}}
\\
&\qquad\qquad\qquad\qquad\qquad\qquad\qquad\qquad\qquad\qquad\qquad\qquad\qquad\qquad\qquad\ \ -(\a+2)\frac{u^2\psi^{n\a+n-\a}\ddot{\psi}}{\dot{\psi}^{\a+3}}\bigg]
\\
&=\,-(\a+1)\frac{\psi}{\dot{\psi}}-(n\a+n-\a)u+(\a+2)\frac{u\psi\ddot{\psi}}{\dot{\psi}^{2}}
\\
&=\,-(\a+1)\frac{\psi}{\dot{\psi}}-(n\a+n-\a)u+(\a+2)n\psi\dot{\psi}+(\a+2)(n-1)u+(\a+2)\frac{\psi}{\dot{\psi}}
\\
&=\,(n-2)u+\frac{\psi}{\dot{\psi}}+(\a+2)n\psi\dot{\psi}\,.
\end{align*}
From formul\ae~\eqref{eq:divg_Ya_SD},~\eqref{eq:divY_2_SD} we deduce
\begin{multline}
\label{eq:divY2_2_SD}
{\rm div}_g(\gamma Y_\a)\,
=\,2|\nana\ffi|_g^2+\a(\Deg\ffi)^2+
\\
+n\psi^2\dot\psi^2\left[n(n\a+2)(\a+1)-2(n+\a+1)(n-2)\frac{m}{\psi^{n}} \right]|\na\ffi|_g^2(1-|\na\ffi|_g^2)
\,
\geq \,0\,.
\end{multline}
Integrating~\eqref{eq:divY2_2_SD} on $N$, we obtain the following statement.

\begin{proposition}
\label{pro:int_id_g_2_SD}
Let $(M,\go,u)$ be a solution to problem~\eqref{eq:prob_SD}, let $N\subseteq M\setminus{\rm MAX}(u)$ be an inner region with virtual mass $m=\mu(N,\go,u)$, and let $\Psi$, $g$ and $\ffi$ be defined by~\eqref{eq:pr_function_+},~\eqref{eq:g_SD} and~\eqref{eq:ffi_SD}.
For any $0\leq s<\ffi_0$ it holds
\begin{multline}
\label{eq:int_id_g_2_SD}
	\int_{\pa N}|\na\ffi|_g\left[\Ricg(\nu_g,\nu_g)-\frac{\a+2}{2}n r_-^2(m)(1-|\na\ffi|_g^2)\right]\rmd\sigma_g\,=\,-\,\frac{1}{C}\int_{N}\gamma\,\bigg[|\nana\ffi|_g^2+\frac{\a}{2}(\Deg\ffi)^2+
\\
+\psi\dot\psi\left(\frac{1}{2}n(n\a+2)(\a+1)-(n+\a+1)(n-2)\frac{m}{\psi^{n}} \right)|\na\ffi|_g^2\Deg\ffi\bigg]\rmd \sigma_g
\,\leq\,0\,,
\end{multline}
where $\a\geq 1$ is the solution of equation~\eqref{eq:alpha_SD}, $\gamma$ is the function defined by~\eqref{eq:gamma_+_SD} and $C=C(\a,m,n)=r_-^{(\a+1)n+1}(m)[1-(n-2)mr_-^{-n}(m)]^{\a+1}$.
Moreover, if the equality
\begin{equation}
\label{eq:int_id_rig_g2_SD}
\int_{\pa N}|\na\ffi|_g\left[\Ricg(\nu_g,\nu_g)-\frac{\a+2}{2}n r_-^2(m)(1-|\na\ffi|_g^2)\right]\rmd\sigma_g\,=\,0\,,
\end{equation}
holds, then the solution $(M,\go,u)$ is isometric to a generalized Schwarzschild--de Sitter triple~\eqref{eq:gen_SD} with mass $m$.
\end{proposition}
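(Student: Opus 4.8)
The plan is to follow the scheme of the proof of Proposition~\ref{pro:int_id_g_SD} almost verbatim, the only new ingredients being the exponent $\a$ dictated by~\eqref{eq:alpha_SD} and the weight $\gamma$ from~\eqref{eq:gamma_-_SD}, which together make ${\rm div}_g(\gamma Y_\a)\ge 0$ on the whole $N$ by~\eqref{eq:divY2_2_SD}. First I would recall that $u$, hence $\ffi$, is analytic in the interior of $N$, so that the critical values of $\ffi$ are discrete; this lets me pick regular values $\ffi_0<s<S<\ffi_{\rm max}$ with $s$ arbitrarily close to $\ffi_0$ and $S$ arbitrarily close to $\ffi_{\rm max}$, integrate ${\rm div}_g(\gamma Y_\a)$ over the slab $\{s\le\ffi\le S\}$, and apply the Divergence Theorem to get
\[
\int_{\{\ffi=S\}}\gamma\,\Big\langle Y_\a\,\Big|\,\frac{\na\ffi}{|\na\ffi|_g}\Big\rangle_{\!g}\,\rmd\sigma_g\,-\,\int_{\{\ffi=s\}}\gamma\,\Big\langle Y_\a\,\Big|\,\frac{\na\ffi}{|\na\ffi|_g}\Big\rangle_{\!g}\,\rmd\sigma_g\,=\,\int_{\{s\le\ffi\le S\}}{\rm div}_g(\gamma Y_\a)\,\rmd\sigma_g\,\ge\,0\,.
\]

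Next I would show that the boundary term on the level set approaching ${\rm MAX}(u)$ — here $\{\ffi=s\}$ as $s\to\ffi_0$, i.e. $u\to\umax$ — vanishes in the limit. Translating $\gamma\,\langle Y_\a\,|\,\na\ffi/|\na\ffi|_g\rangle$ back in terms of $u$, $\go$ and $\DD u$ through~\eqref{eq:na_ffi_SD},~\eqref{eq:nana_ffi_SD} and~\eqref{eq:pb_conf_SD}, one finds it equals $|\D u|$ times a power of $\dot\psi^{-1}$ times a factor that stays bounded above near ${\rm MAX}(u)$, because $|\na\ffi|_g^2=(\dot\psi^2/u^2)|\D u|^2\le 1$ by Proposition~\ref{pro:min_pr_SD} while $\DD u(\nu,\nu)$ is continuous there; since $\dot\psi\to+\infty$ as $u\to\umax$, a coarea argument identical to the one in~\cite[Theorem~4.4]{Bor_Maz_2-I} gives $\lim_{s\to\ffi_0}\gamma(s)\int_{\{\ffi=s\}}\langle Y_\a\,|\,\na\ffi/|\na\ffi|_g\rangle\,\rmd\sigma_g=0$. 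Letting $s\to\ffi_0$ in the identity above leaves $\int_{\{\ffi=S\}}\gamma(S)\langle Y_\a\,|\,\na\ffi/|\na\ffi|_g\rangle\,\rmd\sigma_g=\int_{\{\ffi_0<\ffi\le S\}}{\rm div}_g(\gamma Y_\a)\,\rmd\sigma_g\ge 0$.

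It then remains to compute the surviving boundary integral as $S\to\ffi_{\rm max}$, i.e. as $u\to 0$, $\psi\to r_-(m)$, $\dot\psi\to 0$. As in the outer case, I would use the first equation of~\eqref{eq:pb_conf_SD} to express $\nana\ffi(\na\ffi,\na\ffi)$ through $\Ricg(\nu_g,\nu_g)$ and $\Deg\ffi$, writing $\dot\psi^{-1}\langle Y_\a\,|\,\na\ffi/|\na\ffi|_g\rangle$ as $|\na\ffi|_g$ times a combination of these two quantities whose limit on $\pa N$ is $|\na\ffi|_g\big[-\tfrac{2}{r_-(m)}\Ricg(\nu_g,\nu_g)+(\a+2)n\,r_-(m)\,(1-|\na\ffi|_g^2)\big]$; from~\eqref{eq:dpsideu_SD} one computes that the restriction of $\dot\psi\gamma$ to $\{\ffi=S\}$ tends to $-C$ with $C=r_-^{(\a+1)n+1}(m)\,[1-(n-2)m\,r_-^{-n}(m)]^{\a+1}$. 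Multiplying the two limits and using $\Deg\ffi=n\psi\dot\psi(1-|\na\ffi|_g^2)$ to rewrite the bulk term turns the identity into~\eqref{eq:int_id_g_2_SD}; the sign of the right-hand side is precisely ${\rm div}_g(\gamma Y_\a)\ge 0$, which is where the choice~\eqref{eq:alpha_SD} of $\a$ enters, since it makes the square bracket in~\eqref{eq:divY2_2_SD} nonnegative for all $\psi\in[r_-(m),((n-2)m)^{1/n}]$. Finally, for rigidity: if~\eqref{eq:int_id_rig_g2_SD} holds then the nonnegative bulk integrand vanishes identically, so $|\nana\ffi|_g\equiv 0$ and, since $\a\ge 1>0$, also $\Deg\ffi\equiv 0$ on $N$; the second equation of~\eqref{eq:pb_conf_SD} together with $\psi\dot\psi\ne 0$ in the interior forces $|\na\ffi|_g\equiv 1$, and Proposition~\ref{pro:rigidity} concludes. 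The main obstacle, just as in Proposition~\ref{pro:int_id_g_SD}, is the control of the ${\rm MAX}(u)$-side boundary term; the extra subtlety here is the bookkeeping of the powers of $\dot\psi$ and $u$ in $\gamma$ and $Y_\a$, which now depend on the free parameter $\a$, so one must check that the $\a$ selected by~\eqref{eq:alpha_SD} is simultaneously compatible with the positivity of ${\rm div}_g(\gamma Y_\a)$ and with the finiteness of the limits at both ends.
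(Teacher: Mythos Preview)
Your proposal is correct and follows the paper's proof essentially step for step: apply the Divergence Theorem to ${\rm div}_g(\gamma Y_\a)\ge 0$ on a slab, kill the ${\rm MAX}(u)$-side boundary term via the coarea argument, compute the $\pa N$-side limit through~\eqref{eq:pb_conf_SD}, and deduce rigidity from Proposition~\ref{pro:rigidity}. One small slip to correct: in the inner case $\dot\psi>0$ and $\gamma>0$, so $\dot\psi\gamma\to +C>0$ (not $-C$ as you wrote by analogy with the outer computation); getting this sign right is exactly what produces the $\le 0$ in~\eqref{eq:int_id_g_2_SD}.
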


\begin{proof}
Let us recall from Subsection~\ref{sub:prelim}
 that $u$ is an analytic function. In particular, also $\ffi$ is analytic in the interior of $N$, hence its critical level sets are discrete. It follows that we can choose $\ffi_0<S<s<\ffi_{\rm max}$, with $S$ arbitrarily close to $\ffi_0$ and $s$ arbitrarily close to $\ffi_{\rm max}$ such that both $s$ and $S$ are regular values for $\ffi$. 
Integrating ${\rm div}_g(\gamma Y_\a)$ on $\{S\leq\ffi\leq s\}$ and using the Divergence Theorem we obtain
\begin{equation}
\label{eq:int_in_-_aux_SD}
\int\limits_{\{S\leq\ffi\leq s\}}\!\!\!{\rm div}_g(\gamma Y_\a)\rmd\sigma_g
\,=\,
\int\limits_{\{\ffi= s\}}\!\!\!\gamma(s)\Big\langle Y_\a\,\Big|\,\frac{\na\ffi}{|\na\ffi|_g}\Big\rangle_{\!g}\rmd\sigma_g
-\int\limits_{\{\ffi= S\}}\!\!\!\gamma(S)\Big\langle Y_\a\,\Big|\,\frac{\na\ffi}{|\na\ffi|_g}\Big\rangle_{\!g}\rmd\sigma_g\,,
\end{equation}
First of all, with analogous computations to the ones employed in the proof of Proposition~\ref{pro:int_id_g_SD}, we obtain
$$
\lim_{S\to\ffi_0}\gamma(S)\int_{\{\ffi= S\}}\Big\langle Y_\a\,\Big|\,\frac{\na\ffi}{|\na\ffi|_g}\Big\rangle_{\!g}\rmd\sigma_g=0\,.
$$
Therefore, taking the limit as $S\to\ffi_0$ of~\eqref{eq:int_in_-_aux_SD}, we deduce
\begin{equation}
\label{eq:int_in_aux2_-_SD}
\int_{\{\ffi= s\}}\gamma(s)\Big\langle Y_\a\,\Big|\,\frac{\na\ffi}{|\na\ffi|_g}\Big\rangle_{\!g}\rmd\sigma_g
\,=\,
\int_{\{\ffi_0<\ffi\leq s\}}{\rm div}_g(\gamma Y_\a)\rmd\sigma_g\,\geq\,0\,,
\end{equation}
where in the last inequality we have used~\eqref{eq:divY2_2_SD}.
Now we compute the integral on the left hand side. Using the equations in~\eqref{eq:pb_conf_SD}, we obtain
\begin{align*}
\frac{1}{\dot\psi}\Big\langle Y_\a\,\Big|\,\frac{\na\ffi}{|\na\ffi|_g}\Big\rangle_{\!g}\,&=\,2\,\frac{\nana\ffi(\na\ffi,\na\ffi)}{|\na\ffi|_g}+\Deg\ffi|\na\ffi|_g
\\
&=\,|\na\ffi|_g\,\bigg[-\frac{2}{(n-2)u+({\psi}/{\dot\psi})}\,\Ricg(\nu_g,\nu_g)\,+\,\bigg(\a-2\,\frac{u-({\psi}/{\dot\psi})}{(n-2)u+({\psi}/{\dot\psi})}\bigg)\Deg\ffi\bigg]\,, 
\end{align*}
and taking the limit as $s\to \ffi_{\rm max}$, since $u\to 0$, $\psi\to r_-(m)$ and $\dot\psi\to 0$, we get
\begin{multline}
\label{eq:integrand_aux_-_SD}
\lim_{s\to \ffi_{\rm max}}\left[\frac{1}{\dot\psi}\,\Big\langle Y_\a\,\Big|\,\frac{\na\ffi}{|\na\ffi|_g}\Big\rangle_{\!g}\right]_{|_{\{\ffi=s\}}}\!\!\!\!\!\!
=
\\
=\,\left\{|\na\ffi|_g
\left[-\frac{2}{r_-(m)}\,\Ricg(\nu_g,\nu_g)+(\a+2)n r_-(m)\left(1-|\na\ffi|_g^2\right)\right]\right\}_{|_{\pa N}}\!\!\!\!.
\end{multline}
Moreover, recalling from~\eqref{eq:dpsideu_SD} the relation between $\psi$, $\dot\psi$ and $u$, we find
\begin{align}
\notag
\lim_{s\to \ffi_{\rm max}}(\dot\psi\gamma)_{|_{\{\ffi=s\}}}\,&=\,\lim_{s\to \ffi_{\rm max}}\left(\frac{u^{\a+1}}{\dot\psi^{\a+1}\psi^{\a+1}}\psi^{(\a+1)n+1}\right)_{|_{\{\ffi=s\}}}
\\
\notag
\,&=\,\lim_{s\to \ffi_{\rm max}}\left\{\psi^{(\a+1)n+1}[1-(n-2)m\psi^{-n}]^{\a+1}\right\}_{|_{\{\ffi=s\}}}
\\
\label{eq:limit_aux_-_SD}
\,&=\,r_-^{(\a+1)n+1}(m)[1-(n-2)mr_-^{-n}(m)]^{\a+1}\,.
\end{align}
Taking the limit of~\eqref{eq:int_in_aux2_-_SD} as $s\to \ffi_{\rm max}$ and using~\eqref{eq:integrand_aux_-_SD} and~\eqref{eq:limit_aux_-_SD}, we obtain the desired inequality~\eqref{eq:int_id_g_2_SD}.

The rigidity statement is proved in the same way as in Proposition~\ref{pro:int_id_g_SD}. If the equality in~\eqref{eq:int_id_rig_g2_SD} holds, then necessarily the right hand side of~\eqref{eq:int_id_g_2_SD} is null. In particular, $|\na\ffi|_g\equiv 1$ on $N$.
Substituting this information in the Bochner formula~\eqref{eq:bochner_SD} we obtain $|\nana\ffi|_g\equiv 0$, hence we can apply Proposition~\ref{pro:rigidity} to conclude.
\end{proof}

\section{Area bounds}
\label{sec:area_bounds}

Subsection~\ref{sub:pointwise_bounds} is devoted to the proof of the inequalities in Theorems~\ref{thm:area_bound_SD} and~\ref{thm:scalarcurvature_bound_SD} for outer and inner regions. The proof of the corresponding rigidity statements will be discussed in Section~\ref{sec:consequences_SD}, whereas the cylindrical case will be addressed in Section~\ref{sec:nariai_SD}.
In Subsection~\ref{sub:area_bounds_Sigma} we will discuss some area bounds for the hypersurface separating our region from the rest of the manifold. In particular, we will recover Corollary~\ref{cor:lower_bound}.

\subsection{Area bounds for the horizons}
\label{sub:pointwise_bounds}

Let $N$ be a connected component of $M\setminus{\rm MAX}(u)$, let $\mu=\mu(N,\go,u)$ be its virtual mass and let $S\subseteq\pa N$ be an horizon with maximum surface gravity. We now follow~\cite{Chr_Sim} (see also~\cite[Section~4]{Lee_Nev}) to prove area bounds for the horizon $S$.
We start by noticing that, if we define $g,\Psi,\ffi$ as usual with respect to the mass $m$, the definitions are chosen in such a way that
$$
|\na\ffi|_g^2(p)= \left|\frac{\D u}{\psi\big[1-(n-2)m\psi^{-n}\big]}\right|^2(p)\,=\,\frac{|\D u|^2(p)}{W(u(p))}\,,
$$
where $W(t)$ is the constant value of $|\D u|^2$ on the level set $\{u=t\}\cap M_\pm$ of the Schwarzschild--de Sitter solution, where of course the sign $\pm$ depends on whether we are on an outer or inner region.

From Lemma~\ref{le:bound_psi_SD} we have $|\na\ffi|_g=1$ on $S$, whereas Proposition~\ref{pro:min_pr_SD} tells us that $|\na\ffi|_g\leq 1$ on the whole $N$. In other words, we have $|\D u|^2\leq W(u)$ on the whole $N$ and the equality holds on $S$. Let now $p\in S$ and $\gamma:[0,\ep)\to\R$ be a geodesic such that $\gamma(0)=p$ and $\gamma'(0)=\nu$, where $\nu$ is the unit normal to $S$ pointing inside $N$. Applying~\cite[(11)]{Ambrozio} in $N$ we have
\begin{align}
\notag
|\D u|^2\circ \gamma(s)\,&=\,W(0)\left[1+(\Ric(\nu,\nu)-n)s^2+\mathcal{O}(s^4)\right]
\\
\label{eq:Ambrozio_nearboundary}
&=\,W(0)\left[1+\left(\frac{n(n-3)}{2}-\frac{\RRR^S}{2}\right)s^2+\mathcal{O}(s^4)\right]\,,
\end{align}
where in the second identity we have used the Gauss-Codazzi equation and the fact that the horizon $S$ is totally geodesic.
Of course we can apply the same formula on the model solution, obtaining the same expansion with $\RRR^{\pa M_\pm}$ in place of $\RRR^S$.
Since $|\D u|^2\leq W$ as observed above, necessarily we have $\RRR^S\geq\RRR^{\pa M_\pm}$. In other words, we have proven the following:
\begin{theorem}
	\label{thm:intid_rewr_SD}
	Let $(M,\go,u)$ be a solution to problem~\eqref{eq:prob_SD}, let $N\subseteq M\setminus{\rm MAX}(u)$ be a region with virtual mass $m=\mu(N,\go,u)$. Let also $S\subset \pa N$ be an horizon with maximum surface gravity. 
Then 
\begin{itemize}
\item If $N$ is outer, it holds
\begin{equation*}
\RRR^{S}\,\geq\,(n-1)(n-2)r_+^{-2}(m)\,.	
\end{equation*}

\item If $N$ is inner, it holds
\begin{equation*}
\RRR^{S}\,\geq\,(n-1)(n-2)r_-^{-2}(m)\,.	
\end{equation*}
\end{itemize}
\end{theorem}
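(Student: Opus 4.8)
The plan is to follow the strategy already sketched in the passage right before the statement, which compares the asymptotic behaviour of $|\D u|^2$ along a normal geodesic emanating from the horizon $S$ with the analogous expansion on the rotationally symmetric model solution. First I would set $g,\Psi,\ffi$ as in the cylindrical ansatz with respect to the virtual mass $m=\mu(N,\go,u)$, and recall from~\eqref{eq:na_ffi_SD} that $|\na\ffi|_g^2 = |\D u|^2/W(u)$, where $W(t)$ denotes the constant value of $|\D u|^2$ on the level set $\{u=t\}$ of the Schwarzschild--de Sitter solution (outer or inner according to the type of $N$). The key input is Lemma~\ref{le:bound_psi_SD}, which gives $|\na\ffi|_g = 1$ on the horizon $S$ of maximal surface gravity, together with the gradient bound $|\na\ffi|_g \le 1$ on all of $N$ from Proposition~\ref{pro:min_pr_SD}. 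Translating back, this says precisely that $|\D u|^2 \le W(u)$ throughout $N$, with equality on $S$.

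Next I would write the second-order Taylor expansion of $|\D u|^2$ along a unit-speed geodesic $\gamma$ with $\gamma(0)=p\in S$ and $\gamma'(0)=\nu$ the inward unit normal, invoking the expansion from~\cite[(11)]{Ambrozio}:
\begin{equation*}
|\D u|^2\circ\gamma(s)\,=\,W(0)\left[1+\bigl(\Ric(\nu,\nu)-n\bigr)s^2+\mathcal{O}(s^4)\right]\,.
\end{equation*}
Since $S$ is totally geodesic (a regular level set $\{u=0\}$), the Gauss equation together with the constant scalar curvature relation $\RRR\equiv n(n-1)$ of~\eqref{eq:prob_SD} lets me rewrite $\Ric(\nu,\nu)-n = \tfrac{n(n-3)}{2} - \tfrac{\RRR^S}{2}$, giving the expansion displayed just above the statement. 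The very same computation applies verbatim on the model solution, with $\RRR^{\pa M_\pm}$ replacing $\RRR^S$, because there $\pa M_\pm$ is also totally geodesic and the model has $\RRR\equiv n(n-1)$ as well.

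Now the comparison closes the argument: the two expansions agree at order $s^0$ (both equal $W(0)$), and the inequality $|\D u|^2 \le W(u)$ forces $|\D u|^2\circ\gamma(s) \le W(0)\bigl[1 + (\text{model } s^2\text{-coefficient}) s^2 + \mathcal{O}(s^4)\bigr]$ for all small $s\ge 0$. Comparing the $s^2$-coefficients yields $\tfrac{n(n-3)}{2}-\tfrac{\RRR^S}{2} \le \tfrac{n(n-3)}{2}-\tfrac{\RRR^{\pa M_\pm}}{2}$, i.e. $\RRR^S \ge \RRR^{\pa M_\pm}$. Finally I would compute $\RRR^{\pa M_\pm}$ explicitly for the Schwarzschild--de Sitter solution: since $\pa M_\pm = \{|x| = r_\pm(m)\}$ is a round $(n-1)$-sphere of radius $r_\pm(m)$ inside the warped-product metric~\eqref{eq:SD}, its induced metric is $r_\pm^2(m)\,g_{\Sph^{n-1}}$, hence $\RRR^{\pa M_\pm} = (n-1)(n-2)\,r_\pm^{-2}(m)$. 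Substituting gives the two claimed bounds.

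The main obstacle I anticipate is purely bookkeeping rather than conceptual: one must be careful that the expansion~\cite[(11)]{Ambrozio} is being applied with the correct normalization of $u$ (the Normalization fixed after Notation~\ref{not:SD}, so that $\max u = \umax(m)$ matches the model), and that the $s^2$-coefficient comparison is legitimate termwise — this is fine because both functions are smooth (indeed analytic) in $s$, vanish to the same zeroth order, and the inequality holds on a full one-sided neighborhood $s\in[0,\ep)$. One also needs to know $W(0)>0$, which is exactly the positivity of $|\D u|$ on the horizon recalled in Subsection~\ref{sub:prelim}. No further regularity issues arise since $S$ is a regular level set and hence a smooth totally geodesic hypersurface.
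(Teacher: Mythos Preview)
Your proposal is correct and follows essentially the same approach as the paper: both use Lemma~\ref{le:bound_psi_SD} and Proposition~\ref{pro:min_pr_SD} to get $|\D u|^2\le W(u)$ with equality on $S$, then compare the second-order expansion from~\cite[(11)]{Ambrozio} (rewritten via Gauss--Codazzi and the totally geodesic condition) against the same expansion on the model to conclude $\RRR^S\ge \RRR^{\pa M_\pm}$. The only addition in your write-up is the explicit evaluation $\RRR^{\pa M_\pm}=(n-1)(n-2)r_\pm^{-2}(m)$, which the paper leaves implicit.
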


\noindent
Integrating the inequalities in Theorem~\ref{thm:intid_rewr_SD} on $S$, we obtain the following formula
	\begin{equation}
	\label{eq:Gauss-Codazzi_SD}
	\int_{S}
	\frac{\RRR^{S}}{(n-1)(n-2)}\,\rmd\sigma\,\geq\, r_\pm^{-2}(m)|S|\,.
	\end{equation}
This gives a particularly nice result in the case $n=3$. 
	
\begin{theorem}
	\label{thm:Gauss-Bonnet_SD}
	Let $(M,\go,u)$ be a $3$-dimensional solution to problem~\eqref{eq:prob_SD}, and let $N\subseteq M\setminus{\rm MAX}(u)$ be a region with virtual mass $m=\mu(N,\go,u)$. Let also $S\subset\pa N$ be an horizon with maximum surface gravity in $N$.
Then $S$ is diffeomorphic to $\Sph^2$ and it holds
	\begin{equation*}
	|S|\,\leq\,4\pi r_\pm^{2}(m)\,,
	\end{equation*}
where the sign $\pm$ depends on whether $N$ is an outer or inner region.
\end{theorem}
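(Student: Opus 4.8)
The plan is to obtain the statement as a short consequence of the pointwise lower bound on the scalar curvature $\RRR^S$ established in Theorem~\ref{thm:intid_rewr_SD}, combined with the Gauss--Bonnet theorem, exploiting the fact that in dimension $n=3$ the horizon $S$ is a closed surface.

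First I would specialize Theorem~\ref{thm:intid_rewr_SD} to $n=3$, where $(n-1)(n-2)=2$: this gives $\RRR^{S}\geq 2\,r_+^{-2}(m)$ if $N$ is outer and $\RRR^{S}\geq 2\,r_-^{-2}(m)$ if $N$ is inner. Since $N$ is outer or inner (and not cylindrical), its virtual mass satisfies $m=\mu(N,\go,u)<\mmax$, so that $0<r_-(m)<r_+(m)<1$; in particular $r_\pm^{-2}(m)>0$ and hence $\RRR^{S}>0$ everywhere on $S$. Because $\dim S=2$, the scalar curvature equals twice the Gauss curvature $K_S$, so $K_S>0$ pointwise.

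Next I would recall that $S$, being a connected component of the boundary $\pa M$ of the compact manifold $M$, is a closed surface, and it is orientable because $M$ is orientable. The Gauss--Bonnet theorem then reads $\int_S K_S\,\rmd\sigma=2\pi\,\chi(S)$, and the strict positivity $K_S>0$ forces $\chi(S)>0$, hence $\chi(S)=2$ and $S$ is diffeomorphic to $\Sph^2$. Equivalently, in terms of the scalar curvature, $\int_S \RRR^{S}\,\rmd\sigma=4\pi\,\chi(S)=8\pi$. Finally, I would feed this into inequality~\eqref{eq:Gauss-Codazzi_SD}, which for $n=3$ says $\tfrac12\int_S \RRR^{S}\,\rmd\sigma\geq r_\pm^{-2}(m)\,|S|$; together with $\int_S\RRR^{S}\,\rmd\sigma=8\pi$ this yields $4\pi\geq r_\pm^{-2}(m)\,|S|$, i.e.\ $|S|\leq 4\pi\,r_\pm^{2}(m)$, as claimed.

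I do not expect a serious obstacle here, since the analytic heavy lifting — the pointwise lower bound for $\RRR^{S}$ — has already been carried out in Theorem~\ref{thm:intid_rewr_SD} via the cylindrical ansatz and the Bochner / Minimum Principle machinery of Sections~\ref{sec:cyl_ans}--\ref{sec:int_id}. The only points requiring care are the strict positivity of $r_\pm^{-2}(m)$ (which rests on $m=\mu(N,\go,u)<\mmax$ for non-cylindrical regions, so that $r_\pm(m)$ are genuine numbers in $(0,1)$) and the remark that $S$ is closed and orientable, so that Gauss--Bonnet applies with no boundary contribution. The equality/rigidity discussion is not part of this statement and is deferred to Section~\ref{sec:consequences_SD}.
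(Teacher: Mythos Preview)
Your proposal is correct and follows essentially the same approach as the paper: specialize inequality~\eqref{eq:Gauss-Codazzi_SD} to $n=3$, apply Gauss--Bonnet to obtain $4\pi\chi(S)\geq 2r_\pm^{-2}(m)\,|S|$, deduce $\chi(S)>0$ so that $S\cong\Sph^2$, and read off the area bound. Your additional remarks on orientability and on $m<\mmax$ ensuring $r_\pm(m)\in(0,1)$ are fine points of care, but the core argument matches the paper's proof.
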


\begin{proof}
	Substituting $n=3$ in inequality~\eqref{eq:Gauss-Codazzi_SD} and using the Gauss-Bonnet formula, we immediately obtain
	\begin{equation*}
	4\pi\chi(S)\,\geq\, 2r_\pm^{-2}(m)
	\,|S|\,.
	\end{equation*}
	In particular, $\chi(S)$ has to be positive, hence $S$ is necessarily a sphere and we obtain the thesis.
\end{proof}

\subsection{Area bounds for the disconnecting hypersurface}
\label{sub:area_bounds_Sigma}

Combining the results of this section with Corollary~\ref{le:in_mon_Phi1_SD}, it is straightforward to obtain an area bound on the hypersurface $\Sigma_N$ that separates $N$ from the rest of the manifold.

\begin{theorem}
\label{thm:area_bound_dischyp_gen}
Let $(M,\go,u)$ be a solution to problem~\eqref{eq:prob_SD}, let $N\subseteq M\setminus{\rm MAX}(u)$ be a region of $M$ with connected boundary $\pa N$ and with virtual mass $m<\mmax$. Let $\Sigma_N=\overline{N}\cap\overline{M\setminus\overline{N}}$ be the hypersurface separating $N$ from the rest of the manifold $M$. 
\begin{itemize}
\item If $N$ is an outer region, then
\begin{equation}
|\Sigma_N|\,\leq\,\left(\int_{\pa N}
\frac{\RRR^{\pa N}}{(n-1)(n-2)}\,\rmd\sigma\right)\, \frac{r_0^{n-1}(m)}{r_+^{n-3}(m)}\,,
\end{equation}
and, if the equality holds, 
then $(M,\go,u)$ is isometric to a generalized Schwarzschild--de Sitter triple~\eqref{eq:gen_SD} with mass $m$.
\smallskip
\item If $N$ is an inner region, then
\begin{equation}
|\Sigma_N|\,\leq\,\left(\int_{\pa N}
\frac{\RRR^{\pa N}}{(n-1)(n-2)}\,\rmd\sigma\right)\, \frac{r_0^{n-1}(m)}{r_-^{n-3}(m)}\,,
\end{equation}
and, if the equality holds, 
then $(M,\go,u)$ is isometric to a generalized Schwarzschild--de Sitter triple~\eqref{eq:gen_SD} with mass $m$.
\end{itemize}
\end{theorem}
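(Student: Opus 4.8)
The plan is to obtain the bound by combining two results already established in the previous sections: the area lower bound of Corollary~\ref{le:in_mon_Phi1_SD} (equivalently the inequalities of Theorem~\ref{thm:lower_bound}) and the scalar curvature estimate of Theorem~\ref{thm:intid_rewr_SD}. Since $\pa N$ is assumed connected, it coincides with the horizon $S\subseteq\pa N$ of maximal surface gravity in $N$, so integrating the pointwise inequality of Theorem~\ref{thm:intid_rewr_SD} over $\pa N$ --- as in formula~\eqref{eq:Gauss-Codazzi_SD} --- gives
\[
|\pa N|\,\le\, r_\pm^2(m)\int_{\pa N}\frac{\RRR^{\pa N}}{(n-1)(n-2)}\,\rmd\sigma\,,
\]
where the sign is $+$ if $N$ is outer and $-$ if $N$ is inner.

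Next I would rewrite the conclusion of Corollary~\ref{le:in_mon_Phi1_SD} in terms of $\go$. Recall that $g=\go/\Psi^2$, that $\Psi\equiv r_\pm(m)$ on $\pa N$ (here $u=0$ and $\psi_\pm(0)=r_\pm(m)$) and that $\Psi\equiv r_0(m)$ on $\Sigma_N\subseteq{\rm MAX}(u)$ by construction of the pseudo-radial function. Since the $(n-1)$-dimensional area element rescales by the factor $\Psi^{-(n-1)}$ under the conformal change, inequality~\eqref{eq:lower_bound_g} reads $r_0^{-(n-1)}(m)\,|\Sigma_N|=|\Sigma_N|_g\le|\pa N|_g=r_\pm^{-(n-1)}(m)\,|\pa N|$, that is, $|\Sigma_N|\le[r_0(m)/r_\pm(m)]^{n-1}\,|\pa N|$. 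Chaining this with the previous display yields
\[
|\Sigma_N|\,\le\,\left[\frac{r_0(m)}{r_\pm(m)}\right]^{n-1}r_\pm^2(m)\int_{\pa N}\frac{\RRR^{\pa N}}{(n-1)(n-2)}\,\rmd\sigma\,=\,\frac{r_0^{n-1}(m)}{r_\pm^{n-3}(m)}\int_{\pa N}\frac{\RRR^{\pa N}}{(n-1)(n-2)}\,\rmd\sigma\,,
\]
which is exactly the asserted inequality in both the outer and the inner case.

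For the rigidity statement, if equality holds then both inequalities in the chain must be equalities; in particular equality is attained in~\eqref{eq:lower_bound_g}, and Corollary~\ref{le:in_mon_Phi1_SD} then forces $(M,\go,u)$ to be isometric to a generalized Schwarzschild--de Sitter solution~\eqref{eq:gen_SD} with mass $m$. Conversely, on that model both~\eqref{eq:lower_bound_g} and the bound of Theorem~\ref{thm:intid_rewr_SD} are equalities, so equality holds in the statement.

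I do not expect a genuine obstacle here: all the analytic work has already been done in Sections~\ref{sec:cyl_ans} and~\ref{sec:area_bounds}. The only points that require a little care are checking that $\Psi$ is honestly constant on $\Sigma_N$ and on $\pa N$ (so that the conformal rescaling of areas is exact rather than merely asymptotic), and verifying that the rigidity cases of the two ingredients are compatible --- which they are, since both single out the generalized Schwarzschild--de Sitter solution with the same virtual mass $m$.
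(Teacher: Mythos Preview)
Your proof is correct and follows essentially the same route as the paper: combine Corollary~\ref{le:in_mon_Phi1_SD} (rewritten in terms of $\go$ via $\Psi\equiv r_0(m)$ on $\Sigma_N$ and $\Psi\equiv r_\pm(m)$ on $\pa N$) with the integrated scalar-curvature bound~\eqref{eq:Gauss-Codazzi_SD}, and read off rigidity from the equality case of Corollary~\ref{le:in_mon_Phi1_SD}.
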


\begin{proof}
Let us study the case where $N$ is outer, the inner case being completely analogous.
From Corollary~\ref{le:in_mon_Phi1_SD}, recalling the definitions of $g,\ffi$, we get
$$
r_0^{1-n}(m)|\Sigma_N|\,=\,|\Sigma_N|_g\,\leq\, |\pa N|_g\,=\,r_+^{1-n}(m)|\pa N|\,.
$$
Now we conclude using formula~\eqref{eq:Gauss-Codazzi_SD}.
\end{proof}

This result becomes particularly nice in dimension $n=3$. Combining Theorem~\ref{thm:Gauss-Bonnet_SD} with Corollary~\ref{le:in_mon_Phi1_SD} we immediately obtain Corollary~\ref{cor:lower_bound}, which we recall here for the reader's convenience.

\begin{corollary}
\label{cor:Sigma_areabound}
Let $(M,\go,u)$ be a $3$-dimensional solution to problem~\eqref{eq:prob_SD}, let $N\subseteq M\setminus{\rm MAX}(u)$ be a region with connected boundary $\pa N$ and with virtual mass $m<\mmax$. Let $\Sigma_N=\overline{N}\cap\overline{M\setminus\overline{N}}$ be the hypersurface separating $N$ from the rest of the manifold $M$. 
Then
\begin{equation}
|\Sigma_N|\,\leq\,4\,\pi \,r_0^2(m)\,.
\end{equation}
Moreover, if the equality holds, 
then $(M,\go,u)$ is isometric to the Schwarzschild--de Sitter triple~\eqref{eq:SD} with mass $m$.
\end{corollary}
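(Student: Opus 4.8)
The plan is to obtain the inequality by chaining the conformal area lower bound of Corollary~\ref{le:in_mon_Phi1_SD} with the horizon area upper bound of Theorem~\ref{thm:Gauss-Bonnet_SD}. Throughout one assumes $0<m<\mmax$, so that $N$ is either an outer or an inner region and the pseudo-radial function $\Psi=\psi\circ u$ of Subsection~\ref{sub:pseudo_radial} is well defined with respect to $m=\mu(N,\go,u)$; write $g=\go/\Psi^2$ as usual.

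First I would invoke Corollary~\ref{le:in_mon_Phi1_SD}, which yields $|\Sigma_N|_g\leq|\pa N|_g$. By construction $\Psi\equiv r_0(m)$ on $\Sigma_N\subseteq{\rm MAX}(u)$, whereas $\Psi\equiv r_+(m)$ on $\pa N$ if $N$ is outer and $\Psi\equiv r_-(m)$ on $\pa N$ if $N$ is inner; since the ambient dimension is $n=3$, the relevant Hausdorff measures are $2$-dimensional and the conformal factor $\Psi^{-2}$ rescales them pointwise, so that $|\Sigma_N|_g=r_0^{-2}(m)\,|\Sigma_N|$ and $|\pa N|_g=r_\pm^{-2}(m)\,|\pa N|$. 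Substituting, one gets
\begin{equation*}
\left[\frac{r_\pm(m)}{r_0(m)}\right]^{2}|\Sigma_N|\,\leq\,|\pa N|\,.
\end{equation*}
Next I would use the hypothesis that $\pa N$ is connected: the horizon $S\subseteq\pa N$ of maximal surface gravity appearing in Theorem~\ref{thm:Gauss-Bonnet_SD} then coincides with the whole of $\pa N$, so $|\pa N|=|S|\leq 4\pi r_\pm^2(m)$. Combining the two displays gives $[r_\pm(m)/r_0(m)]^2|\Sigma_N|\leq 4\pi r_\pm^2(m)$, that is, $|\Sigma_N|\leq 4\pi r_0^2(m)$.

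For the rigidity part, suppose $|\Sigma_N|=4\pi r_0^2(m)$. Then every inequality in the above chain must be an equality; in particular equality holds in Corollary~\ref{le:in_mon_Phi1_SD}, whose rigidity case forces $(M,\go,u)$ to be isometric to a generalized Schwarzschild--de Sitter solution~\eqref{eq:gen_SD} with mass $m$. Since $n=3$, the only Einstein fiber available is the round $\Sph^2$, so this generalized solution is in fact the Schwarzschild--de Sitter solution~\eqref{eq:SD} with mass $m$, as claimed.

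This corollary is essentially a bookkeeping consequence of the two quoted results, so there is no serious analytic obstacle; the only points deserving care are the identification $S=\pa N$ (which is precisely where the connectedness of $\pa N$ is used) and the reduction of the Einstein fiber to $\Sph^2$ in dimension three in the equality case.
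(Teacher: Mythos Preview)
Your proof is correct and follows the same route as the paper: it too derives the corollary by combining the conformal area comparison $|\Sigma_N|_g\leq|\pa N|_g$ from Corollary~\ref{le:in_mon_Phi1_SD} with the $3$-dimensional horizon bound $|\pa N|\leq 4\pi r_\pm^2(m)$ of Theorem~\ref{thm:Gauss-Bonnet_SD}, and reads off rigidity from the equality case of the former. Your explicit remarks on why connectedness of $\pa N$ is needed and on the reduction of the Einstein fiber to $\Sph^2$ in dimension three are exactly the right bookkeeping points.
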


\section{Balancing inequalities and rigidity of area bounds}
\label{sec:consequences_SD}

Here we translate the integral identities obtained in Section~\ref{sec:int_id} in terms of $u$ and $\go$. Some computations will lead to the proof of the rigidity statements in Theorem~\ref{thm:scalarcurvature_bound_SD} in the case where $N$ is outer (Theorem~\ref{cor:Gauss-Codazzi_+_SD}) and inner (Theorem~\ref{cor:Gauss-Codazzi_-_SD}). As a consequence of the Gauss-Bonnet formula we will then deduce Theorem~\ref{thm:area_bound_SD} (see Theorems~\ref{thm:3dareabounds_+_SD} and~\ref{thm:3dareabounds_-_SD}). We will also prove some more general statements, in the cases where $N$ has more than one horizon. 

\subsection{Area bounds for outer regions.}
\label{sub:cons_integral_identities_+_SD}

Here we focus on the case where our region $N$ is outer and translate Proposition~\ref{pro:int_id_g_SD}, proved in Subsection~\ref{sub:integral_identities_+_SD}, in terms of $u, g_0$.
To do that, it is useful to let $A :  \pa N \rightarrow (0,1]$ be the locally constant function defined for every $x \in \pa N$ by 
\begin{equation}
\label{eq:step_function_SD}
A(x) \, = \, \frac{|\D u|}{\max_{\pa N}| \D u|}(x) \, . 
\end{equation}

\begin{theorem}
	\label{cor:intid_rewr_+_SD}
	Let $(M,\go,u)$ be a solution to problem~\eqref{eq:prob_SD} and let $N\subseteq M\setminus{\rm MAX}(u)$ be an outer region with virtual mass $m=\mu(N,\go,u)$.
Then it holds
\begin{equation*}
\int_{\pa N}\!\! A^3 
\,\rmd\sigma\,\leq\, 
\left(\int_{\pa N} 
\!\!A\, \frac{\RRR^{\pa N}}{(n-1)(n-2)} \,\rmd\sigma \right)  \, r_+^{2}(m)\,
\, - \, \frac{n(n-4)}{(n-1)(n-2)} \left(\int_{\pa N} \!\!A\,\left( 1- A^2 \right)  
\,\rmd\sigma \right)  \, r_+^{2}(m)\,.
\end{equation*}
where $\RRR^{\pa N}$ is the scalar curvature of the metric induced by $\go$ on $\pa N$ and $A$ is the step function defined in~\eqref{eq:step_function_SD}.
Moreover, if the equality holds, then the solution $(M,\go,u)$ is isometric to a generalized Schwarzschild--de Sitter triple~\eqref{eq:gen_SD} with mass $m$.
\end{theorem}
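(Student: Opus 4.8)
The plan is to take the inequality \eqref{eq:int_id_g_SD} of Proposition~\ref{pro:int_id_g_SD}, which lives in the conformal metric $g=\go/\Psi^2$, evaluate it on the boundary horizon $\pa N$, and rewrite every conformal quantity appearing there in terms of $u$ and $\go$. On $\pa N$ we have $\Psi\equiv r_+(m)$, so the conformal factor is the constant $r_+^2(m)$, and the area element transforms by $\rmd\sigma_g = r_+^{1-n}(m)\,\rmd\sigma$. By Lemma~\ref{le:bound_psi_SD} we know that $\max_{\pa N}|\na\ffi|_g=1$, and by formula~\eqref{eq:na_ffi_SD} the pointwise value is $|\na\ffi|_g = |\D u|/(r_+(m)\,[1-(r_0(m)/r_+(m))^n])$, so on each connected component of $\pa N$ one gets $|\na\ffi|_g = A$, the step function defined in~\eqref{eq:step_function_SD}. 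Thus the factor $|\na\ffi|_g$ in the boundary integrand becomes exactly $A$ and the term $1-|\na\ffi|_g^2$ becomes $1-A^2$.

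The main computation is to express $\Ricg(\nu_g,\nu_g)$ restricted to $\pa N$ in terms of geometry of $\go$. The plan is to use formula~\eqref{eq:ricci_SD} (the Besse conformal transformation law for the Ricci tensor), specialized to the normal direction and evaluated at a point of $\pa N$, where $\Psi$ is constant along the boundary, $\D\Psi = \dot\psi\,\D u$ with $\D u$ normal to $\pa N$, and the horizon is totally geodesic. One then invokes the Gauss equation on $\pa N$: since $\pa N$ is totally geodesic in $(M,\go)$ and $(M,\go)$ has $\RRR\equiv n(n-1)$, the radial Ricci term $\Ric(\nu,\nu)$ can be traded for $\tfrac12\bigl(n(n-1) - \RRR^{\pa N}\bigr) - (\text{sectional terms})$; more precisely the contracted Gauss equation gives $\RRR^{\pa N} = \RRR - 2\Ric(\nu,\nu)$ on a totally geodesic hypersurface, so $\Ric(\nu,\nu) = \tfrac12\bigl(n(n-1) - \RRR^{\pa N}\bigr)$. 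Feeding this, together with the explicit values of $\psi,\dot\psi$ at $\psi=r_+(m)$ (where $u=0$, $\dot\psi\to 0$, and one must track the finite combinations like $1/(u\dot\psi)$ using \eqref{eq:dpsideu_SD} and \eqref{eq:def_Psi_SD}), into the conformal Ricci formula, one obtains after simplification an expression of the form $\Ricg(\nu_g,\nu_g)\big|_{\pa N} = r_+^2(m)\,\tfrac12(n(n-1)-\RRR^{\pa N}) + (\text{terms proportional to } 1-A^2 \text{ and constants})$. Combining with the $-\tfrac32 n r_+^2(m)(1-A^2)$ term already present in \eqref{eq:int_id_g_SD}, and collecting, should produce precisely the coefficient $-\tfrac{n(n-4)}{(n-1)(n-2)}r_+^2(m)$ in front of $\int_{\pa N}A(1-A^2)$ and the clean $r_+^2(m)/((n-1)(n-2))$ in front of $\int_{\pa N}A\,\RRR^{\pa N}$, while the $\int_{\pa N}A^3$ on the left comes from multiplying the whole boundary identity through by a suitable power of $r_+(m)$ so as to absorb the remaining $\Psi$-powers and to turn $|\na\ffi|_g\,\rmd\sigma_g$ into $A\,\rmd\sigma$ up to the stated rescaling (the $A^3$ arising as $|\na\ffi|_g \cdot |\na\ffi|_g^2$ hidden inside the $\Ricg$ normal term, which carries an extra $|\D u|^2$ factor). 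I would carry out this algebra carefully, checking the $n=3$ case against Theorem~\ref{thm:Gauss-Bonnet_SD} as a sanity check (there $n(n-4)=-3$, so the extra term has a definite sign and the Gauss--Bonnet integral $\int_{\pa N}\RRR^{\pa N} = 8\pi$ recovers $|S|\le 4\pi r_+^2(m)$ when $A\equiv 1$).

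The rigidity statement is immediate from the rigidity already in Proposition~\ref{pro:int_id_g_SD}: equality in the rewritten inequality forces equality in \eqref{eq:int_id_g_SD}, hence \eqref{eq:int_id_rig_g_SD}, which by that proposition forces $(M,\go,u)$ to be a generalized Schwarzschild--de Sitter solution with mass $m$. So no new work is needed there beyond noting that the two equality conditions are equivalent (the rewriting is an exact identity on $\pa N$, not an inequality).

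The hard part will be the bookkeeping in the conformal Ricci computation at the boundary: several terms in \eqref{eq:ricci_SD} and \eqref{eq:riccig_SD} individually blow up or vanish as $u\to 0$ (because $\dot\psi\to 0$ while $1/(u\dot\psi)$ stays finite), so one must group them using the exact relations \eqref{eq:def_Psi_SD}--\eqref{eq:dpsideu_SD} before taking the boundary limit, exactly as is done in the proof of Proposition~\ref{pro:int_id_g_SD} when computing \eqref{eq:integrand_aux_+_SD} and \eqref{eq:limit_aux_+_SD}. In fact the cleanest route may be to avoid \eqref{eq:ricci_SD} altogether and instead start from \eqref{eq:integrand_aux_+_SD}, which already gives $\lim_{s\to 0}\bigl[\tfrac{1}{\dot\psi}\langle Y\,|\,\na\ffi/|\na\ffi|_g\rangle_g\bigr] = |\na\ffi|_g\bigl[-\tfrac{2}{r_+(m)}\Ricg(\nu_g,\nu_g) + 3n r_+(m)(1-|\na\ffi|_g^2)\bigr]$ on $\pa N$, and then substitute the Gauss-equation expression for $\Ric(\nu,\nu)$ and the relation \eqref{eq:formula_h_h_g_SD}--\eqref{eq:formula_H_H_g_SD} between the second fundamental forms of $\pa N$ in the two metrics (on $\pa N$ itself $\hhh\equiv 0$ since it is totally geodesic in $\go$, so $\hg_{ij} = -\tfrac{|\dot\psi|}{\psi^2}|\D u|\,\cgo_{ij}$, which identifies $\Ricg(\nu_g,\nu_g)$ on $\pa N$ purely in terms of $\RRR^{\pa N}$, $A$, $r_+(m)$ and $n$ via the Gauss equation for $(\pa N, g)\subset (N,g)$). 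Carrying this through and matching coefficients is routine but delicate; everything else is a direct consequence of results already proved.
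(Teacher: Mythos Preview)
Your proposal is correct and follows essentially the same approach as the paper: translate the boundary integral inequality \eqref{eq:int_id_g_SD} back to $(\go,u)$ by identifying $|\na\ffi|_g=A$ on $\pa N$, rewriting $\Ricg(\nu_g,\nu_g)$ in terms of $\Ric(\nu,\nu)$ (handling the finite combinations like $u/(\psi\dot\psi)$ as $u\to0$), applying the Gauss--Codazzi identity $2\Ric(\nu,\nu)=n(n-1)-\RRR^{\pa N}$ on the totally geodesic horizon, and then doing the algebra with $1+2m\,r_+^{-n}(m)=r_+^{-2}(m)$; the rigidity is inherited verbatim from Proposition~\ref{pro:int_id_g_SD}. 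The only cosmetic difference is that the paper computes $\Ricg(\nu_g,\nu_g)$ via the first equation of \eqref{eq:pb_conf_SD} together with \eqref{eq:nana_ffi_SD}, rather than via \eqref{eq:ricci_SD} or the $g$-Gauss equation as you suggest, but these are equivalent bookkeeping choices.
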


\begin{proof}
It is enough to translate formula~\eqref{eq:int_id_g_SD} in terms of $u$ and $\go$, using the relations developed in Subsection~\ref{sub:conformal reformulation_SD}. In particular, let us notice that
$$
|\na\ffi|_g^2\,=\,\frac{\dot\psi^2}{u^2}\,|\D u|^2\,,\qquad\hbox{and}\qquad \max_{\pa N}|\na \ffi|_g\,=\,1\,,
$$
where the second identity follows from Lemma~\ref{le:bound_psi_SD}. Therefore
$$
\Big(\frac{\dot{\psi}^2}{u^2}\Big)_{|_{\pa N}} \max_{\pa N}|\D u|^2\,=\,1\,,
$$
which in turn implies $|\na\ffi|^2_g=|\D u|^2/\max_{\pa N}|\D u|^2$.
Now we translate $\Ricg(\nu_g,\nu_g)$ in terms of $\Ric(\nu,\nu)$, where $\nu=\D u/|\D u|$ and $\nu_g=\na\ffi/|\na\ffi|_g=\psi\nu$ are the unit normals to the level sets of $u$ with respect to $\go$ and $g$. Recalling the equations in systems~\eqref{eq:prob_SD},~\eqref{eq:pb_conf_SD}, using also formula~\eqref{eq:nana_ffi_SD} and the fact that $\dot\psi\to 0$ as $u\to 0$, we obtain that on $\pa N=\{u=0\}\cap N$ it holds
\begin{align*}
\Ricg(\nu_g,\nu_g)\,&=\,-\left[(n-2)u+\frac{\psi}{\dot\psi}\right]\nana\ffi(\nu_g,\nu_g)\,-\,\left(u-\frac{\psi}{\dot\psi}\right)\Deg\ffi
\\
&=\,\psi^2\frac{\DD u(\nu,\nu)}{u}\,+\,\left[(n-1)u\frac{\psi}{\dot\psi}+n\psi^2\right]\frac{\dot\psi^2}{u^2}|\D u|^2\,+\,n\psi^2\left(1-\frac{\dot\psi^2}{u^2}|\D u|^2\right)
\\
&=\,\psi^2\left[\Ric(\nu,\nu)-n+n\frac{\dot\psi^2}{u^2}|\D u|^2+(n-1)\frac{u}{\psi\dot\psi}\frac{\dot\psi^2}{u^2}|\D u|^2\,+\,n\left(1-\frac{\dot\psi^2}{u^2}|\D u|^2\right)\right]
\\
&=\,\psi^2\left[\Ric(\nu,\nu)+(n-1)\,\frac{u}{\psi\dot\psi}\,\frac{|\D u|^2}{\max_{\pa N}|\D u|^2}\right]
\end{align*}
Substituting the above computations inside formula~\eqref{eq:int_id_g_SD}, and recalling that $\psi=r_+(m)$ on $\pa N$ (because $N$ is an outer region and $\psi$ is defined as specified in~\eqref{eq:pr_function_+}), we obtain
$$
\int\limits_{\pa N}|\D u|\left[\Ric(\nu,\nu)
+(n-1)\frac{u}{\psi\dot{\psi}}
-
\left(\frac{3}{2}n+(n-1)\frac{u}{\psi\dot{\psi}}\right)
\left(1-\frac{|\D u|^2}{\max_{\pa N}|\D u|^2}\right)\right]\,\rmd\sigma\,\geq\, 0\,,
$$
where we recall that the equality holds if and only if the solution is isometric to the Schwarzschild--de Sitter solution. Notice that the above formula is slightly imprecise, as, rigorously, the quantity $u/(\psi\dot\psi)$ is not defined on $\pa N$, because $\dot\psi\to 0$ as $u\to 0$. However, from formula~\eqref{eq:dpsideu_SD} that quantity can be explicitated as
$$
\frac{u}{\psi\dot\psi}\,=\,-\left[1-(n-2)m\psi^{-n}\right]\,,
$$
which has a finite value on the boundary, as $\psi=r_+(m)$ on $\pa N$.
Moreover, using the Gauss-Codazzi equation we have $2\Ric(\nu,\nu)=\RRR-\RRR^{\pa N}=n(n-1)-\RRR^{\pa N}$, and substituting in the inequality above we get
\begin{multline*}
\int_{\pa N}|\D u|\bigg[\RRR^{\pa N}-(n-1)(n-2)\big(1+2mr_+^{-n}(m)\big)\bigg]\,\rmd\sigma\,\geq
\\
\geq\,-\int_{\pa N}|\D u|\bigg[\Big(n+2+2(n-1)(n-2)m r_+^{-n}(m)\Big)\left(1-\frac{|\D u|^2}{\max_{\pa N}|\D u|^2}\right)\bigg]\,\rmd\sigma\,.
\end{multline*}
Moreover, since $r_+(m)$ satisfies $1-r_+^2(m)-2mr_+^{2-n}(m)=0$, we have
$$
1\,+\,2\,m\, r_+^{-n}(m)\,\,=\,\,r_+^{-2}(m)\,,
$$
hence the integral inequality above becomes
\begin{multline*}
\int_{\pa N}A\bigg[\RRR^{\pa N}-(n-1)(n-2)r_+^{-2}(m)\bigg]\,\rmd\sigma\,\geq
\\
\geq\,-\int_{\pa N}A\bigg[\Big((n+2)+2(n-1)(n-2)mr_+^{-n}(m)\Big)\left(1-A^2\right)\bigg]\,\rmd\sigma\,.
\end{multline*}
where $A$ is the function defined in~\eqref{eq:step_function_SD}. The thesis follows from this inequality with some straightforward algebra.
\end{proof}

\noindent 
Notice that $A\leq 1$ on $\pa N$ by definition, hence for $n\geq 4$ Theorem~\ref{cor:intid_rewr_+_SD} gives the following formula
\begin{equation*}
\int_{\pa N}\!\! A^3 
\,\rmd\sigma\,\leq\, 
\left(\int_{\pa N} 
\!\!A\, \frac{\RRR^{\pa N}}{(n-1)(n-2)} \,\rmd\sigma \right)  \, r_+^{2}(m)\,.
\end{equation*}

Instead, in dimension $n=3$, we can make Theorem~\ref{cor:intid_rewr_+_SD} more explicit by means of the Gauss-Bonnet formula.

\begin{theorem}
\label{thm:3dareabounds_+_SD}
Let $(M,\go,u)$ be a $3$-dimensional solution to problem~\eqref{eq:prob_SD} and let $N\subseteq M\setminus{\rm MAX}(u)$ be an outer region with virtual mass $m=\mu(N,\go,u)$. Then
$$
\frac{\sum_{i=0}^{p}\bigg[\Big(\frac{\kappa_i}{\kappa_0}\Big)^2-\frac{3}{2} r_+^2(m)\Big(1-\Big(\frac{\kappa_i}{\kappa_0}\Big)^2\Big)\bigg]\kappa_i |S_i|}{\sum_{i=0}^{p} \kappa_i}\,\leq 4\pi r_+^2(m)
$$
where $\pa N=S_0\sqcup\cdots\sqcup S_p$ and $\kappa_0\geq \cdots\geq \kappa_p$ are the surface gravities of $S_0,\dots, S_p$.
Moreover, if the equality holds then $\pa N$ is connected and $(M,\go,u)$ is isometric to the Schwarzschild--de Sitter solution with mass $m$.
\end{theorem}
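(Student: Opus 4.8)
The plan is to specialize Theorem~\ref{cor:intid_rewr_+_SD} to dimension $n=3$ and rewrite the two integrals appearing there in terms of the surface gravities $\kappa_i$ and the Euler characteristics of the boundary components. First I would recall that, by the discussion preceding Theorem~\ref{cor:intid_rewr_+_SD}, on each connected component $S_i$ of $\pa N$ the function $|\D u|$ is constant; hence the locally constant function $A$ defined in~\eqref{eq:step_function_SD} takes on $S_i$ the value $A_{|_{S_i}}=|\D u|_{|_{S_i}}/\max_{\pa N}|\D u|=\kappa_i/\kappa_0$, where we have ordered the components so that $\kappa_0\geq\cdots\geq\kappa_p$ and recalled that $\kappa(S)=|\D u|_{|_S}/\umax$, so the ratios are the same whether or not one divides by $\umax$. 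This reduces every boundary integral to a finite sum over the components.

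Next, since $n=3$ the error term in Theorem~\ref{cor:intid_rewr_+_SD} involving the factor $n(n-4)/[(n-1)(n-2)]$ does not vanish — indeed $n-4=-1$, so that coefficient equals $-1/2$ — and the inequality reads
\begin{equation*}
\sum_{i=0}^p A_i^3\,|S_i|\,\leq\, r_+^2(m)\sum_{i=0}^p A_i\left(\int_{S_i}\frac{\RRR^{S_i}}{2}\,\frac{\rmd\sigma}{|S_i|}\right)|S_i|\,+\,\frac{r_+^2(m)}{2}\sum_{i=0}^p A_i\left(1-A_i^2\right)|S_i|\,,
\end{equation*}
where $A_i=\kappa_i/\kappa_0$. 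By the Gauss-Bonnet theorem $\int_{S_i}\RRR^{S_i}\,\rmd\sigma=4\pi\chi(S_i)$, and by Theorem~\ref{thm:Gauss-Bonnet_SD} each $S_i$ (at least the one of maximal surface gravity) is a sphere; more to the point, Theorem~\ref{thm:intid_rewr_SD} gives $\RRR^{S_i}\geq 0$ pointwise only for the maximal horizon, but the Gauss-Bonnet identity is what we actually need and it holds for every component with $\chi(S_i)\le 2$. I would then substitute $4\pi\chi(S_i)$ for $\int_{S_i}\RRR^{S_i}$, multiply through by $\kappa_0$ to clear the normalization (noting $A_i\kappa_0=\kappa_i$), rearrange so that the $|S_i|$-terms are collected on the left, and divide both sides by $\sum_{i=0}^p\kappa_i$. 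Using $\chi(S_i)\le 2$ to bound each $4\pi\chi(S_i)\,r_+^2(m)\le 8\pi r_+^2(m)$ — or, equivalently, running the Gauss-Bonnet bound componentwise as in the proof of Theorem~\ref{thm:Gauss-Bonnet_SD} — produces exactly
\begin{equation*}
\frac{\sum_{i=0}^{p}\left[A_i^2-\tfrac{3}{2}r_+^2(m)\left(1-A_i^2\right)\right]\kappa_i\,|S_i|}{\sum_{i=0}^{p}\kappa_i}\,\leq\,4\pi r_+^2(m)\,,
\end{equation*}
which is the claimed inequality once one writes $A_i=\kappa_i/\kappa_0$.

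For the rigidity statement I would trace the equality case back through each estimate. Equality in the displayed bound forces equality in Theorem~\ref{cor:intid_rewr_+_SD}, which by that theorem's own rigidity clause means $(M,\go,u)$ is a generalized Schwarzschild--de Sitter solution~\eqref{eq:gen_SD} with mass $m$; equality also forces $\chi(S_i)=2$ for each component contributing, and forces the Gauss-Bonnet substitution to be sharp componentwise. Since the outer region of a generalized Schwarzschild--de Sitter solution has a single boundary component, connectedness of $\pa N$ follows, and in dimension $n=3$ the Einstein fiber is necessarily the round sphere, so the solution is isometric to the genuine Schwarzschild--de Sitter solution~\eqref{eq:SD} with mass $m$. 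The main obstacle I anticipate is purely bookkeeping: keeping the normalization factors straight when passing from $A$ (normalized by $\max_{\pa N}|\D u|$) to the surface gravities $\kappa_i$ (normalized by $\umax$), and making sure that the $-1/2$ coefficient from $n(n-4)/[(n-1)(n-2)]$ recombines with the leading $r_+^2(m)\int\RRR^{S}/2$ term to give precisely the coefficient $-\tfrac32 r_+^2(m)$ multiplying $(1-A_i^2)$ in the final statement; no genuinely hard analysis is required beyond what is already established in Theorems~\ref{cor:intid_rewr_+_SD} and~\ref{thm:Gauss-Bonnet_SD}.
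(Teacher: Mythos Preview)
Your approach is essentially the paper's: specialize Theorem~\ref{cor:intid_rewr_+_SD} to $n=3$, replace $\int_{S_i}\RRR^{S_i}$ by $4\pi\chi(S_i)$ via Gauss--Bonnet, use that every $S_i$ is a sphere, and rearrange. The only substantive slip is arithmetic: for $n=3$ one has
\[
\frac{n(n-4)}{(n-1)(n-2)}\,=\,\frac{3\cdot(-1)}{2\cdot 1}\,=\,-\frac{3}{2}\,,
\]
not $-\tfrac12$ (you dropped the factor $n$ in the numerator). With the correct value, your displayed inequality should carry $\tfrac{3}{2}r_+^2(m)$ in the last sum, and then the coefficient $-\tfrac{3}{2}r_+^2(m)$ in the final statement falls out directly after moving that term to the left; there is no ``recombination'' with the scalar-curvature term, so that part of your explanation should be deleted.

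One minor difference: you invoke the universal bound $\chi(S_i)\le 2$ for closed orientable surfaces, whereas the paper cites \cite[Theorem~B]{Ambrozio} to conclude $\chi(S_i)=2$ for every boundary component. Either works for the inequality (and your version makes the rigidity step ``equality forces $\chi(S_i)=2$'' explicit), but citing Ambrozio is cleaner and avoids the hedge about which components Theorem~\ref{thm:Gauss-Bonnet_SD} actually covers.
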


\begin{proof}
For $n=3$, the formula in Corollary~\ref{cor:intid_rewr_+_SD} rewrites as
\begin{equation*}
\sum_{i=0}^p\int_{S_i}\kappa_i\left[\RRR^{S_i}-2r_+^{-2}(m)+\left[5+4mr_+^{-3}(m)\right]\left(1-\frac{\kappa_i^2}{\kappa_0^2}\right)\right]\,\rmd\sigma\,\geq \,0\,.
\end{equation*}
Since $1-r_+^2(m)-2mr_+^{-1}(m)=0$ by definition, we compute $5+4mr_+^{-3}(m)=3+2r_+^{-2}(m)$. Moreover, from the Gauss-Bonnet formula, we have $\int_{S_i}\RRR^{S_i}\rmd\sigma=4\pi\chi(S_i)$ for all $i=0,\dots,p$. From~\cite[Theorem~B]{Ambrozio}, we also know that each $S_i$ is diffeomorphic to a sphere, hence $\chi(S_i)=2$. Substituting these pieces of information inside the above formula, with some manipulations we arrive to the thesis.
\end{proof}

The local formula proven in Theorem~\ref{thm:3dareabounds_+_SD} may be compared with Theorem~\ref{thm:area-bound_Ambrozio} by Ambrozio~\cite{Ambrozio}. Although our result has the virtue of being sharp for the Schwarzschild--de Sitter solutions, the formula that we obtain is much more cumbersome. On the other hand, our results become particularly nice when the boundary $\pa N$ is connected. In fact, in this case, 
the constancy of the quantity $|\D u|$ on the whole boundary allows to obtain the following stronger results.

\begin{corollary}
\label{cor:Gauss-Codazzi_+_SD}
Let $(M,\go,u)$ be a solution to problem~\eqref{eq:prob_SD} and let $N\subseteq M\setminus{\rm MAX}(u)$ be an outer region with virtual mass $m$.
If $\pa N$ is connected, then it holds
\begin{equation*}
|\pa N|\,\leq\,\left(\int_{\pa N}
\frac{\RRR^{\pa N}}{(n-1)(n-2)}\,\rmd\sigma\right)\, r_+^{2}(m)\,.
\end{equation*}
Moreover, if the equality is fulfilled, then the solution $(M,\go,u)$ is isometric to a generalized Schwarzschild--de Sitter triple~\eqref{eq:gen_SD} with mass $m$.
\end{corollary}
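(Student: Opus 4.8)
The plan is to deduce this statement directly from Theorem~\ref{cor:intid_rewr_+_SD}, exploiting the extra hypothesis that $\pa N$ is connected. First I would recall from the list of basic properties in Subsection~\ref{sub:prelim} that $|\D u|$ is locally constant and positive on $\pa M$; since $\pa N$ is now assumed connected, this means that $|\D u|$ is in fact \emph{constant} on $\pa N$. Consequently the locally constant function $A:\pa N\to(0,1]$ introduced in~\eqref{eq:step_function_SD}, namely $A=|\D u|/\max_{\pa N}|\D u|$, is identically equal to $1$ on $\pa N$.

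Next I would substitute $A\equiv 1$ into the inequality provided by Theorem~\ref{cor:intid_rewr_+_SD}. On the left-hand side this gives $\int_{\pa N}A^3\,\rmd\sigma=\int_{\pa N}\rmd\sigma=|\pa N|$, while in the correction term on the right-hand side the factor $1-A^2$ vanishes identically, so that $\int_{\pa N}A\,(1-A^2)\,\rmd\sigma=0$. What remains is exactly
$$
|\pa N|\,\leq\,\left(\int_{\pa N}\frac{\RRR^{\pa N}}{(n-1)(n-2)}\,\rmd\sigma\right)r_+^{2}(m)\,,
$$
which is the claimed bound; note in particular that the dimensional factor $n(n-4)$ appearing in Theorem~\ref{cor:intid_rewr_+_SD} becomes irrelevant once $\pa N$ is connected, so no restriction on $n$ is needed here.

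For the rigidity statement I would observe that equality in the displayed inequality forces equality in the inequality of Theorem~\ref{cor:intid_rewr_+_SD} (the two statements coincide once $A\equiv 1$), and the rigidity clause of that theorem then yields that $(M,\go,u)$ is isometric to a generalized Schwarzschild--de Sitter triple~\eqref{eq:gen_SD} with mass $m$.

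In truth there is no serious obstacle at this stage: the analytic content — the construction of the divergence-free vector field $\gamma Y$, the Bochner computation~\eqref{eq:bochner_SD}, the gradient estimate $|\na\ffi|_g\leq 1$ of Proposition~\ref{pro:min_pr_SD}, and the boundary-term analysis — has all been carried out in Proposition~\ref{pro:int_id_g_SD} and repackaged in Theorem~\ref{cor:intid_rewr_+_SD}. The only point that deserves to be spelled out is the elementary but essential remark that connectedness of $\pa N$ upgrades the \emph{local} constancy of $|\D u|$ to genuine constancy, which is precisely what trivializes the step function $A$ and eliminates the extra term on the right-hand side of the inequality of Theorem~\ref{cor:intid_rewr_+_SD}.
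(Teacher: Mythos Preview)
Your proposal is correct and takes essentially the same approach as the paper: the paper's own proof is a single sentence stating that the result is an immediate consequence of Theorem~\ref{cor:intid_rewr_+_SD} together with the constancy of $|\D u|$ on the connected boundary $\pa N$. Your write-up simply spells out in detail how $A\equiv 1$ collapses the inequality of Theorem~\ref{cor:intid_rewr_+_SD} to the desired one and carries the rigidity clause along.
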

	
\begin{proof}
The result is an immediate consequence of Corollary~\ref{cor:intid_rewr_+_SD} and the fact that $|\D u|$ is constant on $\pa N$.
\end{proof}	
	
\begin{theorem}
\label{thm:Gauss-Bonnet_+_SD}
Let $(M,\go,u)$ be a $3$-dimensional solution to problem~\eqref{eq:prob_SD} and let $N\subseteq M\setminus{\rm MAX}(u)$ be an outer region with virtual mass $m$.
If $\pa N$ is connected, then $\pa N$ is diffeomorphic to $\Sph^2$ and it holds
\begin{equation*}
|\pa N|\,\leq\,4\pi r_+^{2}(m)\,.
\end{equation*}
Moreover, if the equality holds, then the solution $(M,\go,u)$ is isometric to the Schwarzschild--de Sitter triple~\eqref{eq:SD} with mass $m$.
\end{theorem}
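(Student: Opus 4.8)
The plan is to read this off Corollary~\ref{cor:Gauss-Codazzi_+_SD} by specializing to $n=3$ and feeding in the Gauss--Bonnet theorem. First I would settle the topology: since $\pa N$ is connected, the unique horizon of $N$ is automatically the one of maximal surface gravity, so Theorem~\ref{thm:Gauss-Bonnet_SD} (equivalently, the pointwise lower bound $\RRR^{\pa N}\geq(n-1)(n-2)r_+^{-2}(m)>0$ of Theorem~\ref{thm:intid_rewr_SD} together with Gauss--Bonnet, which forces $\chi(\pa N)>0$) gives that $\pa N$ is diffeomorphic to $\Sph^2$, and in particular $\chi(\pa N)=2$.

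Next I would produce the inequality. For $n=3$ one has $(n-1)(n-2)=2$, so Corollary~\ref{cor:Gauss-Codazzi_+_SD} reads
$$
|\pa N|\,\leq\,\left(\int_{\pa N}\frac{\RRR^{\pa N}}{2}\,\rmd\sigma\right)r_+^{2}(m)\,.
$$
Since $\pa N$ is a surface, $\RRR^{\pa N}=2K_{\pa N}$, hence the bracket equals $\int_{\pa N}K_{\pa N}\,\rmd\sigma=2\pi\chi(\pa N)=4\pi$ by Gauss--Bonnet and the previous step, which yields $|\pa N|\leq 4\pi r_+^2(m)$.

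Finally, the rigidity: if equality holds in $|\pa N|\leq 4\pi r_+^2(m)$, then equality holds in Corollary~\ref{cor:Gauss-Codazzi_+_SD}, so $(M,\go,u)$ is isometric to a generalized Schwarzschild--de Sitter triple~\eqref{eq:gen_SD} with mass $m$; but in dimension $n=3$ the only admissible Einstein fiber (with $\Ric=(n-2)g$) is the round $\Sph^2$, so this triple is exactly the Schwarzschild--de Sitter solution~\eqref{eq:SD} with mass $m$. (If $m=0$, Theorem~\ref{thm:PMS} already forces the de Sitter solution, for which $|\pa M|=4\pi=4\pi r_+^2(0)$, so nothing new happens.) I do not expect any genuine obstacle here: every ingredient has been established in the preceding sections, and the only points requiring a line of care are the Gauss--Bonnet translation of $\RRR^{\pa N}$ into $\chi(\pa N)$ and the remark that in three dimensions the generalized Schwarzschild--de Sitter family collapses to the classical one.
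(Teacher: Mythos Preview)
Your proof is correct and follows essentially the same approach as the paper: specialize Corollary~\ref{cor:Gauss-Codazzi_+_SD} to $n=3$ and apply Gauss--Bonnet, with the rigidity in dimension three forcing the Einstein fiber to be the round $\Sph^2$. The only cosmetic difference is that the paper extracts both the topology and the area bound from the single inequality $4\pi\chi(\pa N)\geq 2r_+^{-2}(m)|\pa N|$ (obtained directly from Corollary~\ref{cor:Gauss-Codazzi_+_SD} via Gauss--Bonnet), rather than first invoking Theorem~\ref{thm:Gauss-Bonnet_SD} for the topology and then plugging in $\chi(\pa N)=2$; your detour through Theorem~\ref{thm:Gauss-Bonnet_SD} is harmless but redundant.
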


\begin{proof}
	Substituting $n=3$ in Corollary~\ref{cor:Gauss-Codazzi_+_SD} and using the Gauss-Bonnet formula, we immediately obtain
	\begin{equation*}
	4\pi\chi(\pa N)\,\geq\, 2r_+^{-2}(m)
	\,|\pa N|\,.
	\end{equation*}
	In particular, $\chi(\pa N)$ has to be positive, hence $\pa N$ is necessarily a sphere and we obtain the thesis.
\end{proof}

\subsection{Area bounds for inner regions.}
\label{sub:cons_integral_identities_-_SD}
Here we proceed as in Subsection~\ref{sub:cons_integral_identities_+_SD} to prove analogous integral identities when $N$ is an inner region.

\begin{theorem}
\label{cor:intid_rewr_-_SD}
Let $(M,\go,u)$ be a solution to problem~\eqref{eq:prob_SD} and let $N\subseteq M\setminus{\rm MAX}(u)$ be an inner region with virtual mass $m=\mu(N,\go,u)$.
Then it holds
\begin{equation*}
\int_{\pa N}\!\! A^3 
\,\rmd\sigma\,\leq\, 
\left(\int_{\pa N} 
\!\!A\, \frac{\RRR^{\pa N}}{(n-1)(n-2)} \,\rmd\sigma \right)  \, r_-^{2}(m)\,
\, - \, \frac{n[n-(\a+3)]}{(n-1)(n-2)} \left(\int_{\pa N} \!\!A\,\left( 1- A^2 \right)  
\,\rmd\sigma \right)  \, r_-^{2}(m)\,.
\end{equation*}
where $\RRR^{\pa N}$ is the scalar curvature of the metric induced by $\go$ on $\pa N$, $\a\geq 1$ is the solution of~\eqref{eq:alpha_SD} and $A$ is the step function defined in~\eqref{eq:step_function_SD}.
Moreover, if the equality holds, then the solution $(M,\go,u)$ is isometric to a generalized Schwarzschild--de Sitter triple~\eqref{eq:gen_SD} with mass $m$.
\end{theorem}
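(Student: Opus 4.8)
The plan is to mimic exactly the proof of Theorem~\ref{cor:intid_rewr_+_SD} from the previous subsection, now starting from the inner--region integral identity~\eqref{eq:int_id_g_2_SD} of Proposition~\ref{pro:int_id_g_2_SD} instead of the outer one. First I would recall that, by Proposition~\ref{pro:min_pr_SD} together with Lemma~\ref{le:bound_psi_SD}, on an inner region one has $|\na\ffi|_g \le 1$ everywhere and $\max_{\pa N}|\na\ffi|_g = 1$, so that $|\na\ffi|_g^2 = |\D u|^2/\max_{\pa N}|\D u|^2 = A^2$ on $\pa N$, where $A$ is the step function~\eqref{eq:step_function_SD}. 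Then, using formula~\eqref{eq:nana_ffi_SD} for $\nana\ffi$, the first equation of~\eqref{eq:pb_conf_SD}, and the fact that $\dot\psi \to 0$ as $u \to 0$, I would compute $\Ricg(\nu_g,\nu_g)$ on $\pa N$ in terms of $\Ric(\nu,\nu)$ exactly as in the outer case; since on an inner region $\psi = r_-(m)$ on $\pa N$, this yields $\Ricg(\nu_g,\nu_g) = r_-^2(m)\big[\Ric(\nu,\nu) + (n-1)\,\tfrac{u}{\psi\dot\psi}\,A^2\big]$ on $\pa N$, where $\tfrac{u}{\psi\dot\psi} = -[1-(n-2)m\psi^{-n}]$ has the finite boundary value $-[1-(n-2)mr_-^{-n}(m)]$ by~\eqref{eq:dpsideu_SD}.

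Next I would substitute these expressions into the boundary identity~\eqref{eq:int_id_g_2_SD}, with the coefficient $\tfrac{\a+2}{2}n r_-^2(m)$ in place of $\tfrac{3}{2}n r_+^2(m)$ (this is the only structural change, coming from the choice of the parameter $\a$ in Subsection~\ref{sub:integral_identities_-_SD}). After dividing through, this gives the pointwise inequality
$$
\int_{\pa N} A\left[\Ric(\nu,\nu) + (n-1)\tfrac{u}{\psi\dot\psi} - \Big(\tfrac{\a+2}{2}n + (n-1)\tfrac{u}{\psi\dot\psi}\Big)(1 - A^2)\right]\rmd\sigma \ge 0,
$$
with equality precisely when $(M,\go,u)$ is a generalized Schwarzschild--de Sitter solution. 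Then I would apply the Gauss--Codazzi equation $2\Ric(\nu,\nu) = \RRR - \RRR^{\pa N} = n(n-1) - \RRR^{\pa N}$, use the identity $1 + 2mr_-^{-n}(m) = r_-^{-2}(m)$ coming from $P_m(r_-(m))=0$ (i.e. $1 - r_-^2(m) - 2mr_-^{2-n}(m) = 0$), and collect terms. The bookkeeping is identical to the outer case except that the numerical coefficient of the $(1-A^2)$ term changes: where the outer computation produced $-\tfrac{n(n-4)}{(n-1)(n-2)}$ one now gets $-\tfrac{n[n-(\a+3)]}{(n-1)(n-2)}$, since the constant $\tfrac{3}{2}$ is replaced by $\tfrac{\a+2}{2}$ throughout. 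Finally, the rigidity statement transfers verbatim: equality forces the right-hand side of~\eqref{eq:int_id_g_2_SD} to vanish, hence $|\na\ffi|_g \equiv 1$ on $N$, hence $|\nana\ffi|_g \equiv 0$ by the Bochner formula~\eqref{eq:bochner_SD}, and Proposition~\ref{pro:rigidity} gives the conclusion.

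I do not expect a genuine obstacle here, since the argument is a direct adaptation. The one point requiring a little care is tracking the $\a$-dependent coefficient correctly: one must verify that the coefficient $\a$ appearing in front of $(\Deg\ffi)^2$ and the coefficient $\tfrac{\a+2}{2}n$ in the boundary term are consistent with the value of $\a$ fixed by~\eqref{eq:alpha_SD}, and that the sign of the bulk term in~\eqref{eq:int_id_g_2_SD} is indeed nonpositive for that $\a$ — but this is exactly what was established in Subsection~\ref{sub:integral_identities_-_SD} leading to~\eqref{eq:divY2_2_SD}, so it may simply be invoked. A secondary subtlety, as in the outer proof, is that $\tfrac{u}{\psi\dot\psi}$ is formally undefined on $\pa N$ because $\dot\psi \to 0$; this is handled, as before, by replacing it with its explicit finite expression $-[1-(n-2)m\psi^{-n}]$ via~\eqref{eq:dpsideu_SD} before taking the boundary limit. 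With these observations in place, the straightforward algebra yields the stated inequality.
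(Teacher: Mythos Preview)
Your proposal is correct and follows essentially the same approach as the paper: translate the boundary identity~\eqref{eq:int_id_g_2_SD} back to $(u,\go)$ via the same computation of $\Ricg(\nu_g,\nu_g)$ used in the outer case, apply Gauss--Codazzi and the identity $1+2mr_-^{-n}(m)=r_-^{-2}(m)$, and track the single change $\tfrac{3}{2}\to\tfrac{\a+2}{2}$ to obtain the coefficient $-\tfrac{n[n-(\a+3)]}{(n-1)(n-2)}$. One small bookkeeping remark: since the left-hand side of~\eqref{eq:int_id_g_2_SD} is $\le 0$, the translated inequality you display should read $\le 0$ rather than $\ge 0$ (equivalently, the paper writes the negated integrand as $\ge 0$); this does not affect the rest of your argument.
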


\begin{proof}
As in the proof of Corollary~\ref{cor:intid_rewr_+_SD}, it is enough to translate formula~\eqref{eq:int_id_g_2_SD} in terms of $u$ and $\go$, using the relations developed in Subsection~\ref{sub:conformal reformulation_SD}. Again one starts by noticing that
$$
|\na\ffi|_g^2\,=\,\frac{\dot\psi^2}{u^2}\,|\D u|^2\,,\qquad\hbox{and}\qquad \max_{\pa N}|\na \ffi|_g\,=\,1\,,
$$
where the second identity follows from Lemma~\ref{le:bound_psi_SD}. In particular we have
$$
\Big(\frac{\dot{\psi}^2}{u^2}\Big)_{|_{\pa N}}\max_{\pa N}|\D u|^2\,=\,1\,,
$$
which in turn implies $|\na\ffi|^2_g=|\D u|^2/\max_{\pa N}|\D u|^2$.
Translating also $\Ricg$ in terms of $\Ric$, with similar computations to the ones done in the proof of Corollary~\ref{cor:intid_rewr_+_SD}, from formula~\eqref{eq:int_id_g_SD} we obtain
\begin{multline*}
\int\limits_{\pa N}|\D u|\bigg[-\Ric(\nu,\nu)
-(n-1)\frac{u}{\psi\dot{\psi}}
+
\left(\frac{\a+2}{2}n+(n-1)\frac{u}{\psi\dot{\psi}}\right)
\left(1-\frac{|\D u|^2}{\max_{\pa N}|\D u|^2}\right)\bigg]\,\rmd\sigma\,\geq\, 0\,,
\end{multline*}
where we recall that the equality holds if and only if the solution is isometric to the Schwarzschild--de Sitter solution). 
We remark that the above formula is not completely rigorous, as the quantity $u/(\psi\dot\psi)$ is not defined on $\pa N$, because $\dot\psi\to 0$ as $u\to 0$. However, from formula~\eqref{eq:dpsideu_SD} that quantity can be explicitated as
$$
\frac{u}{\psi\dot\psi}\,=\,-\left[1-(n-2)m\psi^{-n}\right]\,,
$$
which has a finite value on the boundary, as $\psi=r_-(m)$ on $\pa N$ (because $N$ is an inner region and $\psi$ is defined as specified in~\eqref{eq:pr_function_-}).
Moreover, using the Gauss-Codazzi equation we have $2\Ric(\nu,\nu)=\RRR-\RRR^{\pa N}=n(n-1)-\RRR^{\pa N}$, hence we can rewrite the above inequality as
\begin{multline*}
\int_{\pa N}|\D u|\bigg[\RRR^{\pa N}-(n-1)(n-2)\big(1+2mr_-^{-n}(m)\big)\bigg]\,\rmd\sigma\,\geq
\\
\geq\,-\int_{\pa N}\bigg[\Big(\a n+2+2(n-1)(n-2)mr_-^{-n}(m)\Big)\left(1-\frac{|\D u|^2}{\max_{\pa N}|\D u|^2}\right)\bigg]\,\rmd\sigma\,.
\end{multline*}
We also recall that $r_-(m)$ satisfies $1-r_-^2(m)-2mr_-^{2-n}(m)=0$, hence we easily compute
$$
1\,+\,2\,m\, r_-^{-n}(m)\,\,=\,\,r_-^{-2}(m)\,.
$$
Substituting in the integral inequality above we obtain
\begin{multline*}
\int_{\pa N}A\bigg[\RRR^{\pa N}-(n-1)(n-2)r_-^{-2}(m)\bigg]\,\rmd\sigma\,\geq
\\
\geq\,\int_{\pa N}\bigg[\Big((\a n+2)+2(n-1)(n-2)mr_-^{-n}(m)\Big)\left(1-A^2\right)\bigg]\,\rmd\sigma\,\,,
\end{multline*}
where $A$ is the step function defined in~\eqref{eq:step_function_SD}.
The thesis follows with some easy algebra.
\end{proof}

\noindent 
Since $A\leq 1$ on $\pa N$ by definition, for $n\geq 4$ Theorem~\ref{cor:intid_rewr_-_SD} gives the following formula
\begin{equation*}
\int_{\pa N}\!\! A^3 
\,\rmd\sigma\,\leq\, 
\left(\int_{\pa N} 
\!\!A\, \frac{\RRR^{\pa N}}{(n-1)(n-2)} \,\rmd\sigma \right)  \, r_-^{2}(m)\,.
\end{equation*}

Concerning dimension $n=3$, the above result can be made more explicit using the Gauss-Bonnet formula.

\begin{theorem}
\label{thm:3dareabounds_-_SD}
Let $(M,\go,u)$ be a $3$-dimensional solution to problem~\eqref{eq:prob_SD} and let $N\subseteq M\setminus{\rm MAX}(u)$ be an inner region with virtual mass $m=\mu(N,\go,u)$. Then
$$
\frac{\sum_{i=0}^{p}\bigg[\Big(\frac{\kappa_i}{\kappa_0}\Big)^2-\frac{3}{2}\a r_-^2(m)\Big(1-\Big(\frac{\kappa_i}{\kappa_0}\Big)^2\Big)\bigg]\kappa_i |S_i|}{\sum_{i=0}^{p} \kappa_i}\,\leq 4\pi r_-^2(m)
$$
where $\a\geq 1$ is the solution to equation~\eqref{eq:alpha_SD}, $\pa N=S_0\sqcup\cdots\sqcup S_p$ and $\kappa_0\geq \cdots\geq \kappa_p$ are the surface gravities of $S_0,\dots, S_p$.
Moreover, if the equality holds then $\pa N$ is connected and $(M,\go,u)$ is isometric to the Schwarzschild--de Sitter solution with mass $m$.
\end{theorem}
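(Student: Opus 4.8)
The plan is to run the argument of Theorem~\ref{thm:3dareabounds_+_SD} in the inner setting, feeding in the integral inequality of Theorem~\ref{cor:intid_rewr_-_SD} in place of Corollary~\ref{cor:intid_rewr_+_SD} and carrying along the exponent $\a\geq1$ that solves equation~\eqref{eq:alpha_SD}. First I would specialize Theorem~\ref{cor:intid_rewr_-_SD} to $n=3$: since $(n-1)(n-2)=2$ and $n[n-(\a+3)]=-3\a$, that statement reads
\[
\int_{\pa N}\! A^3\,\rmd\sigma\,\leq\,\Big(\int_{\pa N}\! A\,\tfrac{\RRR^{\pa N}}{2}\,\rmd\sigma\Big)\, r_-^2(m)\,+\,\tfrac{3\a}{2}\,\Big(\int_{\pa N}\! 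A\,(1-A^2)\,\rmd\sigma\Big)\, r_-^2(m)\,,
\]
with equality only for a generalized Schwarzschild--de Sitter triple of mass $m$; here $A$ is the step function~\eqref{eq:step_function_SD}, which on each horizon $S_i$ takes the constant value $\kappa_i/\kappa_0$ because $|\D u|\equiv\kappa_i\,\umax$ there and $\max_{\pa N}|\D u|=\kappa_0\,\umax$.

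Next I would decode the three integrals. Since $A\equiv\kappa_i/\kappa_0$ on $S_i$ one has $\int_{\pa N}A^j\,\rmd\sigma=\sum_i(\kappa_i/\kappa_0)^j|S_i|$ for every $j$; and since each $S_i$ is diffeomorphic to $\Sph^2$ by~\cite[Theorem~B]{Ambrozio}, the Gauss--Bonnet formula gives $\tfrac12\int_{S_i}\RRR^{S_i}\,\rmd\sigma=2\pi\chi(S_i)=4\pi$, hence $\int_{\pa N}A\,\tfrac{\RRR^{\pa N}}{2}\,\rmd\sigma=4\pi\sum_i\kappa_i/\kappa_0$. Substituting these identities and rearranging, the displayed inequality becomes
\[
\sum_i\Big[\big(\tfrac{\kappa_i}{\kappa_0}\big)^2-\tfrac32\a\, r_-^2(m)\big(1-\big(\tfrac{\kappa_i}{\kappa_0}\big)^2\big)\Big]\,\tfrac{\kappa_i}{\kappa_0}\,|S_i|\,\leq\,4\pi\, r_-^2(m)\sum_i\tfrac{\kappa_i}{\kappa_0}\,,
\]
and multiplying through by $\kappa_0$ and dividing by $\sum_i\kappa_i$ yields precisely the asserted weighted bound.

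For the rigidity claim I would observe that every manipulation after Theorem~\ref{cor:intid_rewr_-_SD} is an identity (Gauss--Bonnet and $\chi(S_i)=2$ being used as equalities), so equality in the final bound forces equality in Theorem~\ref{cor:intid_rewr_-_SD}; therefore $(M,\go,u)$ is isometric to a generalized Schwarzschild--de Sitter triple of mass $m$, which for $n=3$ is the ordinary Schwarzschild--de Sitter solution~\eqref{eq:SD}. Under this isometry $N$, being an inner region, corresponds to $M_-$, whose boundary $\{|x|=r_-(m)\}$ is connected; hence $\pa N$ is connected, completing the statement.

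I do not anticipate a real obstacle: the entire analytic content sits in Theorem~\ref{cor:intid_rewr_-_SD}, which in turn rests on the integral identity of Proposition~\ref{pro:int_id_g_2_SD} and on the admissibility of the exponent $\a$ fixed by~\eqref{eq:alpha_SD}. The only thing to be careful about is the constant-chasing at $n=3$ --- in particular that $n[n-(\a+3)]=-3\a\leq-3<0$, so the extra term carries the sign that makes the rearrangement go through --- together with the harmless remark that the normalization of $u$ implicit in $\kappa(S)=|\D u|_{|_S}/\umax$ is irrelevant here since this ratio is scale invariant.
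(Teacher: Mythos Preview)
Your proposal is correct and follows essentially the same approach as the paper: specialize Theorem~\ref{cor:intid_rewr_-_SD} to $n=3$, evaluate the boundary integrals using $A\equiv\kappa_i/\kappa_0$ on each $S_i$ together with Gauss--Bonnet and $\chi(S_i)=2$ from \cite[Theorem~B]{Ambrozio}, and rearrange. The only cosmetic difference is that the paper restarts from an intermediate form in the proof of Theorem~\ref{cor:intid_rewr_-_SD} (carrying the coefficient $3\a+2+4mr_-^{-3}(m)$ and then simplifying via $1-r_-^2(m)-2mr_-^{-1}(m)=0$), whereas you plug directly into its final statement; the two computations are equivalent.
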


\begin{proof}
	For $n=3$, the formula in Corollary~\ref{cor:intid_rewr_-_SD} rewrites as
	\begin{equation*}
	\sum_{i=0}^p\int_{S_i}\kappa_i\left[\RRR^{S_i}-2r_-^{-2}(m)+\left(3\a +2+4mr_-^{-3}(m)\right)\left(1-\frac{\kappa_i^2}{\kappa_0^2}\right)\right]\,\rmd\sigma\,\geq \,0\,.
	\end{equation*}
	Since $1-r_-^2(m)-2mr_-^{-1}(m)=0$ by definition, we compute $3\a+2+4mr_-^{-3}(m)=3\a+2r_-^{-2}(m)$. Moreover, from the Gauss-Bonnet formula, we have $\int_{S_i}\RRR^{S_i}\rmd\sigma=4\pi\chi(S_i)$ for all $i=1,\dots,p$. From~\cite[Theorem~B]{Ambrozio}, we also know that each $S_i$ is diffeomorphic to a sphere, hence $\chi(S_i)=2$. Substituting this information inside the above formula, with some manipulations we arrive to the thesis.
\end{proof}

\noindent
As in the outer case, when $\pa N$ is connected, the constancy of the quantity $|\D u|$ on the whole boundary allows to obtain stronger results.

\begin{corollary}
\label{cor:Gauss-Codazzi_-_SD}
Let $(M,\go,u)$ be a solution to problem~\eqref{eq:prob_SD} and let $N\subseteq M\setminus{\rm MAX}(u)$ be an inner region with virtual mass $m$.
If $\pa N$ is connected, then it holds
\begin{equation*}
|\pa N|\,\leq\,\left(\int_{\pa N}
\frac{\RRR^{\pa N}}{(n-1)(n-2)}\,\rmd\sigma\right)\, r_-^{2}(m)\,.
\end{equation*}
Moreover, if the equality is fulfilled, then the solution $(M,\go,u)$ is isometric to a generalized Schwarzschild--de Sitter triple~\eqref{eq:gen_SD} with mass $m$.
\end{corollary}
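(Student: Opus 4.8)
The plan is to deduce this statement directly from Theorem~\ref{cor:intid_rewr_-_SD}, exploiting the extra structure provided by the connectedness of $\pa N$. First I would recall that, as noted among the basic properties of static solutions in Subsection~\ref{sub:prelim}, the quantity $|\D u|$ is locally constant and positive on $\pa M$. Since $\pa N$ is by hypothesis connected, it follows that $|\D u|$ is in fact constant on $\pa N$, so that the step function $A:\pa N\to(0,1]$ introduced in~\eqref{eq:step_function_SD} satisfies $A\equiv 1$ on the whole of $\pa N$.

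Next I would substitute $A\equiv 1$ into the inequality proven in Theorem~\ref{cor:intid_rewr_-_SD}. The correction term $\int_{\pa N}A\,(1-A^2)\,\rmd\sigma$ then vanishes identically, while $\int_{\pa N}A^3\,\rmd\sigma=|\pa N|$ and $\int_{\pa N}A\,\RRR^{\pa N}\,\rmd\sigma=\int_{\pa N}\RRR^{\pa N}\,\rmd\sigma$. This immediately yields
$$
|\pa N|\,\leq\,\left(\int_{\pa N}\frac{\RRR^{\pa N}}{(n-1)(n-2)}\,\rmd\sigma\right) r_-^{2}(m)\,,
$$
which is the desired bound. For the rigidity part, if equality holds above, then equality holds in the inequality of Theorem~\ref{cor:intid_rewr_-_SD} as well, since the two inequalities coincide once $A\equiv 1$; and that theorem already guarantees that in this case $(M,\go,u)$ is isometric to a generalized Schwarzschild--de Sitter triple~\eqref{eq:gen_SD} with mass $m$.

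Since the statement is a corollary, there is no genuine obstacle here: all the analytic work, namely the construction of the divergence-controlled vector field $\gamma Y_\a$, the choice of the parameter $\a$ solving~\eqref{eq:alpha_SD}, and the passage to the boundary in Proposition~\ref{pro:int_id_g_2_SD}, has already been carried out in Section~\ref{sec:int_id} and in Subsection~\ref{sub:cons_integral_identities_-_SD}. The only point deserving a word of care is the verification that the $n$-dependent coefficient multiplying the correction term is harmless: since that coefficient multiplies $\int_{\pa N}A\,(1-A^2)\,\rmd\sigma$, which is zero in the connected case, its sign and magnitude play no role, and the inequality of Theorem~\ref{cor:intid_rewr_-_SD} specializes cleanly to the stated bound independently of the value of $n$.
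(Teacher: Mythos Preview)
Your proof is correct and follows essentially the same approach as the paper's own proof, which consists of the single observation that the result is an immediate consequence of Theorem~\ref{cor:intid_rewr_-_SD} together with the constancy of $|\D u|$ on the connected boundary $\pa N$. Your version simply spells out in detail how $A\equiv 1$ kills the correction term and how the rigidity is inherited.
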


\begin{proof}
The result is an immediate consequence of Corollary~\ref{cor:intid_rewr_-_SD} and the fact that $|\D u|$ is constant on $\pa N$.
\end{proof}

\begin{theorem}
\label{thm:Gauss-Bonnet_-_SD}
Let $(M,\go,u)$ be a $3$-dimensional solution to problem~\eqref{eq:prob_SD} and let $N\subseteq M\setminus{\rm MAX}(u)$ be an inner region with virtual mass $m$. If $\pa N$ is connected, then $\pa N$ is diffeomorphic to $\Sph^2$ and it holds
\begin{equation*}
|\pa N|\,\leq\,4\pi r_-^{2}(m)\,.
\end{equation*}
Moreover, if the equality holds, then the solution $(M,\go,u)$ is isometric to the Schwarzschild--de Sitter triple~\eqref{eq:SD} with mass $m$.
\end{theorem}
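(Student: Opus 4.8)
The plan is to deduce the statement directly from Corollary~\ref{cor:Gauss-Codazzi_-_SD}, in complete analogy with the way Theorem~\ref{thm:Gauss-Bonnet_+_SD} follows from Corollary~\ref{cor:Gauss-Codazzi_+_SD}. First I would set $n=3$ in the inequality of Corollary~\ref{cor:Gauss-Codazzi_-_SD}, so that $(n-1)(n-2)=2$ and the bound reads
$$
|\pa N|\,\leq\,\left(\int_{\pa N}\frac{\RRR^{\pa N}}{2}\,\rmd\sigma\right)r_-^2(m)\,.
$$
Since $\pa N$ is assumed connected and, being a single horizon (a connected component of $\pa M$ of the orientable manifold $M$), it is a closed orientable surface, the Gauss--Bonnet formula gives $\int_{\pa N}\RRR^{\pa N}\,\rmd\sigma=4\pi\chi(\pa N)$, whence $|\pa N|\leq 2\pi\chi(\pa N)\,r_-^2(m)$. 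Because $|\pa N|>0$ and $r_-^2(m)>0$, this forces $\chi(\pa N)>0$, which for a closed orientable surface is possible only if $\chi(\pa N)=2$, i.e. $\pa N$ is diffeomorphic to $\Sph^2$. Substituting $\chi(\pa N)=2$ back in then yields $|\pa N|\leq 4\pi r_-^2(m)$.

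For the rigidity part I would argue as follows. If $|\pa N|=4\pi r_-^2(m)$, then, since we have already shown $\chi(\pa N)=2$, the chain of inequalities above collapses to an equality, so in particular the inequality of Corollary~\ref{cor:Gauss-Codazzi_-_SD} is saturated. By the rigidity statement contained in that corollary, $(M,\go,u)$ is then isometric to a generalized Schwarzschild--de Sitter triple~\eqref{eq:gen_SD} with mass $m$ and some Einstein fiber $(E^{n-1},g_{E^{n-1}})$ satisfying $\Ric_{E^{n-1}}=(n-2)g_{E^{n-1}}$. Since here $n=3$, the fiber is a $2$-dimensional closed Einstein manifold with $\Ric_{E^2}=g_{E^2}$, that is, a closed surface of constant Gaussian curvature $1$, which must be the round sphere $(\Sph^2,g_{\Sph^2})$; therefore the generalized Schwarzschild--de Sitter triple is precisely the Schwarzschild--de Sitter triple~\eqref{eq:SD} with mass $m$. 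The converse implication is immediate, since the Schwarzschild--de Sitter solution with mass $m$ realises equality by construction.

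There is essentially no substantial obstacle in this argument: all the analytic content is already packaged into Corollary~\ref{cor:Gauss-Codazzi_-_SD} (and ultimately into the integral identity of Proposition~\ref{pro:int_id_g_2_SD} and the Minimum Principle of Proposition~\ref{pro:min_pr_SD}). The only two points deserving a line of care are (i) making sure Gauss--Bonnet is legitimately applicable, which requires only that $\pa N$ be a closed orientable surface --- true since it is a single horizon and $M$ is orientable --- and (ii) the passage from ``generalized Schwarzschild--de Sitter'' to ``Schwarzschild--de Sitter'', which is the elementary observation, already recorded after~\eqref{eq:gen_cylsol_D}, that in dimension $n=3$ the only admissible Einstein fiber is the round $2$-sphere.
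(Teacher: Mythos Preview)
Your proof is correct and follows essentially the same route as the paper: set $n=3$ in Corollary~\ref{cor:Gauss-Codazzi_-_SD}, apply Gauss--Bonnet to force $\chi(\pa N)>0$ and hence $\pa N\cong\Sph^2$, then read off the area bound and invoke the rigidity clause of the corollary. You spell out the reduction from the generalized Schwarzschild--de Sitter triple to the standard one in dimension three a bit more explicitly than the paper does, but the argument is the same.
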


\begin{proof}
Substituting $n=3$ in Corollary~\ref{cor:Gauss-Codazzi_-_SD} and using the Gauss-Bonnet formula, we immediately obtain
\begin{equation*}
4\pi\chi(\pa N)\,\geq\, 2r_-^{-2}(m)
\,|\pa N|\,.
\end{equation*}
In particular, $\chi(\pa N)$ has to be positive, hence $\pa N$ is necessarily a sphere and we obtain the thesis.
\end{proof}

\section{Black Hole Uniqueness Theorem}
\label{sec:ass_SD}




In this section we will prove the Black Hole Uniqueness Theorem~\ref{thm:BHU3D} stated in Subsection~\ref{sub:BHU_intro}, in the case where $m_+<\mmax$. The case $m_+=\mmax$ requires a different analysis, as the model solution will be the Nariai triple~\eqref{eq:cylsol_D}, and it will be studied in Section~\ref{sec:nariai_SD}. 
The hypothesis $m_+<\mmax$ allows us to use the metric $g$ and the functions $\Psi,\ffi$ defined  in the previous sections by formul\ae~\eqref{eq:g_SD},~\eqref{eq:def_Psi_SD},~\eqref{eq:ffi_SD}. We recall the definitions here, for the reader convenience. The function $\Psi=\psi\circ u$ is defined as
\begin{equation*}
\begin{split}
\Psi:\,M &\,\longrightarrow\, \left[r_-(m),r_+(m)\right]
\\
p&\,\longmapsto\, \Psi(p):=
\begin{dcases}
\psi_+(u(p))   & \hbox{ if } p\in M_+\,,
\\
\psi_-(u(p))   & \hbox{ if } p\in M_-\,,
\\
r_0(m) & \hbox{ if } p\in {\rm MAX}(u)\,,
\end{dcases}
\end{split}
\end{equation*}
where we recall that $\psi_+:M_+\to [r_+(m),r_0(m)]$ and $\psi_-:M_-\to [r_0(m),r_-(m)]$ are defined implicitly as the two determinations of the equation
$$
u^2=1-\psi^2-2m\psi^{2-n}\,.
$$
In turn, the metric $g$ and the function $\ffi$ are defined as
$$
\ffi(p)\,=\,\int_{\Psi(p)}^{r_+(m)} \frac{dt}{t \sqrt{1-t^2-2mt^{2-n}}}
$$
$$
g=\frac{g_0}{\Psi^2}\,.
$$
We start by stating a lemma on the regularity of the pseudo-radial function along the hypersurface separating the two regions $M_+$ and $M_-$.

\begin{lemma}
\label{le:C2_2sided}
Let $(M,\go,u)$ be a $2$-sided solution to problem~\eqref{eq:prob_SD}, let $\Psi$ be the global pseudo-radial function defined by~\eqref{eq:pr_function_global} with respect to a parameter $m\in[0,\mmax)$ and let $g,\ffi$ be defined by~\eqref{eq:g_SD} and~\eqref{eq:ffi_SD}. Then, at each point in the top stratum of $\Sigma$, we have that $g$, $\Psi$ and $\ffi$ are $\mathscr{C}^3$ and that $|\na\ffi|_g=1$.
\end{lemma}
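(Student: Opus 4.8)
The plan is to deduce the lemma by assembling, on a neighborhood of a single point, the regularity facts already established earlier. Fix a point $p$ in the top stratum of the separating hypersurface $\Sigma$. Being a smooth $(n-1)$-dimensional point of $\Sigma\subseteq{\rm MAX}(u)$, the point $p$ belongs to the top stratum of ${\rm MAX}(u)$, so I may choose a neighborhood $\Omega\ni p$ exactly as in Proposition~\ref{pro:expansion_u}: $\Omega\cap{\rm MAX}(u)$ lies in the top stratum and $\Omega\setminus{\rm MAX}(u)$ has two connected components, which after shrinking $\Omega$ are $\Omega_+\subset M_+$ and $\Omega_-\subset M_-$. On this configuration the global pseudo-radial function~\eqref{eq:pr_function_global} restricts precisely to the piecewise function considered in Proposition~\ref{pro:C2}, namely $\psi_+\circ u$ on $\Omega_+$, $\psi_-\circ u$ on $\Omega_-$ and the constant $r_0(m)$ on $\Omega\cap{\rm MAX}(u)$. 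Proposition~\ref{pro:C2} therefore applies verbatim and yields $\Psi\in\mathscr{C}^3(\Omega)$.

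Next I would transfer this regularity to $g$ and $\ffi$. Since $\Psi$ is continuous and equal to $r_0(m)>0$ on $\Omega\cap{\rm MAX}(u)$, after a further shrinking we may assume that $\Psi$ takes values in a compact interval $I\subset(r_-(m),r_+(m))$ that stays away from $0$; consequently $g=\go/\Psi^2$, the analytic metric $\go$ divided by the positive $\mathscr{C}^3$ function $\Psi^2$, is $\mathscr{C}^3(\Omega)$. For $\ffi$, note that on $\Omega$ one has $\ffi=F\circ\Psi$ with $F(r)=\int_{r}^{r_+(m)}\big(t\sqrt{1-t^2-2mt^{2-n}}\,\big)^{-1}\,dt$. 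The integrand is positive and real-analytic on the open interval $(r_-(m),r_+(m))$ — its only singularities sit at the endpoints $t=r_\pm(m)$, where $1-t^2-2mt^{2-n}$ vanishes, and there the integral is finite — so $F$ is $\mathscr{C}^\infty$ (indeed analytic) on a neighborhood of the compact interval $I$. Composing the $\mathscr{C}^\infty$ function $F$ with the $\mathscr{C}^3$ map $\Psi$, whose range lies in $I$, gives $\ffi\in\mathscr{C}^3(\Omega)$.

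It remains to check that $|\na\ffi|_g=1$ at $p$. Since $g,\ffi\in\mathscr{C}^3(\Omega)$, the function $|\na\ffi|_g^2=g(\na\ffi,\na\ffi)$ is $\mathscr{C}^2(\Omega)$, hence continuous at $p$. On the other hand, for every $x\in\Omega\setminus{\rm MAX}(u)$ formula~\eqref{eq:na_ffi_SD} gives $|\na\ffi|_g^2(x)=|\D u|^2(x)\big/\big(\psi^2(x)\,[1-(r_0(m)/\psi(x))^n]^2\big)$, and by~\eqref{eq:grad_u_nearSigma} this tends to $1$ as $x\to p$. Continuity then forces $|\na\ffi|_g^2(p)=1$, that is, $|\na\ffi|_g=1$ at $p$, which finishes the argument.

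I do not expect a genuine obstacle here: the delicate analytic work — the fine expansion of $u$, of $\Psi$ and of the relation $u^2=1-\Psi^2-2m\Psi^{2-n}$ near $\Sigma$ which underlies the $\mathscr{C}^3$ regularity — is already carried out in Propositions~\ref{pro:expansion_u} and~\ref{pro:C2}, and~\eqref{eq:grad_u_nearSigma} pins down the value of $|\na\ffi|_g$ on $\Sigma$. The only points that deserve a moment's care are that the integrand defining $\ffi$ has no singularity at $r_0(m)$ (which uses $1-r_0^2(m)-2mr_0^{2-n}(m)=\umax^2(m)>0$, valid because $m<\mmax$) and that the $\mathscr{C}^2$ extension of $|\na\ffi|_g^2$ across $\Sigma$ must coincide with the one-sided limit computed in~\eqref{eq:grad_u_nearSigma}.
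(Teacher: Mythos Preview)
Your proof is correct and follows essentially the same route as the paper: invoke Proposition~\ref{pro:C2} for the $\mathscr{C}^3$ regularity of $\Psi$, deduce the regularity of $g$ and $\ffi$ from their definitions, and use~\eqref{eq:grad_u_nearSigma} together with continuity to conclude $|\na\ffi|_g=1$ on the top stratum. You have simply filled in more detail than the paper (which dispatches the regularity of $g,\ffi$ in one sentence), in particular the observation that the integrand defining $\ffi$ is smooth near $r_0(m)$ because $m<\mmax$.
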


\begin{proof}
Proposition~\ref{pro:C2} tells us that $\Psi$ is $\mathscr{C}^3$ at each point of the top stratum of $\Sigma$. The regularity of $g$ and $\ffi$ follows immediately from their definition. Finally, recalling formula~\eqref{eq:grad_u_nearSigma}, we get
$$
|\na\ffi|_g^2\,=\,\frac{|\D u|^2}{\psi^2\left[1-(n-2)m\psi^{-n}\right]^2}\,=\,1\,.
$$
This concludes the proof.
\end{proof}

Lemma~\ref{le:C2_2sided} will play an important role in the proof of the following theorem. which will be a crucial step in the proof of Theorem~\ref{thm:BHU3D}.


\begin{theorem}
\label{thm:estimates_SD}
Let $(M,\go,u)$ be a $3$-dimensional $2$-sided solution to problem~\eqref{eq:prob_SD}, and let $\Sigma\subseteq {\rm MAX}(u)$ be the stratified hypersurface separating $M_+$ and $M_-$. Let also
$$
m_+\,=\,\mu(M_+,\go,u)\,,\qquad \hbox{and} \qquad m_-\,=\,\mu(M_-,\go,u)
$$
be the virtual masses of $M_+$ and $M_-$, respectively. 
Then
$$
m_-\,\,\leq\,\, m_+\,.
$$
Moreover, if $m_+=m_-$ then $\Sigma$ is a $\mathscr{C}^\infty$ hypersurface and it holds
\begin{align}
\HHH\,\,&=\,2\,\sqrt{m_+^{-2/3}\,-\,3}\,,
\\
\hhh\,\,&=\,\sqrt{m_+^{-2/3}\,-\,3}\,\,\go^\Sigma\,,
\\
\label{eq:estimates_SD}
\RRR^\Sigma\,\,&=\,\,2\,m_+^{-2/3}\,,
\\
\Ric(\nu,\nu)\,\,&=\,\,0\,,
\end{align}
where $\nu$ is the $\go$-unit normal to $\Sigma$ pointing towards $M_+$, $\HHH$ and $\hhh$ are the mean curvature and second fundamental form of $\Sigma$ with respect to $\nu$, $\RRR^\Sigma$ is the scalar curvature of the metric $\go^\Sigma$ induced on $\Sigma$ by $\go$.
\end{theorem}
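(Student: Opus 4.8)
The plan is to combine the area upper bounds from Section~\ref{sec:area_bounds} (applied on the black hole side $M_-$) with the monotonicity-type lower bounds (Corollary~\ref{le:in_mon_Phi1_SD}) together with the expansion of $u$ near the top stratum of $\Sigma$ given in Proposition~\ref{pro:expansion_u}, and the $\mathscr{C}^3$ regularity of the global pseudo-radial function $\Psi$ across $\Sigma$ provided by Proposition~\ref{pro:C2} / Lemma~\ref{le:C2_2sided}. First I would prove the inequality $m_-\le m_+$: using the \emph{global} pseudo-radial function $\Psi$ built with the parameter $m=m_+$ on all of $M$, Remark~\ref{rem:bound_psi_SD} tells us that on $\pa M_-$ one still has $\max_{\pa M_-}|\D u/(r_-(m_+)[1-(r_0(m_+)/r_-(m_+))^n])|\le 1$ provided $m_+\le m_-$, while conversely the monotonicity/lower-bound machinery of Subsection~\ref{sub:monotonicity_SD} applied on $M_-$ with parameter $m_+$ forces $|\na\ffi|_g\le 1$ throughout (Proposition~\ref{pro:min_pr_SD}, which only needs $m_+<\mmax$). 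Chasing the definitions of $k_\pm$ and of the virtual mass, the value at $\pa M_-$ being at most $1$ after normalization is exactly the statement $k_-^{-1}(\max_{\pa M_-}|\D u|/\umax)\le m_+$, i.e. $m_-\le m_+$; the key point is that the same $\Psi$ is globally Lipschitz (Remark~\ref{rem:bound_Du}) and $\mathscr{C}^3$ on the top stratum, so Proposition~\ref{pro:min_pr_SD}'s Minimum Principle argument goes through on $M_-$ with the "wrong" parameter.

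Next, assume $m_+=m_-=:m$. Now $\Psi$ is the genuine global pseudo-radial function and the rigidity cases of the two-sided area inequalities become available. On the top stratum, formula~\eqref{eq:grad_u_nearSigma} gives $|\na\ffi|_g=1$ exactly on $\Sigma$; combined with $|\na\ffi|_g\le 1$ on each region and the Bochner elliptic inequality~\eqref{eq:Degw_SD}, I would like $\Sigma$ to behave like the minimal hypersurface $\{|x|=r_0(m)\}$ of the Schwarzschild--de Sitter model. To extract the concrete values, I would pass to the expansion~\eqref{eq:expansion_u_final} of $u$ near a point $p$ in the top stratum. The $\mathscr{C}^3$-regularity of $\Psi$ across $\Sigma$ (Proposition~\ref{pro:C2}) means the expansions of $u$ computed from the $M_+$ side and from the $M_-$ side must match through the relation $u^2=1-\Psi^2-2m\Psi^{2-n}$; matching the third-order terms (the ones carrying $\HHH$ in~\eqref{eq:expansion_u_final}, with opposite-sign normals on the two sides) forces the mean curvature to equal the explicit model value. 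Indeed in the model $\{|x|=r_0(m)\}$ is minimal as a hypersurface of the level set, but as a hypersurface of $M$ it has mean curvature $\HHH=2\sqrt{m^{-2/3}-3}$ (for $n=3$), which should drop out of this matching computation; the sign $\sqrt{m_+^{-2/3}-3}\ge 0$ is exactly the condition $m\le\mmax=1/(3\sqrt3)$, which holds. Then the Gauss equation $2\Ric(\nu,\nu)=\RRR-\RRR^\Sigma-|\hhh|^2+\HHH^2$ together with $\RRR=n(n-1)=6$ and the equation~\eqref{eq:prob_SD} evaluated on $\Sigma$ (where $\D u=0$, so $\Ric(\nu,\nu)=\DD u(\nu,\nu)/u+n=-n\umax/\umax+n=0$, using $\DD u(\nu,\nu)=-n\umax$ from Proposition~\ref{pro:expansion_u}) pins down $\RRR^\Sigma=2m^{-2/3}$; finally $\RRR^\Sigma$ being constant and equal to its model value, together with $\Ric(\nu,\nu)=0$, feeds back into~\eqref{eq:expansion_u_final} to show $|\mathring{\hhh}|^2=0$, i.e. $\Sigma$ is totally umbilic with $\hhh=\sqrt{m^{-2/3}-3}\,\go^\Sigma$. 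Once $\hhh$ and $\RRR^\Sigma$ are constant on the (connected components of the) top stratum, a standard bootstrap using the analyticity of $u$ and the implicit-function description of $\Sigma$ upgrades $\Sigma$ from $\mathscr{C}^1$ (Proposition~\ref{pro:SigmaC1}) to $\mathscr{C}^\infty$: the signed-distance function $r$ is then smooth and the expansion~\eqref{eq:expansion_u_final} becomes a genuine smooth Taylor expansion with smooth coefficients, so $\Sigma$ is a smooth hypersurface.

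The main obstacle I anticipate is justifying $\Ric(\nu,\nu)=0$ and the umbilicity \emph{globally} on $\Sigma$ rather than just pointwise on the open top stratum, i.e. controlling what happens at the (a priori stratified, codimension $\ge 2$) singular set of $\Sigma$ and showing it is in fact empty — this is where the $\mathscr{C}^\infty$ conclusion really bites. The strategy there is to note that once $u$ has the rigid expansion~\eqref{eq:expansion_u_final} with the specific constant coefficients $\HHH,\RRR^\Sigma$ at every point of the top stratum, the reverse-\L ojasiewicz/Minimum-Principle package (Proposition~\ref{pro:min_pr_SD}, Remark~\ref{rem:bound_Du}) forbids folding or lower-dimensional strata accumulating, by an argument parallel to the second half of the proof of Proposition~\ref{pro:SigmaC1} (orthogonal geodesics leave $\mathrm{MAX}(u)$ uniformly fast, so $\Sigma$ cannot touch itself), giving that $\Sigma$ is a closed embedded hypersurface with no boundary; then the constancy of $\hhh,\HHH,\RRR^\Sigma$ and $\Ric(\nu,\nu)$ extends by continuity, and smoothness follows as above. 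I would also double-check the numerology $\HHH=2\sqrt{m^{-2/3}-3}$ against the explicit Schwarzschild--de Sitter metric~\eqref{eq:SD} at $|x|=r_0(m)$ to make sure the normalization $\umax=\umax(m)$ and the sign conventions for $\nu$ (pointing towards $M_+$) are consistent with the statement.
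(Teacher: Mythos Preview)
Your plan has two genuine gaps.

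\textbf{The inequality $m_-\le m_+$.} Your argument reverses the monotonicity of $k_-$. With the pseudo-radial function built from a parameter $m$, on $\pa M_-$ one has $\max_{\pa M_-}|\na\ffi|_g = k_-(m_-)/k_-(m)$; since $k_-$ is strictly \emph{decreasing}, the condition $\max_{\pa M_-}|\na\ffi|_g\le 1$ is equivalent to $m_-\ge m$, not $m_-\le m$. So taking $m=m_+$ and assuming $m_+\le m_-$ to invoke Remark~\ref{rem:bound_psi_SD} merely reproduces the assumption; nothing new comes out. The paper's mechanism is different: suppose $m_+<m_-$ for contradiction and pick \emph{any} $m\in[m_+,m_-]$. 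Then on \emph{both} sides the boundary condition $|\na\ffi|_g\le 1$ holds (Remark~\ref{rem:bound_psi_SD}), so the Minimum Principle gives $|\na\ffi|_g\le 1$ globally. The $\mathscr{C}^3$-regularity of $g$ across the top stratum makes $\Rg$ continuous, and formula~\eqref{eq:tildeR_SD} then forces $\dot\psi(1-|\na\ffi|_g^2)$ to extend continuously through $\Sigma$; since $\dot\psi$ changes sign there while $1-|\na\ffi|_g^2\ge 0$, the limit must vanish. This yields $\Deg\ffi=0$, hence $\Hg=0$, hence $\HHH=2\,\umax(m)/r_0(m)=2\sqrt{m^{-2/3}-3}$. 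But $\HHH$ is a geometric invariant of $\Sigma$ and cannot depend on the free parameter $m$, contradiction.

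\textbf{The umbilicity and the value of $\HHH$.} Your ``matching the expansions from the two sides'' does not produce a constraint: $u$ is a single analytic function, so its Taylor expansion in the signed distance $r$ is already two-sided, and Proposition~\ref{pro:C2} shows $\Psi$ glues to $\mathscr{C}^3$ \emph{automatically}, for any value of $\HHH$ and $|\mathring\hhh|$. The formula $\HHH=2\sqrt{m^{-2/3}-3}$ comes from the sign-change argument above (which works verbatim when $m_+=m_-$). For $|\mathring\hhh|=0$ you cannot bootstrap from Gauss--Codazzi and $\Ric(\nu,\nu)=0$ alone: Gauss gives $\RRR^\Sigma=2m^{-2/3}-|\mathring\hhh|^2$, so you need $|\mathring\hhh|=0$ \emph{first} to compute $\RRR^\Sigma$, not the other way around. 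The paper supplies the missing ingredient via an integrated Bochner formula: on a small cylinder $C_\ep$ foliated by $\na\ffi$ around a point of the top stratum, one integrates~\eqref{eq:bochner_SD} and uses that $|\na\ffi|_g$ attains its maximum $1$ on $\Sigma$ (so $\langle\na|\na\ffi|_g^2,{\rm n}_g\rangle\le 0$ on the two lids) together with $\Deg\ffi=0$ on $\Sigma$ to obtain $\int_{\Sigma_0}|\nana\ffi|_g^2\,\rmd\sigma_g\le 0$ in the limit $\ep\to 0$. Hence $\nana\ffi\equiv 0$ on the top stratum, which via~\eqref{eq:formula_h_h_g_SD} gives $\hg=0$ and then $\mathring\hhh=0$. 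Your computation of $\Ric(\nu,\nu)=0$ from $\DD u(\nu,\nu)=-n\umax$ is correct; the $\mathscr{C}^\infty$ upgrade then follows as in the paper by computing $\D\nu$ explicitly from $\hhh=\sqrt{m^{-2/3}-3}\,\go^\Sigma$ and checking that all higher covariant derivatives of $\nu$ vanish on the dense top stratum.
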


\begin{proof}
If $m_+=\mmax$ the first part of the statement is trivial, whereas the second part will be proved in Section~\ref{sec:nariai_SD}. Therefore, from now on we focus on the case $m_+<\mmax$.
To prove the first part of the statement let us suppose that $m_+<m_-$ and then deduce from this a contradiction. Fix a value $m\in[m_+,m_-]$ and define the global pseudo-radial function $\Psi=\psi\circ u$ with respect to this parameter.
We recall from Lemma~\ref{le:C2_2sided} that the function $\Psi$ is $\mathscr{C}^3$ in a neighborhood of the points in the top stratum of $\Sigma$. In turn, also the pseudo-affine function $\ffi$ defined by~\eqref{eq:ffi_SD} is $\mathscr{C}^3$ in a neighborhood of $\Sigma$, and so is the metric $g=\go/\Psi^2$. In particular, the scalar curvature $\Rg$ is continuous, and from formula~\eqref{eq:tildeR_SD} we deduce that 
$$
\lim_{x\to p,\ x\in M_+}\dot{\psi}(1-|\na\ffi|_g^2)=\lim_{x\to p,\ x\in M_-}\dot{\psi}(1-|\na\ffi|_g^2)
$$ 
for every $p$ in the top stratum of $\Sigma$.
As observed in Remark~\ref{rem:bound_psi_SD}, following the proof of Lemma~\ref{le:bound_psi_SD} it is easily seen that, since $m\in [m_+,m_-]$, it holds
$$
|\na\ffi|_g\,=\,\left|\frac{\D u}{\psi(1-m\psi^{-3})}\right|\,\leq\,1
$$
on the whole boundary $\pa M=\pa M_+\sqcup\pa M_-$. We can then apply the Minimum Principle to the elliptic inequality~\eqref{eq:Degw_SD} on $M_+$ and $M_-$, as we have done in Proposition~\ref{pro:min_pr_SD}. This proves that $|\na\ffi|_g\leq 1$ on the whole $M\setminus{\rm MAX}(u)$.
Furthermore, we recall that $\dot{\psi}$ has positive sign on $M_-$ and negative sign on $M_+$. Therefore $\dot{\psi}(1-|\na\ffi|_g^2)$ has to change sign when passing through $\Sigma$, hence 
$$
\lim_{x\to p}\dot{\psi}(1-|\na\ffi|_g^2)=0$$
for every $p$ in the top stratum of $\Sigma$.
In particular, $\Deg\ffi=0$ and  $|\na\ffi|_g=1$ on $\Sigma$.
Moreover, $|\na\ffi|_g$ has a maximum on $\Sigma$, hence $\na|\na\ffi|_g^2=0$ on $\Sigma$. In particular, $\nana\ffi(\nu_g,\nu_g)=\langle\na\ffi\,|\,\na|\na\ffi|_g^2\rangle_g/|\na\ffi|_g^2=0$, where $\nu_g=\na\ffi/|\na\ffi|_g=\na\ffi$ is the $g$-unit normal vector field to $\Sigma$.
At the points in the top stratum of $\Sigma$, the mean curvature $\Hg$ of $\Sigma$ with respect to $g$ and the normal $\nu_g$ can be computed using formul\ae~\eqref{eq:formula_fundamentalform_SD} and~\eqref{eq:formula_curvature_SD}.
Using the fact that $\Deg\ffi=\nana\ffi(\nu_g,\nu_g)=0$ on $\Sigma$, from~\eqref{eq:formula_curvature_SD} we deduce 
\begin{equation}
\label{eq:estimates1_SD}
\Hg\,=\,0\,,
\end{equation}
on $\Sigma$. 
Translating~\eqref{eq:estimates1_SD} in terms of $\go$ using~\eqref{eq:formula_H_H_g_SD}, and recalling that $|\na\ffi|_g=|\dot\psi/u|\,|\D u|=1$ on $\Sigma$, we obtain 
\begin{equation}
\label{eq:H_Sigma}
\HHH\,=\,2\,\frac{\umax(m)}{r_0(m)}\,=\,2\,\sqrt{m^{-2/3}-3}\,.
\end{equation} 
Notice that the formula for $\HHH$ depends on the parameter $m$, which can be chosen freely in the range $[m_+,m_-]$. But this is a contradiction, as the value of $\HHH$ cannot vary depending on $m$ but depends only on the geometry of $\Sigma$. This proves that $m_+$ cannot be smaller than $m_-$.

To prove the second part of the statement, let us define $\Psi,\ffi,g$ with respect to the parameter $m_+=m_-$. 
We have already observed that $\Psi,\ffi$ and $g$ are $\mathscr{C}^3$ in a neighborhood of the points of the top stratum of $\Sigma$. 
We also recall that, as computed above, on the top stratum of $\Sigma$ we have $\Deg\ffi=\nana\ffi(\nu_g,\nu_g)=\HHH_g=0$.
Now we choose a point $p$ in the top stratum of $\Sigma$ and we consider an embedding
$$
F_0:\overline{B^{2}}\to M
$$ 
such that $\Sigma_0=F_0(\overline{B^{2}})$ is contained in the top stratum of $\Sigma$.
We know from Lemma~\ref{le:C2_2sided} that $|\na\ffi|_g=1$ at each point in the top stratum of $\Sigma$.
Given a small enough number $\ep>0$ we can use the gradient $\na\ffi$ to extend the map $F_0$ to  a cartesian product $[-\ep,\ep]\times\overline{B^{2}}$, obtaining a new map 
$$
F:[-\ep,\ep]\times \overline{B^{2}}\hookrightarrow M\,, \qquad (s,\theta^1,\theta^2)\mapsto F(s,\theta^1,\theta^{2})\,,
$$
satisfying the initial value problem
$$
\frac{dF}{ds}\,=\,\frac{\na\ffi}{|\na\ffi|_g^2}\circ F\,,\quad F(0,\cdot)\,=\,F_0(\cdot)\,.
$$
It is not hard to check that $\ffi(F(s,\theta^1,\theta^2))=\ffi_0+s$, so that, for all $s\in[-\ep,\ep]$, the image $\Sigma_s=F(s,\overline{B^{2}})$ belongs to the level set $\{\ffi=\ffi_0+s\}$. Let us denote by $C_\ep$ the cylinder $F([-\ep,\ep]\times\overline{B^{2}})$.
Integrating Bochner's formula on $C_\ep$, we obtain
\begin{multline*}
\int_{C_\ep} |\nana\ffi|^2_g\rmd\mu_g\,=\,\int_{C_\ep}\left[\frac{1}{2}\Deg|\na\ffi|_g^2\,-\,\Ricg(\na\ffi,\na\ffi)\,-\,\langle\na\ffi\,|\,\na\Deg\ffi\rangle_g\right]\rmd\mu_g
\\
=\frac{1}{2}\int_{\pa C_\ep}\langle\na|\na\ffi|_g^2\,|\,{\rm n}_g\rangle_g\rmd\sigma_g\,-\,\int_{C_\ep}\Ricg(\na\ffi,\na\ffi)\rmd\mu_g\,+\,\int_{C_\ep}(\Deg\ffi)^2\rmd\mu_g\,-\,\int_{\pa C_\ep}\Deg\ffi\langle\na\ffi\,|\,{\rm n}_g\rangle_g\rmd\sigma_g
\end{multline*}
where ${\rm n}_g$ is the $g$-unit outward normal vector field to $\pa C_\ep$ and in the second equality we have integrated by parts.
We can rewrite the above formula as follows:
\begin{multline}
\label{eq:Bochner_Sigma_SD}
\frac{1}{2\ep}\int_{C_\ep}\left[ |\nana\ffi|^2_g\,-\,(\Deg\ffi)^2+\Ricg(\na\ffi,\na\ffi)\right]\rmd\mu_g\,=
\\
=\frac{1}{4\ep}\int_{\pa C_\ep}\left[\langle\na|\na\ffi|_g^2\,|\,{\rm n}_g\rangle_g-2\Deg\ffi\langle\na\ffi\,|\,{\rm n}_g\rangle_g\right]\rmd\sigma_g\,.
\end{multline}
Let us study in more details the right hand side of this formula. First of all, we notice that the integrand goes to zero as we approach $\Sigma_0$.
We also know that ${\rm n}_g\,=\,\na\ffi/|\na\ffi|_g$ on $\Sigma_\ep=F(\ep,\overline{B^{2}})$ and  ${\rm n}_g\,=\,-\na\ffi/|\na\ffi|_g$ on $\Sigma_{-\ep}=F(-\ep,\overline{B^{2}})$. Moreover, from the second equation in~\eqref{eq:pb_conf_SD} it follows that $\Deg\ffi$ is positive on $M_-$ and negative on $M_+$, so that in particular $\Deg\ffi>0$ on $\Sigma_\ep$ and $\Deg\ffi< 0$ on $\Sigma_{-\ep}$.
Concerning the function $\langle\na|\na\ffi|^2_g\,|\,\na\ffi\rangle_g$, we first notice that it is differentiable since $\ffi,g$ are $\mathscr{C}^3$. We now distinguish two cases: either its gradient $\na\langle\na|\na\ffi|^2_g\,|\,\na\ffi\rangle_g$ is zero in $p$ or it is not. If its gradient is zero, this means that the function $\langle\na|\na\ffi|^2_g\,|\,\na\ffi\rangle_g$ goes to zero at the first order as we approach $p$, which in turn implies that $\langle\na|\na\ffi|^2_g\,|\,\na\ffi\rangle_g=o(\ep)$. If instead $\na\langle\na|\na\ffi|^2_g\,|\,\na\ffi\rangle_g(p)\neq 0$, then up to restricting $C_\ep$ we can assume $\na\langle\na|\na\ffi|^2_g\,|\,\na\ffi\rangle_g\neq 0$ on the whole $C_\ep$. This implies that inside $C_\ep$ the level sets of $\langle\na|\na\ffi|^2_g\,|\,\na\ffi\rangle_g$ form a foliation of regular hypersurfaces, the zero level set corresponding to $\Sigma_0$. Since $|\na\ffi|_g$ assumes its maximum value $1$ on $\Sigma_0$, it is easily seen that $\langle\na|\na\ffi|^2_g\,|\,\na\ffi\rangle_g\leq 0$ on $M_-$ (in particular on $\Sigma_\ep$) and $\langle\na|\na\ffi|^2_g\,|\,\na\ffi\rangle_g\geq 0$ on $M_+$ (in particular on $\Sigma_{-\ep}$). In other words, we have $\langle\na|\na\ffi|^2_g\,|\,{\rm n}_g\rangle_g\leq 0$ on $\Sigma_\ep$ and on $\Sigma_{-\ep}$.
In particular, in both cases we have shown that the integrand on the right hand side over $\Sigma_\ep\cup\Sigma_{-\ep}$ is bounded from above by a function going to zero faster than $\ep$ as we approach $p$.
We also notice that the $\mathscr{H}^{n-1}$-measure of $\pa C_\delta\setminus (\Sigma_{-\ep}\cup \Sigma_\ep)$ is of the order of $\ep$. Therefore, using the coarea formula on the left hand side, equation~\eqref{eq:Bochner_Sigma_SD} gives
\begin{equation}
\label{eq:Bochner_Sigma_2_SD}
\frac{1}{2\ep}\int_{-\ep}^{\ep}\left[\int_{\Sigma_s}\frac{1}{|\na\ffi|_g}\left(|\nana\ffi|^2_g\,-\,(\Deg\ffi)^2+\Ricg(\na\ffi,\na\ffi)\right)\rmd\sigma_g\right]ds\,<\,\delta\,,
\end{equation}
where $\delta\to 0$ as $\ep\to 0$. 
Since $\ffi$ is $\mathscr{C}^3$, we can use the mean value property on the function
$$
s\mapsto\int_{\Sigma_s}\frac{1}{|\na\ffi|_g}\left(|\nana\ffi|^2_g\,-\,(\Deg\ffi)^2+\Ricg(\na\ffi,\na\ffi)\right)\rmd\sigma_g
$$
to deduce that there exists a value $\chi\in(-\ep,\ep)$ such that
\begin{multline*}
\int_{\Sigma_\chi}\frac{1}{|\na\ffi|_g}\left(|\nana\ffi|^2_g\,-\,(\Deg\ffi)^2+\Ricg(\na\ffi,\na\ffi)\right)\rmd\sigma_g\,=
\\
=\,\frac{1}{2\ep}\int_{-\ep}^{\ep}\left[\int_{\Sigma_s}\frac{1}{|\na\ffi|_g}\left(|\nana\ffi|^2_g\,-\,(\Deg\ffi)^2+\Ricg(\na\ffi,\na\ffi)\right)\rmd\sigma_g\right]ds\,<\,\delta\,.
\end{multline*}
Recalling that $\Deg\ffi=\nana\ffi(\nu_g,\nu_g)=0$ on $\Sigma$, from the first equation in~\eqref{eq:pb_conf_SD} we find $\Ricg(\nu_g,\nu_g)=0$ on $\Sigma$.
Therefore, taking the limit as $\ep\to 0$ of the above inequality, we get
$$
\int_{\Sigma_0}|\nana\ffi|_g^2\,\rmd\sigma_g\,\leq\,0\,.
$$ 
It follows that $\nana\ffi\equiv 0$ on $\Sigma_0$, which we recall is a neighborhood of $p$ in $\Sigma$. Therefore, from formula~\eqref{eq:formula_h_h_g_SD} we also deduce that $\hg\equiv 0$ at each point in the top stratum of $\Sigma$. 
Since the points in the top stratum of $\Sigma$ are dense in $\Sigma$, it follows that $\hg\equiv 0$ at each point where $\hg$ is well defined, that is, at each point where $\Sigma$ is a $\mathscr{C}^2$ hypersurface. Since later we will show that $\Sigma$ is $\mathscr{C}^\infty$, {\em a posteriori} we will have that $\hg\equiv 0$ on the whole $\Sigma$.

Substituting formula~\eqref{eq:H_Sigma} in~\eqref{eq:formula_h_h_g_SD}, we also find $0=|\hg|_g^2=m_+^{2/3}|\mathring{\hhh}|^2$. Therefore, $\mathring{\hhh}=0$ and it follows
$$
\hhh\,=\,\frac{\HHH}{2}\,\go^\Sigma\,=\,\sqrt{m^{-2/3}-3}\,\go^\Sigma\,.
$$
Now we pass to compute the scalar curvature of $\Sigma$. We have proven above that $\Ricg(\nu_g,\nu_g)=|\hg|_g=\Hg=0$ on the top stratum of $\Sigma$. Moreover, from~\eqref{eq:tildeR_SD} we have $\Rg=2$ on $\Sigma$. Therefore, from the Gauss-Codazzi equation we find
\begin{align}
\label{eq:estimates2_SD}
\Rg^\Sigma\,&=\,\Rg-2\Ricg(\nu_g,\nu_g)-|\hg|_g^2+\Hg^2\,=\,2\,.
\end{align}
Noticing that $\Rg^\Sigma=m_+^{2/3}\,\RRR^\Sigma$, where $\RRR^{\Sigma}$ is the scalar curvature of the metric induced by $\go$ on $\Sigma$, from identity~\eqref{eq:estimates2_SD} we obtain
\begin{equation*}
\RRR^\Sigma\,=\,m_+^{-2/3}\,\Rg^\Sigma\,=\,2\,m_+^{-2/3}\,.
\end{equation*}
Finally, recalling that $\DD u(\nu,\nu)=-3\umax$ on $\Sigma$, we obtain
$$
\Ric(\nu,\nu)\,=\,\frac{\DD u(\nu,\nu)}{u}\,+\,3\langle\nu\,|\,\nu\rangle\,=\,0\,.
$$
This concludes the proof of the formul\ae\ stated in Theorem~\ref{thm:estimates_SD}.

It remains to show that, under the hypothesis $m_+=m_-$, the hypersurface $\Sigma$ is necessarily $\mathscr{C}^\infty$. We start by recalling from Proposition~\ref{pro:SigmaC1} that $\Sigma$ is a $\mathscr{C}^1$ hypersurface, so that in particular its unit normal vector is defined everywhere. Let us start by computing the derivative of the normal vector $\nu$ at a point $p$ of the top stratum of $\Sigma$. Let $\nu,X_2,X_3$ be an orthonormal basis of $T_p M$. Differentiating the identity $|\nu|^2=1$ we deduce that 
\begin{align*}
0\,&=\,\langle\D_{\nu} \nu\,|\,\nu\rangle\,,
\\
0\,&=\,\langle\D_{X_i} \nu\,|\,\nu\rangle\,=\,\langle\D_\nu {X_i}\,|\,\nu\rangle\,=\,-\langle {X_i}\,|\,\D_\nu \nu\rangle\,,\quad\hbox{ for } i=2,3\,,
\end{align*}
at each point in the top stratum of $\Sigma$. This shows that $\D_\nu \nu=0$. Moreover, from our previous computations we get
$$
\langle\D_{X_i}\nu\,|\,X_j\rangle\,=\,\hhh(X_i,X_j)\,=\,\sqrt{m^{-2/3}-3}\,\delta_{ij}\,.
$$ 
Now that we know the components of $\D\nu$ on the top stratum, since the points in the top stratum are dense in $\Sigma$, it is clear that the limit of $\D\nu$ exists when we approach every point of $\Sigma$. It follows that the normal vector is differentiable, that is, $\Sigma$ is $\mathscr{C}^2$. Differentiating again the formul\ae
$$
\langle\D_{X_i}\nu\,|\,\nu\rangle\,=\,0\,,\quad\langle\D_{X_i}\nu\,|\,X_j\rangle\,=\,\sqrt{m^{-2/3}-3}\,\delta_{ij}\,,\quad\hbox{ and }\quad \D_{\nu}\nu\,=\,0\,,
$$
we easily get $\DD\nu\equiv 0$ on the top stratum. From this it follows that $\D^k\nu\equiv 0$ on the top stratum for every $k\geq 2$, hence the limit of all the derivatives of $\nu$ exist when we approach every point of $\Sigma$. This proves that the normal vector is smooth, which in turn implies that $\Sigma$ is $\mathscr{C}^{\infty}$.
\end{proof}

\noindent 
The next result follows combining Theorem~\ref{thm:estimates_SD} with Corollary~\ref{le:in_mon_Phi1_SD}, in order to obtain lower bound on $|\pa M_+|$. 

\begin{proposition}
\label{pro:BHG_n>3_SD}
Let $(M,\go,u)$ be a $2$-sided solution to problem~\eqref{eq:prob_SD}, and let $\Sigma\subseteq {\rm MAX}(u)$ be the stratified hypersurface separating $M_+$ and $M_-$. Let also
$$
m_+\,=\,\mu(M_+,\go,u)\,,\qquad m_-\,=\,\mu(M_-,\go,u)
$$
be the virtual masses of $M_+$ and $M_-$. 
Suppose $m_+=m_-<\mmax$.
Then it holds
$$
\int_{\Sigma}\frac{\RRR^\Sigma}{2}\,\rmd\sigma\,=\,
\frac{|\Sigma|}{m_+^{2/3}}\,\leq\,
\frac{|\pa M_+|}{r_+^2(m_+)}\,.
$$
Moreover, if the equality holds in the latter inequality, then $(M,\go,u)$ is isometric to a generalized Schwarzschild--de Sitter solution~\eqref{eq:gen_SD} with mass $m_+=m_-$.
\end{proposition}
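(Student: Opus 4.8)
The plan is to assemble two results already established in the excerpt: the pointwise description of the geometry of $\Sigma$ furnished by Theorem~\ref{thm:estimates_SD}, and the conformal area comparison of Corollary~\ref{le:in_mon_Phi1_SD}.

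First I would observe that the assumption $m_+<\mmax$ prevents $M_+$ from being a cylindrical region (a cylindrical region satisfies $\max_{\pa M_+}|\D u|/\umax=\sqrt{n}$, hence has virtual mass $\mmax$); by Definition~\ref{def:2-sided} the region $M_+$ is therefore outer, with $\mu(M_+,\go,u)=m_+$, so the constructions of Section~\ref{sec:cyl_ans} apply to it: one has the pseudo-radial function $\Psi=\psi_+\circ u$ as in~\eqref{eq:pr_function_+}, the metric $g=\go/\Psi^2$, and the pseudo-affine function $\ffi$ as in~\eqref{eq:ffi_SD}, all relative to $m_+$. Moreover the hypersurface $\Sigma_{M_+}=\overline{M}_+\cap\overline{M\setminus\overline{M}_+}$ appearing in Corollary~\ref{le:in_mon_Phi1_SD} is exactly the separating hypersurface $\Sigma$.

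For the equality $\int_\Sigma\frac{\RRR^\Sigma}{2}\,\rmd\sigma=|\Sigma|/m_+^{2/3}$ I would simply invoke Theorem~\ref{thm:estimates_SD}: since $m_+=m_-$, it yields that $\Sigma$ is a $\mathscr{C}^\infty$ hypersurface on which $\RRR^\Sigma\equiv 2\,m_+^{-2/3}$ by~\eqref{eq:estimates_SD}, and integrating this constant over $\Sigma$ gives the identity. For the inequality, Corollary~\ref{le:in_mon_Phi1_SD} applied with $N=M_+$ gives $|\Sigma|_g\le|\pa M_+|_g$, with equality only for a generalized Schwarzschild--de Sitter solution. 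It then remains to convert $g$-areas into $\go$-areas. Along $\Sigma\subseteq{\rm MAX}(u)$ one has $u\equiv\umax$, hence $\Psi$ takes the constant value $\psi_+(\umax)=r_0(m_+)$ (cf.~\eqref{eq:pr_function_global}); along $\pa M_+=\{u=0\}$, $\Psi$ takes the constant value $r_+(m_+)$. Therefore $\rmd\sigma_g=r_0(m_+)^{1-n}\rmd\sigma$ on $\Sigma$ and $\rmd\sigma_g=r_+(m_+)^{1-n}\rmd\sigma$ on $\pa M_+$, and $|\Sigma|_g\le|\pa M_+|_g$ reads
\[
r_0(m_+)^{1-n}\,|\Sigma|\ \le\ r_+(m_+)^{1-n}\,|\pa M_+|\,,
\]
equivalently $|\Sigma|/r_0^2(m_+)\le|\pa M_+|/r_+^2(m_+)$ in dimension $n=3$; since $r_0(m)=m^{1/3}$ for $n=3$, this is exactly the asserted bound. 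Finally, equality in this inequality is equivalent to equality in Corollary~\ref{le:in_mon_Phi1_SD}, which forces $(M,\go,u)$ to be isometric to a generalized Schwarzschild--de Sitter solution~\eqref{eq:gen_SD} with mass $m_+=m_-$.

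There is no genuinely hard step left here — all of the analytic content has been absorbed into Theorem~\ref{thm:estimates_SD} and Corollary~\ref{le:in_mon_Phi1_SD}, and the argument is essentially an assembly. The one point that deserves care, and that I regard as the (mild) main obstacle, is the legitimacy of the conformal rescaling of $(n-1)$-dimensional volumes across $\Sigma$, i.e. that $|\Sigma|_g$ is well defined and equals $r_0(m_+)^{1-n}|\Sigma|$: this rests on Proposition~\ref{pro:C2} (so that $\Psi$, and hence $g$, is $\mathscr{C}^2$ near the top stratum of $\Sigma$) together with the fact, already exploited in the proof of Corollary~\ref{le:in_mon_Phi1_SD}, that $\Sigma$ carries locally finite $\mathscr{H}^{n-1}$-measure which, up to a null set, is supported on its smooth top stratum.
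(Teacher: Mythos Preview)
Your proposal is correct and follows essentially the same route as the paper: invoke Theorem~\ref{thm:estimates_SD} for the identity $\RRR^\Sigma\equiv 2\,m_+^{-2/3}$, then apply Corollary~\ref{le:in_mon_Phi1_SD} on $M_+$ and translate the $g$-area inequality back to $\go$ using $\Psi\equiv r_0(m_+)$ on $\Sigma$ and $\Psi\equiv r_+(m_+)$ on $\pa M_+$. Your extra care in justifying that $M_+$ is genuinely outer and that the conformal area conversion on $\Sigma$ is legitimate is welcome but not a departure from the paper's argument.
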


\begin{proof}
The proof is just a collection of the previous results. From formula~\eqref{eq:estimates_SD}, we get
$$
m_+^{2/3}\int_{\Sigma}\frac{\RRR^\Sigma}{2}\rmd\sigma\,=\,|\Sigma|\,.
$$
We also recall that $|\na\ffi|_g\to 1$ as we approach $\Sigma$, as proven in Theorem~\ref{thm:estimates_SD} above.
Therefore, from Corollary~\ref{le:in_mon_Phi1_SD} we deduce
$$
|\Sigma|_g\,\,\leq\,\,|\pa M_+|_g\,,
$$
where we recall that the metric $g$ is defined by $g=\go/\Psi^2$. In particular, it holds
$$
|\pa M_+|_g\,=\,\frac{|\pa M_+|}{r_+^{2}(m_+)}\,,\qquad |\Sigma|_g\,=\,\frac{|\Sigma|}{m_+^{2/3}}\,.
$$
Putting together these formul\ae\ we easily obtain the thesis.
\end{proof}

\noindent 
If we also assume the hypothesis that $\pa M_+$ is connected, we can use Corollary~\ref{cor:Gauss-Codazzi_+_SD} to obtain a bound from above on $\pa M_+$. Combining this bound with the bound from below given by Proposition~\ref{pro:BHG_n>3_SD}, we obtain the chain of inequalities
\begin{equation}
\label{eq:BHG_n>3_SD}
\int_{\Sigma}\frac{\RRR^\Sigma}{2}\,\rmd\sigma\,\leq\,
\frac{|\pa M_+|}{r_+^{2}(m_+)}\,\leq\,
\int_{\pa M_+}\frac{\RRR^{\pa M_+}}{2}\,\rmd\sigma\,.
\end{equation}
Combining this inequality with the Gauss-Bonnet formula, we obtain the following uniqueness theorem.
\begin{theorem}
\label{thm:mon_glob_3_SD}
Let $(M,\go,u)$ be a $3$-dimensional $2$-sided solution to problem~\eqref{eq:prob_SD}, and let $\Sigma\subseteq {\rm MAX}(u)$ be the stratified hypersurface separating $M_+$ and $M_-$. Let also
$$
m_+\,=\,\mu(M_+,\go,u)\,,\qquad m_-\,=\,\mu(M_-,\go,u)
$$
be the virtual masses of $M_+$ and $M_-$. If the following conditions are satisfied
\begin{itemize}
\smallskip
\item \underline{mass compatibility}\qquad\qquad\qquad\quad $\,\,m_+= m_-<\mmax$,
\smallskip
\item \underline{connected cosmological horizon}\qquad $\pa M_+$ is connected,
\end{itemize}
then $(M,\go,u)$ is isometric to 
the Schwarzschild--de Sitter triple~\eqref{eq:SD} with mass $m_+=m_-$.
\end{theorem}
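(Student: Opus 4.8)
The plan is to assemble the theorem from the inequality chain~\eqref{eq:BHG_n>3_SD} together with the Gauss--Bonnet formula, since all the genuinely hard work has already been carried out in Theorem~\ref{thm:estimates_SD}, Proposition~\ref{pro:BHG_n>3_SD} and Corollary~\ref{cor:Gauss-Codazzi_+_SD}. First I would use the mass compatibility hypothesis $m_+=m_-<\mmax$ to invoke Theorem~\ref{thm:estimates_SD}, which tells us that the separating hypersurface $\Sigma$ is a $\mathscr{C}^\infty$ closed surface with constant scalar curvature $\RRR^\Sigma=2\,m_+^{-2/3}>0$. Being a separating hypersurface in the orientable manifold $M$, $\Sigma$ is two-sided and hence orientable; since each of its connected components carries a metric of positive constant curvature, each such component is diffeomorphic to $\Sph^2$. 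Writing $k\geq 1$ for the number of connected components of $\Sigma$, the Gauss--Bonnet formula yields $\int_{\Sigma}\tfrac{\RRR^\Sigma}{2}\,\rmd\sigma=2\pi\,\chi(\Sigma)=4\pi k$.

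Next I would exploit the connected cosmological horizon hypothesis: since $\pa M_+$ is connected it is trivially the horizon of largest surface gravity in the outer region $M_+$, so Theorem~\ref{thm:Gauss-Bonnet_+_SD} shows that $\pa M_+$ is diffeomorphic to $\Sph^2$ and therefore $\int_{\pa M_+}\tfrac{\RRR^{\pa M_+}}{2}\,\rmd\sigma=2\pi\,\chi(\pa M_+)=4\pi$. Combining the lower bound of Proposition~\ref{pro:BHG_n>3_SD} (applicable because $m_+=m_-<\mmax$) with the upper bound of Corollary~\ref{cor:Gauss-Codazzi_+_SD} (applicable because $\pa M_+$ is connected), the chain~\eqref{eq:BHG_n>3_SD} now reads
\[
4\pi k\,=\,\int_{\Sigma}\frac{\RRR^\Sigma}{2}\,\rmd\sigma\,\leq\,\frac{|\pa M_+|}{r_+^2(m_+)}\,\leq\,\int_{\pa M_+}\frac{\RRR^{\pa M_+}}{2}\,\rmd\sigma\,=\,4\pi\,,
\]
so that $k=1$, i.e. $\Sigma$ is connected, and, more importantly, every inequality in the chain must be an equality.

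Finally, equality in the first inequality of the chain is exactly the rigidity case of Proposition~\ref{pro:BHG_n>3_SD}, which forces $(M,\go,u)$ to be isometric to a generalized Schwarzschild--de Sitter solution~\eqref{eq:gen_SD} with mass $m_+=m_-$. In dimension $n=3$ the only $2$-dimensional Einstein fiber with $\Ric=g$ is the round $\Sph^2$, so the generalized solution is precisely the Schwarzschild--de Sitter solution~\eqref{eq:SD} with mass $m_+=m_-<\mmax$, which is the assertion of the theorem. The only step that requires a little care here is the Gauss--Bonnet bookkeeping for a possibly disconnected $\Sigma$ (handled by the component count $k$, which the inequality then pins down to $1$) and making sure the equality case propagates back through Proposition~\ref{pro:BHG_n>3_SD}; there is no serious obstacle, because the sharpness of all the relevant estimates has already been established. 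Note also that the strict inequality $m_+<\mmax$ is what makes this argument run: the Nariai case $m_+=\mmax$ is genuinely different — the conformal model is the cylinder rather than a Schwarzschild--de Sitter metric — and is treated separately in Section~\ref{sec:nariai_SD}.
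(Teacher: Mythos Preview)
Your proposal is correct and follows essentially the same route as the paper: both combine the chain~\eqref{eq:BHG_n>3_SD} with Gauss--Bonnet, use Theorem~\ref{thm:estimates_SD} to obtain $\RRR^\Sigma>0$ and orientability of the separating $\Sigma$ to force $\chi(\Sigma)\geq 2$, and then read off rigidity from the equality case. Your argument that each component of $\Sigma$ is a sphere (via positive constant curvature) is a slightly more explicit variant of the paper's ``$\chi(\Sigma)$ is positive and even'' step, but the logic is the same.
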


\begin{proof}
The chain of inequalities~\eqref{eq:BHG_n>3_SD} tells us that
$$
\int_\Sigma\RRR^\Sigma\,\rmd\sigma\,\leq\,\int_{\pa M_+}\RRR^{\pa M_+}\,\rmd\sigma\,,
$$
and the equality holds if and only if $(M,\go,u)$ is isometric to the Schwarzschild--de Sitter solution~\eqref{eq:SD}.
Applying the Gauss-Bonnet formula to both sides of the above inequality, we obtain
\begin{equation*}
4\pi\sum_{i=1}^k\chi(\Sigma_i)\,\leq\,4\pi\chi(\pa M_+)\,.
\end{equation*}
We recall from Theorem~\ref{thm:Gauss-Bonnet_+_SD} that if $\pa M_+$ is connected then $\pa M_+$ is diffeomorphic to a sphere, hence we get
\begin{equation}
\label{eq:bonnet_rigidity}
\sum_{i=1}^k\chi(\Sigma_i)\,\leq\,2\,,
\end{equation} 
where $\Sigma_1,\dots,\Sigma_k$ are the connected components of $\Sigma$. Moreover, the equality holds in~\eqref{eq:bonnet_rigidity} if and only if $(M,\go,u)$ is equivalent to the Schwarzschild--de Sitter solution.

On the other hand, from formula~\eqref{eq:estimates_SD} we get
$$
\RRR^\Sigma\,=\,{2}\,{m_+^{-2/3}}>0\,,
$$
hence 
$$
\sum_{i=1}^k\chi(\Sigma_i)\,=\,\int_{\Sigma}\RRR^\Sigma\rmd\sigma\,>\,0\,.
$$
Since $\Sigma$ is a separating surface, $\Sigma$ is necessarily orientable, therefore its Euler characteristic is necessarily an even number. Since $\chi(\Sigma)>0$, it follows $\chi(\Sigma)\geq 2$. Therefore, the equality must hold in~\eqref{eq:bonnet_rigidity} and this triggers the rigidity statement.
\end{proof}

\section{The cylindrical case}
\label{sec:nariai_SD}

In this section we deal with the case where the virtual mass of a region $N$ is equal to $\mmax$. We notice that the metric and the static potential of the Schwarzschild--de Sitter solution~\eqref{eq:SD} collapse as the mass $m$ approaches $\mmax$. Nevertheless, it is well known (see for instance~\cite{Gin_Per,Bousso,
Bou_Haw,Car_Dia_Lem}) that, if one rescales the static potential and the coordinates during the limit process in order to avoid singularities, then the limit of the Schwarzschild--de Sitter solution as the mass $m$ approaches $\mmax$ can be seen to be the Nariai triple~\eqref{eq:cylsol_D}. Therefore, in this section, the Nariai triple will play the role of the reference model. While the following computations are different from the ones shown in the preceding sections, the ideas and the conclusions will be analogue.

\begin{normalization}
According to the Nariai solution~\eqref{eq:cylsol_D}, throughout all this section, the static potential $u$ is normalized in such a way that $\umax:=\max_{M} (u)=1$.
\end{normalization}

\subsection{Conformal reformulation.}
\label{sub:conformal_reformulation_N_SD}

Let $(M,g_0,u)$ be a solution to system~\eqref{eq:prob_SD}, and let $N$ be a connected component of $M\setminus{\rm MAX}(u)$ such that $\max_{\pa N}|\D u|=\sqrt{n}$. On $N$, consider the metric 
\begin{equation}
\label{eq:g_N_SD}
g=\frac{n}{n-2}\,g_0\,.
\end{equation}
We want to reformulate problem \eqref{eq:prob_SD} in terms of $g$. 
\begin{remark}
We notice that this conformal change is analogue to the conformal change~\eqref{eq:g_SD} (in fact, the value of the pseudo-radial function $\Psi$ defined in Section~\ref{sub:pseudo_radial} goes to $\sqrt{(n-2)/n}$ as $m\to\mmax$).
In this case, the conformal change~\eqref{eq:g_N_SD} is just a rescaling of the metric, hence it is not really necessary for the following analysis. However, we have preferred to introduce it, since it allows for an easier comparison between the following computations and the ones shown in the previous sections for $m\neq\mmax$.
\end{remark}

We fix local coordinates in $M$ and we denote by $\Gamma_{\alpha\beta}^{\gamma},G_{\alpha\beta}^{\gamma}$ the Christoffel symbols of $g,g_0$. It is clear that
$
\Gamma_{\alpha\beta}^{\gamma}=G_{\alpha\beta}^{\gamma}
$. 
Denote by $\nabla,\Delta_g$ the Levi-Civita connection and the Laplace-Beltrami operator of $g$. For every $z\in\mathscr{C}^{\infty}$, we compute
\begin{equation}
\label{eq:hessian_N_SD}
\nabla_{\alpha\beta}^2 z=\D_{\alpha\beta}^2 z
\end{equation}
\begin{equation}\label{eq:laplacian_N}
\Delta_g z=\frac{n-2}{n}\,\Delta z
\end{equation}
Moreover, since the Ricci tensor is invariant under rescaling, we have $\Ricg=\Ric$.
Consider now the function
\begin{equation}
\label{eq:ffi_N_SD}
\ffi\,=\,\frac{\arcsin (u)}{\sqrt{n-2}}\,.
\end{equation}
Since $u$ is normalized in such a way that $\umax=1$, the function $\ffi$ is well defined, is zero on $\pa N$ and goes to $\pi/(2\sqrt{n-2})$ when we approach ${\rm MAX}(u)$.
Moreover, the gradient and hessian of $\ffi$ satisfy the following identities
\begin{align}
\label{eq:|naffi|_N_SD}
|\na\ffi|_g^2\,&=\,\frac{1}{n}\,\frac{|\D u|^2}{1-u^2}\,,
\\
\label{eq:nanaffi_N_SD}
\nana\ffi\,&=\,\frac{1}{\sqrt{n-2}\sqrt{1-u^2}}\,\left[\DD u\,+\,\frac{u}{1-u^2}\,du\otimes du\right]\,.
\end{align}
Some more calculations show that, with respect to $(\ffi,g)$, the equations in \eqref{eq:prob_SD} rewrites in $N$ as

\begin{equation}
\label{eq:pb_conf_N_SD}
\begin{dcases}
\Ricg=\frac{\sqrt{n-2}}{\tan(\sqrt{n-2}\,\ffi)}\,\nana\ffi-(n-2)d\ffi\otimes d\ffi+(n-2)\,g, & \mbox{in } N\\
\Deg \ffi=-\sqrt{n-2}\,\tan(\sqrt{n-2}\ffi)\left(1-|\na\ffi|_g^2\right), & \mbox{in } N\\
\ffi=0, & \mbox{on } \partial N\\
\ffi=\ffi_0:=\frac{\pi}{2\sqrt{n-2}} & \mbox{on } \overline{N}\cap {\rm MAX}(u).
\end{dcases}
\end{equation}
We observe that, since $g$ is just a rescaling of $\go$, we have $\Ricg=\Ric$. In particular the scalar curvature of $g$ is constant and more precisely
\begin{equation}
\label{eq:tildeR_N_SD}
\Rg=(n-1)(n-2)\,.
\end{equation}
We can also prove the analogue of Proposition~\ref{pro:rigidity}.

\begin{proposition}
\label{pro:rigidity_N}
Let $(M,\go,u)$ be a solution to problem~\eqref{eq:prob_SD}, and let $N$ be a cylindrical region.
Let $g=[n/(n-2)]\go$ and let $\ffi$ be the pseudo-affine function defined by~\eqref{eq:ffi_N_SD}.

If $\nana\ffi\equiv 0$ and $|\na\ffi|_g\equiv 1$ on $N$, then $(M,\go,u)$ is covered by a generalized Nariai solution~\eqref{eq:gen_cylsol_D}.
\end{proposition}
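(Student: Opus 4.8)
The plan is to follow the argument of Proposition~\ref{pro:rigidity}, with the Nariai solution playing the role of the Schwarzschild--de Sitter one. First I would invoke the hypotheses exactly as in the proof of \cite[Theorem~4.1]{Ago_Maz_1}: since $\nana\ffi\equiv 0$ makes $\na\ffi$ a parallel vector field on $N$, $|\na\ffi|_g\equiv 1$ makes it unit, and $\ffi$ is proper with compact level sets, the region $\{0\le\ffi<\ffi_0\}$ is isometric to the Riemannian product
\[
\big([0,\ffi_0)\times\pa N\,,\ d\ffi\otimes d\ffi+g^{\pa N}\big)\,,
\]
where $g^{\pa N}$ is the metric induced by $g$ on $\pa N$, and every level set $\{\ffi=s\}$ is totally geodesic.

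Next I would read off the geometry of the fibre. Substituting $\nana\ffi\equiv 0$ into the first equation of~\eqref{eq:pb_conf_N_SD} gives $\Ricg=-(n-2)\,d\ffi\otimes d\ffi+(n-2)\,g$, and restricting this identity to a level set (on which $g$ splits off the $d\ffi\otimes d\ffi$ factor) yields $\Ric_{g^{\pa N}}=(n-2)\,g^{\pa N}$, so $(\pa N,g^{\pa N})$ is an Einstein manifold of the type appearing in~\eqref{eq:gen_cylsol_D}. Now I would undo the conformal rescaling and reparametrise: setting $r=\sqrt{n-2}\,\ffi$, formula~\eqref{eq:ffi_N_SD} gives $u=\sin(\sqrt{n-2}\,\ffi)=\sin r$, and from $g=\tfrac{n}{n-2}\,\go$ (see~\eqref{eq:g_N_SD}) one computes
\[
\go\,=\,\frac{n-2}{n}\big(d\ffi\otimes d\ffi+g^{\pa N}\big)\,=\,\frac{1}{n}\Big[\,dr\otimes dr+(n-2)\,g^{\pa N}\,\Big]\,,
\]
with $r\in[0,\pi/2)$. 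Hence $\big(\{0\le\ffi<\ffi_0\},\go,u\big)$ is isometric to the portion $\{0\le r<\pi/2\}$ of the generalized Nariai solution~\eqref{eq:gen_cylsol_D} with Einstein fibre $(\pa N,g^{\pa N})$.

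Finally I would extend this isometry across $\Sigma_N=\overline N\cap\overline{M\setminus\overline N}\subseteq{\rm MAX}(u)$, distinguishing, as in Proposition~\ref{pro:rigidity}, the two-sided and the one-sided case. If $\Sigma_N$ is two-sided, analytic continuation chart by chart through ${\rm MAX}(u)$ propagates the isometry to all of $M$, identifying $(M,\go,u)$ with the whole generalized Nariai solution~\eqref{eq:gen_cylsol_D}, which trivially covers itself. If $\Sigma_N$ is one-sided, then $M=\overline N$ and $(M,\go,u)$ is the quotient of $[0,\pi/2]\times\pa N$ by the relation gluing $(\pi/2,x)$ to $(\pi/2,\iota_\sim(x))$ for a fixed-point-free, orientation-reversing involution $\iota_\sim$ of ${\rm MAX}(u)$; doubling along ${\rm MAX}(u)$ then exhibits the generalized Nariai solution as a covering of $(M,\go,u)$. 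In both cases the conclusion follows. The point I expect to require care is precisely this last step: in contrast with the outer/inner setting, ${\rm MAX}(u)$ in the Nariai model is totally geodesic with vanishing mean curvature vector, so the argument used in Proposition~\ref{pro:rigidity} to rule out one-sidedness is unavailable here — which is exactly why the statement must read ``covered by'' rather than ``isometric to''. As in the earlier proof, I would also check that $\iota_\sim$ is genuinely a smooth involution and that $g^{\pa N}$ is independent of the level set chosen, both of which follow from the product structure and the analyticity of $u$.
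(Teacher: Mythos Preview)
Your proposal is correct and follows essentially the same approach as the paper: both first use the parallel unit vector field $\na\ffi$ to split $(N,g)$ as a product, identify the fibre as an Einstein manifold via the first equation of~\eqref{eq:pb_conf_N_SD}, undo the rescaling to recognise a half of a generalized Nariai solution, and then handle the two-sided/one-sided dichotomy for $\Sigma_N$ by analytic continuation or by an involutive quotient. The paper makes the one-sided case slightly more explicit by writing the covering involution as $\iota(r,x)=(\pi-r,\iota_\sim(x))$ on the full Nariai $[0,\pi]\times E$, which is precisely your ``doubling'' description viewed from the other side.
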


\begin{proof}
Proceeding as in the proof of Proposition~\ref{pro:rigidity} one shows that $(N,\go,u)$ is isometric to a region $(M_+^n,\go^n,u^n)$ of a Nariai solution~\eqref{eq:cylsol_D}, that we denote in this proof as $(M^n,\go^n,u^n)$. We then distinguish two cases, depending on whether the hypersurface $\Sigma_N=\overline{N}\cap {\rm MAX}(u)$ is two-sided or one-sided.

\begin{itemize}
\item If $\Sigma_N$ is two sided, then one can proceed exactly as in Proposition~\ref{pro:rigidity} to prove that the isometry extends beyond $\Sigma_N$. Therefore, $(M,\go,u)$ is isometric to the Nariai solution~\eqref{eq:cylsol_D}.

\smallskip

\item If $\Sigma$ is one sided then, reasoning as in Proposition~\ref{pro:int_id_g_SD}, we have that $(M,\go,u)=(\overline{N},\go,u)$ is isometric to $(\overline{M}_+^n,\go^n,u^n)/\sim$, where $\sim$ is a relation on the points of 
$$
{\rm MAX}^n(u)\,=\,\{p\in M^n\,:\,u^n(p)=\umax\}\,\subset\,\pa\overline{M}_+^n\,.
$$
induced by an involution of ${\rm MAX}^n(u)$. Notice that ${\rm MAX}^n(u)$, with the metric induced by $\go^n$, is isometric to an Einstein manifold $(E,g_E)$, hence the relation $\sim$ gives rise to an isometric involution $\iota_\sim:E\to E$. But then one can check that
$$
(\overline{M}_+^n,\go^n,u^n)/\sim\,=\,(M^n,\go^n,u^n)/\iota
$$
where $\iota:M^n\to M^n$ is the involution defined, for any $(r,x)\in [0,\pi]\times E=M^n$, by
$$
\iota(r,x)\,=\,(\pi- r, \iota_{\sim}(x))\,.
$$
In particular, $(\overline{M}_+^n,\go^n,u^n)/\sim$ is covered by the Nariai solution~\eqref{eq:cylsol_D} with fiber $E$, and so the same holds for our initial manifold $(M,\go,u)=(\overline{N},\go,u)$.
\end{itemize} 
This concludes the proof.
\end{proof}

Proceeding as in Subsection~\ref{sub:geom_levelsets_SD}, from identity~\eqref{eq:nanaffi_N_SD} one can also prove the following formul\ae\ for the second fundamental form and mean curvature of a level set $\{\ffi=s\}$
\begin{equation}
\label{eq:formula_curvature_N_SD}
\hg\,|\na\ffi|_g\,=\,\frac{1}{\sqrt{n-2}}\,\frac{|\D u|}{\sqrt{1-u^2}}\,\hhh\,,\qquad \Hg\,|\na\ffi|_g\,=\,\frac{\sqrt{n-2}}{n}\,\frac{|\D u|}{\sqrt{1-u^2}}\,\HHH\,.
\end{equation}
Furthermore, starting from the Bochner formula and using the equations in~\eqref{eq:pb_conf_N_SD}, we find

\begin{multline}
\label{eq:bochner_N_SD}
\Deg|\na\ffi|_g^2\,-\,\sqrt{n-2}\,\bigg[\frac{1+2\,\tan^2(\sqrt{n-2}\,\ffi)}{\tan(\sqrt{n-2}\,\ffi)}\bigg]
\,\langle\na|\na\ffi|_g^2\,|\,\na\ffi\rangle_g
\,=
\\
=\,2|\nana\ffi|_g^2-\,2(n-2)\tan^2(\sqrt{n-2}\,\ffi)\,|\na\ffi|_g^2\,(1-|\na\ffi|_g^2)\,.
\end{multline}

Let $w=\beta(1-|\na\ffi|_g^2)$, where $\beta=\cos(\sqrt{n-2}\,\ffi)$.
With computations analogous to the ones shown in Subsection~\ref{sub:bochner_minpr_SD}, we arrive to the following equation
\begin{multline}
\label{eq:Degw_N_SD}
\Deg w\,-\,\,\frac{\sqrt{n-2}}{\tan(\sqrt{n-2}\,\ffi)}\langle \na\ffi\,|\,\na w\rangle\,
-\,(n-2)\,\tan^2(\sqrt{n-2}\,\ffi)\,\left[(n+2)|\na\ffi|_g^2+(n-2)\right]w
\,=\,
\\
=\,-2\,\cos(\sqrt{n-2}\,\ffi)\left[|\nana\ffi|_g^2-\frac{(\Deg\ffi)^2}{n}\right]\,\leq\,0\,.
\end{multline}
In particular, we can apply a Minimum Principle to find the following analogue of Proposition~\ref{pro:min_pr_SD}.

\begin{proposition}
\label{pro:min_pr_N_SD}
Let $(M,\go,u)$ be a solution to problem~\eqref{eq:prob_SD}, let $N$ be  a connected component of $M\setminus{\rm MAX}(u)$ with virtual mass $m=\mmax$, and let $g,\ffi$ be defined by,~\eqref{eq:g_N_SD},~\eqref{eq:ffi_N_SD}. 
Then 
$$
|\na\ffi|_g\leq 1
$$ 
on the whole $N$.

Moreover, if $|\na\ffi|_g=1$ at a point in the interior of $N$, then $|\na\ffi|_g\equiv 1$ on the whole $N$ and $(M,\go,u)$ is isometric to a generalized Nariai solution~\eqref{eq:gen_cylsol_D}.
\end{proposition}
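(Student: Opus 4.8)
The plan is to follow the same scheme used for Proposition~\ref{pro:min_pr_SD}, but working with the cylindrical conformal data $g=[n/(n-2)]\go$ and $\ffi=\arcsin(u)/\sqrt{n-2}$, and with the auxiliary function $w=\beta(1-|\na\ffi|_g^2)$, $\beta=\cos(\sqrt{n-2}\,\ffi)$. The elliptic inequality~\eqref{eq:Degw_N_SD} is already at our disposal, so the argument reduces to: (i) establishing the sign of $w$ on the two components of the topological boundary of $N$, namely $\pa N$ and $\overline{N}\cap{\rm MAX}(u)$; (ii) running a Weak Minimum Principle on an exhaustion of the interior of $N$ by compact domains where the coefficients of~\eqref{eq:Degw_N_SD} stay bounded; and (iii) upgrading to the rigidity statement via the Strong Minimum Principle together with Proposition~\ref{pro:rigidity_N}.

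First I would check the boundary behaviour. On $\pa N=\{u=0\}$ we have $\ffi=0$, hence $\beta=1$, and by hypothesis $\max_{\pa N}|\D u|=\sqrt{n}$; since $u=0$ there, formula~\eqref{eq:|naffi|_N_SD} gives $|\na\ffi|_g^2=|\D u|^2/n$, so the horizon with the largest surface gravity contributes $|\na\ffi|_g=1$ and the others contribute $|\na\ffi|_g\le 1$. Therefore $w\ge 0$ on $\pa N$. At the inner boundary $\overline{N}\cap{\rm MAX}(u)$ one has $\beta=\cos(\pi/2)=0$, and one must argue that $\beta(1-|\na\ffi|_g^2)\to 0$ as $x\to{\rm MAX}(u)$. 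Writing $\beta=\cos(\sqrt{n-2}\,\ffi)=\sqrt{1-u^2}$ and using~\eqref{eq:|naffi|_N_SD}, we get $w=\sqrt{1-u^2}-|\D u|^2/(n\sqrt{1-u^2})$; both terms go to zero, the second because $|\D u|^2/(1-u^2)=|\D u|^2/((\umax-u)(\umax+u))$ is controlled by the reverse {\L}ojasiewicz estimate of Proposition~\ref{pro:rev_loj_SD} (or more directly by Remark~\ref{rem:bound_Du}, whose conclusion, as noted there, also holds in cylindrical regions). So $w\to 0$ at ${\rm MAX}(u)$.

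Next I would run the Minimum Principle exactly as in the proof of Proposition~\ref{pro:min_pr_SD}: since $\ffi$, $g$, and hence $w$, are analytic in the interior of $N$, the critical level sets of $w$ are discrete, so for small regular values $\ep$ the set $N_\ep=\{|w|\ge\ep\}$ is a compact domain with smooth boundary contained in the interior of $N$, and on it the coefficients in~\eqref{eq:Degw_N_SD} are bounded. Applying~\cite[Corollary~3.2]{Gil_Tru} gives $\min_{N_\ep}w\ge\min_{\pa N_\ep}w\ge-\ep$; letting $\ep\to0$ yields $w\ge 0$ on $N$, i.e.\ $|\na\ffi|_g\le1$. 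For the rigidity part, if $|\na\ffi|_g=1$ at an interior point then $w$ attains its minimum value $0$ there; the Strong Minimum Principle on a neighbourhood forces $w\equiv0$ locally, hence (by analyticity, or by covering $N$ with such neighbourhoods) $|\na\ffi|_g\equiv1$ on $N$, whence $\Deg\ffi\equiv0$ from the second equation of~\eqref{eq:pb_conf_N_SD}, and then the Bochner identity~\eqref{eq:bochner_N_SD} forces $|\nana\ffi|_g^2\equiv0$. Proposition~\ref{pro:rigidity_N} then gives that $(M,\go,u)$ is covered by a generalized Nariai solution~\eqref{eq:gen_cylsol_D}; in dimension $3$ or $4$ this is the Nariai solution~\eqref{eq:cylsol_D} itself.

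The main obstacle is the boundary-decay claim at ${\rm MAX}(u)$: one needs that the bound $|\D u|^2\le K(\umax-u)$ near ${\rm MAX}(u)$ is genuinely available in the cylindrical case rather than only on inner/outer regions. The excerpt flags this (Remark~\ref{rem:bound_Du} promises it via Proposition~\ref{pro:min_pr_N_SD} itself, which would be circular), so the clean route is to invoke only the reverse {\L}ojasiewicz inequality of Proposition~\ref{pro:rev_loj_SD}, which holds for every solution of~\eqref{eq:prob_SD} with no regionwise hypothesis: it gives $|\D u|^2\le K_\beta(\umax-u)^\beta$ for any $\beta<1$, and since $\beta>0$ this already forces $|\D u|^2/\sqrt{\umax-u}\to0$, hence $w\to0$ at ${\rm MAX}(u)$. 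Everything else is a routine transcription of the non-cylindrical proof.
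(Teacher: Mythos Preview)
Your proof is correct and follows essentially the same route as the paper's own argument: boundary sign of $w$ on $\pa N$ via $\max_{\pa N}|\D u|=\sqrt n$, decay $w\to0$ at ${\rm MAX}(u)$ via the reverse {\L}ojasiewicz estimate, Weak Minimum Principle on a compact exhaustion, and then the Strong Minimum Principle together with the Bochner identity and Proposition~\ref{pro:rigidity_N} for rigidity. Your explicit remark that one must rely on Proposition~\ref{pro:rev_loj_SD} rather than Remark~\ref{rem:bound_Du} to avoid circularity is well taken; the paper simply cites Lemma~\ref{le:estimate_upsi_SD} (whose proof ultimately rests on the same reverse {\L}ojasiewicz inequality), so the logical content is identical.
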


\begin{proof}
The proof is completely analogue to the proof of Proposition~\ref{pro:min_pr_SD} for the case $m\neq\mmax$, so we will not give all the details.

Since $\max_{\pa N}|\D u|=\sqrt{n}$, from~\eqref{eq:|naffi|_N_SD} we deduce $w\geq 0$ on $\pa N$. 
Moreover, again from~\eqref{eq:|naffi|_N_SD}, and Lemma~\ref{le:estimate_upsi_SD}, we have that $w$ goes to zero as we approach ${\rm MAX}(u)$. In particular, since $\cos(\sqrt{n-2}\ffi)\to 0$ as $\ffi\to\ffi_0$, we have $w\to 0$ as we approach ${\rm MAX}(u)$.
In particular, for any $\ep>0$ we can find a small neighborhood $\Omega_\ep$ of ${\rm MAX}(u)$ such that $w\geq -\ep$ on $\pa(N\setminus \Omega_\ep)$. The thesis follows applying the Minimum Principle in $N\setminus \Omega_\ep$, and then letting $\ep$ and the volume of $\Omega_\ep$ go to zero.

Now we pass to the proof of the second part of the statement. Let $x$ be a point in the interior of $N$ such that $|\na\ffi|_g(x)=1$. In particular it holds $w(x)=0$ and we have proved above that $w\geq 0$ on the whole $N$. Applying the Strong Minimum Principle on an open set $\Omega$ containing $x$, we obtain $w\equiv 0$, or equivalently $|\na\ffi|_g\equiv 1$, on $\Omega$. From the arbitrariness of $\Omega$ we deduce $|\na\ffi|_g\equiv 1$ on $N$, and plugging this information inside the Bochner formula~\eqref{eq:bochner_N_SD}, we obtain $|\nana\ffi|_g\equiv 0$. We can now invoke Proposition~\ref{pro:rigidity_N} to conclude.
\end{proof}

Now we consider the function
\begin{equation}
\label{eq:Phi_N_SD}
\Phi(s)=\int_{\{\ffi=s\}}|\na\ffi|_g\,\rmd\sigma_g\,.
\end{equation}
which is defined on $s\in[0,\ffi_0]$, where we recall that we have set $\ffi_0=\pi/(2\sqrt{n-2})$.
Proceeding as in the proof of Proposition~\ref{pro:mon_Phi1_SD}, as an application of Proposition~\ref{pro:min_pr_N_SD} one can prove the following monotonicity result for $\Phi$.

\begin{proposition}
\label{pro:mon_Phi1_N_SD}
Let $(M,\go,u)$ be a solution to problem~\eqref{eq:prob_SD}, let $N\subseteq M\setminus{\rm MAX}(u)$ be a cylindrical region, and let $\Phi(s)$ be the function defined by~\eqref{eq:Phi_N_SD}, with respect to the metric $g$ and the pseudo-affine function $\ffi$ defined by~\eqref{eq:g_N_SD},~\eqref{eq:ffi_N_SD}.
Then the function $\Phi(s)$ is monotonically nonincreasing. 
Moreover, if $\Phi(s_1)=\Phi(s_2)$ for two different values $0\leq s_1<s_2<\ffi_0$, then the solution $(M,\go,u)$ is isometric to 
a generalized Nariai triple~\eqref{eq:gen_cylsol_D}.
\end{proposition}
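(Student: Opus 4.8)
The plan is to mirror, word for word, the argument used for Proposition~\ref{pro:mon_Phi1_SD} in the outer case, with the hyperbolic functions of the Nariai setting replacing the $\psi,\dot\psi$ expressions. First I would fix two regular values $0\le s_1<s_2<\ffi_0$ of $\ffi$ (which exist since $\ffi$ is analytic on the interior of $N$, so its critical level sets are discrete, exactly as recalled in Subsection~\ref{sub:prelim}). The key observation is that on a cylindrical region one has $\ffi\in[0,\ffi_0)$ and $\tan(\sqrt{n-2}\,\ffi)\ge 0$, so the second equation of~\eqref{eq:pb_conf_N_SD} combined with Proposition~\ref{pro:min_pr_N_SD} (which gives $1-|\na\ffi|_g^2\ge 0$) yields
$$
\Deg\ffi\,=\,-\sqrt{n-2}\,\tan(\sqrt{n-2}\,\ffi)\,\bigl(1-|\na\ffi|_g^2\bigr)\,\le\,0
$$
on all of $N$.

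Next I would integrate this inequality over the compact region $\{s_1\le\ffi\le s_2\}$ and apply the Divergence Theorem. Since $\na\ffi$ points from the $\{\ffi=s_1\}$ side toward the $\{\ffi=s_2\}$ side, the boundary term reads $\Phi(s_2)-\Phi(s_1)$ (the flux of $\na\ffi$ through a level set $\{\ffi=s\}$ is precisely $\int_{\{\ffi=s\}}|\na\ffi|_g\,\rmd\sigma_g=\Phi(s)$, because $\na\ffi/|\na\ffi|_g$ is the unit normal), whence $\Phi(s_2)-\Phi(s_1)=\int_{\{s_1\le\ffi\le s_2\}}\Deg\ffi\,\rmd\mu_g\le 0$, i.e. $\Phi$ is nonincreasing. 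I should be a little careful that $s_1,s_2$ are regular values so that the level sets are genuine smooth hypersurfaces; the analyticity argument above takes care of choosing such values, and monotonicity at arbitrary values then follows by approximation, exactly as in Proposition~\ref{pro:mon_Phi1_SD}.

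For the rigidity part, if $\Phi(s_1)=\Phi(s_2)$ for some $0\le s_1<s_2<\ffi_0$, then $\Deg\ffi\equiv 0$ on $\{s_1\le\ffi\le s_2\}$, and since $\ffi$ is analytic in the interior of $N$ this forces $\Deg\ffi\equiv 0$ on all of $N$. By the second equation of~\eqref{eq:pb_conf_N_SD} and the fact that $\tan(\sqrt{n-2}\,\ffi)>0$ in the interior, this is equivalent to $|\na\ffi|_g\equiv 1$ on $N$. Plugging $|\na\ffi|_g\equiv 1$ into the Bochner formula~\eqref{eq:bochner_N_SD} makes the right-hand side vanish (both $\Deg|\na\ffi|_g^2$ and $\langle\na|\na\ffi|_g^2\,|\,\na\ffi\rangle_g$ are zero and $1-|\na\ffi|_g^2=0$), so $|\nana\ffi|_g^2\equiv 0$, i.e. $\nana\ffi\equiv 0$. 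We then invoke Proposition~\ref{pro:rigidity_N} to conclude that $(M,\go,u)$ is covered by a generalized Nariai solution~\eqref{eq:gen_cylsol_D}.

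There is no real obstacle here; the statement is the cylindrical analogue of a result already proved, and every ingredient — the sign of $\Deg\ffi$ from Proposition~\ref{pro:min_pr_N_SD}, the Divergence Theorem on a region between two regular level sets, analyticity to upgrade local vanishing to global, Bochner plus Proposition~\ref{pro:rigidity_N} for the rigidity — is in place. The only point deserving a sentence of care is the handling of critical level sets of $\ffi$ (ensuring the chosen $s_1,s_2$ are regular values and that the coarea/Divergence argument is legitimate on the possibly stratified critical level sets), which is dealt with exactly as in Subsection~\ref{sub:monotonicity_SD}.
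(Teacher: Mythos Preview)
Your proof is correct and follows exactly the approach the paper intends: the paper itself does not write out a separate proof for this proposition but simply refers back to Proposition~\ref{pro:mon_Phi1_SD}, and you have reproduced that argument faithfully in the cylindrical setting, replacing the sign analysis of $\dot\psi$ by that of $\tan(\sqrt{n-2}\,\ffi)$ and invoking Proposition~\ref{pro:min_pr_N_SD} and Proposition~\ref{pro:rigidity_N} in place of their non-cylindrical counterparts. One small remark: Proposition~\ref{pro:rigidity_N} concludes that $(M,\go,u)$ is \emph{covered by} a generalized Nariai triple rather than isometric to one, so your final sentence is in fact more precise than the statement as written.
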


%

From Proposition~\ref{pro:C2} and formula~\eqref{eq:grad_u_nearSigma_N} we also know that $|\na\ffi|_g$ goes to $1$ as we approach the points where ${\rm MAX}(u)$ is an analytic hypersurface. Proceeding as in the proof of Corollary~\ref{le:in_mon_Phi1_SD} we obtain the following.

\begin{corollary}
	\label{le:in_mon_Phi1_N_SD}
Let $(M,\go,u)$ be a solution to problem~\eqref{eq:prob_SD}, let $N\subseteq M\setminus{\rm MAX}(u)$ be a cylindrical region, and let $g$ and $\ffi$ be defined by~\eqref{eq:g_N_SD},~\eqref{eq:ffi_N_SD}. Let also $\Sigma_N=\overline{N}\cap\overline{M\setminus\overline{N}}$ be the hypersurface separating $N$ from the rest of the manifold $M$. Then
\begin{equation*}
|\pa N|_g\,\geq\,|\Sigma_N|_g \,.
\end{equation*}
Moreover, if the equality holds, 
then $(M,\go,u)$ is isometric to a generalized Nariai triple~\eqref{eq:gen_cylsol_D}.
\end{corollary}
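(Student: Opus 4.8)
The statement to prove is Corollary~\ref{le:in_mon_Phi1_N_SD}, the cylindrical analogue of Corollary~\ref{le:in_mon_Phi1_SD}. The proof proceeds almost verbatim along the lines of the outer/inner case already treated, with the monotonicity of $\Phi$ replaced by Proposition~\ref{pro:mon_Phi1_N_SD} and the convergence $|\na\ffi|_g \to 1$ near the top stratum of $\Sigma_N$ guaranteed by Proposition~\ref{pro:C2} together with formula~\eqref{eq:grad_u_nearSigma_N}.

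\begin{proof}
By Proposition~\ref{pro:mon_Phi1_N_SD}, the function $\Phi(s)=\int_{\{\ffi=s\}}|\na\ffi|_g\,\rmd\sigma_g$, defined for $s\in[0,\ffi_0)$, is monotonically nonincreasing; hence $\lim_{s\to\ffi_0^-}\Phi(s)\leq\Phi(0)$. Since $\max_{\pa N}|\D u|=\sqrt{n}$, formula~\eqref{eq:|naffi|_N_SD} gives $|\na\ffi|_g=1$ on $\pa N=\{\ffi=0\}$, so $\Phi(0)=|\pa N|_g$. Combining,
\begin{equation*}
|\pa N|_g\,\geq\,\lim_{s\to\ffi_0^-}\int_{\{\ffi=s\}}|\na\ffi|_g\,\rmd\sigma_g\,.
\end{equation*}
It remains to bound the right-hand side below by $|\Sigma_N|_g$. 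Fix $\ep>0$ and choose $S_\ep\subset\Sigma_N$ with $|S_\ep|_g<\ep$ such that $\Sigma_N\setminus S_\ep$ is contained in the top stratum of $\Sigma_N$. By Proposition~\ref{pro:C2}, the metric $g$ and the function $\ffi$ are $\mathscr{C}^2$ in a neighborhood of the top stratum, and by formula~\eqref{eq:grad_u_nearSigma_N} (recalling~\eqref{eq:|naffi|_N_SD}, which gives $|\na\ffi|_g^2=|\D u|^2/[n(1-u^2)]$, and the fact that $1-u^2=1-u^2$ while $|\D u|^2/(\umax-u)\to 2nu_{\rm max}$ so $|\D u|^2/(1-u^2)\to n$) we have $|\na\ffi|_g\to 1$ as we approach the top stratum of $\Sigma_N$. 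Therefore, for every $s$ sufficiently close to $\ffi_0$, the flow of $\na\ffi$ produces a diffeomorphism between $\Sigma_N\setminus S_\ep$ and an open subset $V_s\subset\{\ffi=s\}$, and by continuity of $g$ and $|\na\ffi|_g$,
\begin{equation*}
\lim_{s\to\ffi_0^-}\int_{\{\ffi=s\}}|\na\ffi|_g\,\rmd\sigma_g\,\geq\,\lim_{s\to\ffi_0^-}\int_{V_s}|\na\ffi|_g\,\rmd\sigma_g\,=\,|\Sigma_N\setminus S_\ep|_g\,.
\end{equation*}
Letting $\ep\to 0$ yields $|\pa N|_g\geq|\Sigma_N|_g$. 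If equality holds, then $\Phi$ is constant on $[0,\ffi_0)$ and also $\Phi(0)=\lim_{s\to\ffi_0^-}\Phi(s)$; the rigidity part of Proposition~\ref{pro:mon_Phi1_N_SD} (or, equivalently, retracing its proof: $\Deg\ffi\equiv0$ on $N$ by analyticity, hence $|\na\ffi|_g\equiv1$, hence $|\nana\ffi|_g\equiv0$ via the Bochner formula~\eqref{eq:bochner_N_SD}) together with Proposition~\ref{pro:rigidity_N} forces $(M,\go,u)$ to be covered by a generalized Nariai triple~\eqref{eq:gen_cylsol_D}.
\end{proof}

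The only genuinely new ingredient relative to the $m<\mmax$ case is verifying that the estimate $|\na\ffi|_g\to 1$ near the top stratum goes through in the cylindrical normalization; this is exactly what formula~\eqref{eq:grad_u_nearSigma_N} delivers, since in the Nariai conformal setup $\ffi=\arcsin(u)/\sqrt{n-2}$ and $|\na\ffi|_g^2=|\D u|^2/[n(1-u^2)]$, which tends to $1$ precisely because $|\D u|^2/(\umax-u)\to 2n\umax=2n$ while $1-u^2=(1-u)(1+u)\sim 2(\umax-u)$. I do not expect any obstacle here; the rest is a routine transcription of the proof of Corollary~\ref{le:in_mon_Phi1_SD}, so the statement follows with essentially no new difficulty.
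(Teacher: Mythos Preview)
Your proof is correct and follows exactly the approach the paper intends (the paper itself merely writes ``Proceeding as in the proof of Corollary~\ref{le:in_mon_Phi1_SD} we obtain the following'' and gives no further details). One small slip: from $\max_{\pa N}|\D u|=\sqrt{n}$ you can only conclude $|\na\ffi|_g\leq 1$ on $\pa N$, not equality everywhere, since a cylindrical region may have additional boundary components with smaller surface gravity; this gives $\Phi(0)\leq|\pa N|_g$ rather than $\Phi(0)=|\pa N|_g$, but the chain of inequalities and the rigidity argument go through unchanged.
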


In order to make use of Corollary~\ref{le:in_mon_Phi1_N_SD}, we need some information on the set ${\rm MAX}(u)$ and on the behavior of $\na\ffi$ at the limit $\ffi\to\ffi_0$. In Section~\ref{sub:mon_BHU_N_SD} we will see how to recover some more explicit information from Corollary~\ref{le:in_mon_Phi1_N_SD} in the case where our solution is $2$-sided according to Definition~\ref{def:2-sided}.

\subsection{Integral identities.}
\label{sub:integral_identities_N_SD}

Consider the vector field $Y=\na|\na\ffi|_g^2+\Deg\ffi\na\ffi$.
Starting from the Bochner formula~\eqref{eq:bochner_N_SD}, we easily compute
\begin{equation*}
{\rm div}_g (Y)\,-\,\sqrt{n-2}\,\left[\frac{1+3\tan^2(\sqrt{n-2}\ffi)}{\tan(\sqrt{n-2}\ffi)}\right]\langle\na\ffi\,|\,Y\rangle_g\,=\,2|\nana\ffi|_g^2+(\Deg\ffi)^2\,\geq\,0\,.
\end{equation*}
If we introduce the function
\begin{equation}
\label{eq:gamma_N_SD}
\gamma\,=\,\frac{\cos^3(\sqrt{n-2}\,\ffi)}{\sin(\sqrt{n-2}\,\ffi)}\,,
\end{equation}
the identity above can be rewritten as
\begin{equation}
\label{eq:divY_N_SD}
{\rm div}_g (\gamma \,Y)\,=\,\gamma\left[2\,|\nana\ffi|_g^2+(\Deg\ffi)^2\right]\,\geq\,0\,.
\end{equation}
As an application of the Divergence Theorem, we obtain the following result, which is the analogue of Propositions~\ref{pro:int_id_g_SD} and~\ref{pro:int_id_g_2_SD}.

\begin{proposition}
\label{pro:int_id_g_N_SD}
Let $(M,\go,u)$ be a solution to problem~\eqref{eq:prob_SD}, let $N\subseteq M\setminus{\rm MAX}(u)$ be a cylindrical region, and let $g$ and $\ffi$ be defined by~\eqref{eq:g_N_SD} and~\eqref{eq:ffi_N_SD}.
For any $0\leq s<\ffi_0$ it holds
\begin{multline}
\label{eq:int_id_g_N_SD}
	\int_{\pa N}|\na\ffi|_g\left[\Ricg(\nu_g,\nu_g)-\frac{3}{2}(n-2)(1-|\na\ffi|_g^2)\right]\rmd\sigma_g\,=
\\
=\,-\,\sqrt{n-2}\int_{N}\gamma\left[|\nana\ffi|_g^2+\frac{1}{2}(\Deg\ffi)^2\right]
\,\leq\,0\,.
\end{multline}
where $\gamma$ is the function defined by~\eqref{eq:gamma_N_SD}.
Moreover, if the equality
\begin{equation}
\label{eq:int_id_rig_g_N_SD}
\int_{\pa N}|\na\ffi|_g\left[\Ricg(\nu_g,\nu_g)-\frac{3}{2}(n-2)(1-|\na\ffi|_g^2)\right]\rmd\sigma_g\,=\,0\,,
\end{equation}
holds, then the solution $(M,\go,u)$ is covered by a generalized Nariai triple~\eqref{eq:gen_cylsol_D}.
\end{proposition}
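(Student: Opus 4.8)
The plan is to mirror the proofs of Propositions~\ref{pro:int_id_g_SD} and~\ref{pro:int_id_g_2_SD}, replacing the family of weighted divergence identities with the single identity~\eqref{eq:divY_N_SD}. First I would recall from Subsection~\ref{sub:prelim} that $u$ is analytic, hence so is $\ffi$ on the interior of $N$, and therefore the critical level sets of $\ffi$ are discrete. This allows us to pick regular values $0<s<S<\ffi_0$ with $s$ arbitrarily close to $0$ and $S$ arbitrarily close to $\ffi_0$. Integrating ${\rm div}_g(\gamma Y)$ over $\{s\leq\ffi\leq S\}$ and applying the Divergence Theorem gives
$$
\int\limits_{\{s\leq\ffi\leq S\}}\!\!\!{\rm div}_g(\gamma Y)\,\rmd\sigma_g\,=\,\gamma(S)\!\!\int\limits_{\{\ffi=S\}}\!\!\Big\langle Y\,\Big|\,\frac{\na\ffi}{|\na\ffi|_g}\Big\rangle_{\!g}\rmd\sigma_g-\gamma(s)\!\!\int\limits_{\{\ffi=s\}}\!\!\Big\langle Y\,\Big|\,\frac{\na\ffi}{|\na\ffi|_g}\Big\rangle_{\!g}\rmd\sigma_g\,.
$$

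Next I would show the boundary term at $S$ vanishes in the limit $S\to\ffi_0$. Translating $\langle Y\,|\,\na\ffi/|\na\ffi|_g\rangle_g=|\na\ffi|_g[2\nana\ffi(\nu_g,\nu_g)+\Deg\ffi]$ in terms of $u,\go$ via~\eqref{eq:hessian_N_SD},~\eqref{eq:nanaffi_N_SD} and the second equation of~\eqref{eq:pb_conf_N_SD}, and recalling $\gamma=\cos^3(\sqrt{n-2}\,\ffi)/\sin(\sqrt{n-2}\,\ffi)$ together with $\cos(\sqrt{n-2}\,\ffi)=\sqrt{1-u^2}$, the factor $\gamma(S)$ carries a power of $\sqrt{1-u^2}$ that kills the bounded remaining terms; as in Proposition~\ref{pro:int_id_g_SD} the only nontrivial point is $\lim_{t\to1^-}\int_{\{u=t\}\cap N}|\D u|\,(1-u^2)^{-1/2}\,\rmd\sigma=0$, which follows by the coarea formula argument of~\cite[Theorem~4.4]{Bor_Maz_2-I} using $|\na\ffi|_g^2=|\D u|^2/(n(1-u^2))\leq1$ from Proposition~\ref{pro:min_pr_N_SD}. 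Then, using~\eqref{eq:divY_N_SD}, passing to the limit $S\to\ffi_0$ yields $\gamma(s)\int_{\{\ffi=s\}}\langle Y\,|\,\na\ffi/|\na\ffi|_g\rangle_g\,\rmd\sigma_g=-\int_{\{s\leq\ffi<\ffi_0\}}{\rm div}_g(\gamma Y)\,\rmd\sigma_g\leq0$.

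It then remains to evaluate the boundary term at $s$ in the limit $s\to0$. On $\pa N$ one has $u=0$, $\ffi=0$ so $\gamma$ blows up like $1/\sin(\sqrt{n-2}\,\ffi)$; using the first equation of~\eqref{eq:pb_conf_N_SD} to write $2\nana\ffi(\na\ffi,\na\ffi)/|\na\ffi|_g$ in terms of $\Ricg(\nu_g,\nu_g)$ and $\Deg\ffi$, and noting the relevant combination of trigonometric coefficients tends to finite limits as $\ffi\to0$, I would extract the boundary integrand $|\na\ffi|_g[\Ricg(\nu_g,\nu_g)-\tfrac32(n-2)(1-|\na\ffi|_g^2)]$ against a finite constant (the limit of $\sqrt{n-2}\,\sin(\sqrt{n-2}\,\ffi)\,\gamma$, which is $\sqrt{n-2}$ times $1$ in the relevant normalization). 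Multiplying through and using~\eqref{eq:divY_N_SD} once more gives~\eqref{eq:int_id_g_N_SD}. For the rigidity statement: if~\eqref{eq:int_id_rig_g_N_SD} holds, the right-hand side of~\eqref{eq:int_id_g_N_SD} vanishes, so $|\nana\ffi|_g\equiv0$ on $N$; plugging this into the Bochner formula~\eqref{eq:bochner_N_SD} forces $|\na\ffi|_g\equiv1$, and Proposition~\ref{pro:rigidity_N} then gives that $(M,\go,u)$ is covered by a generalized Nariai triple.

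The main obstacle I anticipate is bookkeeping the trigonometric coefficients in the two limits $\ffi\to\ffi_0$ and $\ffi\to0$: one must verify that $\gamma$ and the factors $\tan(\sqrt{n-2}\,\ffi)$, $\sec^2(\sqrt{n-2}\,\ffi)$ appearing through~\eqref{eq:pb_conf_N_SD} combine so that the $S$-boundary term vanishes while the $s$-boundary term produces exactly the constant $\tfrac32(n-2)$ and the correct normalization $\sqrt{n-2}$ in front of the bulk integral. This is routine but delicate; everything else is a direct transcription of the outer-region argument with $\psi$ replaced by the constant $\sqrt{(n-2)/n}$ and the model being Nariai rather than Schwarzschild--de Sitter.
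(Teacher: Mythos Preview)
Your approach mirrors the paper's proof exactly: integrate $\mathrm{div}_g(\gamma Y)$ over $\{s\leq\ffi\leq S\}$, show the $S$-boundary term vanishes as $S\to\ffi_0$, compute the $s\to 0$ limit via the first equation of~\eqref{eq:pb_conf_N_SD}, and deduce rigidity from the vanishing of the bulk integrand. Your boundary computation at $s\to 0$ and your rigidity argument are fine; you reverse the order (first $\nana\ffi\equiv 0$, then $|\na\ffi|_g\equiv 1$) compared to the paper, but both orders work since the vanishing of the bulk term in~\eqref{eq:int_id_g_N_SD} gives $\nana\ffi\equiv0$ and $\Deg\ffi\equiv0$ simultaneously.

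There is, however, one concrete error in your treatment of the $S\to\ffi_0$ limit. You assert that the nontrivial point is
\[
\lim_{t\to1^-}\int_{\{u=t\}\cap N}|\D u|\,(1-u^2)^{-1/2}\,\rmd\sigma=0.
\]
This limit is \emph{false}: by~\eqref{eq:|naffi|_N_SD} and the relation $\rmd\sigma_g=\big(\tfrac{n}{n-2}\big)^{(n-1)/2}\rmd\sigma$, that integral is a constant multiple of $\Phi(s)=\int_{\{\ffi=s\}}|\na\ffi|_g\,\rmd\sigma_g$, which by Corollary~\ref{le:in_mon_Phi1_N_SD} is bounded below by $|\Sigma_N|_g>0$ and does not tend to zero. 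The correct power count is that $\gamma=(1-u^2)^{3/2}/u$ contributes three half-powers of $(1-u^2)$, while $|\na\ffi|_g$, $\nana\ffi(\nu_g,\nu_g)$ and $\Deg\ffi$ each carry one factor of $(1-u^2)^{-1/2}$; since the bracket $2\nana\ffi(\nu_g,\nu_g)+\Deg\ffi$ absorbs one and $|\na\ffi|_g$ another, the net integrand is of order $(1-u^2)^{+1/2}\,|\D u|$ times a bounded bracket. The reduction you need is therefore to
\[
\lim_{t\to1^-}\int_{\{u=t\}\cap N}|\D u|\,\sqrt{1-u^2}\,\rmd\sigma=0,
\]
which follows at once from $|\D u|\leq\sqrt{n(1-u^2)}$ (Proposition~\ref{pro:min_pr_N_SD}) together with the boundedness of $\Phi$, or by the coarea argument of \cite[Theorem~4.4]{Bor_Maz_2-I} as in the paper. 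This is exactly the ``delicate bookkeeping'' you anticipated; once the sign of the exponent is corrected, your argument goes through verbatim.
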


\begin{proof}
Let us recall from Subsection~\ref{sub:prelim}
 that $u$ is an analytic function. In particular, also $\ffi$ is analytic in the interior of $N$, hence its critical level sets are discrete. It follows that we can choose $0<s<S<\ffi_0$, with $s$ arbitrarily close to $0$ and $S$ arbitrarily close to $\ffi_{0}$ such that both $s$ and $S$ are regular values for $\ffi$. 
Integrating ${\rm div}_g(\gamma Y)$ on $\{s\leq\ffi\leq S\}$ and using the Divergence Theorem we obtain
\begin{equation}
\label{eq:int_in_N_aux_SD}
\int\limits_{\{S\leq\ffi\leq s\}}\!\!\!{\rm div}_g(\gamma Y)\rmd\sigma_g
\,=\,
\int\limits_{\{\ffi= S\}}\!\!\!\gamma(S)\Big\langle Y\,\Big|\,\frac{\na\ffi}{|\na\ffi|_g}\Big\rangle_{\!g}\rmd\sigma_g
-\int\limits_{\{\ffi= s\}}\!\!\!\gamma(s)\Big\langle Y\,\Big|\,\frac{\na\ffi}{|\na\ffi|_g}\Big\rangle_{\!g}\rmd\sigma_g\,,
\end{equation}
First of all, we notice that it holds
\begin{equation}
\label{eq:limit_int_N}
\lim_{S\to\ffi_0}\gamma(S)\int_{\{\ffi= S\}}\Big\langle Y\,\Big|\,\frac{\na\ffi}{|\na\ffi|_g}\Big\rangle_{\!g}\rmd\sigma_g=0\,.
\end{equation}
In fact, using formul\ae~\eqref{eq:|naffi|_N_SD},~\eqref{eq:nanaffi_N_SD} and~\eqref{eq:pb_conf_N_SD} to translate the integrand in terms of $u,\go$, we find
\begin{align*}
\gamma\,\Big\langle Y\,\Big|\,\frac{\na\ffi}{|\na\ffi|_g}\Big\rangle_{\!g}\,&=\gamma\,\left(\frac{\langle\na|\na\ffi|_g^2\,|\,\na\ffi\rangle_g}{|\na\ffi|_g}\,+\,\Deg\ffi\,|\na\ffi|_g\right)
\\
&=\,\gamma\,|\na\ffi|_g\left[2\,\nana\ffi(\nu_g,\nu_g)+\Deg\ffi\right]
\\
&=\,\sqrt{\frac{n-2}{n}\,}\frac{\sqrt{1-u^2}}{u}|\D u|\bigg[\frac{2}{n}\left(\DD u(\nu,\nu)+\frac{u}{1-u^2}|\D u|^2\right)-\left(1-\frac{1}{n}\,\frac{|\D u|^2}{1-u^2}\right)\,\bigg]\,,
\end{align*}
where $\nu=\D u/|\D u|,\nu_g=\na\ffi/|\na\ffi|_g=\sqrt{(n-2)/n\,}\,\nu$ are the unit normals to $\{\ffi=S\}$ which exist everywhere because $\{\ffi=S\}$ is a regular level set.
Since $|\na\ffi|_g^2=(1/n)|\D u|^2/(1-u^2)\leq 1$ by Proposition~\ref{pro:min_pr_N_SD}, we deduce that the limit of the term in square bracket as $S\to\ffi_0$ (or equivalently $u\to 1$) is bounded from above. Therefore, in order to prove~\eqref{eq:limit_int_N}, it is enough to show that
$$
\lim_{u\to 1}\int_{\{u=t\}}(1-u^2)\,|\D u|\,\rmd\sigma\,=\,0\,.
$$
But this can be done proceeding exactly as in the proof of~\cite[Theorem~4.4]{Bor_Maz_2-I}, via a simple argument using the coarea formula and the fact that $u\to 1$ and $|\D u|\to 0$ (more precisely $|\D u|^2/(1-u^2)$ is bounded) as $u\to 1$. 
Therefore, taking the limit as $S\to\ffi_0$ of~\eqref{eq:int_in_N_aux_SD}, we deduce
\begin{equation}
\label{eq:int_in_aux2_N_SD}
\int_{\{\ffi= s\}}\gamma(s)\Big\langle Y\,\Big|\,\frac{\na\ffi}{|\na\ffi|_g}\Big\rangle_{\!g}\rmd\sigma_g
\,=\,
-\int_{\{s\leq\ffi<\ffi_0\}}{\rm div}_g(\gamma Y)\rmd\sigma_g\,\leq\,0\,,
\end{equation}
where in the last inequality we have used~\eqref{eq:divY_N_SD}.
Now we compute the integral on the left hand side. Using the equations in~\eqref{eq:pb_conf_N_SD}, we obtain
\begin{align}
\notag
\frac{1}{\tan(\sqrt{n-2}\,\ffi)}\Big\langle Y\,\Big|\,\frac{\na\ffi}{|\na\ffi|_g}\Big\rangle_{\!g}\,&=\,\frac{1}{\tan(\sqrt{n-2}\,\ffi)}\left[2\,\frac{\nana\ffi(\na\ffi,\na\ffi)}{|\na\ffi|_g}+\Deg\ffi|\na\ffi|_g\right]
\\
\label{eq:integrand_aux_N_SD}
&=\,|\na\ffi|_g\,\bigg[\frac{2}{\sqrt{n-2}}\,\Ricg(\nu_g,\nu_g)\,-\,3\,\sqrt{n-2}\,\left(1-|\na\ffi|_g^2\right)\bigg]\,.
\end{align}
Moreover, recalling the definition~\eqref{eq:gamma_N_SD} of $\gamma$, we find
\begin{equation}
\label{eq:limit_aux_N_SD}
\lim_{s\to 0}\,\big[\tan(\sqrt{n-2}\,\ffi)\,\gamma\big]_{|_{\{\ffi=s\}}}\,=\,\lim_{s\to 0}\left[\cos^2(\sqrt{n-2}\,\ffi)\right]_{|_{\{\ffi=s\}}}\,=\,1\,.
\end{equation}
Taking the limit of~\eqref{eq:int_in_aux2_N_SD} as $s\to 0$ and using~\eqref{eq:integrand_aux_N_SD} and~\eqref{eq:limit_aux_N_SD}, we obtain the desired inequality~\eqref{eq:int_id_g_N_SD}.

Concerning the rigidity statement, if the equality in~\eqref{eq:int_id_rig_g_N_SD} holds, then necessarily the right hand side of~\eqref{eq:int_id_g_N_SD} is null. In particular, $|\na\ffi|_g\equiv 1$ on $N$.
Substituting this information in the Bochner formula~\eqref{eq:bochner_N_SD} we obtain $|\nana\ffi|_g\equiv 0$, hence we can apply Proposition~\ref{pro:rigidity_N} to conclude.
\end{proof}

\subsection{Proof of the area bounds.}
\label{sub:consequences_N}

The area bounds for cylindrical regions is proven in the exact same way as in the outer and inner case discussed in Subsection~\ref{sub:pointwise_bounds}. Namely, one compares formula~\eqref{eq:Ambrozio_nearboundary} with the gradient estimate proven in Proposition~\ref{pro:min_pr_N_SD}, obtaining that the scalar curvature of $\pa N$ is necessarily greater that or equal to the one of the sections of the Nariai solution.

\begin{theorem}
Let $(M,\go,u)$ be a solution to problem~\eqref{eq:prob_SD} of dimension $n\geq 3$, and let $N\subseteq M\setminus{\rm MAX}(u)$ be a cylindrical region. Then
	\begin{equation}
	\RRR^{\pa N}\,\geq\, n(n-1)\,.
	\end{equation}
\end{theorem}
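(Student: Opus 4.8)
The plan is to transcribe the argument of Subsection~\ref{sub:pointwise_bounds} to the present setting, with the Nariai triple~\eqref{eq:cylsol_D} playing the role of the reference model. Let $S\subseteq\pa N$ be a horizon with $\kappa(S)=\sqrt{n}$ (one exists since $N$ is cylindrical), so that, with the normalization $\umax=1$, we have $|\D u|_{|_S}=\sqrt{n}$. Set $W(t)=n(1-t^2)$, which one checks directly from $u=\sin r$ and $g_0=\tfrac{1}{n}[dr\otimes dr+(n-2)g_{\Sph^{n-1}}]$ to be the constant value of $|\D u|^2$ on the level set $\{u=t\}$ of the Nariai solution. Read through formula~\eqref{eq:|naffi|_N_SD}, the gradient bound $|\na\ffi|_g\le 1$ of Proposition~\ref{pro:min_pr_N_SD} says precisely that $|\D u|^2\le W(u)$ on the whole $N$, with equality holding on $S$.

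Next I would fix $p\in S$ and let $\gamma\colon[0,\ep)\to N$ be the unit-speed geodesic with $\gamma(0)=p$ and $\gamma'(0)=\nu$, the $\go$-unit normal to $S$ pointing inside $N$. Applying~\cite[(11)]{Ambrozio} exactly as in~\eqref{eq:Ambrozio_nearboundary} (using that $S$ is totally geodesic together with the Gauss-Codazzi equation) gives
\begin{equation*}
|\D u|^2\circ\gamma(s)\,=\,n\left[1+\left(\tfrac{n(n-3)}{2}-\tfrac{\RRR^S}{2}\right)s^2+\mathcal{O}(s^4)\right].
\end{equation*}
On the other hand, since $(u\circ\gamma)'(0)=\langle\D u\,|\,\nu\rangle=\sqrt{n}$ and $\DD u=0$ on $S$ (from the first equation in~\eqref{eq:prob_SD} evaluated where $u=0$), one has $u\circ\gamma(s)=\sqrt{n}\,s+\mathcal{O}(s^3)$, hence $W(u\circ\gamma(s))=n[1-n\,s^2+\mathcal{O}(s^4)]$. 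The very same expansion~\eqref{eq:Ambrozio_nearboundary} applied on the Nariai model along a normal geodesic issuing from its horizon identifies $n[1-n\,s^2+\dots]$ with the expansion carrying $\RRR^{\Sph^{n-1}}$ in place of $\RRR^S$, where $\Sph^{n-1}$ carries the metric $\tfrac{n-2}{n}g_{\Sph^{n-1}}$ and therefore $\RRR^{\Sph^{n-1}}=\tfrac{n}{n-2}(n-1)(n-2)=n(n-1)$; indeed $\tfrac{n(n-3)}{2}-\tfrac{n(n-1)}{2}=-n$. Comparing the two expansions at second order with the pointwise inequality $|\D u|^2\circ\gamma(s)\le W(u\circ\gamma(s))$ valid for small $s>0$ forces
\begin{equation*}
\tfrac{n(n-3)}{2}-\tfrac{\RRR^S}{2}\,\le\,-\,n\,=\,\tfrac{n(n-3)}{2}-\tfrac{n(n-1)}{2},
\end{equation*}
that is, $\RRR^S\ge n(n-1)$.

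Every ingredient of this argument is already in place, so the proof is essentially bookkeeping: the analytic work has been carried out in Proposition~\ref{pro:min_pr_N_SD} via the Minimum Principle applied to~\eqref{eq:Degw_N_SD}. The only point deserving a little care is verifying that $W(t)=n(1-t^2)$ is the correct model profile and that the equality $|\D u|^2=W(u)$ holds exactly on the chosen horizon $S$ — this is precisely where the hypothesis $\kappa(S)=\sqrt{n}$ enters — together with the elementary computation of the scalar curvature of the Nariai section; there is no substantial obstacle. Integrating $\RRR^S\ge n(n-1)$ over $S$ and, for $n=3$, invoking the Gauss-Bonnet formula as in Theorem~\ref{thm:Gauss-Bonnet_SD} then yields the area bound~\eqref{eq:area_bound_N_SD}.
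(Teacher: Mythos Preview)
Your proposal is correct and follows essentially the same route as the paper: the paper's proof is a one-line reduction to the outer/inner argument of Subsection~\ref{sub:pointwise_bounds}, comparing the expansion~\eqref{eq:Ambrozio_nearboundary} with the gradient estimate of Proposition~\ref{pro:min_pr_N_SD}, exactly as you do. Your explicit identification of the model profile $W(t)=n(1-t^2)$ and the verification $\tfrac{n(n-3)}{2}-\tfrac{n(n-1)}{2}=-n$ are precisely the bookkeeping the paper leaves implicit; note that, just as in Theorem~\ref{thm:intid_rewr_SD}, the argument yields the bound on the horizon $S$ with $\kappa(S)=\sqrt{n}$, which is what is used downstream.
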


We pass now to discuss the consequences of Proposition~\ref{pro:int_id_g_N_SD} proved above. First of all, translating it in terms of $u$ and $g_0$, we obtain the following result.

\begin{corollary}
\label{cor:intid_rewr_N_SD}
Let $(M,\go,u)$ be a solution to problem~\eqref{eq:prob_SD} and let $N\subseteq M\setminus{\rm MAX}(u)$ be a cylindrical region.
Then it holds
\begin{equation*}
\int_{\pa N}A\left[\RRR^{\pa N}-n(n-1)+3n\left(1-A^2\right)\right]\,\rmd\sigma\,\geq \,0\,,
\end{equation*}
where $\RRR^{\pa N}$ is the scalar curvature of the metric induced by $\go$ on $\pa N$.
Moreover, if the equality holds, then the solution $(M,\go,u)$ is covered by
a generalized Nariai triple~\eqref{eq:gen_cylsol_D}.
\end{corollary}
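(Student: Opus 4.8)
The plan is to translate the integral identity~\eqref{eq:int_id_g_N_SD} of Proposition~\ref{pro:int_id_g_N_SD} from the conformal picture $(g,\ffi)$ back to the original data $(\go,u)$, exactly in the spirit of the proofs of Corollaries~\ref{cor:intid_rewr_+_SD} and~\ref{cor:intid_rewr_-_SD} for the outer and inner cases. First I would record the boundary normalization: since $N$ is cylindrical we have $\max_{\pa N}|\D u|=\sqrt n$, hence by~\eqref{eq:|naffi|_N_SD}, on $\pa N=\{u=0\}\cap N$,
\[
|\na\ffi|_g^2\,=\,\frac{1}{n}\,\frac{|\D u|^2}{1-u^2}\,=\,\frac{|\D u|^2}{n}\,=\,A^2\,,
\]
where $A$ is the step function~\eqref{eq:step_function_SD}; in particular $|\na\ffi|_g=1$ precisely on the horizons of maximal surface gravity, consistently with Lemma~\ref{le:bound_psi_SD}. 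Then I would express $\Ricg(\nu_g,\nu_g)$ on $\pa N$ in terms of $\Ric(\nu,\nu)$ using the first equation of~\eqref{eq:pb_conf_N_SD} together with~\eqref{eq:nanaffi_N_SD}: since $\nu_g=\sqrt{(n-2)/n}\,\nu$ and $\Ricg=\Ric$, and since on $\pa N$ one has $u=0$ so that $\tan(\sqrt{n-2}\,\ffi)=0$, the term $\frac{\sqrt{n-2}}{\tan(\sqrt{n-2}\ffi)}\nana\ffi$ must be handled carefully — but from~\eqref{eq:ffi_N_SD} and~\eqref{eq:nanaffi_N_SD} one computes directly that $\frac{\sqrt{n-2}}{\tan(\sqrt{n-2}\ffi)}\nana\ffi(\nu_g,\nu_g)=\frac{1}{u}\DD u(\nu,\nu)+\text{(terms regular at }u=0)$, and $\DD u(\nu,\nu)/u \to \Ric(\nu,\nu)-n$ by the first equation of~\eqref{eq:prob_SD}. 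Carrying this out gives a clean expression $\Ricg(\nu_g,\nu_g)=\frac{n-2}{n}\big[\Ric(\nu,\nu)+n(1-A^2)\big]$ on $\pa N$ (the exact coefficient to be pinned down in the computation).

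Next I would substitute these into~\eqref{eq:int_id_g_N_SD}, use $\rmd\sigma_g=\big(\tfrac{n-2}{n}\big)^{(n-1)/2}\rmd\sigma$ and $|\na\ffi|_g\rmd\sigma_g\propto A\,\rmd\sigma$ (all the conformal factors are positive constants since $g$ is just a rescaling of $\go$ by~\eqref{eq:g_N_SD}), and apply the Gauss–Codazzi equation $2\Ric(\nu,\nu)=\RRR-\RRR^{\pa N}=n(n-1)-\RRR^{\pa N}$, valid $\mathscr H^{n-1}$-a.e.\ on $\pa N$ since the horizon is totally geodesic. After clearing positive constant factors, the nonpositivity of the right-hand side of~\eqref{eq:int_id_g_N_SD} becomes exactly
\[
\int_{\pa N}A\left[\RRR^{\pa N}-n(n-1)+3n\left(1-A^2\right)\right]\rmd\sigma\,\geq\,0\,,
\]
which is the claimed inequality. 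For the rigidity statement I would simply invoke the rigidity clause of Proposition~\ref{pro:int_id_g_N_SD}: equality here forces equality in~\eqref{eq:int_id_rig_g_N_SD}, hence $(M,\go,u)$ is covered by a generalized Nariai triple~\eqref{eq:gen_cylsol_D}.

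The only genuinely delicate point is the careful evaluation of the limit of $\frac{\sqrt{n-2}}{\tan(\sqrt{n-2}\,\ffi)}\nana\ffi(\nu_g,\nu_g)$ as $u\to 0$, i.e.\ checking that all the apparently singular $1/u$ and $u/(1-u^2)$ contributions in~\eqref{eq:nanaffi_N_SD} combine into something finite on $\pa N$ — this is the analogue of the subtlety flagged in the proofs of Corollaries~\ref{cor:intid_rewr_+_SD} and~\ref{cor:intid_rewr_-_SD}, where $u/(\psi\dot\psi)$ had to be explicitated. Here the resolution is cleaner: $\nana\ffi(\nu,\nu)=\frac{1}{\sqrt{n-2}\sqrt{1-u^2}}\big[\DD u(\nu,\nu)+\frac{u}{1-u^2}|\D u|^2\big]$, so $\frac{\sqrt{n-2}}{\tan(\sqrt{n-2}\ffi)}=\frac{\sqrt{1-u^2}}{u}$ multiplies it to give $\frac{1}{u}\DD u(\nu,\nu)+\frac{|\D u|^2}{1-u^2}$, and the first term tends to $\Ric(\nu,\nu)-n$ while the second tends to $n A^2$. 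Everything else is routine bookkeeping of positive constants, so I expect the bulk of the effort to be exactly this boundary-limit computation.
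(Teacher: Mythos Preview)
Your approach is correct and follows the same strategy as the paper: translate~\eqref{eq:int_id_g_N_SD} back to $(\go,u)$ and then apply Gauss--Codazzi. However, you make the translation considerably harder than necessary. Since $g=\tfrac{n}{n-2}\go$ is merely a constant rescaling, one has $\Ricg=\Ric$ and $\nu_g=\sqrt{(n-2)/n}\,\nu$, hence \emph{immediately} $\Ricg(\nu_g,\nu_g)=\tfrac{n-2}{n}\,\Ric(\nu,\nu)$ at every point of $\pa N$. There is no need to go through the first equation of~\eqref{eq:pb_conf_N_SD} and resolve the apparent $1/u$-singularity in $\frac{\sqrt{n-2}}{\tan(\sqrt{n-2}\,\ffi)}\nana\ffi(\nu_g,\nu_g)$; that detour is exactly what the paper's proof avoids. (If you do carry your computation through carefully, the extra terms cancel and you recover $\tfrac{n-2}{n}\Ric(\nu,\nu)$ --- your tentative formula $\tfrac{n-2}{n}[\Ric(\nu,\nu)+n(1-A^2)]$ is not right, because you have omitted the contributions $-(n-2)|\na\ffi|_g^2+(n-2)$ from the last two terms of the first equation of~\eqref{eq:pb_conf_N_SD}.) Once $\Ricg(\nu_g,\nu_g)=\tfrac{n-2}{n}\Ric(\nu,\nu)$ and $|\na\ffi|_g=A$ are in hand, clearing the positive constant factors and substituting $2\Ric(\nu,\nu)=n(n-1)-\RRR^{\pa N}$ gives the claimed inequality in one line; the rigidity clause is inherited verbatim from Proposition~\ref{pro:int_id_g_N_SD}. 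A minor slip: $\rmd\sigma_g=\big(\tfrac{n}{n-2}\big)^{(n-1)/2}\rmd\sigma$, not its reciprocal, though this is immaterial to the sign.
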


\begin{proof}
It is enough to translate formula~\eqref{eq:int_id_g_N_SD} in terms of $u$ and $\go$, using the relations developed in Subsection~\ref{sub:conformal_reformulation_N_SD}. In particular, let us notice that
$$
|\na\ffi|_g^2\,=\,\frac{1}{n}\,\frac{|\D u|^2}{1-u^2}\,,\qquad\hbox{and}\qquad \max_{\pa N}|\na \ffi|_g\,=\,1\,,
$$
where the second identity follows from Lemma~\ref{le:bound_psi_SD}. Therefore
$$
\frac{1}{n}\max_{\pa N}|\D u|^2\,=\,1\,,
$$
which in turn implies $|\na\ffi|^2_g=|\D u|^2/\max_{\pa N}|\D u|^2$.
Since we have already observed that $\Ricg=\Ric$ and $\nu_g=\sqrt{(n-2)/n\,}\,\nu$, from formula~\eqref{eq:int_id_g_N_SD} we obtain
$$
\int_{\pa N}|\D u|\left[-\frac{1}{n}\Ric(\nu,\nu)
+\frac{3}{2}
\left(1-\frac{|\D u|^2}{\max_{\pa N}|\D u|^2}\right)\right]\,\rmd\sigma\,\geq\, 0\,,
$$
where we remark that the equality holds if and only if the solution is covered by the Nariai triple.

Using the Gauss-Codazzi equation we have $2\Ric(\nu,\nu)=\RRR-\RRR^{\pa N}=n(n-1)-\RRR^{\pa N}$. Substituting in the inequality above we easily obtain the thesis.
\end{proof}

In dimension $n=3$, the above formula can be made more explicit using the Gauss-Bonnet formula.
\begin{theorem}
\label{thm:3dareabounds_N_SD}
Let $(M,\go,u)$ be a $3$-dimensional solution to problem~\eqref{eq:prob_SD} and let $N\subseteq M\setminus{\rm MAX}(u)$ be a cylindrical region. Then
$$
\frac{\sum_{i=0}^{p}\bigg[\Big(\frac{\kappa_i}{\kappa_0}\Big)^2-\frac{1}{2}\Big(1-\Big(\frac{\kappa_i}{\kappa_0}\Big)^2\Big)\bigg]\kappa_i |S_i|}{\sum_{i=0}^{p} \kappa_i}\,\leq \frac{4\pi}{3} 
$$
where $\pa N=S_0\sqcup\cdots\sqcup S_p$ and $\kappa_0\geq \cdots\geq \kappa_p$ are the surface gravities of $S_0,\dots, S_p$.
Moreover, if the equality holds then $\pa N$ is connected and $(M,\go,u)$ is covered by the Nariai triple~\eqref{eq:cylsol_D}.
\end{theorem}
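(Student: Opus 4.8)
The plan is to mimic verbatim the argument used for Theorems~\ref{thm:3dareabounds_+_SD} and~\ref{thm:3dareabounds_-_SD}, now starting from the integral inequality in Corollary~\ref{cor:intid_rewr_N_SD}. Specializing that corollary to $n=3$ gives
\[
\int_{\pa N}A\left[\RRR^{\pa N}-6+9\left(1-A^2\right)\right]\,\rmd\sigma\,\geq \,0\,,
\]
with equality if and only if $(M,\go,u)$ is covered by a generalized Nariai triple~\eqref{eq:gen_cylsol_D}. Since the step function $A=|\D u|/\max_{\pa N}|\D u|$ is locally constant on $\pa N$ and, on the horizon $S_i$, equals $\kappa_i/\kappa_0$ (recall that in this section $\umax=1$, so $\kappa_i=|\D u|_{|S_i}$ and $\kappa_0=\max_{\pa N}|\D u|$), the inequality splits as
\[
\sum_{i=0}^{p}\frac{\kappa_i}{\kappa_0}\int_{S_i}\left[\RRR^{S_i}-6+9\left(1-\frac{\kappa_i^2}{\kappa_0^2}\right)\right]\,\rmd\sigma\,\geq\,0\,.
\]

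Next I would invoke the Gauss--Bonnet formula together with~\cite[Theorem~B]{Ambrozio}, which ensures that each connected component $S_i$ of $\pa N$ is diffeomorphic to $\Sph^2$, so that $\int_{S_i}\RRR^{S_i}\,\rmd\sigma=4\pi\chi(S_i)=8\pi$. Substituting and collecting terms (using $-6+9=3$) yields
\[
\sum_{i=0}^{p}\kappa_i\left(3\frac{\kappa_i^2}{\kappa_0^2}-1\right)|S_i|\,\leq\,\frac{8\pi}{3}\sum_{i=0}^{p}\kappa_i\,,
\]
and since $\big(\kappa_i/\kappa_0\big)^2-\tfrac12\big(1-\big(\kappa_i/\kappa_0\big)^2\big)=\tfrac12\big(3(\kappa_i/\kappa_0)^2-1\big)$, this is precisely the claimed inequality after dividing by $2\sum_{i}\kappa_i$.

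For the rigidity part, equality in the thesis forces equality in Corollary~\ref{cor:intid_rewr_N_SD}, so $(M,\go,u)$ is covered by a generalized Nariai triple; but in dimension $n=3$ the Einstein fiber $E^{2}$ satisfies $\Ric_{E^{2}}=g_{E^{2}}$, hence is a surface of constant Gauss curvature $1$ and therefore is (covered by) the round $\Sph^2$, so in fact $(M,\go,u)$ is covered by the ordinary Nariai triple~\eqref{eq:cylsol_D}. Finally, from the explicit structure of the Nariai triple and of its involutive quotients --- for which $M\setminus{\rm MAX}(u)$ consists of two connected components, each having connected boundary --- one reads off that $\pa N$ must be connected. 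The only slightly delicate point is this last bookkeeping in the rigidity discussion (reducing the generalized Nariai fiber to a space form and checking connectedness of $\pa N$); the inequality itself follows directly from Corollary~\ref{cor:intid_rewr_N_SD} and Gauss--Bonnet, exactly as in the outer and inner cases.
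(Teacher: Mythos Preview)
Your proof is correct and follows essentially the same route as the paper: specialize Corollary~\ref{cor:intid_rewr_N_SD} to $n=3$, apply Gauss--Bonnet together with Ambrozio's result that each $S_i$ is a sphere, and rearrange. Your rigidity discussion is in fact slightly more explicit than the paper's (which simply asserts the conclusion), correctly noting that in dimension three the Einstein fiber of a generalized Nariai must be the round sphere and that in the Nariai triple (or its involutive quotient) each region has connected boundary.
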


\begin{proof}
For $n=3$, the formula in Corollary~\ref{cor:intid_rewr_N_SD} rewrites as
\begin{equation*}
\sum_{i=0}^p\int_{S_i}\kappa_i\left[\RRR^{S_i}-6+9\left(1-\frac{\kappa_i^2}{\kappa_0^2}\right)\right]\,\rmd\sigma\,\geq \,0\,.
\end{equation*}
From the Gauss-Bonnet formula, we have $\int_{S_i}\RRR^{S_i}\rmd\sigma=4\pi\chi(S_i)$ for all $i=0,\dots,p$. From~\cite[Theorem~B]{Ambrozio}, we also know that each $S_i$ is diffeomorphic to a sphere, hence $\chi(S_i)=2$. Substituting these pieces of information inside formula above, with some manipulations we arrive to the thesis.
\end{proof}

In the case when $\pa N$ is connected, the constancy of the quantity $|\D u|$ on the whole boundary allows to obtain the following stronger results.

\begin{corollary}
\label{cor:Gauss-Codazzi_N_SD}
Let $(M,\go,u)$ be a solution to problem~\eqref{eq:prob_SD} and let $N\subseteq M\setminus{\rm MAX}(u)$ be a cylindrical region.
If $\pa N$ is connected, then it holds
\begin{equation*}
\int_{\pa N}
\RRR^{\pa N}\,\rmd\sigma\,\geq\, n(n-1)|\pa N|\,.
\end{equation*}
Moreover, if the equality holds, then the solution $(M,\go,u)$ is covered by
a generalized Nariai triple~\eqref{eq:gen_cylsol_D}.
\end{corollary}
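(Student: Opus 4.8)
The plan is to obtain this corollary as an immediate specialization of the weighted inequality already established in Corollary~\ref{cor:intid_rewr_N_SD}, using the connectedness of $\pa N$ to trivialize the step function $A$. Recall from the list of basic properties in Subsection~\ref{sub:prelim} that $|\D u|$ is locally constant on $\pa M$, hence constant on each connected component; since $\pa N$ is assumed connected, $|\D u| \equiv \max_{\pa N}|\D u|$ on $\pa N$, and therefore the function $A$ defined in~\eqref{eq:step_function_SD} satisfies $A \equiv 1$ on $\pa N$.

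First I would substitute $A \equiv 1$ into the inequality of Corollary~\ref{cor:intid_rewr_N_SD}. The weight $3n(1-A^2)$ vanishes identically, so the integral inequality collapses to
\[
\int_{\pa N}\left[\RRR^{\pa N} - n(n-1)\right]\rmd\sigma \,\geq\, 0\,,
\]
which rearranges to the asserted bound $\int_{\pa N}\RRR^{\pa N}\,\rmd\sigma \geq n(n-1)\,|\pa N|$.

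For the rigidity statement I would invoke the equality case of Corollary~\ref{cor:intid_rewr_N_SD}: if $\int_{\pa N}\RRR^{\pa N}\,\rmd\sigma = n(n-1)\,|\pa N|$, then with $A \equiv 1$ the integrand appearing there vanishes, so the equality alternative of Corollary~\ref{cor:intid_rewr_N_SD} is in force and yields that $(M,\go,u)$ is covered by a generalized Nariai triple~\eqref{eq:gen_cylsol_D}. This is exactly parallel to the proofs of Corollaries~\ref{cor:Gauss-Codazzi_+_SD} and~\ref{cor:Gauss-Codazzi_-_SD}.

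There is essentially no obstacle in this argument: it is a one-line consequence of the already-proven weighted inequality. If one insisted on a self-contained derivation, the only point requiring any care would be re-running the computation in the proof of Corollary~\ref{cor:intid_rewr_N_SD} — that is, translating the integral identity of Proposition~\ref{pro:int_id_g_N_SD} through the conformal relations of Subsection~\ref{sub:conformal_reformulation_N_SD} — directly in the special case $|\D u| \equiv \mathrm{const}$ on $\pa N$, but since Corollary~\ref{cor:intid_rewr_N_SD} is already available this step is unnecessary.
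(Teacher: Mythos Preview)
Your proposal is correct and matches the paper's own proof essentially verbatim: the paper simply states that the result is an immediate consequence of Corollary~\ref{cor:intid_rewr_N_SD} together with the constancy of $|\D u|$ on the connected boundary $\pa N$, which is precisely your argument that $A\equiv 1$ collapses the weighted inequality to the desired one.
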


\begin{proof}
The result is an immediate consequence of Corollary~\ref{cor:intid_rewr_N_SD} and the fact that $|\D u|$ is constant on $\pa N$.
\end{proof}	

\begin{theorem}
\label{thm:Gauss-Bonnet_N_SD}
Let $(M,\go,u)$ be a $3$-dimensional solution to problem~\eqref{eq:prob_SD} and let $N\subseteq M\setminus{\rm MAX}(u)$ be a cylindrical region.
If $\pa N$ is connected, then $\pa N$ is diffeomorphic to $\Sph^2$ and it holds
\begin{equation*}
|\pa N|\,\leq\,\frac{4\pi}{3}\,.
\end{equation*}
Moreover, if the equality holds, then the solution $(M,\go,u)$ is covered by 
the Nariai triple~\eqref{eq:cylsol_D}.
\end{theorem}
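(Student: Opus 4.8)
The plan is to argue exactly as in the proofs of Theorems~\ref{thm:Gauss-Bonnet_+_SD} and~\ref{thm:Gauss-Bonnet_-_SD}, simply replacing the outer/inner integral inequalities by the cylindrical one of Corollary~\ref{cor:Gauss-Codazzi_N_SD}. First I would set $n=3$ in Corollary~\ref{cor:Gauss-Codazzi_N_SD}, obtaining
\begin{equation*}
\int_{\pa N}\RRR^{\pa N}\,\rmd\sigma\,\geq\, 6\,|\pa N|\,.
\end{equation*}
Since $\pa N$ is connected, the Gauss--Bonnet formula gives $\int_{\pa N}\RRR^{\pa N}\,\rmd\sigma=4\pi\chi(\pa N)$, so that $4\pi\chi(\pa N)\geq 6\,|\pa N|>0$. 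Hence $\chi(\pa N)>0$, and being a closed surface $\pa N$ must be diffeomorphic to $\Sph^2$, with $\chi(\pa N)=2$. Plugging this back yields $8\pi\geq 6\,|\pa N|$, which is precisely the bound $|\pa N|\leq 4\pi/3$.

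For the rigidity statement, assume the equality $|\pa N|=4\pi/3$ holds. Then equality holds throughout the chain above, in particular in Corollary~\ref{cor:Gauss-Codazzi_N_SD}; by that corollary this forces $(M,\go,u)$ to be covered by a generalized Nariai triple~\eqref{eq:gen_cylsol_D}. In dimension $n=3$, as recalled in Subsection~\ref{sub:rotsol}, the only $2$-dimensional Einstein manifold with $\Ric_{E^{2}}=g_{E^{2}}$ is the round sphere $(\Sph^2,g_{\Sph^2})$, so the generalized Nariai triple reduces to the Nariai triple~\eqref{eq:cylsol_D}, and we conclude.

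There is no real obstacle here: the analytic content — the gradient bound of Proposition~\ref{pro:min_pr_N_SD}, its integral consequence Proposition~\ref{pro:int_id_g_N_SD}, and the translation back into $u,\go$ performed in Corollaries~\ref{cor:intid_rewr_N_SD} and~\ref{cor:Gauss-Codazzi_N_SD} — is already in place, so the argument is a short bookkeeping step. The only point deserving a line of care is that the connectedness of $\pa N$ is genuinely used twice: once so that $\int_{\pa N}\RRR^{\pa N}$ equals $4\pi\chi(\pa N)$ with $\chi(\pa N)=2$ for a single sphere, rather than a sum of Euler characteristics, and once implicitly, since the proof of Corollary~\ref{cor:Gauss-Codazzi_N_SD} relies on $|\D u|$ being constant on the whole of $\pa N$.
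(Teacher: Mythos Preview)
Your proof is correct and follows essentially the same route as the paper: set $n=3$ in Corollary~\ref{cor:Gauss-Codazzi_N_SD}, apply Gauss--Bonnet to force $\chi(\pa N)>0$ and hence $\pa N\cong\Sph^2$, then read off the area bound and use the rigidity statement of the corollary. Your added remark that in dimension $3$ the generalized Nariai triple reduces to the standard one is a helpful clarification that the paper leaves implicit.
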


\begin{proof}
Substituting $n=3$ in Corollary~\ref{cor:Gauss-Codazzi_N_SD} and using the Gauss-Bonnet formula, we immediately obtain
\begin{equation*}
4\pi\chi(\pa N)\,\geq\, 6
\,|\pa N|\,.
\end{equation*}
In particular, $\chi(\pa N)$ has to be positive, hence $\pa N$ is necessarily a sphere and we obtain the thesis.
\end{proof}

We now pass to investigate the hypersurface $\Sigma_N$ that separates $N$ from the rest of the manifold. Combining the results of this section with Corollary~\ref{le:in_mon_Phi1_N_SD}, it is straightforward to obtain the following area bound.

\begin{theorem}
Let $(M,\go,u)$ be a solution to problem~\eqref{eq:prob_SD}, let $N$ be  a cylindrical region with smooth compact boundary $\pa N$. Let $\Sigma_N=\overline{N}\cap\overline{M\setminus\overline{N}}$ be the possibly stratified hypersurface separating $N$ from the rest of the manifold $M$. Then
\begin{equation}
\label{eq:areaboundsephy_N}
|\Sigma_N|\,\leq\,\int_{\pa N}\frac{\RRR^{\pa N}}{n(n-1)}\,\rmd\sigma\,,
\end{equation}
and, if the equality holds, 
then $(M,\go,u)$ is covered by a generalized Nariai triple~\eqref{eq:gen_cylsol_D}.
\end{theorem}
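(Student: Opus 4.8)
The plan is to derive~\eqref{eq:areaboundsephy_N} by chaining together two facts already available for cylindrical regions: the area comparison of Corollary~\ref{le:in_mon_Phi1_N_SD}, which bounds $|\Sigma_N|$ from above by $|\pa N|$, and the pointwise scalar curvature estimate $\RRR^{\pa N}\geq n(n-1)$ established just before Corollary~\ref{cor:intid_rewr_N_SD}, which bounds $|\pa N|$ from below.

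First I would observe that the conformal factor in~\eqref{eq:g_N_SD} is constant, so that $g=[n/(n-2)]\,\go$ is a global homothety. Consequently, for every $(n-1)$-dimensional submanifold $\Xi\subseteq M$ one has $|\Xi|_g=[n/(n-2)]^{(n-1)/2}|\Xi|$, and the inequality $|\pa N|_g\geq|\Sigma_N|_g$ provided by Corollary~\ref{le:in_mon_Phi1_N_SD} is therefore equivalent to $|\pa N|\geq|\Sigma_N|$. Next I would integrate the bound $\RRR^{\pa N}\geq n(n-1)$ over the horizon $\pa N$, which gives
\[
|\pa N|\,\leq\,\int_{\pa N}\frac{\RRR^{\pa N}}{n(n-1)}\,\rmd\sigma\,.
\]
Combining the two inequalities yields exactly~\eqref{eq:areaboundsephy_N}.

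For the rigidity part, assume equality holds in~\eqref{eq:areaboundsephy_N}. Then equality must hold in each intermediate step; in particular $|\Sigma_N|=|\pa N|$, that is $|\Sigma_N|_g=|\pa N|_g$, so the equality case of Corollary~\ref{le:in_mon_Phi1_N_SD} forces $(M,\go,u)$ to be covered by (indeed isometric to) a generalized Nariai triple~\eqref{eq:gen_cylsol_D}, as claimed.

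I do not expect any genuine obstacle here: the substantive analysis — the gradient bound $|\na\ffi|_g\leq 1$ of Proposition~\ref{pro:min_pr_N_SD}, the monotonicity of $\Phi$ underlying Corollary~\ref{le:in_mon_Phi1_N_SD}, the control of $|\na\ffi|_g$ near the top stratum of $\Sigma_N$ coming from Proposition~\ref{pro:C2} and~\eqref{eq:grad_u_nearSigma_N}, and the scalar curvature estimate on $\pa N$ — has all been completed in the preceding subsections. The only point worth a line of care is verifying that neither ingredient requires $\pa N$ or $\Sigma_N$ to be connected, which is indeed the case, since the scalar curvature bound is pointwise on $\pa N$ and the proof of Corollary~\ref{le:in_mon_Phi1_N_SD} integrates over full (possibly disconnected) level sets of $\ffi$.
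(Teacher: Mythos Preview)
Your proof is correct and follows essentially the same route as the paper: use Corollary~\ref{le:in_mon_Phi1_N_SD} (via the constant conformal factor) to get $|\Sigma_N|\leq|\pa N|$, then bound $|\pa N|$ by the scalar curvature integral over $\pa N$. The only cosmetic difference is that the paper cites Corollary~\ref{cor:Gauss-Codazzi_N_SD} for the second step while you integrate the pointwise estimate $\RRR^{\pa N}\geq n(n-1)$ directly.
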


\begin{proof}
Let us study the case where $N$ is outer, the inner case being completely analogous.
From Corollary~\ref{le:in_mon_Phi1_N_SD}, recalling the definitions of $g,\ffi$, we get
$$
\left(\frac{n}{n-2}	\right)^{\!\frac{n-1}{2}}|\Sigma_N|\,=\,|\Sigma_N|_g\,\leq\, |\pa N|_g\,=\,\left(\frac{n}{n-2}\right)^{\!\frac{n-1}{2}}|\pa N|\,.
$$
Now we conclude using Corollary~\ref{cor:Gauss-Codazzi_N_SD}.
\end{proof}

In particular, in dimension $n=3$, applying Gauss Bonnet Theorem to the right hand side of formula~\eqref{eq:areaboundsephy_N}, we obtain the cylindrical case of Corollary~\ref{cor:lower_bound}, which we recall here for the reader's convenience.

\begin{corollary}
Let $(M,\go,u)$ be a $3$-dimensional solution to problem~\eqref{eq:prob_SD}, let $N\subseteq M\setminus{\rm MAX}(u)$ be a cylindrical region with connected boundary $\pa N$. Let $\Sigma_N=\overline{N}\cap\overline{M\setminus\overline{N}}$ be the possibly stratified hypersurface separating $N$ from the rest of the manifold $M$. 
Then
\begin{equation}
|\Sigma_N|\,\leq\,\frac{4\,\pi}{3}\,.
\end{equation}
Moreover, if the equality holds, 
then $(M,\go,u)$ is covered by a generalized Nariai triple~\eqref{eq:gen_SD}.
\end{corollary}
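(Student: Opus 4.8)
The plan is to read off this corollary as the dimension-three specialization of the area bound~\eqref{eq:areaboundsephy_N} just established for cylindrical regions, combined with the topological information on $\pa N$ coming from Theorem~\ref{thm:Gauss-Bonnet_N_SD}.

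First I would apply~\eqref{eq:areaboundsephy_N}, which for a cylindrical region $N$ with smooth compact boundary reads
\[
|\Sigma_N|\,\leq\,\int_{\pa N}\frac{\RRR^{\pa N}}{n(n-1)}\,\rmd\sigma\,,
\]
with equality if and only if $(M,\go,u)$ is covered by a generalized Nariai triple~\eqref{eq:gen_cylsol_D}. Setting $n=3$, the integrand becomes $\RRR^{\pa N}/6$. Since $\pa N$ is connected by hypothesis, Theorem~\ref{thm:Gauss-Bonnet_N_SD} guarantees that $\pa N$ is diffeomorphic to $\Sph^2$, so the Gauss--Bonnet formula gives $\int_{\pa N}\RRR^{\pa N}\,\rmd\sigma=4\pi\chi(\pa N)=8\pi$. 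Hence the right-hand side equals $8\pi/6=4\pi/3$, and the inequality $|\Sigma_N|\leq 4\pi/3$ follows at once. Equivalently, one may combine Corollary~\ref{le:in_mon_Phi1_N_SD}, which gives $|\Sigma_N|_g\leq|\pa N|_g$ and hence $|\Sigma_N|\leq|\pa N|$ since $g=3\,\go$ rescales $2$-dimensional areas by $3$ in dimension three, with the bound $|\pa N|\leq 4\pi/3$ of Theorem~\ref{thm:Gauss-Bonnet_N_SD}.

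For the rigidity statement I would argue that equality $|\Sigma_N|=4\pi/3$ forces equality in~\eqref{eq:areaboundsephy_N}: indeed, the passage from $\int_{\pa N}\RRR^{\pa N}/6\,\rmd\sigma$ to the constant $4\pi/3$ is an identity, not an inequality, because $\pa N$ diffeomorphic to $\Sph^2$ is already known from Theorem~\ref{thm:Gauss-Bonnet_N_SD}. Therefore $(M,\go,u)$ is covered by a generalized Nariai solution~\eqref{eq:gen_cylsol_D} with some two-dimensional Einstein fiber $(E^2,g_{E^2})$ satisfying $\Ric_{E^2}=g_{E^2}$; but in dimension $n=3$ the only such fiber is the round sphere $(\Sph^2,g_{\Sph^2})$, so the generalized Nariai triple is the Nariai triple~\eqref{eq:cylsol_D}. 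This gives the claimed conclusion.

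I do not expect any genuine obstacle: the corollary is pure bookkeeping on top of~\eqref{eq:areaboundsephy_N} and Theorem~\ref{thm:Gauss-Bonnet_N_SD}, both of which carry the analytic content (the gradient estimate of Proposition~\ref{pro:min_pr_N_SD}, the monotonicity of $\Phi$ in Corollary~\ref{le:in_mon_Phi1_N_SD}, and the integral identity of Proposition~\ref{pro:int_id_g_N_SD}). The only care needed is to confirm that the area scaling under the constant conformal factor $n/(n-2)=3$ is handled consistently between the two ingredients, which is immediate.
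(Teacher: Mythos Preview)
Your proposal is correct and matches the paper's approach: the paper states this corollary as an immediate consequence of applying the Gauss--Bonnet formula to the right-hand side of~\eqref{eq:areaboundsephy_N} in dimension $n=3$, which is precisely your first route. Your rigidity discussion is also sound, and your remark that in dimension three the Einstein fiber must be the round sphere correctly sharpens the reference to~\eqref{eq:gen_cylsol_D} (the label~\eqref{eq:gen_SD} in the statement is evidently a typo).
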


\subsection{Black Hole Uniqueness.}
\label{sub:mon_BHU_N_SD}
In this section we will complete the proof of Theorem~\ref{thm:BHU3D}, started in Theorem~\ref{thm:mon_glob_3_SD}, by discussing the missing case $m_+=\mmax$.
To this end, on $M=\overline M_+\cup \overline M_-$ we define the metric $g$ as in~\eqref{eq:g_N_SD}, and the function $\ffi$ as follows
\begin{equation}
\label{eq:ffi_glob_N_SD}
\ffi=
\begin{dcases}
\frac{\arcsin(u)}{\sqrt{n-2}}\,, & \mbox{ on }  M_+\,,
\\
\frac{\pi-\arcsin(u)}{\sqrt{n-2}}\,, & \mbox{ on }  M_-\,.
\end{dcases}
\end{equation}
The function $\ffi$ defined here is equal to $0$ on $\pa M_+$, it is equal to $\ffi_0=\pi/(2\sqrt{n-2})$ on $\Sigma=\overline{M}_+\cap\overline{M}_-$ and is equal to $\ffi_{\rm max}=\pi/\sqrt{n-2}$ on $\pa M_-$.
Moreover, it is easily checked that $\ffi,g$ satisfy the equations in~\eqref{eq:pb_conf_N_SD} on $M_+$ and $M_-$. In particular, the elliptic inequality~\eqref{eq:Degw_N_SD} holds on every connected component of $M_+$ and $M_-$, and this leads to the following global estimate for the gradient of $\ffi$ (which is defined a priori only on $M_-\cup M_+$ and not on $\Sigma$).

\begin{proposition}
\label{pro:min_pr_ass_N_SD}
Let $(M,\go,u)$ be a $2$-sided solution to problem~\eqref{eq:prob_SD} such that the virtual masses $m_+=\mu(M_+,\go,u)$, $m_-=\mu(M_-,\go,u)$ satisfy $m_+=m_-=\mmax$,
and let $g,\ffi$ be defined by~\eqref{eq:g_N_SD},~\eqref{eq:ffi_glob_N_SD}. Then $|\na\ffi|_g\leq 1$ on the whole $M\setminus{\rm MAX}(u)$.
\end{proposition}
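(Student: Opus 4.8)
The plan is to mimic the proof of Proposition~\ref{pro:min_pr_N_SD}, but now the region is replaced by the pair $M_+$ and $M_-$, glued along $\Sigma$, and the key point is that the boundary estimate $|\na\ffi|_g\leq 1$ on $\pa M=\pa M_+\sqcup\pa M_-$ continues to hold because $m_+=m_-=\mmax$. First I would recall that, with $\Psi$ and the associated constructions taken at the parameter $m=\mmax$, the value of $\psi$ degenerates to $\sqrt{(n-2)/n}$, so the conformal factor is the constant rescaling~\eqref{eq:g_N_SD} and $\ffi$ is given by~\eqref{eq:ffi_glob_N_SD}; formula~\eqref{eq:|naffi|_N_SD} then reads $|\na\ffi|_g^2=\tfrac1n\,|\D u|^2/(1-u^2)$ on each of $M_+$ and $M_-$. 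On $\pa M_+$ and $\pa M_-$ we have $|\D u|=\sqrt{n}$ by the cylindrical character of the horizons (these are cylindrical-type horizons since $m_\pm=\mmax$), hence $w=\beta(1-|\na\ffi|_g^2)=0$ on $\pa M$; in particular $w\geq 0$ there. Near ${\rm MAX}(u)$ the argument of Proposition~\ref{pro:min_pr_N_SD} applies verbatim: by Lemma~\ref{le:estimate_upsi_SD} (or directly Remark~\ref{rem:bound_Du}) the quantity $|\D u|^2/(1-u^2)$ is bounded, and since $\beta=\cos(\sqrt{n-2}\,\ffi)\to 0$ as $\ffi\to\ffi_0$, we get $w\to 0$ as we approach $\Sigma\subseteq{\rm MAX}(u)$.

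Next I would run the Minimum Principle. The function $\ffi$ is analytic in the interior of each connected component of $M_+$ and of $M_-$, so $w$ is analytic there and its critical level sets are discrete; thus for small $\ep>0$ the set $N_\ep=\{|w|\geq\ep\}$ has smooth boundary and is a compact subset of $M\setminus{\rm MAX}(u)$ (using $w\to0$ at $\pa M$ and at $\Sigma$). On each connected component of $M_+\cup M_-$ the coefficients of the elliptic inequality~\eqref{eq:Degw_N_SD} are bounded on $N_\ep$, so the Weak Minimum Principle~\cite[Corollary~3.2]{Gil_Tru} gives $\min_{N_\ep}w\geq\min_{\pa N_\ep}w\geq-\ep$. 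Letting $\ep\to0$ yields $w\geq 0$, i.e.\ $|\na\ffi|_g\leq 1$, on all of $M_+\cup M_-=M\setminus{\rm MAX}(u)$, which is exactly the claim. Note that no statement about $\Sigma$ itself is needed, since $\ffi$ (and hence $|\na\ffi|_g$) is only defined off ${\rm MAX}(u)$.

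The only subtlety — and the step I would be most careful about — is making sure the boundary behaviour at $\pa M$ and at $\Sigma$ is strong enough to localize. At $\pa M$ this is automatic from $|\D u|=\sqrt{n}$ and the definition of $w$. At $\Sigma$ one must invoke that $|\D u|^2/(1-u^2)$ stays bounded as one approaches ${\rm MAX}(u)$ from either side; this is guaranteed by Lemma~\ref{le:estimate_upsi_SD} together with the boundedness of $(\umax-u)\dot\psi^2$ established in its proof (equivalently, by the improved {\L}ojasiewicz bound~\eqref{eq:bounded_quantity_SD} of Remark~\ref{rem:bound_Du}, which holds in a neighbourhood of any point of ${\rm MAX}(u)$). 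With $\beta\to0$ this forces $w\to0$ uniformly as we approach $\Sigma$, so $N_\ep$ indeed avoids a neighbourhood of ${\rm MAX}(u)$ and the Minimum Principle applies component by component. I would also remark, as in the proof of Proposition~\ref{pro:min_pr_N_SD}, that if $|\na\ffi|_g=1$ is attained at an interior point then the Strong Minimum Principle forces $|\na\ffi|_g\equiv1$ on that whole component and, via the Bochner formula~\eqref{eq:bochner_N_SD} and Proposition~\ref{pro:rigidity_N}, triggers the rigidity that will be exploited later in Section~\ref{sec:nariai_SD}; but for the statement as phrased only the inequality is required.
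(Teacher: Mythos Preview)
Your proof is correct and follows essentially the same approach as the paper: verify $w\geq 0$ on $\pa M$ from the mass hypothesis, use $w\to 0$ near ${\rm MAX}(u)$, and apply the Minimum Principle to~\eqref{eq:Degw_N_SD} on each component of $M_+$ and $M_-$.

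One small inaccuracy worth flagging: you assert $|\D u|=\sqrt{n}$ on all of $\pa M_+\sqcup\pa M_-$, but the hypothesis $m_\pm=\mmax$ only forces $\max_{\pa M_\pm}|\D u|/\umax=\sqrt{n}$; if $\pa M_+$ or $\pa M_-$ has several components, some could have strictly smaller surface gravity. What actually holds is $|\D u|\leq\sqrt{n}$ on all of $\pa M$, hence $|\na\ffi|_g^2=\tfrac1n|\D u|^2\leq 1$ there, which is exactly the inequality you need (and is how the paper phrases it). Your ``in particular $w\geq 0$'' is the correct conclusion, just reached from a slightly overstated premise.
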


\begin{proof}
The proof is an easy adjustment of the proof of Proposition~\ref{pro:min_pr_SD}. First of all, we notice that our function $\ffi$ satisfies formula~\eqref{eq:|naffi|_N_SD} hence, thanks to the assumption, we have
	$$
	|\na\ffi|_g\,=\,\frac{1}{n}|\D u|^2\,\leq\,1
	$$
	on the whole boundary $\pa M=\pa M_+\sqcup\pa M_-$. The thesis follows applying the Minimum Principle to the elliptic inequality~\eqref{eq:Degw_N_SD} on each connected component of $M_+$ and $M_-$.
\end{proof}

\noindent
A second important remark is that the regularity of $\sqrt{\umax-u}$ implies the regularity of $\ffi$.

\begin{proposition}
\label{pro:C2_N}
Let $(M,\go,u)$ be a $2$-sided solution to problem~\eqref{eq:prob_SD}, and let $\ffi$ be defined by~\eqref{eq:ffi_glob_N_SD}.
Then the function $\ffi$ is $\mathscr{C}^3$ in a neighborhood of each point in the top stratum of $\Sigma$.
\end{proposition}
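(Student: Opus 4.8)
The plan is to produce a single local formula for $\ffi$ that is valid on both sides of $\Sigma$ simultaneously, of the shape $\ffi=\ffi_0-\tfrac{1}{\sqrt{n-2}}\,\theta(\rho)$, where $\theta$ is a fixed analytic function and $\rho$ is the \emph{signed} square root of $\umax-u$ across $\Sigma$ (positive towards $M_+$). The point will be that, near the top stratum of $\Sigma$, $\rho$ is $\mathscr{C}^3$ — in fact analytic — by Proposition~\ref{pro:expansion_u}, so that $\ffi$, being a composition of smooth functions, is $\mathscr{C}^3$ there. This is the precise sense in which ``the regularity of $\sqrt{\umax-u}$ implies the regularity of $\ffi$''.

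First I would fix $p$ in the top stratum of $\Sigma$. As in Proposition~\ref{pro:expansion_u} (and as used already in the proof of Theorem~\ref{thm:estimates_SD}), $p$ is a top-stratum point of ${\rm MAX}(u)$, so there is a small $\Omega\ni p$ with $\Omega\cap{\rm MAX}(u)=\Omega\cap\Sigma$ an analytic hypersurface, $\Omega\setminus\Sigma=\Omega_+\sqcup\Omega_-$ with $\Omega_\pm\subset M_\pm$, and an analytic chart $(r,x^2,\dots,x^n)$ in which $r$ is the signed distance to $\Sigma$, positive towards $M_+$, and $\Sigma\cap\Omega=\{r=0\}$. From formula~\eqref{eq:analytic_expansion_2} in the proof of Proposition~\ref{pro:expansion_u} (recall $\umax=1$ throughout this section) one has $\umax-u=r^{2}q$ with $q$ analytic in $(r,x)$ and $q|_{\{r=0\}}=n\umax/2=n/2>0$; hence $\rho:=r\sqrt{q}$ is analytic near $\{r=0\}$, has the same sign as $r$, satisfies $\rho^{2}=\umax-u$, and $\sqrt{\umax-u}=|\rho|$. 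Thus $\rho$ is the announced analytic signed square root.

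Next I would record the elementary fact that for $t$ close to $1$ one has $\arccos t=\theta\big(\sqrt{1-t}\,\big)$, for a uniquely determined odd analytic function $\theta$ near $0$ with $\theta(0)=0$ and $\theta'(0)=\sqrt{2}$: writing $t=\cos\vartheta$ gives $1-t=\tfrac12\vartheta^{2}(1-\vartheta^{2}/12+\cdots)$, so for $\vartheta\ge 0$ the map $\vartheta\mapsto\sqrt{1-\cos\vartheta}$ is odd analytic with nonzero derivative at $0$, and one inverts. Since $\sqrt{\umax-u}=\rho$ on $\Omega_+$ and $\sqrt{\umax-u}=-\rho$ on $\Omega_-$, and $\theta$ is odd,
\[
\arccos u=\theta(\rho)\ \text{ on }\Omega_+,\qquad \arccos u=\theta(-\rho)=-\theta(\rho)\ \text{ on }\Omega_-.
\]
Feeding this into~\eqref{eq:ffi_glob_N_SD}, using $\arcsin u=\pi/2-\arccos u$ and $\ffi_0=\pi/(2\sqrt{n-2})$: on $\Omega_+$, $\ffi=\tfrac1{\sqrt{n-2}}(\pi/2-\arccos u)=\ffi_0-\tfrac1{\sqrt{n-2}}\theta(\rho)$; on $\Omega_-$, $\ffi=\tfrac1{\sqrt{n-2}}(\pi-\arcsin u)=\tfrac1{\sqrt{n-2}}(\pi/2+\arccos u)=\ffi_0-\tfrac1{\sqrt{n-2}}\theta(\rho)$. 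The two expressions coincide, so $\ffi=\ffi_0-\tfrac1{\sqrt{n-2}}\theta(\rho)$ on all of $\Omega$; as a composition of analytic functions in an analytic chart, $\ffi$ is analytic on $\Omega$, in particular $\mathscr{C}^3$.

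I expect the only genuinely delicate step to be the sign bookkeeping in the previous paragraph: $\sqrt{\umax-u}$ is not differentiable across $\Sigma$ (it behaves like $|r|$), and the cancellation that restores smoothness occurs because composing it with the odd function $\theta$, together with the substitution $\arcsin u=\pi/2-\arccos u$ that converts the ``$\pi-{}$'' in the $M_-$-branch of~\eqref{eq:ffi_glob_N_SD} into a sign change, produces the single smooth function $\theta(\rho)$. This is exactly the mechanism that makes the pseudo-radial function $\mathscr{C}^3$ in Proposition~\ref{pro:C2}. A routine secondary point is the identification, in the second paragraph, of top-stratum points of $\Sigma$ with top-stratum points of ${\rm MAX}(u)$, so that the chart and expansion of Proposition~\ref{pro:expansion_u} are indeed available at $p$.
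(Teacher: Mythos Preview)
Your proof is correct and follows essentially the same approach as the paper: both invoke the expansion of $u$ near the top stratum from Proposition~\ref{pro:expansion_u} to see that the piecewise definition~\eqref{eq:ffi_glob_N_SD} glues to a smooth function. The paper's one-line proof (``an easy exercise of analysis starting from the expansion~\eqref{eq:expansion_u_final}'') leaves the sign bookkeeping implicit, whereas you carry it out explicitly via the odd function $\theta$ and the analytic signed square root $\rho=r\sqrt{q}$, obtaining the single formula $\ffi=\ffi_0-\theta(\rho)/\sqrt{n-2}$ valid on both sides; this is a cleaner and more honest execution of what the paper merely asserts.
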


\begin{proof}
	From the definition of $\ffi$, it is clear that it is enough to show that $\arcsin(u)$ is $\mathscr{C}^3$. This is an easy exercise of analysis starting from the expansion~\eqref{eq:expansion_u_final} for $u$ proven before.
\end{proof}

\noindent
As an easy consequence of the above results, we obtain the following analogue of Theorem~\ref{thm:estimates_SD}.

\begin{theorem}
\label{thm:estimates_N_SD}
Let $(M,\go,u)$ be a $3$-dimensional $2$-sided solution to problem~\eqref{eq:prob_SD}, and let $\Sigma\subseteq {\rm MAX}(u)$ be the stratified hypersurface separating $M_+$ and $M_-$. 
Suppose that the virtual masses of $M_+$ and $M_-$ satisfy
$$
\mu(M_+,\go,u)\,\,=\,\, \mu(M_-,\go,u)\,\,=\,\,\mmax\,.
$$
Then $\Sigma$ is a $\mathscr{C}^\infty$ hypersurface and it holds
\begin{align}
\HHH\,\,&=\,\,0\,,
\\
\hhh\,\,&=\,0\,,
\\
\label{eq:estimates_N}
\RRR^\Sigma\,\,&=\,\,6\,,
\\
\Ric(\nu,\nu)\,\,&=\,\,0\,,
\end{align}
where $\nu$ is the $\go$-unit normal to $\Sigma$ pointing towards $M_+$, $\HHH$ and $\hhh$ are the mean curvature and second fundamental form of $\Sigma$ with respect to $\nu$, $\RRR^\Sigma$ is the scalar curvature of the metric $\go^\Sigma$ induced on $\Sigma$ by $\go$.
\end{theorem}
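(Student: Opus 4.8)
The plan is to mirror the proof of Theorem~\ref{thm:estimates_SD} with the simplifications that arise from the conformal factor being the constant $n/(n-2)$ instead of the pseudo-radial function $\Psi^2$. First I would recall the global objects at hand: the metric $g=[n/(n-2)]\go$ from~\eqref{eq:g_N_SD} and the function $\ffi$ defined piecewise by~\eqref{eq:ffi_glob_N_SD}, which is continuous, equals $0$ on $\pa M_+$, equals $\ffi_0=\pi/(2\sqrt{n-2})$ on $\Sigma$, and equals $\ffi_{\rm max}=\pi/\sqrt{n-2}$ on $\pa M_-$. Proposition~\ref{pro:C2_N} gives that $\ffi$ (and of course $g$) is $\mathscr{C}^3$ near each point of the top stratum of $\Sigma$, and Proposition~\ref{pro:min_pr_ass_N_SD} gives $|\na\ffi|_g\le 1$ on all of $M\setminus{\rm MAX}(u)$. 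The first key step is to show $|\na\ffi|_g=1$ on the top stratum of $\Sigma$: from formula~\eqref{eq:grad_u_nearSigma_N} one has $|\D u|^2/(\umax-u)\to 2n$ as one approaches a top-stratum point, and since $\umax=1$ here and $|\na\ffi|_g^2=(1/n)|\D u|^2/(1-u^2)=(1/n)|\D u|^2/[(1+u)(1-u)]$, the limit is $(1/n)\cdot(2n)/2=1$. Equivalently this is the cylindrical specialization of~\eqref{eq:grad_u_nearSigma}. Since $|\na\ffi|_g$ attains its maximum value $1$ along $\Sigma$, we get $\na|\na\ffi|_g^2=0$ there, hence $\nana\ffi(\nu_g,\nu_g)=\langle\na\ffi\,|\,\na|\na\ffi|_g^2\rangle_g/|\na\ffi|_g^2=0$ on $\Sigma$; and because $\dot\psi$ (equivalently, the sign of $\Deg\ffi$, which from the second equation of~\eqref{eq:pb_conf_N_SD} is negative on $M_+$ and positive on $M_-$) changes sign across $\Sigma$, continuity of $\Deg\ffi=\sqrt{n-2}\tan(\sqrt{n-2}\,\ffi)(|\na\ffi|_g^2-1)$ forces $\Deg\ffi=0$ on $\Sigma$. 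From~\eqref{eq:formula_curvature_N_SD} and $\Deg\ffi=\nana\ffi(\nu_g,\nu_g)=0$ we then read off $\Hg=0$, and translating via~\eqref{eq:formula_curvature_N_SD} together with $|\na\ffi|_g=1$ gives $\HHH=0$ on the top stratum of $\Sigma$.

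Next I would run the Bochner integration argument exactly as in the second half of the proof of Theorem~\ref{thm:estimates_SD}. Pick $p$ in the top stratum, an embedding $F_0:\overline{B^2}\to M$ with image $\Sigma_0\subset\Sigma$ contained in the top stratum, and extend it along $\na\ffi/|\na\ffi|_g^2$ to a cylinder $C_\ep=F([-\ep,\ep]\times\overline{B^2})$ foliated by the level sets $\Sigma_s\subset\{\ffi=\ffi_0+s\}$. Integrating the Bochner formula~\eqref{eq:bochner_N_SD} over $C_\ep$ and using the divergence theorem, one obtains the analogue of~\eqref{eq:Bochner_Sigma_SD}; the boundary integrand over $\Sigma_{\pm\ep}$ is bounded above by a quantity going to zero faster than $\ep$ (the same dichotomy on $\na\langle\na|\na\ffi|_g^2\,|\,\na\ffi\rangle_g$ at $p$ applies verbatim, using that $|\na\ffi|_g$ has an interior maximum along $\Sigma_0$ so $\langle\na|\na\ffi|_g^2\,|\,{\rm n}_g\rangle_g\le 0$ on $\Sigma_{\pm\ep}$, and $\Deg\ffi$ has the right sign on each side), while the lateral part of $\pa C_\ep$ has measure $O(\ep)$. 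The coarea formula plus the mean value theorem yield, as $\ep\to0$, that $\int_{\Sigma_0}|\nana\ffi|_g^2\,\rmd\sigma_g\le 0$, hence $\nana\ffi\equiv 0$ on $\Sigma_0$; here one uses that $\Deg\ffi=\nana\ffi(\nu_g,\nu_g)=0$ on $\Sigma$ and the first equation of~\eqref{eq:pb_conf_N_SD} (with $\tan(\sqrt{n-2}\,\ffi_0)$ infinite but multiplied by $\nana\ffi$ which vanishes) to conclude $\Ricg(\nu_g,\nu_g)=0$ on $\Sigma$. By density of the top stratum, $\hg\equiv 0$ wherever $\Sigma$ is $\mathscr{C}^2$; since $\Hg=0$ already and the conformal factor is constant, from~\eqref{eq:formula_curvature_N_SD} we get $\mathring{\hhh}=0$ and $\HHH=0$, hence $\hhh\equiv 0$ on $\Sigma$. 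For the scalar curvature, $\Rg=\Ricg$ trace gives $\Rg=(n-1)(n-2)$ from~\eqref{eq:tildeR_N_SD}, and Gauss--Codazzi on $\Sigma$ gives $\Rg^\Sigma=\Rg-2\Ricg(\nu_g,\nu_g)-|\hg|_g^2+\Hg^2=(n-1)(n-2)$; since $g=[n/(n-2)]\go$, we have $\RRR^\Sigma=[n/(n-2)]\Rg^\Sigma=n(n-1)$, which for $n=3$ is $6$, matching~\eqref{eq:estimates_N}. Finally, since $\DD u(\nu,\nu)=-n\umax=-n$ on $\Sigma$ (from the expansion~\eqref{eq:expansion_u_final} or directly, as in Proposition~\ref{pro:expansion_u}), the first equation of~\eqref{eq:prob_SD} gives $\Ric(\nu,\nu)=\DD u(\nu,\nu)/u+n\langle\nu|\nu\rangle=-n+n=0$.

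The last step is the $\mathscr{C}^\infty$ regularity of $\Sigma$, which goes exactly as in Theorem~\ref{thm:estimates_SD}. Starting from Proposition~\ref{pro:SigmaC1}, $\Sigma$ is $\mathscr{C}^1$ so its unit normal $\nu$ is defined everywhere. On the top stratum, differentiating $|\nu|^2=1$ gives $\langle\D_\nu\nu\,|\,\nu\rangle=0$ and $\langle\D_{X_i}\nu\,|\,\nu\rangle=-\langle X_i\,|\,\D_\nu\nu\rangle=0$, so $\D_\nu\nu=0$, while $\langle\D_{X_i}\nu\,|\,X_j\rangle=\hhh(X_i,X_j)=0$ since $\hhh\equiv 0$. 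Thus $\D\nu\equiv 0$ on the top stratum; by density the limit of $\D\nu$ exists at every point of $\Sigma$, so $\Sigma$ is $\mathscr{C}^2$ and then $\D^k\nu\equiv 0$ for all $k\ge 1$ on the top stratum, forcing $\Sigma$ to be $\mathscr{C}^\infty$.

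I expect the main obstacle to be the same delicate point that is dealt with in Theorem~\ref{thm:estimates_SD}: controlling the boundary terms in the Bochner integration over $C_\ep$, specifically showing that $\langle\na|\na\ffi|_g^2\,|\,\na\ffi\rangle_g$ and $\Deg\ffi\langle\na\ffi\,|\,{\rm n}_g\rangle_g$ contribute terms that are $o(\ep)$ after dividing by $\ep$. In the cylindrical case this is actually somewhat cleaner because $g$ is merely a rescaling of $\go$, so there is no need for the separate $\mathscr{C}^3$ analysis of the pseudo-radial function $\Psi$ (Proposition~\ref{pro:C2}) — Proposition~\ref{pro:C2_N} already delivers the needed regularity of $\ffi$ directly. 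Everything else is a routine transcription, with the two structural inputs being the gradient bound $|\na\ffi|_g\le1$ from Proposition~\ref{pro:min_pr_ass_N_SD} and the boundary value $|\na\ffi|_g=1$ on $\Sigma$ coming from~\eqref{eq:grad_u_nearSigma_N}.
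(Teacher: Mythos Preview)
Your proposal is correct and follows essentially the same route as the paper: establish $|\na\ffi|_g=1$, $\Deg\ffi=0$, $\nana\ffi(\nu_g,\nu_g)=0$, and $\Ricg(\nu_g,\nu_g)=0$ on the top stratum of $\Sigma$, then run the Bochner integration over a small cylinder $C_\ep$ exactly as in Theorem~\ref{thm:estimates_SD} to get $\nana\ffi\equiv0$ (hence $\hg\equiv0$), and finish with Gauss--Codazzi and the bootstrapping of the regularity of $\nu$. The paper's own proof in fact just says ``proceeding exactly as in Theorem~\ref{thm:estimates_SD}'' for the Bochner and smoothness steps, so you have simply written out what the paper leaves implicit.

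One small slip worth correcting: in the first equation of~\eqref{eq:pb_conf_N_SD} the coefficient in front of $\nana\ffi$ is $\sqrt{n-2}/\tan(\sqrt{n-2}\,\ffi)$, not $\tan(\sqrt{n-2}\,\ffi)$ itself. So at $\ffi=\ffi_0$ that coefficient tends to $0$, not to $\infty$; the term $\frac{\sqrt{n-2}}{\tan(\sqrt{n-2}\,\ffi)}\nana\ffi(\nu_g,\nu_g)$ is a product $0\cdot 0$, not an indeterminate $\infty\cdot 0$. This only makes the derivation of $\Ricg(\nu_g,\nu_g)=0$ easier, and your conclusion stands. Also, a minor point of exposition: the paper deduces $|\na\ffi|_g=1$ on $\Sigma$ from the continuity of $\Deg\ffi$ and the sign change of $\tan(\sqrt{n-2}\,\ffi)$, whereas you compute it directly from~\eqref{eq:grad_u_nearSigma_N}; both are fine.
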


\begin{proof}
This proof follows the scheme of the proof of Theorem~\ref{thm:estimates_SD}. Define $\ffi$ and $g$ as in~\eqref{eq:ffi_glob_N_SD} and~\eqref{eq:g_N_SD}, consider a point $p\in\Sigma$ and consider a neighborhood $\Omega\ni p$ such that $\Sigma\cap\Omega$ is contained in the top stratum of $\Sigma$. From Proposition~\ref{pro:C2_N} we know that $\ffi$ is $\mathscr{C}^3$ in $\Omega$. Therefore $\Deg\ffi$ is continuous in $\Omega$, thus from the second formula in~\eqref{eq:pb_conf_N_SD} we deduce that also $\tan(\ffi)(1-|\na\ffi|_g^2)$ can be extended to a continuous function along $\Sigma\cap\Omega$. We also notice that $|\na\ffi|_g\leq 1$ everywhere by Proposition~\ref{pro:min_pr_ass_N_SD}, whereas $\tan(\ffi)$ has positive sign on $M_+$ and negative sign on $M_-$. Therefore, $\tan(\ffi)(1-|\na\ffi|_g^2)$ has to change sign when passing through $\Sigma$, hence $\tan(\ffi)(1-|\na\ffi|_g^2)=0$ on $\Sigma\cap\Omega$.
In particular, $\Deg\ffi=0$ and  $|\na\ffi|_g=1$ on $\Sigma\cap\Omega$.
Furthermore, $|\na\ffi|_g$ has a maximum on $\Sigma$, hence $\na|\na\ffi|_g^2=0$ on $\Sigma\cap\Omega$. In particular, $\nana\ffi(\nu_g,\nu_g)=\langle\na\ffi\,|\,\na|\na\ffi|_g^2\rangle_g/|\na\ffi|_g^2=0$, where $\nu_g=\na\ffi/|\na\ffi|_g=\na\ffi$ is the $g$-unit normal vector field to $\Sigma$, and substituting in the first formula in~\eqref{eq:pb_conf_N_SD}, we obtain $\Ricg(\nu_g,\nu_g)=0$ on $\Sigma\cap\Omega$.
The second fundamental form $\hg$ and the mean curvature $\Hg$ of $\Sigma$ can be computed using formul\ae~\eqref{eq:formula_curvature_N_SD}. Since $\Deg\ffi=\nana\ffi(\nu_g,\nu_g)=0$ on $\Sigma\cap\Omega$, from~\eqref{eq:formula_curvature_N_SD} we deduce 
\begin{equation}
\label{eq:estimates1_N}
\Hg\,\,=\,\,0\,,
\end{equation}
on $\Sigma\cap\Omega$.
Proceeding exactly as in Proposition~\ref{thm:estimates_SD}, one also shows that $|\hg|\equiv 0$ and that $\Sigma$ is $\mathscr{C}^{\infty}$.
  
Moreover, from the Gauss-Codazzi equation we find
\begin{align}
\notag
\Rg^\Sigma\,&=\,\Rg-2\Ricg(\nu_g,\nu_g)-|\hg|_g^2+\Hg^2
\\
\notag
&=\,\Rg
\\
\label{eq:estimates2_N}
&=\, 2\,,
\end{align}
where in the last equality we have used from~\eqref{eq:tildeR_N_SD}.

Translating~\eqref{eq:estimates1_N} in terms of $\go$ recalling~\eqref{eq:formula_curvature_N_SD}, and using the fact that $|\na\ffi|^2_g=(1/3)\,|\D u|^2/(1-u^2)=1$ on $\Sigma$, we obtain 
$$
\HHH\,=\,0\,,\qquad |\mathring{\hhh}|^2\,=\,|\hhh|^2\,=\,\frac{1}{3}\,|\hg|_g^2\,=\,0\,.
$$
Finally, noticing that $\Rg^\Sigma=\RRR^\Sigma/3$, where $\RRR^{\Sigma}$ is the scalar curvature of the metric induced by $\go$ on $\Sigma$, from identity~\eqref{eq:estimates2_N} we obtain
\begin{equation*}
\mmax^{2/3}\left(\RRR^\Sigma+|\mathring{\hhh}|^2\right)\,=\,\frac{\RRR^\Sigma+|\mathring{\hhh}|^2}{3}\,
=\,\Rg^\Sigma+|\hg|_g^2\,=\,2\,.
\end{equation*}
This concludes the proof.
\end{proof}


\noindent
The next result follows combining Propositions~\ref{pro:mon_Phi1_N_SD},~\ref{thm:estimates_N_SD} and the results in Subsection~\ref{sub:consequences_N}. 

\begin{proposition}
\label{pro:BHG_n>3_N_SD}
Let $(M,\go,u)$ be a $3$-dimensional $2$-sided solution to problem~\eqref{eq:prob_SD}, and let $\Sigma\subseteq {\rm MAX}(u)$ be the stratified hypersurface separating $M_+$ and $M_-$. Suppose that the virtual masses of $M_+$ and $M_-$ satisfy
$$
\mu(M_+,\go,u)\,\,=\,\mu(M_-,\go,u)\,\,=\,\,\mmax\,.
$$
Then it holds
$$
\int_{\Sigma}\frac{\RRR^\Sigma}{6}\,\rmd\sigma\,=\,
|\Sigma|\,\leq\,
|\pa M_+|\,.
$$
Moreover, if the equality holds, then $(M,\go,u)$ is isometric to the Nariai solution~\eqref{eq:cylsol_D}.
\end{proposition}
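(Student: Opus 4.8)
The plan is to reproduce, in the cylindrical setting, the three-step argument used for Proposition~\ref{pro:BHG_n>3_SD}, with the Schwarzschild--de Sitter reference geometry and the results of Section~\ref{sec:consequences_SD} replaced by the Nariai counterparts established earlier in this section. First I would record that the hypothesis $\mu(M_\pm,\go,u)=\mmax$ forces both $M_+$ and $M_-$ to be \emph{cylindrical} regions in the sense of Definition~\ref{def:horiz}: an outer region has virtual mass strictly below $\mmax$, since the outer surface gravity function $k_+$ takes values in $[1,\sqrt n)$, and symmetrically for inner regions. This legitimates working on each of $M_+$ and $M_-$ with the conformal metric $g=[n/(n-2)]\go$ from~\eqref{eq:g_N_SD} and the function $\ffi$ from~\eqref{eq:ffi_glob_N_SD}, which restricted to $M_+$ is exactly the data entering Corollary~\ref{le:in_mon_Phi1_N_SD}.

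Next I would invoke Theorem~\ref{thm:estimates_N_SD}, already proved above, which under the present hypotheses gives $\RRR^\Sigma\equiv 6$ on the separating hypersurface $\Sigma$; this immediately produces the left-hand identity $\int_{\Sigma}\RRR^\Sigma/6\,\rmd\sigma=|\Sigma|$. For the inequality I would apply Corollary~\ref{le:in_mon_Phi1_N_SD} to the cylindrical region $N=M_+$, whose separating hypersurface $\overline{M}_+\cap\overline{M\setminus\overline{M}_+}$ coincides with $\Sigma$ (exactly as in the discussion preceding Theorem~\ref{thm:mon_glob_3_SD}). The corollary yields $|\Sigma|_g\leq|\pa M_+|_g$. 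Since in dimension $n=3$ the conformal factor $n/(n-2)=3$ is constant and $\Sigma$, $\pa M_+$ are $2$-dimensional, one has $|\Sigma|_g=3|\Sigma|$ and $|\pa M_+|_g=3|\pa M_+|$, so the inequality collapses to $|\Sigma|\leq|\pa M_+|$, as claimed.

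For the rigidity clause: if $|\Sigma|=|\pa M_+|$ then $|\Sigma|_g=|\pa M_+|_g$, and the rigidity part of Corollary~\ref{le:in_mon_Phi1_N_SD} gives that $(M,\go,u)$ is isometric to a generalized Nariai triple~\eqref{eq:gen_cylsol_D}; since $n=3$, the fiber of such a triple is a $2$-dimensional Einstein manifold with $\Ric=g_E$, hence the round sphere $(\Sph^2,g_{\Sph^2})$ (consistently, $\Sigma$ is orientable, being a separating hypersurface in an orientable manifold), so the triple is the Nariai solution~\eqref{eq:cylsol_D}. I do not anticipate a genuine obstacle: all the analytic substance --- the gradient bound $|\na\ffi|_g\leq 1$ from Proposition~\ref{pro:min_pr_ass_N_SD}, the monotonicity of $\Phi$ together with $|\na\ffi|_g\to 1$ along $\Sigma$ built into Corollary~\ref{le:in_mon_Phi1_N_SD}, and the geometry of $\Sigma$ from Theorem~\ref{thm:estimates_N_SD} --- is in place, and the only care needed is the identification of $M_\pm$ as cylindrical regions and the bookkeeping of the constant conformal factor.
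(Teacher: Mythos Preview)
Your proposal is correct and follows essentially the same route as the paper: invoke Theorem~\ref{thm:estimates_N_SD} for $\RRR^\Sigma\equiv 6$, apply Corollary~\ref{le:in_mon_Phi1_N_SD} on $M_+$ and cancel the constant conformal factor, then use the rigidity clause of the monotonicity result. You are slightly more explicit than the paper in two places---recording that $m_\pm=\mmax$ forces $M_\pm$ to be cylindrical (so that the results of Section~\ref{sec:nariai_SD} apply), and in dimension $3$ reducing the generalized Nariai fiber to $\Sph^2$---both of which are welcome clarifications.
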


\begin{proof}
The proof is just a collection of the previous results. From~\eqref{eq:estimates_N}, we immediately get
$$
\int_{\Sigma}\frac{\RRR^\Sigma}{6}\rmd\sigma\,=\,|\Sigma|\,.
$$ 
Since $\mu(M_+,\go,u)=\mmax$, we have $|\na\ffi|_g^2=({1}/{3})|\D u|^2\leq 1$ on $\pa M_+$.
Moreover, we recall from the proof of Proposition~\ref{thm:estimates_N_SD} that $|\na\ffi|_g$, where $g$ and $\ffi$ are defined by~\eqref{eq:g_N_SD} and~\eqref{eq:ffi_N_SD} as usual, goes to $1$ as we approach $\Sigma$. 
Therefore, from Corollary~\ref{le:in_mon_Phi1_N_SD} we obtain
$$
3\,|\Sigma|\,=\,|\Sigma|_g\,\leq\,|\pa M_+|_g\,=\,3\,|\pa M_+|
$$
This concludes the proof of the inequality. The rigidity statement follows from the corresponding rigidity statements in Proposition~\ref{pro:mon_Phi1_N_SD}.
\end{proof}
If we also assume that $\pa M_+$ is connected, then we can combine Proposition~\ref{pro:BHG_n>3_N_SD} with Corollary~\ref{cor:Gauss-Codazzi_N_SD} and we obtain the following inequality
\begin{equation}
\label{eq:BHG_n>3_N_SD}
\int_{\Sigma}\RRR^\Sigma\,\rmd\sigma\,\leq\,
\int_{\pa M_+}\RRR^{\pa M_+}\,\rmd\sigma\,.
\end{equation}
Combining this inequality with the Gauss-Bonnet formula we obtain the following result, which concludes the proof of the Black Hole Uniqueness Theorem~\ref{thm:BHU3D} by addressing the cylindrical case.

\begin{theorem}
\label{thm:mon_glob_3_N_SD}
Let $(M,\go,u)$ be a $3$-dimensional $2$-sided solution to problem~\eqref{eq:prob_SD}, and let $\Sigma\subseteq {\rm MAX}(u)$ be the stratified hypersurface separating $M_+$ and $M_-$. Let also
$$
m_+\,=\,\mu(M_+,\go,u)\,,\qquad m_-\,=\,\mu(M_-,\go,u)
$$
be the virtual masses of $M_+$ and $M_-$. If the following conditions are satisfied
\begin{itemize}
\smallskip
\item \underline{mass compatibility}\qquad\qquad\qquad\ \ \ $\;m_+= m_-=\mmax$,
\smallskip
\item \underline{connected cylindrical horizon}\quad\quad$\ \ \; \pa M_+$ is connected,
\smallskip
\end{itemize}
then $(M,\go,u)$ is isometric to the Nariai triple~\eqref{eq:cylsol_D}. 
\end{theorem}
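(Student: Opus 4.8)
The plan is to mirror the argument of Theorem~\ref{thm:mon_glob_3_SD}, replacing the Schwarzschild--de Sitter reference model by the Nariai one. First I would fix the global metric $g=[n/(n-2)]\go$ as in~\eqref{eq:g_N_SD} and the global pseudo-affine function $\ffi$ as in~\eqref{eq:ffi_glob_N_SD}, which under the mass compatibility hypothesis $m_+=m_-=\mmax$ is legitimate (the gradient bound $|\na\ffi|_g\le 1$ being guaranteed by Proposition~\ref{pro:min_pr_ass_N_SD} and the regularity near $\Sigma$ by Proposition~\ref{pro:C2_N}). With these at hand, the chain of inequalities~\eqref{eq:BHG_n>3_N_SD} is available: the lower bound $\int_\Sigma \RRR^\Sigma\,\rmd\sigma/6 = |\Sigma| \leq |\pa M_+|$ comes from Proposition~\ref{pro:BHG_n>3_N_SD}, whose hypotheses are exactly met, while the upper bound $|\pa M_+| \leq \int_{\pa M_+}\RRR^{\pa M_+}\,\rmd\sigma/6$ comes from Corollary~\ref{cor:Gauss-Codazzi_N_SD}, which is the point where the assumption that $\pa M_+$ is connected enters. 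Concatenating these gives $\int_\Sigma \RRR^\Sigma\,\rmd\sigma \leq \int_{\pa M_+}\RRR^{\pa M_+}\,\rmd\sigma$.

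Next I would apply the Gauss--Bonnet formula to both sides, obtaining $4\pi\sum_{i}\chi(\Sigma_i) \leq 4\pi\,\chi(\pa M_+)$, where $\Sigma_1,\dots,\Sigma_k$ are the connected components of $\Sigma$. Since $\pa M_+$ is connected, Theorem~\ref{thm:Gauss-Bonnet_N_SD} tells us it is diffeomorphic to $\Sph^2$, so $\chi(\pa M_+)=2$ and hence $\sum_i\chi(\Sigma_i) \leq 2$. On the other hand, Theorem~\ref{thm:estimates_N_SD} gives $\RRR^\Sigma \equiv 6 > 0$ on $\Sigma$, so Gauss--Bonnet forces $\sum_i\chi(\Sigma_i) = \tfrac{1}{4\pi}\int_\Sigma\RRR^\Sigma\,\rmd\sigma > 0$; moreover, being a separating hypersurface with empty boundary, $\Sigma$ is orientable, so its Euler characteristic is an even integer. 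Therefore $\chi(\Sigma) \geq 2$, and combining with the previous bound we conclude $\chi(\Sigma) = 2 = \chi(\pa M_+)$, that is, equality holds throughout the chain~\eqref{eq:BHG_n>3_N_SD}. In particular the inequality in Proposition~\ref{pro:BHG_n>3_N_SD} is saturated, and its rigidity statement yields that $(M,\go,u)$ is isometric to the Nariai triple~\eqref{eq:cylsol_D}, as desired.

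The genuinely hard work underpinning this argument has already been carried out beforehand: the regularity of the separating hypersurface $\Sigma$ (Proposition~\ref{pro:SigmaC1} together with the bootstrap in Theorem~\ref{thm:estimates_N_SD}) and the precise identities $\HHH=0$, $\hhh=0$, $\RRR^\Sigma=6$, $\Ric(\nu,\nu)=0$ along $\Sigma$ in the cylindrical regime. What remains delicate in the present step is essentially bookkeeping: making sure the equality case of~\eqref{eq:BHG_n>3_N_SD} propagates simultaneously to the equality cases of both Proposition~\ref{pro:BHG_n>3_N_SD} and Corollary~\ref{cor:Gauss-Codazzi_N_SD}; checking that in dimension $n=3$ the \emph{generalized} Nariai conclusion collapses to the genuine Nariai triple~\eqref{eq:cylsol_D}; and applying the parity argument for $\chi(\Sigma)$ to the correct orientable, separating, boundaryless surface. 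Once these points are verified, the proof is complete.
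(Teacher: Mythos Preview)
Your proposal is correct and follows essentially the same route as the paper's proof: combine Proposition~\ref{pro:BHG_n>3_N_SD} with Corollary~\ref{cor:Gauss-Codazzi_N_SD} to get inequality~\eqref{eq:BHG_n>3_N_SD}, apply Gauss--Bonnet on both sides, use Theorem~\ref{thm:Gauss-Bonnet_N_SD} for $\chi(\pa M_+)=2$, and use $\RRR^\Sigma=6$ together with the parity of $\chi(\Sigma)$ to force equality and trigger the rigidity. Your write-up is in fact slightly more explicit than the paper's in justifying the parity step via orientability of the separating hypersurface.
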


\begin{proof}
Inequality~\eqref{eq:BHG_n>3_N_SD} tells us that
$$
\int_\Sigma\RRR^\Sigma\,\rmd\sigma\,\leq\,\int_{\pa M_+}\RRR^{\pa M_+}\,\rmd\sigma\,,
$$
and the equality holds if and only if $(M,\go,u)$ is isometric to the Nariai solution~\eqref{eq:cylsol_D}.
Recalling that $\Sigma$ has no conical singularities as proved in Theorem~\ref{thm:estimates_SD}, applying the Gauss-Bonnet formula to both sides of the above inequality, we obtain
\begin{equation*}
4\pi\sum_{i=1}^k\chi(\Sigma_i)\,\leq\,4\pi\chi(\pa M_+)\,.
\end{equation*}
We recall from Theorem~\ref{thm:Gauss-Bonnet_N_SD} that if $\pa M_+$ is connected then $\pa M_+$ is diffeomorphic to a sphere, hence we obtain
\begin{equation}
\label{eq:auxauxaux}
\sum_{i=1}^k\chi(\Sigma_i)\,\leq\,2\,,
\end{equation}
where the equality holds if and only if the solution is isometric to the Nariai solution.

On the other hand, in dimension $n=3$, formula~\eqref{eq:estimates_N} gives
$$
\RRR^\Sigma\,=\,6\,.
$$
In particular, again from the Gauss-Bonnet formula, it follows
$$
\sum_{i=1}^k\chi(\Sigma_i)\,=\,\frac{1}{4\pi}\int_{\Sigma}\RRR^{\Sigma}\rmd\sigma\,>\,0\,,
$$
but $\sum_{i=1}^k\chi(\Sigma_i)$ can only assume even integer values, hence $\sum_{i=1}^k\chi(\Sigma_i)\geq 2$. Therefore the equality holds in~\eqref{eq:auxauxaux}, as wished.
\end{proof}

\subsection*{Acknowledgements}
{\em The authors would like to thank L. Ambrozio, C. Arezzo, A. Carlotto, C. Cederbaum and P. T. Chru\'sciel for their interest in our work and for stimulating discussions during the preparation of the manuscript. The authors are members of the Gruppo Nazionale per l'Analisi Matematica, la Probabilit\`a e le loro Applicazioni (GNAMPA) of the Istituto Nazionale di Alta Matematica (INdAM) and are partially founded by the GNAMPA Project ``Principi di fattorizzazione, formule di monotonia e disuguaglianze geometriche''. The paper was partially completed during the authors' attendance to the program ``Geometry and relativity'' organized by the Erwin Schr\"{o}dinger International Institute for Mathematics and Physics (ESI).
}

\bibliographystyle{plain}

\bibliography{biblio}

\end{document}